\declaretheorem[style=definition,qed=$\dashv$,numberwithin=section]
{definition}
\declaretheorem[style=plain,sibling=definition]{theorem}
\declaretheorem[style=plain,sibling=definition]{lemma}
\declaretheorem[style=plain,sibling=definition]{question}
\declaretheorem[style=plain,sibling=definition]{conjecture}
\declaretheorem[style=definition,sibling=definition]{remark}
\declaretheorem[style=plain,sibling=definition]{claim}
\declaretheorem[style=plain,sibling=definition]{claim*}
\titleformat{\section}{\normalsize\centering}{\thesection.}{1em}{}
\titleformat{\subsection}{\normalsize\centering}{\thesubsection.}{1em}{}
\titleformat{\subsubsection}{\normalsize}{\thesubsubsection.}{1em}{}
\numberwithin{equation}{section}
\newcommand{\gTheta}{\mathsf{G}}
\newcommand{\rest}{\restriction}
\newcommand{\param}{\mathfrak{P}}
\newcommand{\dom}{{\rm dom}}
\newcommand{\lh}{{\rm lh}}
\def\a{\alpha}
\def\l{\lambda}
\renewcommand{\models}{\vDash}
\newcommand{\powerset}{{\wp }}
\def\P{{\mathcal{P} }}
\def\W{{\mathcal{W} }}
\def\Q{{\mathcal{ Q}}}
\def\R{{\mathcal R}}
\def\M{{\mathcal{M}}}
\def\N{{\mathcal{N}}}
\def\F{{\mathcal{F}}}
\def\T {{\mathcal{T}}}
\def\U{{\mathcal{U}}}
\def\S{{\mathcal{S}}}
\def\G{{\mathcal{G}}}
\def\VT{{\vec{\mathcal{T}}}}
\newcommand{\ins}{\trianglelefteq}
\newcommand{\pins}{\triangleleft}
\newcommand{\crit}{\mathrm{crit}}
\newcommand{\RR}{\mathbb{R}}
\newcommand{\OR}{\text{OR}}
\newcommand{\J}{\mathcal J}
\newcommand{\un}{\cup}
\newcommand{\sub}{\subseteq}
\newcommand{\om}{\omega}
\newcommand{\ZF}{\mathsf{ZF}}
\newcommand{\AD}{\mathsf{AD}}
\newcommand{\ZFC}{\mathsf{ZFC}}
\newcommand{\CC}{\mathbb{C}}
\newcommand{\es}{\mathbb{E}}
\newcommand{\Tt}{\mathcal{T}}
\newcommand{\Uu}{\mathcal{U}}
\newcommand{\conc}{\ \widehat{\ }\ }
\newcommand{\Lp}{\mathrm{Lp}}
\newcommand{\sats}{\vDash}
\newcommand{\cut}{\backslash}
\newcommand{\core}{\mathcal{C}}
\newcommand{\pow}{\mathcal{P}}
\newcommand{\inter}{\cap}
\newcommand{\BB}{\mathbb{B}}
\newcommand{\Coll}{\mathrm{Col}}
\renewcommand{\l}{\mathit{l}}
\renewcommand{\OR}{\textit{o}}
\newcommand{\nth}{\text{th}}
\newcommand{\Ord}{\text{Ord}}
\newcommand{\com}{\circ}
\newcommand{\HC}{\mathrm{HC}}
\newcommand{\BBB}{\mathfrak{B}}
\newcommand{\Ll}{\mathcal{L}}
\newcommand{\all}{\forall}
\newcommand{\ex}{\exists}
\newcommand{\id}{\mathrm{id}}
\newcommand{\cross}{\times}
\newcommand{\Mbar}{\bar{\M}}
\newcommand{\gammabar}{\bar{\gamma}}
\newcommand{\Ttbar}{\bar{\Tt}}
\newcommand{\bbar}{\bar{b}}
\newcommand{\Bbar}{\bar{B}}
\newcommand{\Rbar}{\bar{\R}}
\newcommand{\C}{\mathcal{C}}
\newcommand{\her}{\mathcal{H}}
\newcommand{\rank}{\mathrm{rank}}
\newcommand{\MFsharp}{\mathfrak{M}}
\newcommand{\Jop}{\J^{\mathrm{m}}}
\newcommand{\Pbar}{\bar{\P}}
\newcommand{\abar}{\bar{a}}
\newcommand{\Fop}{\mathcal{F}}
\newcommand{\g}{\mathrm{g}}
\newcommand{\trancl}{\mathrm{trancl}}
\newcommand{\ahat}{\hat{a}}
\newcommand{\wh}{\mathrm{wh}}
\newcommand*{\TitleFont}{%
      \usefont{\encodingdefault}{\rmdefault}{b}{n}%
      \fontsize{12}{16}%
      \selectfont}
\begin{document}
\title{\TitleFont $\textsf{PFA}$ AND GUESSING MODELS}
\author{\fontsize{11}{13} \selectfont NAM TRANG\\
Department of Mathematics\\ 
University of California, Irvine\\
Email: ntrang@math.uci.edu.}

\date{}
\maketitle
\begin{abstract}
This paper explores the consistency strength of The Proper Forcing Axiom ($\textsf{PFA}$) and the theory (T) which involves a variation of the Viale-Wei$\ss$ guessing hull principle. We show that (T) is consistent relative to a supercompact cardinal. The main result of the paper is Theorem \ref{main_technical_theorem}, which implies that the theory ``$\sf{AD}$$_\mathbb{R} + \Theta$ is regular" is consistent relative to (T) and to $\textsf{PFA}$. This improves significantly the previous known best lower-bound for consistency strength for (T) and $\textsf{PFA}$, which is roughly ``$\sf{AD}$$_\mathbb{R} + \textsf{DC}$". 
\end{abstract}

Suppose $\kappa < \gamma$ are uncountable cardinals and $X\prec H_\gamma$ is such that $\kappa \leq |X|$. Let $\powerset_\kappa(X) = \{a\in \powerset(X) \ | \ |a| < \kappa\}$. We say that $X$ is \textit{$\kappa$-guessing} if for all $a \in X$ and for all $b\subseteq a$ such that $c\cap b \in X$ for all $c\in X \cap \powerset_{\kappa}(X)$ then $b$ is \textit{$X$-guessed}, i.e. there is some $c\in X$ such that $c\cap X = b\cap X$. Such a $b$ satisfying the hypothesis of the previous sentence is called \textit{$\kappa$-approximated by $X$}. So a hull $X$ is $\kappa$-guessing if whenever $a\in X$ and whenever $b\subseteq a$ is $\kappa$-approximated by $X$, then $b$ is $X$-guessed. 

In this paper, we study the strength of the following theories
\begin{itemize}
\item The Proper Forcing Axiom ($\textsf{PFA})$;
\item (T): there is a cardinal $\lambda\geq 2^{\aleph_2}$ such that the set $\{X \prec (H_{\lambda^{++}},\in) \ | \ |X| = \aleph_2$, $X^\omega \subseteq X$, $\omega_2\subset X$, and $X$ is $\omega_2$-guessing $\}$ is stationary.
\end{itemize}

\indent Guessing models in \cite{viale2011consistency} are $\omega_1$-guessing in the above notations. It's not clear that the theory (T) is consistent with $\textsf{PFA}$ (in contrast to Viale-Wei$\ss$ principle $\sf{ISP}(\omega_2)$, which asserts the existence of stationary many $\omega_1$-guessing models of size $\aleph_1$ of $H_{\lambda}$ for all sufficiently large $\lambda$). However, it's possible that (T) is a consequence of or at least consistent with a higher analog of $\sf{PFA}$. 


The outline of the paper is as follows. In section \ref{BasicAD+Facts}, we review some $\sf{AD}$$^+$ facts that we'll be using in this paper. In section \ref{upperbound}, using a Mitchell-style forcing, we prove
\begin{theorem}
\label{FromASpct}
Con($\sf{ZFC}$ + there is a supercompact cardinal) $\Rightarrow$ Con$\rm{(T)}$.
\end{theorem}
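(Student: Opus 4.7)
The approach is to adapt the Viale--Weiss derivation of $\mathsf{ISP}(\omega_2)$ from a supercompact by shifting the Mitchell-style forcing up one cardinal. Start with a supercompact $\kappa$ in $V$ (Laver-prepared if necessary) and let $\mathbb{P}$ be a Mitchell-style iteration of length $\kappa$ whose Cohen component is $\mathrm{Add}(\omega_1,\kappa)$ with countable conditions and whose second component is a countably-closed iteration of collapses $\mathrm{Coll}(\omega_1,\alpha)$ for the relevant $\alpha<\kappa$, arranged so that in $V[G]$: (i) $\omega_1$ is preserved and $\kappa$ becomes $\omega_2$; (ii) $\mathbb{P}$ is $\kappa$-c.c.\ and the non-Cohen factor is $\sigma$-closed, so that countable sequences of ordinals in $V[G]$ are captured in appropriate intermediate extensions; and (iii) $\mathbb{P}$ has the $\omega_2$-\emph{approximation property}: any set of ordinals $b\in V[G]$ with $b\cap u\in V$ for every $u\in V$ of $V$-cardinality $<\kappa$ already lies in $V$. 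Property (iii) is the technical heart and is obtained by factoring $\mathbb{P}$ through $\mathrm{Add}(\omega_1,\kappa)$ and exploiting countable closure of the side-condition factor.

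With such a $\mathbb{P}$ fixed, work in $V[G]$ and fix $\lambda\geq 2^{\aleph_2}$. To prove that the desired set of hulls is stationary, let $F:[H_{\lambda^{++}}]^{<\omega}\to H_{\lambda^{++}}$ be arbitrary in $V[G]$. Choose in $V$ a supercompactness embedding $j:V\to M$ with $\crit(j)=\kappa$, $j(\kappa)>\lambda^{++}$, and $M^{\lambda^{++}}\subseteq M$. Use the factorization $j(\mathbb{P})\cong\mathbb{P}\ast\dot{\mathbb{P}}_{\mathrm{tail}}$, observe that $\dot{\mathbb{P}}_{\mathrm{tail}}$ is sufficiently closed in $V[G]$, and build a master condition below $j[G]$ to lift $j$ to $\tilde{j}:V[G]\to M[G][H]$ in a further generic extension $V[G][H]$. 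Define $X:=\tilde{j}[H_{\lambda^{++}}^{V[G]}]\in M[G][H]$. A routine verification shows that $X$ is an elementary substructure of $\tilde{j}(H_{\lambda^{++}}^{V[G]})$ whose cardinality reflects via $\tilde{j}$ to $\aleph_2$ in $V[G]$, that $\omega_2\subset X$, that $X^{\omega}\subseteq X$ (using closure of $M$ under $\lambda^{++}$-sequences together with the $\sigma$-closure of the tail), and that $X$ is closed under $\tilde{j}(F)$. Hence, by $\tilde{j}$-elementarity, the existence of a single $X\in M[G][H]$ of the required form and $j(\omega_2)$-guessing reflects to stationarily many hulls in $V[G]$ of the form demanded by $(T)$.

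The principal obstacle is verifying that $X=\tilde{j}[H_{\lambda^{++}}^{V[G]}]$ is $j(\omega_2)$-guessing in $M[G][H]$. Given $a\in X$ and $b\subseteq a$ in $M[G][H]$ that is $j(\omega_2)$-approximated by $X$, the natural candidate witness is $\bar b:=\tilde{j}^{-1}[b\cap X]\subseteq \tilde{j}^{-1}(a)$, viewed as a subset living potentially in $V[G]$. The $j(\omega_2)$-approximation hypothesis on $b$, translated under $\tilde{j}^{-1}$, becomes the assertion that $\bar b\cap u\in V[G]$ for every $u\in V[G]$ with $|u|^{V[G]}<\omega_2$. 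Applying the $\omega_2$-approximation property (iii) of $\mathbb{P}$ (and using that the tail forcing contributes no new small-restriction information relevant to $\bar b$) then forces $\bar b\in V[G]$, so that $c:=\bar b\in H_{\lambda^{++}}^{V[G]}$ satisfies $c\cap X=b\cap X$, witnessing guessing. The delicate part of the construction is therefore arranging (i)--(iii) simultaneously --- in particular, securing the approximation property at $\omega_2$ rather than at $\omega_1$ as in the classical Viale--Weiss setup; once that is in place, the reflection argument is a direct shifted analog of the standard one.
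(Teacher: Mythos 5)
There is a genuine gap, and it lies in the cardinal arithmetic of your forcing rather than in the reflection argument. You arrange for the supercompact $\kappa$ to become $\omega_2$ of $V[G]$ (Cohen part $\add(\omega_1,\kappa)$, collapses $\Coll(\omega_1,\alpha)$). But (T) demands hulls $X\prec H_{\lambda^{++}}$ of size $\aleph_2$ with $\omega_2\subset X$ and $X^\omega\subseteq X$, so in the lifted model you must verify that $X=\tilde j[H^{V[G]}_{\lambda^{++}}]$ belongs to $\tilde j(T)$, and with your setup it does not. Since $\crit(j)=\kappa=\omega_2^{V[G]}$, we get $X\cap j(\kappa)=j[\kappa]=\kappa<j(\kappa)=\omega_2^{M[G][H]}$, so $\omega_2^{M[G][H]}\not\subseteq X$; and $|X|=|H^{V[G]}_{\lambda^{++}}|$ is an ordinal in $(\kappa,j(\kappa))$, which your tail collapses to $\omega_1$, so $M[G][H]\vDash |X|=\aleph_1\neq\aleph_2$. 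Hence nothing of the form (T) reflects; what your construction naturally yields is an $\mathsf{ISP}$-type statement about $\aleph_1$-sized models (which, as noted at the end of the paper, are typically not even closed under $\omega$-sequences). The correct arrangement — and the one the paper uses, following Krueger — is to leave $\omega_1$ and $\omega_2$ alone and turn the supercompact $\alpha$ into $\omega_3$: the Cohen component is $\add(\omega_1)$ and the collapses are $\Coll(\omega_2,j)$ with conditions of size $\aleph_1$, assuming $\omega_2^\omega=\omega_2$ in $V$. Then $\omega_2<\crit(j)$ gives $\omega_2=j[\omega_2]\subseteq X$, the tail collapses $|H^{V[G]}_{\lambda}|$ to exactly $\omega_2$ in $M[G][H]$, and $X^\omega\subseteq X$ follows from the $\omega_1$-closure of the tail together with $\omega_2^\omega=\omega_2$.

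A secondary but real issue: the approximation property you invoke is stated for $\mathbb{P}$ over $V$, whereas what the guessing argument requires is the $\omega_2$-approximation property of the quotient $j(\mathbb{P})/\mathbb{P}$ over the intermediate extension — the set $\bar b=\tilde j^{-1}[b\cap X]$ lives in $V[G][H]$ and must be pulled into $V[G]$. Your parenthetical remark that "the tail forcing contributes no new small-restriction information" is precisely the assertion that needs proof; it is not a consequence of the approximation property of $\mathbb{P}$ over $V$. In the paper this is property (e): for every inaccessible $\eta\leq\alpha$, writing $\mathbb{P}_\alpha=\mathbb{P}_\eta\ast\dot{\mathbb{Q}}$, one has $\Vdash_{\mathbb{P}_\eta}$ "$\dot{\mathbb{Q}}$ has the $\omega_2$-approximation property", which is then applied inside $M$ at $\eta=\alpha$ to the factorization $\mathbb{P}_{j(\alpha)}=\mathbb{P}_\alpha\ast\dot{\mathbb{Q}}$.
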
 
Of course, it is well-known that $\sf{PFA}$ is consistent relative to the existence of a supercompact cardinal. Theorem \ref{PFA_Guess} suggests that it's reasonable to expect $\sf{PFA}$ and (T) are equiconsistent.

Recall, for an infinite cardinal $\lambda$, the principle $\square_\lambda$ asserts the existence of a sequence $\langle C_\alpha \ | \ \alpha < \lambda^+ \rangle$ such that for each $\alpha < \lambda^+$,
\begin{itemize}
\item $C_\alpha$ is club in $\alpha$;
\item for each limit point $\beta$ of $C_\alpha$, $C_\beta = C_\alpha\cap \beta$;
\item the order type of $C_\alpha$ is at most $\lambda$.
\end{itemize}
The principle $\square(\lambda)$ asserts the existence of a sequence $\langle C_\alpha \ | \ \alpha < \lambda \rangle$ such that
\begin{enumerate}[(1)]
\item for each $\alpha < \lambda$,
\begin{itemize}
\item each $C_\alpha$ is club in $\alpha$;
\item for each limit point $\beta$ of $C_\alpha$, $C_\beta = C_\alpha \cap \beta$; and
\end{itemize}
\item there is no thread through the sequence, i.e., there is no club $E\subseteq \lambda$ such that $C_\alpha = E\cap \alpha$ for each limit point $\alpha$ of $E$.
\end{enumerate}

Note that $\square_\lambda$ implies $\square(\lambda^+)$ (equivalently, $\neg \square(\lambda^+)$ implies  $\neg \square_\lambda$). The main technical theorem of the paper, proved in Section \ref{lowerbound}, is the following.

\begin{theorem}\label{main_technical_theorem}
Supppose $\kappa$ is a cardinal such that $\kappa^\omega=\kappa$. Suppose for each cardinal $\alpha\in [\kappa^+,(2^\kappa)^+]$, $\neg \square(\alpha)$. Then letting $G\subseteq Col(\omega,\kappa)$ be $V$-generic, in $V[G]$, there is a transitive $M$ containing $\rm{OR}\cup \mathbb{R}$ such that $M\vDash ``\sf{AD}$$_\mathbb{R} + \Theta$ is regular".
\end{theorem}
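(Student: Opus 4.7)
The plan is to run a core model induction (CMI) in $V[G]$, the $\Coll(\omega,\kappa)$-generic extension of $V$. Since $\kappa^\omega=\kappa$ in $V$, the L\'evy collapse has the $\kappa^+$-cc and preserves all cardinals $\geq\kappa^+$, so the interval $[\kappa^+,(2^\kappa)^+]$ of $V$ survives intact into $V[G]$; by standard preservation of coherent-sequence properties under small forcing, the assumption $\neg\square(\alpha)$ on this interval continues to be usable in $V[G]$. The CMI will then produce, in $V[G]$, an increasing hierarchy of hybrid mice (strategy premice) over $\mathbb{R}$ whose derived models satisfy progressively stronger initial segments of $\AD^+$, culminating in a hybrid mouse $\mathcal{N}$ whose derived model $D(\mathcal{N})$ is the desired transitive class $M$.

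The induction climbs the Solovay sequence of the target model. At each successor stage one performs a $K^c$-construction relative to the iteration strategy $\Sigma$ produced at the previous stage and, using the Mouse Set Conjecture at that level (maintained inductively), obtains a new $\Sigma$-premouse with an additional Woodin cardinal; by Woodin's derived model theorem the associated derived model strictly extends the previous one. Within a single Solovay block, limit stages are handled by taking direct limits of the mice already constructed. Reaching $\AD_{\mathbb{R}}$ corresponds to producing a mouse whose top Woodin cardinal is a limit of ``$\AD_{\mathbb{R}}$-witnesses''; the additional requirement that ``$\Theta$ is regular'' is obtained by one further reflection step, producing a mouse whose top Woodin is in turn a limit of such limits.

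The main obstacle will be crossing the limit stages of the Solovay sequence and finally verifying ``$\Theta$ is regular''. The general (Schindler--Sargsyan--Steel) pattern for extracting a contradiction at a stalled stage is this: at the obstruction one distills from the failure a coherent sequence $\vec{C}=\langle C_\beta\mid \beta<\alpha\rangle$ for some $\alpha$ in $[\kappa^+,(2^\kappa)^+]$, together with the non-existence of a thread---in other words, a witness to $\square(\alpha)$. Since this contradicts the hypothesis, the induction cannot stall and must pass through every cutpoint of the Solovay sequence up to the required level. The final mouse $\mathcal{N}$ delivered by the CMI has a Woodin cardinal reflecting enough structure that $M=D(\mathcal{N})$ is a transitive class containing $\Ord\cup\mathbb{R}$ with $M\vDash$ ``$\AD_{\mathbb{R}}+\Theta$ is regular'', completing the argument.
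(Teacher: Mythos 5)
Your high-level plan (run a core model induction in $V[G]$ and, at any stalled stage, extract a witness to some $\square(\alpha)$) captures only one piece of the actual argument, namely the step that produces $\M_1^{J,\sharp}$: there, if the sharp fails to exist one forms the Jensen--Steel core model $K^J(a)$, in which $\square(\gamma)$ holds at a suitable successor cardinal $\gamma$ of $K^J(a)$, and weak covering gives $\cof(\gamma)\geq\kappa^+$, so the sequence transfers to a $\square(\cof(\gamma))$-witness in $V$, contradicting the hypothesis. The other obstructions are not resolved this way, and your sketch gives no mechanism for them: crossing the end of a gap requires producing a suitable premouse together with a $\Gamma$-fullness-preserving strategy guided by a self-justifying system (via the envelope analysis), crossing ``$\Theta=\theta_\Sigma$'' requires the stabilization-of-measures games, and the failure of square enters these steps only indirectly, through cofinality computations such as $\cof(o(\Lp^{\Sigma}(A)))\leq\kappa$, which are what make the lifting and condensation arguments for the operators go through. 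Relatedly, you never use the hypothesis $\kappa^{\omega}=\kappa$ except to note the chain condition; in the actual proof it is essential, since the construction runs over stationarily many $X\prec H_{\lambda^{++}}$ with $X^{\omega}\subseteq X$, and this countable closure is what gives countable completeness of the extenders derived from $\pi_X$, countable completeness of the measure used to build $\M_0^{J,\sharp}$, and the correctness computations such as $\Lp(\mathbb{R}_X)^{M_X[G]}=\Lp(\mathbb{R}_X)$.

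The most serious gap is the final step. Obtaining ``$\Theta$ is regular'' is not ``one further reflection step'' producing a mouse whose top Woodin is a limit of limits; no such mouse is constructed. Instead the proof assumes there is no model of ``$\mathsf{AD}_{\mathbb{R}}+\Theta$ regular'' (the hypothesis $(\dag)$), shows the Solovay sequence of the maximal pointclass $\Omega$ has limit length, forms the direct limit $\mathcal{H}$ of all fullness-preserving hod pairs in $\Omega$ and the stack $\mathcal{H}^+_X$ over $\mathcal{H}_X$, proves no level of $\mathcal{H}^+_X$ projects across $\Theta_X$, equips $\mathcal{H}^+_X$ with an $\Omega$-fullness-preserving strategy $\Sigma_X$, and then proves the $A$-condensation lemma. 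It is $A$-condensation that yields an elementary $\sigma:\M_\infty(\mathcal{H}^+_X,\Sigma_X)\to\mathcal{H}^+$ with critical point $\delta^{\M_\infty}$, whence $\M_\infty\models$ ``$\delta$ is regular'' and $L(\Gamma(\Q,\Lambda),\mathbb{R})\models$ ``$\Theta$ is regular'', contradicting $(\dag)$. Without this hod-mouse analysis your induction has no way to certify regularity of $\Theta$ in the limit model, so the proposal as written does not establish the theorem.
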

\begin{remark}\label{firstRmk}
We remark that the proof of Theorems \ref{main_technical_theorem} actually shows that we can get in $V$ a model $M$ containing $\rm{OR}\cup \mathbb{R}$ that satisfies ``$\sf{AD}_\mathbb{R} + \Theta$ is regular", see Remark \ref{InV}. This is because we actually construct a hod pair $(\P,\Sigma)$ that generates a model of ``$\sf{AD}_\mathbb{R} + \Theta$ is regular" with the property that $(\P,\Sigma\rest V)\in V$.
\end{remark}

Hence, as a corollary, we establish the following theorem, which improves upon the conclusion of Corollary 0.2 of \cite{JSSS}. 
\begin{theorem}\label{PFA_Guess}
Suppose $S$ is one of the following theories:
\begin{enumerate}
\item $\sf{PFA}$,
\item $\rm{(T)}$,
\item there is a strongly compact cardinal.
\end{enumerate}
If $S$ holds, then there is a transitive model $M$ containing $\mathbb{R} \cup \rm{OR}$ such that $M\vDash ``\sf{AD}_\mathbb{R} + \Theta$ is regular". \footnote{To the best of the author's knowledge, the result in this paper gives the best lower-bound obtained from any combinatorial principle not augmented by large cardinal assumptions.}
\end{theorem}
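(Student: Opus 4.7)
The plan is to derive Theorem~\ref{PFA_Guess} from Theorem~\ref{main_technical_theorem} together with Remark~\ref{firstRmk}, by exhibiting, in each of the three cases for $S$, a cardinal $\kappa$ satisfying the hypotheses of Theorem~\ref{main_technical_theorem}: namely $\kappa^\omega=\kappa$ and $\neg\square(\alpha)$ for every cardinal $\alpha\in[\kappa^+,(2^\kappa)^+]$. Once such a $\kappa$ is in hand, Theorem~\ref{main_technical_theorem} produces a model of ``$\mathsf{AD}_{\mathbb{R}}+\Theta$ is regular'' in the $\mathrm{Col}(\omega,\kappa)$-extension, and Remark~\ref{firstRmk} upgrades this to a model living in $V$ itself, which is what Theorem~\ref{PFA_Guess} asserts.

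For case (3), let $\kappa$ be the strongly compact cardinal. Since $\kappa$ is regular and inaccessible, $\kappa^\omega=\kappa$, and Solovay's theorem on strongly compact cardinals gives $\neg\square(\alpha)$ for every cardinal $\alpha\geq\kappa$, covering the interval $[\kappa^+,(2^\kappa)^+]$. For case (1), set $\kappa=\aleph_2$; $\mathsf{PFA}$ yields $2^{\aleph_0}=\aleph_2$, hence $\aleph_2^\omega=(2^{\aleph_0})^{\aleph_0}=2^{\aleph_0}=\aleph_2$, and Todorcevic's theorem that $\mathsf{PFA}$ implies $\neg\square(\alpha)$ for every regular $\alpha\geq\omega_2$ handles the required interval $[\aleph_3,(2^{\aleph_2})^+]$.

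Case (2) is the substantive one; take $\kappa=\aleph_2$. The clauses $|X|=\aleph_2$, $\omega_2\subseteq X$, and $X^\omega\subseteq X$ in the definition of (T) force $\omega_2^\omega\subseteq X$, so $\aleph_2^\omega\leq|X|=\aleph_2$. For the square-failure, fix $\alpha\in[\aleph_3,(2^{\aleph_2})^+]$ and a hypothetical $\square(\alpha)$-sequence $\vec C=\langle C_\beta\mid\beta<\alpha\rangle$; since the $\lambda$ witnessing (T) satisfies $\lambda\geq 2^{\aleph_2}\geq\alpha$, we have $\vec C\in H_{\lambda^{++}}$. Using the stationarity clause of (T), pick an $\omega_2$-guessing $X\prec H_{\lambda^{++}}$ containing $\vec C$. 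Setting $\delta=\sup(X\cap\alpha)$, one builds a thread-candidate $b\subseteq\alpha$ through $\vec C$ (extracted from the coherence of $\vec C$ and its interaction with $X$), verifies that $b$ is $\omega_2$-approximated by $X$, and invokes the guessing property to obtain $c\in X$ with $c\cap X=b\cap X$; by elementarity $c$ closes up to a club thread through $\vec C$, contradicting clause~(2) of $\square(\alpha)$.

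The main obstacle is this thread-extraction step in case (2): one must produce a candidate $b$ that is provably $\omega_2$-approximated by $X$ and whose $X$-guess yields a genuine club thread of length $\alpha$, not merely a cofinal subset of $\delta$. This is precisely where the strengthening from the $\omega_1$-guessing of $\mathsf{ISP}(\omega_2)$ (which suffices to refute $\square(\omega_2)$) to the $\omega_2$-guessing of (T) is used, since the conclusion is needed at cardinals arbitrarily far above $\aleph_2$; the $\omega$-closure $X^\omega\subseteq X$ supplies the combinatorial slack needed for the approximation computation to go through uniformly across $\alpha\in[\aleph_3,(2^{\aleph_2})^+]$.
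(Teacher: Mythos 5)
Your overall strategy --- reduce to Theorem \ref{main_technical_theorem} plus Remark \ref{firstRmk} and then verify the hypotheses of Theorem \ref{main_technical_theorem} case by case --- is exactly the paper's proof, and your treatments of cases (1) and (2) match it in substance: the paper simply cites Todorcevic for $\mathsf{PFA}$ and the Viale--Wei{\ss} and Wei{\ss} papers for (T), whereas you sketch the guessing-model thread-extraction argument yourself. Your observation that $X^\omega\subseteq X$ together with $\omega_2\subseteq X$ and $|X|=\aleph_2$ already forces $\aleph_2^\omega=\aleph_2$ under (T) is correct and is a detail the paper leaves implicit.

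The one genuine divergence is case (3), and it is where your argument has a gap. You take $\kappa$ to be the strongly compact cardinal itself; the paper instead takes $\kappa$ to be a singular, strong limit cardinal of uncountable cofinality \emph{above} a strongly compact. That choice is not cosmetic: by Solovay, $\mathsf{SCH}$ holds above a strongly compact, so for such $\kappa$ one has $2^\kappa=\kappa^+$ and the interval $[\kappa^+,(2^\kappa)^+]$ consists of exactly the two successor (hence regular) cardinals $\kappa^+$ and $\kappa^{++}$, where $\neg\square(\alpha)$ follows from the standard reflection argument. With your choice, strong compactness places no bound on $2^\kappa$, so the interval may contain singular cardinals $\alpha$ of uncountable cofinality $<\kappa$; for those, an embedding $j$ derived from a fine measure on $P_\kappa(\alpha)$ is continuous at $\alpha$, so the usual device of reflecting to $j(\vec C)_{\sup j''\alpha}$ produces no thread, and your blanket claim that Solovay's theorem yields $\neg\square(\alpha)$ for \emph{every} cardinal $\alpha\geq\kappa$ is not justified as stated (the standard results are $\neg\square_\lambda$ for all $\lambda\geq\kappa$ and $\neg\square(\lambda)$ for \emph{regular} $\lambda\geq\kappa$). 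To repair this, either adopt the paper's choice of $\kappa$, or argue that the proof of Theorem \ref{main_technical_theorem} only ever invokes $\neg\square(\alpha)$ at regular $\alpha$ (as in Lemmas \ref{BasicFacts}, \ref{lem:smallCof} and \ref{M1sharpExists}) --- but the latter means reopening that proof rather than quoting the theorem as a black box. (A milder version of the same caveat applies to your intervals in cases (1) and (2), where the cited threadability results are likewise stated for regular cardinals; there, however, you are no less precise than the paper itself.)
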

\begin{proof}
We apply Theorem \ref{main_technical_theorem} and Remark \ref{firstRmk}. We just need to verify that $S$ implies the hypothesis of Theorem \ref{main_technical_theorem}. If $S$ is either $\sf{PFA}$ or (T), we take $\kappa = \aleph_2$. It's well-known that the hypothesis in Theorem \ref{main_technical_theorem} regarding threadability follows from $S$. \footnote{In the case $S$ is (T), see \cite{viale2011consistency} and \cite{weiss2012combinatorial}.} Otherwise, take $\kappa$ to be a signular, strong limit cardinal of uncountable cofinality above a strongly compact cardinal. By \cite{solovay1974strongly}, the hypothesis of Theorem \ref{main_technical_theorem} holds at $\kappa$.
\end{proof}

In Subsection \ref{framework}, we lay out the framework for the core model induction which allows us to construct models of ``$\sf{AD}$$_\mathbb{R} + \Theta$ is regular" from the hypothesis of Theorem \ref{main_technical_theorem}. The actual construction of such models is carried out in Subsections \ref{M1sharp}-\ref{thetareg}. 

We note that the previous best known lower-bound for (T) as well as for $\textsf{PFA}$ is the sharp for a proper class model with a proper class of strong cardinals and a proper class of Woodin cardinals (see \cite[Corollary 0.2]{JSSS}), which is just a bit stronger than ``$\sf{AD}$$_\mathbb{R} + \textsf{DC}$" but is weaker than ``$\sf{AD}$$_\mathbb{R} + \Theta$ is regular". The method used in this paper is the core model induction method, which can be used to further improve the lower bounds for (T) and for $\textsf{PFA}$, as opposed to the method in \cite{JSSS}, which seems hard to generalize.  In fact, it's possible to improve the lowerbound consistency strength for (T) and for $\textsf{PFA}$ in Theorem \ref{PFA_Guess} to ``$\sf{AD}$$_\mathbb{R} + \Theta$ is measurable" and beyond. These results involve a combination of the core model induction and techniques for constructing hod mice beyond those developed in \cite{ATHM} and hence will appear in a future publication. 

Let $\textsf{LSA}$ denote the theory ``$\sf{AD}$$^+ + \Theta = \theta_{\alpha+1} + \theta_\alpha$ is the largest Suslin cardinal." $\sf{LSA}$ was first isolated by Woodin in \cite{Woodin} and is very recently shown to be consistent by G. Sargsyan. It is one of the strongest determinacy theories known to be consistent. We conjecture that  
\begin{conjecture}\label{ConLSA}
Con$\rm{(T)}$ $\Rightarrow$ Con$(\sf{LSA})$ and Con$(\sf{PFA})$ $\Rightarrow$ Con$(\sf{LSA})$.
\end{conjecture}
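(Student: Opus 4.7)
The plan is to extend the core model induction from the proof of Theorem \ref{main_technical_theorem} so that it does not halt at the level ``$\sf{AD}_\mathbb{R} + \Theta$ is regular'' but continues into Sargsyan's hierarchy, eventually producing a hod pair whose derived model satisfies $\sf{LSA}$. Fixing $S$ to be either $\sf{PFA}$ or (T), one again takes $\kappa=\aleph_2$; the combinatorial input to the induction remains the failure of $\square(\alpha)$ at the relevant cardinals $\alpha\geq\kappa^+$, which $S$ already supplies, and the generic extension $V[G]$ for $G\subseteq \Coll(\omega,\kappa)$ is the same ambient model in which the mice will be constructed. The framework of Subsection \ref{framework} organizes the construction as a sequence of hod pairs $(\P_\xi,\Sigma_\xi)$ realizing increasing fragments of $\sf{AD}^+$; the task is to prolong this sequence through the stages ``$\Theta$ is Mahlo'', ``$\Theta$ is measurable'', and beyond, stopping at the least pointclass whose supremum equals the largest Suslin cardinal.

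In outline, at each successor stage $\xi$ one fixes a set of reals $A$ coding the $\xi$-th fragment of $\sf{AD}^+$ constructed so far; one then shows, using the capturing and condensation apparatus of Subsection \ref{framework}, that $A$ is captured by an appropriate hod pair; and one uses $\neg\square$ at the next $V$-cardinal together with a Mitchell-style analysis to refute the core model dichotomy for the next putative hod premouse, forcing the induction to cross into the next region. Limit stages are handled by taking direct limits of hod pairs along a suitable tail of iterates. Once the induction has passed every stage below the largest Suslin cardinal of $\sf{LSA}$, the union of the resulting hod pairs, as in Remark \ref{firstRmk}, should produce in $V$ a transitive $M$ containing $\rm{OR}\cup\mathbb{R}$ that models $\sf{LSA}$.

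The main obstacle, and the reason this is presently only a conjecture, is the construction of hod mice at the $\sf{LSA}$ level. The hod-mouse technology developed in \cite{ATHM} supports exactly the region handled in Theorem \ref{main_technical_theorem}; reaching $\sf{LSA}$ requires the lsa-type hod mice isolated by Sargsyan, whose iteration strategies demand new branch-condensation and hull-condensation analyses and new forms of fullness preservation. The author flags this as the content of a forthcoming publication combining the present core model induction with an extension of the hod-mouse theory beyond \cite{ATHM}, and I expect this extension of the hod-mouse catalogue, rather than any new forcing or combinatorial ingredient, to carry essentially all the difficulty. The combinatorial side of the argument, by contrast, should go through uniformly in the choice of $S$: both $\sf{PFA}$ and (T) already propagate $\neg\square$ to all the cardinals the induction will encounter, so once the inner-model-theoretic machinery is in place the induction should run without new hypotheses on $S$.
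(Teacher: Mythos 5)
This statement is a conjecture in the paper, not a theorem: the paper offers no proof of it, only the remark that the methods of the paper ``and their extensions'' might settle it, and that going beyond ``$\sf{AD}_\mathbb{R}+\Theta$ is regular'' requires ``techniques for constructing hod mice beyond those developed in \cite{ATHM}'' which ``will appear in a future publication.'' Your proposal is therefore not a proof and cannot be one; to your credit, you say so explicitly, and the obstacle you isolate --- the absence of a worked-out theory of lsa-type hod mice with the requisite branch condensation, hull condensation, and fullness preservation --- is exactly the obstacle the paper itself identifies. Your outline of how the induction would be prolonged (replace the smallness assumption $(\dag)$ by smallness at the $\sf{LSA}$ level, keep the combinatorial input $\neg\square(\alpha)$ for $\alpha\in[\kappa^+,\lambda^+]$, and push the Solovay sequence of $\Omega$ past ``$\Theta$ regular,'' ``$\Theta$ Mahlo,'' ``$\Theta$ measurable,'' etc.) is consistent with the paper's stated program and with Conjecture \ref{LSAFromPFA}.

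Two cautions on the sketch itself. First, the phrase ``Mitchell-style analysis'' at successor stages is misplaced: the Mitchell-style forcing appears only in the upper-bound argument of Section \ref{upperbound}; the lower-bound induction instead uses weak covering for the Jensen--Steel core model $K^J$ together with $\neg\square(\cf(\gamma))$ (as in Lemma \ref{M1sharpExists}), and the lifting lemmas of Subsection \ref{M1sharp}, all of which lean heavily on the $\omega$-closure of the hulls $X\in S$. Second, it is not merely a matter of cataloguing new hod mice: once $\lambda^\P$ reaches lsa type, the comparison theory, the definition of $\Gamma(\P,\Sigma)$, the $\sf{SMC}$-style capturing used in Lemmas \ref{NotProjectAcross} and \ref{FullnessPreserving}, and the $A$-condensation argument of Lemma \ref{GettingACondensation} all need to be re-proved in the new setting, since they quote \cite{ATHM} in forms valid only below ``$\sf{AD}_\mathbb{R}+\Theta$ regular.'' So the gap is real and is located where you say it is, but it is wider than a missing catalogue entry; the conjecture remains open as stated.
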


\noindent We are hopeful that methods used in this paper and their extensions can be used to settle the conjecture.
\\
\\
\noindent \textbf{Acknowledgement.} The author would like to thank John Steel and Trevor Wilson for many useful conversations regarding Chapter 3 of the paper. We would like to thank Christoph Wei$\ss$  for helpful discussions regarding his work on guessing hulls principle and related concepts. We would also like to thank Martin Zeman and the referee for pointing out several typos and mistakes in an earlier version of the paper.
\section{BASIC FACTS ABOUT $\sf{AD}$$^+$}
\label{BasicAD+Facts}
We start with the definition of Woodin's theory of $\sf{AD}$$^+$. In this paper, we identify $\mathbb{R}$ with $\omega^{\omega}$. We use $\Theta$ to denote the sup of ordinals $\alpha$ such that there is a surjection $\pi: \mathbb{R} \rightarrow \alpha$. Under $\textsf{AC}$, $\Theta$ is just the successor cardinal of the continuum. In the context of $\sf{AD}$, $\Theta$ is shown to be the supremum of $w(A)$\footnote{$w(A)$ is the Wadge rank of $A$.} for $A\subseteq \mathbb{R}$. The definition of $\Theta$ relativizes to any determined pointclass (with sufficient closure properties). We denote $\Theta^\Gamma$ for the sup of $\alpha$ such that there is a surjection from $\mathbb{R}$ onto $\alpha$ coded by a set of reals in $\Gamma$.
\begin{definition}
\label{AD+}
$\sf{AD}$$^+$ is the theory $\sf{ZF} + \sf{AD}$$ + \sf{DC}_{\mathbb{R}}$ and 
\begin{enumerate}
\item for every set of reals $A$, there are a set of ordinals $S$ and a formula $\varphi$ such that $x\in A \Leftrightarrow L[S,x] \vDash \varphi[S,x]$. $(S,\varphi)$ is called an $\infty$-Borel code\index{$\infty$-Borel code} for $A$;
\item for every $\lambda < \Theta$, for every continuous $\pi: \lambda^\omega \rightarrow \omega^{\omega}$, for every $A \subseteq \mathbb{R}$, the set $\pi^{-1}[A]$ is determined.
\end{enumerate}
\end{definition}
\noindent $\sf{AD}$$^+$ is equivalent to ``$\sf{AD}$$\ +\ $the set of Suslin cardinals is closed". Another, perhaps more useful, characterization of $\sf{AD}$$^+$ is ``$\sf{AD}$$ + \Sigma_1$ statements reflect into Suslin co-Suslin sets'' (see \cite{steelderived} for the precise statement). 
\\
\indent Let $A\subseteq \mathbb{R}$, we let $\theta_A$\index{$\theta_A$} be the supremum of all $\alpha$ such that there is an $OD(A)$ surjection from $\mathbb{R}$ onto $\alpha$. If $\Gamma$ is a determined pointclass, and $A\in \Gamma$, we write $\Gamma\rest A$ for the set of $B\in\Gamma$ which is Wadge reducible to $A$. If $\alpha < \Theta^\Gamma$, we write $\Gamma\rest \alpha$ for the set of $A\in \Gamma$ with Wadge rank strictly less than $\alpha$.
\begin{definition}[\sf{AD}$$$^+$]
\label{Solovaysequence}
The \textbf{Solovay sequence} is the sequence $\langle\theta_\alpha \ | \ \alpha \leq \lambda\rangle$ where
\begin{enumerate}
\item $\theta_0$ is the supremum of ordinals $\beta$ such that there is an $OD$ surjection from $\mathbb{R}$ onto $\beta$;
\item if $\alpha>0$ is limit, then $\theta_\alpha = \sup\{\theta_\beta \ | \ \beta<\alpha\}$;
\item if $\alpha =\beta + 1$ and $\theta_\beta < \Theta$ (i.e. $\beta < \lambda$), fixing a set $A\subseteq \mathbb{R}$ of Wadge rank $\theta_\beta$, $\theta_\alpha$ is the sup of ordinals $\gamma$ such that there is an $OD(A)$ surjection from $\mathbb{R}$ onto $\gamma$, i.e. $\theta_\alpha = \theta_A$.
\end{enumerate}
\end{definition}
Note that the definition of $\theta_\alpha$ for $\alpha = \beta+1$ in Definition \ref{Solovaysequence} does not depend on the choice of $A$. For a pointclass $\Gamma$ that satisfies $\sf{AD}^+$ and is sufficiently closed, we can also define the Solovay sequence $\langle \theta_\alpha^\Gamma \ | \ \alpha\leq \lambda \rangle$ of $\Gamma$ like above. For $\alpha\leq \lambda$, we say $\Gamma\rest \theta^\Gamma_\alpha$ is a \textit{Solovay initial segment} of $\Gamma$.

Roughly speaking the longer the Solovay sequence is, the stronger the associated $\sf{AD}$$^+$-theory is. For instance the theory $\sf{AD}$$_\mathbb{R} + \textsf{DC}$ is strictly stronger than $\sf{AD}$$_\mathbb{R}$ since by \cite{solovay1978independence}, $\textsf{DC}$ implies cof$(\Theta)>\omega$ while the minimal model\footnote{From here on, whenever we talk about ``models of $\textsf{AD}^+$", we always mean those $M$ that contain $\rm{OR}\cup \mathbb{R}$ and satisfy $\sf{AD}^+$.} of $\sf{AD}$$_\mathbb{R}$ satisfies $\Theta = \theta_\omega$. $\sf{AD}$$_\mathbb{R} + \Theta$ is regular is much stronger still as it implies the existence of many models of $\sf{AD}$$_\mathbb{R} + \textsf{DC}$. We end this section with a theorem of Woodin, which produces models with Woodin cardinals in $\sf{AD}$$^+$. The theorem is important in the HOD analysis of such models.
\begin{theorem}[Woodin, see \cite{koellner2010large}]
Assume $\sf{AD}$$^+$. Let $\langle \theta_\alpha \ | \ \alpha \leq \Omega\rangle$ be the Solovay sequence. Suppose $\alpha = 0$ or $\alpha = \beta+1$ for some $\beta < \Omega$. Then $\rm{HOD} $ $ \vDash \theta_\alpha$ is Woodin.
\end{theorem}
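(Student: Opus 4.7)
The plan is to reduce Woodinness of $\theta_\alpha$ in $\mathrm{HOD}$ to the standard extender characterization: given any $f:\theta_\alpha\to\theta_\alpha$ in $\mathrm{HOD}$, I must produce $\kappa<\theta_\alpha$ closed under $f$ together with a $(\kappa,\lambda)$-extender $E\in\mathrm{HOD}$, where $\lambda=j_E(f)(\kappa)$, $\mathrm{crit}(j_E)=\kappa$, and $V^{\mathrm{HOD}}_{\lambda}\subseteq\mathrm{Ult}(\mathrm{HOD},E)$. I will first handle the base case $\alpha=0$ in detail and then indicate how to relativize to $\alpha=\beta+1$ by replacing $\mathrm{HOD}$ by $\mathrm{HOD}_A$, where $A\subseteq\mathbb{R}$ has Wadge rank $\theta_\beta$.

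The central tool is the $\mathrm{OD}$-Vopenka algebra. Working inside $\mathrm{HOD}$, form the poset $\mathbb{P}$ whose conditions are the nonempty $\mathrm{OD}$ subsets of $\mathbb{R}$, ordered by inclusion. For any $x\in\mathbb{R}$ the filter $G_x=\{A\in\mathbb{P}: x\in A\}$ is $\mathrm{HOD}$-generic with $\mathrm{HOD}[x]=\mathrm{HOD}[G_x]$. Moschovakis' coding lemma gives that $\mathbb{P}$ has $\mathrm{HOD}$-cardinality at most $\theta_0$ and that every $\mathrm{OD}$ prewellorder of $\mathbb{R}$ has length strictly below $\theta_0$. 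Thus the structure of $\mathrm{HOD}$ below $\theta_0$ is controlled by $V$-side determinacy facts about $\mathrm{OD}$ sets of reals, which is what allows $\sf{AD}^+$ to be imported into $\mathrm{HOD}$.

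To build the witness, fix $f:\theta_0\to\theta_0$ in $\mathrm{HOD}$. The coding lemma applied to graphs of initial segments of $f$ shows each $f\restriction\xi$ is coded by an $\mathrm{OD}$ set of reals, so a standard boundedness argument produces $\kappa<\theta_0$ closed under $f$ and large enough that sufficiently many generators for the sought extender can be defined over it. Then, for each finite $a\subseteq\lambda$ with $\lambda>j_E(f)(\kappa)$, I would define a measure $E_a$ on $[\kappa]^{|a|}$ derived from a Martin-Solovay style tree built from the $\infty$-Borel codes and scales that $\sf{AD}^+$ supplies for $\mathrm{OD}$ sets of Wadge rank below $\theta_0$. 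The Vopenka correspondence would then transfer the family $\langle E_a : a\in[\lambda]^{<\omega}\rangle$ into $\mathrm{HOD}$ as a coherent extender $E$.

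The main obstacle is verifying simultaneously that $E\in\mathrm{HOD}$ and that $E$ has the required strength $V^{\mathrm{HOD}}_{j_E(f)(\kappa)}\subseteq\mathrm{Ult}(\mathrm{HOD},E)$. Definability in $\mathrm{HOD}$ reduces to each $E_a$ being $\mathrm{OD}$, which follows once the underlying scale and $\infty$-Borel code are chosen in an $\mathrm{OD}$ canonical fashion, using clause (1) of Definition \ref{AD+}. The strength condition is the technical core: one must show that every $\mathrm{OD}$ subset $X$ of $V^{\mathrm{HOD}}_{j_E(f)(\kappa)}$ is represented inside the ultrapower, which amounts to a coherence and countable-completeness computation generalizing Kunen's treatment of normal measures, now exploiting simultaneous indiscernibility of generators coming from the Suslin representations. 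Once this is in hand, $\theta_0$ is Woodin in $\mathrm{HOD}$; the successor case $\alpha=\beta+1$ is obtained by running the same argument inside $\mathrm{HOD}_A$ with $\mathrm{OD}_A$ scales and $\mathrm{OD}_A$ $\infty$-Borel codes, both of which $\sf{AD}^+$ provides for all sets of reals Wadge-below $\theta_\alpha$.
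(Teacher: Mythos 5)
The paper states this result as a black box, attributing it to Woodin and citing Koellner--Woodin; it contains no proof of its own, so there is nothing in-paper to compare your argument against and I can only assess your outline on its own terms. Your ambient setup is the right one: the reduction to the functional extender characterization of Woodinness, the $\mathrm{OD}$-Vop\v{e}nka algebra, the Coding Lemma, and boundedness are all genuinely the tools of the known proof. But the two steps that carry the entire weight of the theorem are not carried out, and one of them starts from a false premise.

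Concretely: (i) you propose to derive the measures $E_a$ from ``Martin--Solovay style trees built from the $\infty$-Borel codes and scales that $\mathsf{AD}^+$ supplies for $\mathrm{OD}$ sets of Wadge rank below $\theta_0$.'' $\mathsf{AD}^+$ does supply $\infty$-Borel codes for all sets of reals, but it does \emph{not} supply scales for all $\mathrm{OD}$ sets of Wadge rank below $\theta_0$: already in $L(\mathbb{R})$ one has $\theta_0=\Theta$, while the Suslin sets form the pointclass $\Sigma^2_1$ and the largest Suslin cardinal is $\undertilde{\delta^2_1}<\Theta$, so the $\mathrm{OD}$ sets of Wadge rank cofinal in $\theta_0$ that your construction must handle are mostly not Suslin and there is no Martin--Solovay tree to build from them. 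The actual argument (the Generation Theorem of Koellner--Woodin) manufactures the ultrafilters from the Coding Lemma and boundedness applied to $\mathrm{OD}$ prewellorderings and the associated local $\mathrm{HOD}$ structures, with normality and strong normality established by coding arguments rather than by Suslin representations. (ii) The verification that the derived extender lies in $\mathrm{HOD}$, is coherent and countably complete, and satisfies $V^{\mathrm{HOD}}_{j_E(f)(\kappa)}\subseteq\mathrm{Ult}(\mathrm{HOD},E)$ is precisely where the substance of the theorem lives; calling it ``a coherence and countable-completeness computation generalizing Kunen'' names the difficulty without discharging it. (iii) In the successor case, relativizing to $\mathrm{HOD}_A$ yields that $\theta_{\beta+1}$ is Woodin in $\mathrm{HOD}_A$, whereas the theorem asserts Woodinness in $\mathrm{HOD}$; you still owe the step that the witnessing extenders restrict to $\mathrm{HOD}$ and can be chosen $\mathrm{OD}$ rather than merely $\mathrm{OD}_A$, e.g.\ by checking that the construction does not depend on the choice of $A$ of Wadge rank $\theta_\beta$.
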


\section{UPPER-BOUND CONSISTENCY STRENGTH OF (T)}
\label{upperbound}
In this section, we prove Theorem \ref{FromASpct}. We follow closely the construction of Section 3 in \cite{krueger2008general}\footnote{For the reader's convenience, our $\omega_1$ will play the role of $\mu$ in Section 3 of \cite{krueger2008general}, our $\omega_2$ will play the role of $\kappa$ there, and finally our $\alpha$ plays the same role as the $\alpha$ in \cite{krueger2008general}.}. We use Even and Odd to denote the classes of even ordinals and odd ordinals respectively. We assume the ground model $V$ satisfies 
\begin{center}
$\omega_1^\omega=\omega_1 + \omega_2^\omega = \omega_2 + \alpha$ is supercompact. 
\end{center}
Consider the following forcing iteration
\begin{center}
\indent \indent $\langle \mathbb{P}_i, \dot{\mathbb{Q}}_j \ | \ i\leq \alpha, j<\alpha\rangle$,
\end{center}
with two partial orderings $\leq$ and $\leq^*$, where $\leq$ is the standard partial ordering on posets and $\leq^*$ is defined by letting $p\textasteriskcentered \dot{r} \leq^* q\textasteriskcentered \dot{s}$ if $p\textasteriskcentered \dot{r} \leq q\textasteriskcentered \dot{s}$ and $p=q$. Inductively, we ensure that the following hold.
\begin{enumerate}
\item If $i<\alpha$ is even then $\mathbb{P}_i$ forces $\dot{\mathbb{Q}}_i = \textrm{ADD}(\omega_1)$, where $\textrm{ADD}(\omega_1)$ is the standard forcing for adding a Cohen subset of $\omega_1$ with countable conditions.
\item If $j<\alpha$ is odd, $\mathbb{P}_{j-1}$ forces $\langle \dot{Q}_{j-1}\textasteriskcentered \dot{Q}_j,\leq^*\rangle = \langle \textrm{ADD}(\omega_1)\textasteriskcentered \textrm{Col}(\omega_2,j),\leq^* \rangle$, where $\textrm{Col}(\omega_2,j)$ is the standard forcing that collapses $j$ to $\omega_2$ using conditions of size at most $\aleph_1$. So for all $i<\alpha$, $\mathbb{P}_i$ forces $\dot{\mathbb{Q}}_i$ is $\omega_1$-closed and furthermore, if $i$ is odd, then $\mathbb{P}_{i-1}$ forces $\langle \dot{\mathbb{Q}}_{i-1}\textasteriskcentered \dot{\mathbb{Q}}_i, \leq^*\rangle$ is $\omega_2$-strategically closed.
\item If $i\leq \alpha$ is a limit ordinal, then $\mathbb{P}_i$ consists of all partial functions $p:i\rightarrow V$ such that $p\rest j \in \mathbb{P}_j$ for $j<i$, $|\textrm{dom}(p)\cap \textrm{Even}|<\omega_1$, and $|\textrm{dom}(p)\cap \textrm{Odd}|<\omega_2$.
\item For $i\leq \alpha$ and $p,q\in \mathbb{P}_i$, $q\leq p$ in $\mathbb{P}_i$ iff for all $\gamma$ in the domain of $p$, $\gamma$ is in the domain of $q$ and $q\rest \gamma \Vdash q(\gamma) \leq p(\gamma)$.
\item For $i\leq \alpha$ and $p,q\in \mathbb{P}_i$, $q\leq^* p$ in $\mathbb{P}_i$ iff $q\leq p$, $\textrm{dom}(p)\cap \textrm{Even} = \textrm{dom}(q)\cap \textrm{Even}$, and for every $\gamma$ in $\textrm{dom}(p)\cap \textrm{Even}$, $q\rest \gamma\Vdash q(\gamma) = p(\gamma)$. 
\end{enumerate}
By results in Section 3 of \cite{krueger2008general}, we get the following.
\begin{enumerate}[(a)]
\item $\langle\mathbb{P}_\beta,\leq\rangle$ is $\omega_1$-closed for all $\beta\leq \alpha$. In particular $\textrm{ADD}(\omega_1)$ is the same in the ground model and in any intermediate extension. 
\item $\mathbb{P}_\alpha$ preserves $\omega_1, \omega_2$ (as well as stationary subsets of $\omega_1$ and $\omega_2$). 
\item $\forall \eta\leq \alpha$ such that $\eta$ is inaccessible, $\mathbb{P}_\eta$ is $\eta$-cc.
\item $\mathbb{P}_\alpha$ forces $\alpha=\omega_3$.
\item $\forall \eta \leq \alpha$ such that $\eta$ is inaccessible, letting $\mathbb{P}_\alpha=\mathbb{P}_\eta\textasteriskcentered \dot{\mathbb{Q}}$, then $\Vdash_{\mathbb{P}_\eta} \dot{\mathbb{Q}}$ satisfies the $\omega_2$-approximation property, that is, whenever $G\textasteriskcentered H$ is $V$-generic for $\mathbb{P}_\eta\textasteriskcentered \dot{\mathbb{Q}}$, then if for all $x\in V[G][H]$, $x\subseteq V[G]$, it holds that if $x\cap z\in V[G]$ for all $z\in \powerset_{\omega_2}(V[G])^{V[G]}$ then $x\in V[G]$.
\end{enumerate}
\begin{lemma}
\label{VGsatisfiesT}
In $V[G]$, for any regular cardinal $\lambda > \omega_2$, the set $\{X\prec (H_\lambda,\in) \ | \ |X|=\omega_2, \omega_2\subset X,  X^\omega \subseteq X, \textrm{ and } X \textrm{ is } \omega_2\textrm{-guessing}\}$ is stationary. In particular, $\rm{(T)}$ holds in $V[G]$.
\end{lemma}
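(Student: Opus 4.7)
My plan is to use the $\lambda$-supercompactness of $\alpha$ in $V$ together with the $\omega_2$-approximation property (e) to show that every $F: [H_\lambda^{V[G]}]^{<\omega} \to H_\lambda^{V[G]}$ in $V[G]$ admits a closure $X$ of the required form; stationarity then follows. Fix a regular $\lambda > \omega_2$ and $F$ in $V[G]$. In $V$, let $j : V \to M$ be a $\lambda$-supercompactness embedding with $\crit(j) = \alpha$, $j(\alpha) > \lambda$, and $M^\lambda \subseteq M$. In $M$ factor $j(\mathbb{P}_\alpha) = \mathbb{P}_\alpha * \dot{\mathbb{R}}$; property (e) in $M$ at the inaccessible $\alpha$ gives that $\dot{\mathbb{R}}$ has the $\omega_2$-approximation property over $M[G]$. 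Lift $j$ through the forcing in the standard manner to $j : V[G] \to M[G*H]$.

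In $M[G*H]$ set $Y := \{j(x) : x \in H_\lambda^{V[G]}\}$. Routine verifications give $Y \prec H_{j(\lambda)}^{M[G*H]}$, $\omega_2 \subset Y$ (since $\crit(j) > \omega_2$), $Y^\omega \subseteq Y$ in $M[G*H]$ (using $\omega_1$-closure of the tail together with $M^\lambda \subseteq M$), and $Y$ is closed under $j(F)$. The key assertion is that $Y$ is $\omega_2$-guessing in $M[G*H]$.

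To prove the guessing property, suppose $a = j(a_0) \in Y$ and $b \subseteq a$ in $M[G*H]$ is $\omega_2$-approximated by $Y$. Define the pullback $b^* := \{x \in a_0 : j(x) \in b\}$. I verify that $b^* \in M[G]$ (hence in $V[G]$, as $M[G] \subseteq V[G]$) using the $\omega_2$-approximation property of $\dot{\mathbb{R}}$: given $z \in \powerset_{\omega_2}(M[G])^{M[G]}$ with $z \subseteq a_0$, we have $j(z) = \{j(y) : y \in z\}$ (using $|z| < \omega_2 < \crit(j)$), so $j(z) \in Y$ and has size $<\omega_2$ in $M[G*H]$. The approximation hypothesis on $b$ then yields $b \cap j(z) = j(w)$ for some $w \in H_\lambda^{V[G]}$, and injectivity of $j$ together with $j(w) = \{j(y) : y \in w\}$ gives $b^* \cap z = z \cap w \in M[G]$. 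Once $b^* \in V[G]$, since $b^* \subseteq a_0 \in H_\lambda^{V[G]}$ we have $b^* \in H_\lambda^{V[G]}$, so $j(b^*) \in Y$, and a direct calculation shows $j(b^*) \cap Y = \{j(x) : x \in b^*\} = b \cap Y$, witnessing that $b$ is $Y$-guessed.

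To descend to size $\omega_2$, pick $X_0 \in V[G]$ an $\omega$-closed elementary substructure of $H_\lambda^{V[G]}$ of size $\omega_2$ containing $\omega_2$ and closed under $F$, and set $X := \{j(x) : x \in X_0\} \subseteq Y$. Standard arguments give $X \prec H_{j(\lambda)}^{M[G*H]}$ of size $\omega_2$ in $M[G*H]$ with all structural closure properties. The pullback argument localizes to $X$ when the approximating sets $j(z)$ are drawn from $z \in X_0$, giving that $X$ is $\omega_2$-guessing in $M[G*H]$. The existence of such $X$ witnesses in $M[G*H]$ the $\Sigma_1$-over-$H_{j(\lambda)}$ statement ``there is an elementary substructure of $H_{j(\lambda)}$ of size $j(\omega_2) = \omega_2$ that is $\omega$-closed, contains $\omega_2$, is closed under $j(F)$, and is $\omega_2$-guessing'', and by elementarity of $j$ the corresponding statement holds in $V[G]$, producing the required hull closed under $F$. \emph{The principal obstacle} is the guessing argument in the third paragraph: it requires the specific form of $Y$ (to identify $b \cap Y$ with $\{j(x) : x \in b^*\}$) combined with the $\omega_2$-approximation property of the tail forcing $\dot{\mathbb{R}}$ (to descend $b^*$ from $M[G*H]$ to $M[G]$). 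The descent to the size-$\omega_2$ hull $X$ is subtler still, and requires verifying that the approximating witnesses for $X$ can be chosen from within $X_0$ using its closure under $\omega$-sequences.
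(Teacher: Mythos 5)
Your opening moves match the paper's proof: lift the $H_\lambda$-supercompactness embedding $j$ through the iteration to $j^+:V[G]\to M[G\ast H]$, take the pointwise image of $H_\lambda^{V[G]}$, verify $\omega$-closure from the $\omega_1$-closure of the tail, and use the $\omega_2$-approximation property (e) to pull an approximated $b$ back to $j^{-1}[b]\in V[G]$ and guess it by $j(j^{-1}[b])$. That part is sound. But your final ``descent to size $\omega_2$'' step contains a genuine gap, and it is also unnecessary. It is unnecessary because $j(\alpha)=\omega_3^{M[G\ast H]}$ (by property (d) applied in $M$) and $|H_\lambda^{V[G]}|<j(\alpha)$, so the hull $Y=j^+[H_\lambda^{V[G]}]$ \emph{already} has cardinality $\omega_2$ in $M[G\ast H]$; the collapse does the cutting down for you, and $Y$ itself is the witness to $j^+(T)$ — this is exactly how the paper proceeds (phrased via the induced normal filter rather than closure under a function $F$, but that difference is cosmetic).

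The gap is in the localization to $X=j[X_0]$ for a size-$\omega_2$ substructure $X_0$. To conclude $b^*\in M[G]$ from the approximation property of the tail forcing, you must verify $b^*\cap z\in M[G]$ for \emph{every} $z\in\powerset_{\omega_2}(M[G])^{M[G]}$ with $z\subseteq a_0$. The guessing hypothesis on $b$ relative to $X$ only gives you control of $b\cap c$ for $c\in X\cap\powerset_{\omega_2}(X)$, i.e.\ for $c=j(z)$ with $z\in X_0$. Since $a_0$ may have cardinality far exceeding $\omega_2$ while $|X_0|=\omega_2$, almost all of the required approximating sets $z$ lie outside $X_0$, and for those you have no information about $b^*\cap z$. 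You flag this yourself as ``subtler still'' but do not resolve it, and I do not see how to resolve it for an arbitrary $X_0$: the fix is precisely to abandon the descent and use the full image $Y$, whose small-subset structure is rich enough that $j(z)=j[z]\in Y\cap\powerset_{\omega_2}(Y)$ for \emph{every} small $z\in V[G]$.
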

\begin{proof}
Since $\mathbb{P}_\alpha$ is $\omega_1$-closed, $\omega_2^\omega = \omega_2$ in $V$, and $\omega_2^V = \omega_2^{V[G]}$, 
\begin{center}
$(\omega_2^\omega)^{V[G]} = (\omega_2^\omega)^V = \omega_2^V = \omega_2^{V[G]}$.
\end{center}
It's also clear from (a), (c), and the fact that $\alpha$ is an inaccessible limit of inaccessibles that $\omega_2^\omega = \omega_2$ and $2^{\omega_2} = \omega_3$ in $V[G]$.

Working in $V[G]$, fix a regular cardinal $\lambda>\omega_2$. Let
\begin{center}
$T = \{X \prec H_{\lambda} \ | \ |X| = \omega_2 \wedge \omega_2\subset X \wedge X^\omega \subseteq X \wedge X \textrm{ is } \omega_2\textrm{-guessing}\}$.
\end{center}
We show $T$ is stationary. In $V$, let $j: V\rightarrow M$ witness that $\alpha$ is $H_{\lambda}$-supercompact. Let $G\textasteriskcentered H$ be $\mathbb{P}_{j(\alpha)}$-generic over $V$. $j$ canonically lifts to $j^+: V[G] \rightarrow M[G\textasteriskcentered H]$, where $j^+(\tau_G) = j(\tau)_{G\textasteriskcentered H}$. Let $\mathcal{F}$ be the normal filter defined from $j^+$, that is for all $A\in \powerset_{\omega_3}(H_{\lambda})^{V[G]}$,
\begin{center}
$A\in \mathcal{F} \Leftrightarrow \mathbb{P}_{j(\alpha)}$ forces over $V[G]$ that $j^+[H_{\lambda}^{V[G]}]\in j(A)$.
\end{center}
It's easy to check that $\mathcal{F}$ is a normal filter in $V[G]$. We now check that whenever $G\textasteriskcentered H$ is $V$-generic, then $H_{\lambda}^{V[G]}\in j^+(T)$. Fix such $G,H$ and let $X = j^+[H_{\lambda}^{V[G]}]$. To simplify the notation, we also use $j$ to denote $j^+$. Note that $X\prec H^{M[G][H]}_{j(\lambda)}$.

We first show in $M[G][H]$, $X^\omega\subseteq X$. Let $a\in X^\omega$ and note that $j^{-1}[a] \in V[G]$. This is because $j^{-1}[a] \subseteq V[G]$ and is a countable sequence in $V[G][H]$ and hence is in $V[G]$ since by construction, in $V[G]$, $\Vdash_{\mathbb{P}_\alpha} ``\dot{\mathbb{Q}}$ is $\omega_1$-closed'', where $\mathbb{P}_{j(\alpha)}=\mathbb{P}_\alpha\textasteriskcentered \dot{\mathbb{Q}}$. This easily implies that $j^{-1}[a]\in H^{V[G]}_{\lambda}$. Hence $j(j^{-1}[a]) = a \in X$. 

Now suppose $b\subseteq z\in X$ is such that $b\subseteq X$ and whenever $d\in X\cap \powerset_{\omega_2}(X)$, $d\cap b \in X$. We want to show there is a $c\in X$ such that $b\cap X = c\cap X$. To this end, note that $j^{-1}[b]\in V[G]$. This uses (e) and the assumption on $b$. Let $c = j(j^{-1}[b])$. Since $j^{-1}[b]\subseteq j^{-1}(z)\in H^{V[G]}_{\lambda}$, $j^{-1}[b]\in H^{V[G]}_{\lambda}$; this gives $c\in X$. It's easy then to check that $c\cap X = b \cap X$ ($c$ need not equal $b$ though). This completes the proof of the lemma.
\end{proof}
\begin{proof}[Proof of Theorem \ref{FromASpct}] Immediate from Lemma \ref{VGsatisfiesT}.
\end{proof}
\begin{remark} We note that the $\omega_2$-approximation property in (e) is crucial in the proof of Lemma \ref{VGsatisfiesT}. It's used to show that the hull $X$ is $\omega_2$-guessing.
\end{remark} 

\section{LOWER-BOUND CONSISTENCY STRENGTH OF (T)} 
\label{lowerbound}
In this section, we prove Theorem \ref{main_technical_theorem} and hence Theorem \ref{PFA_Guess}. The next several subsections are dedicated to setting up the core model induction, constructing hod pairs with nice properties that generate $\sf{AD}$$^+$ models. Fix a $V$-generic $G \subseteq \textrm{Col}(\omega,\kappa)$. Let $\lambda=2^\kappa$. For $X\prec H_{\lambda^{++}}$ such that $|X| = \kappa$, $\kappa\subseteq X$, let $\pi_X: M_X \rightarrow H_{\lambda^{++}}$ be the uncollapse map. $\pi_X$ naturally extends to a map, which we also call $\pi_X$ from $M_X[G]$ to $H_{\lambda^{++}}[G]$. The core model induction will occur in $V[G]$. Our smallness assumption throughout this paper is: 
\\
\begin{adjustwidth}{2cm}{2cm}

$(\dag): \ \ \ $ in $V[G]$, there is no model $M$ containing all reals and ordinals such that $M \vDash ``\sf{AD}$$_\mathbb{R} + \Theta$ is regular".

\end{adjustwidth}

Among other things, $(\dag)$ implies:
\begin{itemize}
\item There are no $\textsf{AD}^+$ models $M, N$ such that $\mathbb{R}\cup \textrm{OR} \subseteq M, N$ and $\powerset(\mathbb{R})\cap M \cap N$ is strictly contained in $\powerset(\mathbb{R})\cap M$ and in $\powerset(\mathbb{R})\cap N$ (see \cite{ATHM}). This implies that all $\textsf{AD}^+$ models constructed in the core model induction will end-extend one another.
\item If $M$ is an $\textsf{AD}^+$ model, then \textit{Strong Mouse Capturing} ($\textsf{SMC}$) (see Footnote \ref{MC} for definition of $\sf{SMC}$) and \textit{Generation of Mouse Full Pointclasses} (see \cite[Section 6.1]{ATHM}) hold in $M$ . This fact allows us to use the hod analysis in \cite{ATHM} to construct hod mice. 
\end{itemize}

Using the first consequence of $(\dag)$, we define

\begin{definition}[Maximal pointclass of $\textsf{AD}^+$]\label{MaximalModel}
In $V[G]$, let 
\begin{center}$\Omega = \bigcup \{\powerset(\mathbb{R})\cap M \ | \  \mathbb{R}\cup \textrm{OR} \subset M \wedge M \vDash \textsf{AD}^+\}$.\end{center}
\end{definition}

The rest of the paper is dedicated to analyzing $\Omega$. In particular, we show that in $V[G]$,
\begin{itemize}
\item $\Omega \neq \emptyset$.
\item Letting $\langle \theta_\alpha^\Omega\ | \ \alpha \leq \gamma\rangle$ be the Solovay sequence of $\Omega$, then $\gamma$ is a limit ordinal.
\end{itemize}
We will then deduce that there is indeed a model $M$ of ``$\textsf{AD}_\mathbb{R} + \Theta$ is regular". This contradicts $(\dag)$.

\subsection{FRAMEWORK FOR THE CORE MODEL INDUCTION}
\label{framework}
This section, consisting of several subsections, develops some terminology and framework for the core model induction. The first subsection gives a brief summary of the theory of hod mice developed in \cite{ATHM}. In the next three subsections, we briefly introduce the notions of $\F$-premice, strategy premice, and $(\Theta)$-$g$-organized $\F$-premice developed in \cite{trang2013}. For a full development of these concepts as well as proofs of lemmas stated below, the reader should consult \cite{trang2013}. These subsections summarize the theory and results in \cite{trang2013} to make the paper self-contained. The reader who wishes to see the main argument can skip them on the first read, and go back when needed. The next subsection discusses the $S$-constructions, which allow us to translate hybrid mice over a set $a$ to hybrid mice over a set $b$ where $a$ and $b$ are closely related. The last subsection defines core model induction operators, which are operators that we construct during the course of the core model induction in this paper.
\subsubsection{A BRIEF INTRODUCTION TO HOD MICE}
\label{briefHodMice} 
In this paper, a hod premouse $\P$ is one defined as in \cite{ATHM}. The reader is advised to consult \cite{ATHM} for basic results and notations concerning hod premice and mice. Let us mention some basic first-order properties of a hod premouse $\P$. There are an ordinal $\lambda^\P$ and sequences $\langle(\P(\alpha),\Sigma^\P_\alpha) \ | \ \alpha < \lambda^\P\rangle$ and $\langle \delta^\P_\alpha \ | \ \alpha \leq \lambda^\P  \rangle$ such that 
\begin{enumerate}
\item $\langle \delta^\P_\alpha \ | \ \alpha \leq \lambda^\P  \rangle$ is increasing and continuous and if $\alpha$ is a successor ordinal then $\P \vDash \delta^\P_\alpha$ is Woodin;
\item $\P(0) = Lp_\omega(\P|\delta_0)^\P$; for $\alpha < \lambda^\P$, $\P(\alpha+1) = (Lp_\omega^{\Sigma^\P_\alpha}(\P|\delta_\alpha))^\P$; for limit $\alpha\leq \lambda^\P$, $\P(\alpha) = (Lp_\omega^{\oplus_{\beta<\alpha}\Sigma^\P_\beta}(\P|\delta_\alpha))^\P$;
\item $\P \vDash \Sigma^\P_\alpha$ is a $(\omega,o(\P),o(\P))$\footnote{This just means $\Sigma^\P_\alpha$ acts on all stacks of $\omega$-maximal, normal trees in $\P$.}-strategy for $\P(\alpha)$ with hull condensation;
\item if $\alpha < \beta < \lambda^\P$ then $\Sigma^\P_\beta$ extends $\Sigma^\P_\alpha$.
\end{enumerate}
We will write $\delta^\P$ for $\delta^\P_{\lambda^\P}$ and $\Sigma^\P=\oplus_{\beta<\lambda^\P}\Sigma^\P_{\beta}$. Note that $\P(0)$ is a pure extender model. Suppose $\P$ and $\Q$ are two hod premice. Then $\P\trianglelefteq_{hod}\Q$\index{$\trianglelefteq_{hod}$} if there is $\a\leq\l^\Q$ such that $\P=\Q(\a)$. We say then that $\P$ is a \textit{hod initial segment} of $\Q$. $(\P,\Sigma)$ is a \textit{hod pair} if $\P$ is a hod premouse and $\Sigma$ is a strategy for $\P$ (acting on countable stacks of countable normal trees) such that $\Sigma^\P \subseteq \Sigma$ and this fact is preserved under $\Sigma$-iterations. Typically, we will construct hod pairs $(\P,\Sigma)$ such that $\Sigma$ has hull condensation, branch condensation, and is $\Gamma$-fullness preserving for some pointclass $\Gamma$. 

Suppose $(\Q,\Sigma)$ is a hod pair such that $\Sigma$ has hull condensation. $\P$ is a $(\Q,\Sigma)$-hod premouse if there are ordinal $\lambda^\P$ and sequences $\langle(\P(\alpha),\Sigma^\P_\alpha) \ | \ \alpha < \lambda^\P\rangle$ and $\langle \delta^\P_\alpha \ | \ \alpha \leq \lambda^\P  \rangle$ such that 
\begin{enumerate}
\item $\langle \delta^\P_\alpha \ | \ \alpha \leq \lambda^\P  \rangle$ is increasing and continuous and if $\alpha$ is a successor ordinal then $\P \vDash \delta^\P_\alpha$ is Woodin;
\item $\P(0) = Lp^{\Sigma}_\omega(\P|\delta_0)^\P$ (so $\P(0)$ is a $\Sigma$-premouse built over $\Q$); for $\alpha < \lambda^\P$, $\P(\alpha+1) = (Lp_\omega^{\Sigma\oplus\Sigma^\P_\alpha}(\P|\delta_\alpha))^\P$; for limit $\alpha\leq \lambda^\P$, $\P(\alpha) = (Lp_\omega^{\oplus_{\beta<\alpha}\Sigma^\P_\beta}(\P|\delta_\alpha))^\P$;
\item $\P \vDash \Sigma\cap \P$ is a $(\omega,o(\P),o(\P))$\-strategy for $\Q$ with hull condensation;
\item $\P \vDash \Sigma^\P_\alpha$ is a $(\omega,o(\P),o(\P))$\-strategy for $\P(\alpha)$ with hull condensation;
\item if $\alpha < \beta < \lambda^\P$ then $\Sigma^\P_\beta$ extends $\Sigma^\P_\alpha$.
\end{enumerate}
Inside $\P$, the strategies $\Sigma^\P_\alpha$ act on stacks above $\Q$ and every $\Sigma^P_\alpha$ iterate is a $\Sigma$-premouse. Again, we write $\delta^\P$ for $\delta^\P_{\lambda^\P}$ and $\Sigma^\P=\oplus_{\beta<\lambda^\P}\Sigma^\P_{\beta}$. $(\P,\Lambda)$ is a $(\Q,\Sigma)$-hod pair if $\P$ is a $(\Q,\Sigma)$-hod premouse and $\Lambda$ is a strategy for $\P$ such that $\Sigma^P\subseteq \Lambda$ and this fact is preserved under $\Lambda$-iterations. The reader should consult \cite{ATHM} for the definition of $B(\Q,\Sigma)$, and $I(\Q,\Sigma)$. Roughly speaking, $B(\Q,\Sigma)$ is the collection of all hod pairs which are strict hod initial segments of a $\Sigma$-iterate of $\Q$ and $I(\Q,\Sigma)$ is the collection of all $\Sigma$-iterates of $\Sigma$. In the case $\lambda^\Q$ is limit, $\Gamma(\Q,\Sigma)$ is the collection of $A\subseteq \mathbb{R}$ such that $A$ is Wadge reducible to some $\Psi$ for which there is some $\R$ such that $(\R,\Psi)\in B(\Q,\Sigma)$. See \cite{ATHM} for the definition of $\Gamma(\Q,\Sigma)$ in the case $\lambda^\Q$ is a successor ordinal.

\cite{ATHM} constructs under $\textsf{AD}^+$ and the hypothesis that there are no models of ``$\textsf{AD}_\mathbb{R}+\Theta$ is regular"  hod pairs that are fullness preserving, positional, commuting, and have branch condensation (see \cite{ATHM} for a full discussion of these notions). Such hod pairs are particularly important for our computation as they are points in the direct limit system giving rise to \textrm{HOD} of $\textsf{AD}^+$ models. Under $\sf{AD}^+$, for hod pairs $(\M_\Sigma, \Sigma)$, if $\Sigma$ is a strategy with branch condensation and $\VT$ is a stack on $\M_\Sigma$ with last model $\N$, $\Sigma_{\N, \VT}$ is independent of $\VT$. Therefore, later on we will omit the subscript $\VT$ from $\Sigma_{N, \VT}$ whenever $\Sigma$ is a strategy with branch condensation and $\M_\Sigma$ is a hod mouse. In a core model induction, we don't quite have, at the moment $(\M_\Sigma,\Sigma)$ is constructed, an $\textsf{AD}^+$-model $M$ such that $(\M_\Sigma,\Sigma)\in M$ but we do know that every $(\R,\Lambda)\in B(\M_\Sigma,\Sigma)$ belongs to such a model. We then can show (using our hypothesis) that $(\M_\Sigma,\Sigma)$ belongs to an $\textsf{AD}^+$-model.

\begin{definition}[Hod pair below $\kappa$]\label{hodpairbelowomega2} $(\P, \Sigma)$ is a hod pair below $\kappa$ if $\P\in V$, $|\P|^V\leq \kappa$, $\Sigma$ is a $(\omega_1,\omega_1)$ in $V[G]$\footnote{Technically, this should be a $(k,\omega_1,\omega_1)$-strategy, where $k$ is the degree of soundness of $\P$. But we suppress this parameter throughout our paper.}-strategy with branch condensation, and is commuting, positional, and $\Omega$-fullness preserving, and for all $(\Q,\Lambda)\in B(\Q,\Sigma)$, $\Lambda\rest HC \in \Omega$. Furthermore, $\Sigma\rest V\in V$.
\end{definition}

\subsubsection{$\F$-PREMICE}
\begin{definition}
\label{model}
Let $\mathcal{L}_0$ be the language of set theory expanded by unary predicate
symbols $\dot{E}, \dot{B}, \dot{S}$, and constant symbols $\dot{a}$, 
$\dot{\param}$. Let 
$\Ll_0^-=\Ll_0\cut\{\dot{E},\dot{B}\}$.

Let $a$ be transitive. Let $\varrho:a\to\rank(a)$ be the rank function. 
We write $\ahat=\trancl(\{(a,\varrho)\})$. Let $\param\in\J_1(\ahat)$.

A \textbf{$\J$-structure over $a$ (with parameter $\param$) (for $\Ll_0$)} 
is a structure $\M$ for $\Ll_0$ such that $a^\M=a$, ($\param^\M=\param$), and 
there is
$\lambda\in[1,\Ord)$ such that $|\M|=\J_\lambda^{S^\M}(\ahat)$.

Here we also let $\l(\M)$ denote 
$\lambda$, the \textbf{length} of $\M$, and let $\ahat^\M$ denote $\ahat$.

For $\alpha\in[1,\lambda]$ let $\M_\alpha=\J_\alpha^{S^\M}(\ahat)$.
We say that $\M$ is \textbf{acceptable} iff for each 
$\alpha<\lambda$ and $\tau<\OR(\M_\alpha)$, if 
\[ 
\pow(\tau^{<\om}\cross\ahat^{<\om})\inter\M_\alpha\neq\pow(\tau^{<\om}
\cross\ahat^{<\om})\inter\M_{\alpha+1},\]
then there is a surjection 
$\tau^{<\om}\cross\ahat^{<\om}\to\M_\alpha$ in $\M_{\alpha+1}$.

A \textbf{$\J$-structure (for $\Ll_0$)} is a $\J$-structure over $a$, for some 
$a$.
\end{definition}

As all $\J$-structures we consider will be for $\Ll_0$, we will omit the 
phrase ``for $\Ll_0$''. We also often omit the phrase ``with parameter 
$\param$''. Note that if $\M$ is a $\J$-structure over $a$ then 
$|\M|$ is transtive and rud-closed, $\hat{a}\in M$ and $\OR\inter 
M=\rank(M)$. This last point is because we construct from $\hat{a}$ instead of 
$a$.

$\Fop$-premice will be $\J$-structures of the following form.

\begin{definition}\label{dfn:model}
A \textbf{$\J$-model over $a$ (with parameter $\param$)} 
is an acceptable
$\J$-structure over $a$ (with parameter $\param$), of the form
\[ \M = (M; E, B, S, a, \param) \]
where $\dot{E}^\mathcal{M}=E$, etc, and letting $\lambda=\l(\M)$, the following 
hold.
\begin{enumerate}
 \item $\M$ is amenable.
 \item $S = \langle S_\xi \ | \ \xi\in[1,\lambda) \rangle$
is a sequence of $\J$-models over $a$ (with parameter $\param$).
\item For each $\xi\in[1,\lambda)$, $\dot{S}^{S_\xi} = S\rest \xi$ and $\M_\xi = |S_\xi|$.
\item\label{item:extender} Suppose $E\neq\emptyset$. Then $B=\emptyset$ and 
there is an extender $F$ over $\M$ which is
$\hat{a}\times\gamma$-complete for all $\gamma < \crit(F)$ and such that 
the premouse axioms \cite[Definition 2.2.1]{wilson2012contributions} hold for $(\M,F)$, and
$E$ codes $\tilde{F}\un\{G\}$ where: (i) $\tilde{F}\sub M$ is the 
amenable code for $F$ (as in \cite{steel2010outline}); and (ii) if $F$ is not 
type 2 then $G=\emptyset$, and otherwise $G$ is the ``longest'' non-type Z 
proper segment of $F$ in $\M$.\footnote{We use $G$ explicitly, instead of the 
code $\gamma^\M$ used for $G$ in \cite[Section 2]{FSIT}, because $G$ does not depend 
on which (if there is any) wellorder of $\M$ we use. This ensures that certain 
pure mouse operators are forgetful.}
\qedhere
\end{enumerate}
\end{definition}

Our notion of a ``$\J$-model over $a$" is a bit different from the notion of ``model with parameter $a$" in \cite{CMI} or \cite[Definition 2.1.1]{wilson2012contributions} in that we build into our notion some fine structure and we do not have the predicate $l$ used in \cite[Definition 2.1.1]{wilson2012contributions}. Note that with notation as above, if $\lambda$ is a successor ordinal then 
$M=J(S^\M_{\lambda-1})$, and otherwise, 
$M=\bigcup_{\alpha<\lambda}|S_\alpha|$.
The predicate $\dot{B}$ will be used to code extra information (like a (partial) branch of a tree in $M$).

\begin{definition}
\label{SomeNotations}
Let $\M$ be a $\J$-model over $a$ (with parameter $\param$). Let $E^\M$ denote $\dot{E}^\M$, 
etc. Let $\lambda=\l(\M)$, $S^\M_0=a$, $S^\M_\lambda=\M$, and
$\M|\xi=S^\M_\xi$ for all $\xi\leq\lambda$.
An \textbf{(initial) segment} of $\M$ is just a structure of 
the form $\M|\xi$ for some $\xi\in[1,\lambda]$. We write $\P\ins\M$ iff $\P$ is 
a segment of $\M$, and $\P\pins\M$ iff $\P\ins\M$ and $\P\neq\M$.
Let $\M||\xi$ be the structure having the same universe
and predicates as $\M|\xi$, except that $E^{\M||\xi}=\emptyset$.
We say that $\M$ is \textbf{$E$-active} iff
$E^\M\neq\emptyset$, and \textbf{$B$-active} iff
$B^\M\neq\emptyset$. \textbf{Active} means either
$E$-active or $B$-active; \textbf{$E$-passive} means not
$E$-active; \textbf{$B$-passive} means not $B$-active; and \textbf{passive}
means not active.

Given a $\J$-model $\M_1$ over $b$ and a $\J$-model $\M_2$ over
$\M_1$, we write $\M_2\downarrow b$ for the $\J$-model $\M$ over $b$,
such that $\M$ is ``$\M_1\conc\M_2$''. That is,
$|\M|=|\M_2|$, $a^\M=b$, $E^\M=E^{\M_2}$, $B^\M=B^{\M_2}$,
and $\P\pins\M$ iff $\P\ins\M_1$ or there is $\Q\pins\M_2$ such that 
$\P=\Q\downarrow b$, when such an $\M$ exists. Existence depends on whether the $\J$-structure $\M$ is acceptable.
\end{definition}

In the following, the variable $i$ should be interpreted as 
follows. When $i=0$, we ignore history, and so $\P$ is treated as a coarse 
object when determining $\Fop(0,\P)$. When $i=1$ we respect the history 
(given it exists).

\begin{definition}\label{operatorWithParama}
An \textbf{operator $\Fop$ with domain $D$} is a function with domain 
$D$, such that for some cone $C=C_\Fop$, possibly self-wellordered (sword)\footnote{$C$ is a cone if there are a cardinal $\kappa$ and a transitive set $a\in H_\kappa$ such that $C$ is the set of $b\in H_\kappa$ such that $a\in L_1(b)$; $a$ is called the base of the cone. A set $a$ is self-wellordered if there is a well-ordering of $a$ in $L_1(a)$. A set $C$ is a self-wellordered cone if $C$ is the restriction of a cone $C'$ to its own self-wellordered elements}, $D$ is the set of 
pairs $(i,X)$ such that either:
\begin{itemize}
 \item $i=0$ and $X\in C$, or
 \item $i=1$ and $X$ is a $\J$-model over $X_1\in C$,
\end{itemize}
and for each $(i,X)\in D$, $\Fop(i,X)$ is a $\J$-model over $X$ 
such that for each $\P\ins \Fop(i,X)$, $\P$ is fully 
sound. (Note that $\P$ is a $\J$-model over $X$, so soundness is in this 
sense.)

Let $\Fop,D$ be as above. We say $\Fop$ is \textbf{forgetful} iff 
$\Fop(0,X)=\Fop(1,X)$ whenever $(0,X),(1,X)\in D$, and whenever $X$ is a 
$\J$-model over $X_1$, and $X_1$ is a $\J$-model over $X_2\in C$, we have 
$\Fop(1,X)=\Fop(1,X\downarrow X_2)$. 
Otherwise we say $\Fop$ is \textbf{historical}. Even when $\Fop$ is historical, 
we often just write $\Fop(X)$ instead of $\Fop(i,X)$ when the nature of $\Fop$ is clear from the context.
We say $\Fop$ is \textbf{basic} iff for all 
$(i,X)\in D$ and $\P\ins\Fop(i,X)$, we have $E^\P=\emptyset$.
We say $\Fop$ is \textbf{projecting} iff for all $(i,X)\in D$, we have 
$\rho_\om^{\Fop(i,X)}=X$.
\end{definition}

Here are some illustrations. Strategy 
operators (to be explained in more detail later) are basic, and as usually 
defined, projecting and historical. Suppose we have an iteration strategy $\Sigma$ and we want to build a $\J$-model $\N$ (over some $a$) that codes a fragment of $\Sigma$ via its predicate $\dot{B}$. We feed $\Sigma$ into $\N$ by always providing $b = \Sigma(\T)$, for the $<$-$\N$-least tree $\T$ for which this information is required. So given a reasonably closed level $\P\lhd \N$, the choice of which tree $\T$ should be processed next will usually depend on the information regarding $\Sigma$ already encoded in $\P$ (its history). Using an operator $\Fop$ to build $\N$, then $\Fop(i,\P)$ will be a structure extending $\P$ and over which $b = \Sigma(\T)$ is encoded. The variable $i$ should be interpreted as follows. When $i = 1$, we respect the history of $\P$ when selecting $\T$. When $i = 0$ we ignore history when selecting $\T$ . The operator $\Fop(X)=X^\#$ is forgetful 
and projecting, and not basic; here $\Fop(X) = \Fop(0,X)$.

\begin{definition}
For any $P$ and any ordinal $\alpha\geq 1$, the operator $\Jop_\alpha(\ \cdot\ ;P)$ 
is defined as follows.\footnote{The ``$\mathsf{m}$'' is 
for 
``model''.} For $X$ such that $P\in\J_1(\hat{X})$, let
$\Jop_{\alpha}(X;P)$ be the $\J$-model $\M$ over $X$, with parameter $P$, such 
that
$|\M|=\J_\alpha(\hat{X})$ and for each $\beta\in[1,\alpha]$, $\M|\beta$ is 
passive. Clearly $\Jop_\alpha(\ \cdot\ ;P)$ is basic and forgetful.
If $P=\emptyset$ or we wish to supress $P$, we just 
write $\Jop_\alpha(\ \cdot\ )$.

\end{definition}

\begin{definition}[Potential $\Fop$-premouse, 
$\C_\Fop$]\label{potentialJpremouse}
Let $\Fop$ be an operator with domain $D$ of self-wellordered sets. Let $b\in C_\Fop$, so there is a well-ordering of $b$ in $L_1[b]$. A \textbf{potential
$\Fop$-premouse over $b$} is an acceptable $\J$-model $\M$ over $b$
such that there is an ordinal $\iota>0$ and an increasing, closed sequence
$\langle \zeta_\alpha \rangle_{\alpha\leq \iota}$ of ordinals such that for each
$\alpha\leq \iota$, we have:
\begin{enumerate}
\item $0=\zeta_0\leq\zeta_\alpha\leq\zeta_\iota=l(\M)$ (so $\M|\zeta_0=b$ and
$\M|\zeta_\iota=\M$).
\item If $1<\iota$ then $\M|\zeta_1=\Fop(0,b)$.
\item If $1=\iota$ then $\M\ins\Fop(0,b)$.
\item If $1<\alpha+1<\iota$ then
$\M|\zeta_{\alpha+1}=\Fop(1,\M|\zeta_\alpha)\downarrow b$.
\item If $1<\alpha+1=\iota$, then $\M\ins
\Fop(1,\M|\zeta_\alpha)\downarrow b$.
\item Suppose $\alpha$ is a limit. Then $\M|\zeta_\alpha$ is $B$-passive, and 
if $E$-active, then $\crit(E^{\M|\zeta_\alpha})>\rank(b)$.
\end{enumerate}
We say that $\M$ is \textbf{($\Fop$-)whole} iff $\iota$ is a limit or else,
$\iota=\alpha+1$ and $\M = \Fop(\M|\zeta_\alpha)\downarrow b$.

A \textbf{(potential) $\Fop$-premouse} is a (potential) $\Fop$-premouse over 
$b$, for some $b$.
\end{definition}
\begin{definition}
Let $\Fop$ be an operator and $b\in C_\Fop$. Let $\N$ be a whole
$\Fop$-premouse over $b$. A \textbf{potential continuing $\Fop$-premouse over 
$\N$} is a $\J$-model $\M$ over $\N$ such that $\M\downarrow b$ is a potential 
$\Fop$-premouse over $b$. (Therefore $\N$ is a whole strong cutpoint of $\M$.)

We say that $\M$ (as above) is \textbf{whole} iff $\M\downarrow 
b$ is whole.

A \textbf{(potential) continuing $\Fop$-premouse} is a (potential) continuing 
$\Fop$-premouse over $b$, for some $b$.
\end{definition}

\begin{definition}
\label{Lps}
$\Lp^\Fop(a)$ denotes the stack of all 
countably $\Fop$-iterable $\Fop$-premice $\M$ over $a$ such that $\M$ is fully sound 
and 
projects to $a$.\footnote{Countable substructures of $\M$ are $(\omega,\omega_1+1)$-$\Fop$-iterable, i.e. all iterates are $\Fop$-premice. See \cite[Section 2]{trang2013} for more details on $\Fop$-iterability.}

Let $\N$ be a whole $\Fop$-premouse over $b$, for $b\in C_\Fop$. Then $\Lp^\Fop_+(\N)$ denotes the stack 
of all countably $\Fop$-iterable (above $o(\N)$) continuing $\Fop$-premice $\M$ over $\N$ such 
that $\M\downarrow b$ is fully sound and projects to $\N$.

We say that $\Fop$ is
\textbf{uniformly $\Sigma_1$} iff there are $\Sigma_1$ formulas $\varphi_1$ and 
$\varphi_2$ in $\Ll_0^-$ such that whenever $\M$ is a (continuing) 
$\Fop$-premouse, 
then
the set of whole proper segments of $\M$ is defined over $\M$ by $\varphi_1$ 
($\varphi_2$). For 
such an operator $\Fop$, let $\varphi^\Fop_\wh$ denote the least such 
$\varphi_1$.
\end{definition}
\begin{definition}[Mouse operator]\label{MouseOperator}
Let $Y$ be a projecting, uniformly $\Sigma_1$ operator.
A \textbf{$Y$-mouse operator $\Fop$ with domain $D$}
is an operator with domain $D$ such for each $(0,X)\in D$, 
$\Fop(0,X)\pins\Lp^Y(X)$, and for each $(1,X)\in 
D$, $\Fop(1,X)\pins\Lp^Y_+(X)$.\footnote{This restricts the usual notion defined in \cite{CMI}.} (So any $Y$-mouse 
operator is an operator.) A $Y$-mouse operator $\Fop$ is called \textbf{first order} if there are formulas $\varphi_1$ and $\varphi_2$ in the language of $Y$-premice such that $\Fop(0,X)$ ($\Fop(1,X)$) is the first $\M\lhd \textrm{Lp}^Y(X)$ ($\textrm{Lp}^Y_+(X)$) satisfying $\varphi_1$ ($\varphi_2$).

A \textbf{mouse operator} is a $\Jop_1$-mouse operator.
\end{definition}

We can then define $\Fop$-solidity, the $L^{\Fop}[\es]$-construction etc. as usual (see \cite{trang2013} for more details). We now define the kind of condensation that mouse operators need to satisfy to ensure the $L^{\Fop}[\es]$ converges.

\begin{definition}
Let $\M_1,\M_2$ be $k$-sound $\J$-models over $a_1,a_2$ and 
let $\pi:\M_1\to\M_2$. Then $\pi$
is \textbf{(weakly, nearly) $k$-good} iff $\pi\rest a_1=\id$, $\pi(a_1)=a_2$,\ and $\pi$ is 
a (weak, near)
$k$-embedding (as in \cite{FSIT}).\end{definition}

\begin{definition}\label{DropDownSequence}
Given a $\J$-model $\N$ over $a$, and $\M\pins\N$ such that $\M$ is fully 
sound, the 
\textbf{$\M$-drop-down sequence} of $\N$ is the sequence of
pairs $\left<(\Q_n,m_n)\right>_{n<k}$ of maximal length such that $\Q_0=\M$ and 
$m_0=\om$ and for each $n+1<k$:
\begin{enumerate}
 \item $\M\pins\Q_{n+1}\ins\N$ and $\Q_n\ins\Q_{n+1}$,
 \item every proper segment of $\Q_{n+1}$ is fully sound,
 \item $\rho_{m_n}(\Q_n)$ is an $a$-cardinal of $\Q_{n+1}$,
 \item $0<m_{n+1}<\om$,
 \item $\Q_{n+1}$ is $(m_{n+1}-1)$-sound,
 \item 
$\rho_{m_{n+1}}(\Q_{n+1})<\rho_{m_n}(\Q_n)\leq\rho_{m_{n+1}-1}(\Q_{n+1})$. 
 \qedhere
\end{enumerate}
\end{definition}

\begin{definition}\label{dfn:Fop_con}
Let $\Fop$ be an operator and let $C$ be some class of $E$-active 
$\Fop$-premice. Let $b$ 
be transitive. A \textbf{($C$-certified) $L^\Fop[\es,b]$-construction} is a 
sequence $\left<\N_\alpha\right>_{\alpha\leq\lambda}$ with the 
following properties. We omit the phrase ``over $b$''.

We have $\N_0=b$ and $\N_1=\Fop(0,b)$.

Let $\alpha\in(0,\lambda]$. Then $\N_\alpha$ is an $\Fop$-premouse, and 
if $\alpha$ is a limit then $\N_\alpha$ is the lim inf of the $\N_\beta$ 
for $\beta<\alpha$. Now suppose 
that $\alpha<\lambda$.
Then either:
\begin{itemize}
\item $\N_\alpha$ is passive and is a limit of whole proper segments and
$\N_{\alpha+1}=(\N_\alpha,G)$ for some extender $G$ (with $\N_{\alpha+1}\in 
C$); or
\item\label{item:apply_Fop} $\N_\alpha$ is $\om$-$\Fop$-solid. Let $\M_\alpha=\core_\om(\N_\alpha)$.
Let $\M$ be the largest whole segment of $\M_\alpha$. So either $\M_\alpha=\M$ or 
$\M_\alpha\downarrow\M\ins\Fop_1(\M)$.
Let $\N\ins\Fop_1(\M)$ be least such that either $\N=\Fop_1(\M)$ or for some $k+1<\om$, 
$(\N\downarrow b,k+1)$ is 
on the $\M_\alpha$-drop-down sequence of $\N\downarrow b$. Then $\N_{\alpha+1}=\N\downarrow b$. (Note 
$\M_\alpha\pins\N_{\alpha+1}$.)\qedhere
\end{itemize}
\end{definition}

\begin{definition}\label{dfn:condenses_coarsely}
 Let $Y$ be an operator. We say that $Y$ 
\textbf{condenses coarsely} iff 
for all $i\in\{0,1\}$ and $(i,\bar{X}),(i,X)\in\dom(Y)$, and all $\J$-models 
$\M^+$ over $\bar{X}$, if $\pi:\M^+\to Y_i(X)$ is fully elementary, fixes the parameters in the definition of $Y$, then 
\begin{enumerate}
 \item if $i=0$ then $\M^+\ins Y_0(\bar{X})$; and
 \item if $i=1$ and $X$ is a sound whole $Y$-premouse, then $\M^+\ins 
Y_1(\bar{X})$.\qedhere
\end{enumerate}
\end{definition}

\begin{definition}\label{dfn:condenses_finely}
Let $Y$ be a projecting, uniformly $\Sigma_1$ operator. We say that $Y$ 
\textbf{condenses finely} iff $Y$ condenses 
coarsely and we have the following. Let $k<\om$. Let $\M^*$ 
be a $Y$-premouse over $a$, with a largest whole proper segment $\M$, 
such that $\M^+=\M^*\downarrow\M$ is sound and $\rho_{k+1}(\M^+)=\M$. Let 
$\P^*,\abar,\P,\P^+$ be likewise. Let $\N$ be a sound whole $Y$-premouse over 
$\abar$. Let
$G\sub\Coll(\om,\P\un\N)$ be $V$-generic. Let $\N^+,\pi,\sigma\in V[G]$,
with $\N^+$ a sound $\J$-model over $\N$ such that
$\N^*=\N^+\downarrow\abar$ is defined (i.e. acceptable). Suppose $\pi:\N^*\to\M^*$ is such that
$\pi(\N)=\M$ and
either:
\begin{enumerate}
 \item $\M^*$ is $k$-sound and $\N^*=\core_{k+1}(\M^*)$; or
 \item $(\N^*,k+1)$ is in the 
$\N$-dropdown sequence of $\N^*$, and likewise $(\P^*,k+1),\P$, and 
either:
\begin{enumerate}
\item\label{item:condense_k-good} $\pi$ is $k$-good, or

\item\label{item:condense_fully} $\pi$ is fully elementary, or

\item\label{item:condense_realize} $\pi$ is a weak $k$-embedding, 
$\sigma:\P^*\to\N^*$ is 
$k$-good, $\sigma(\P)=\N$ and $\pi\com\sigma\in V$ is a near $k$-embedding.
\end{enumerate}
\end{enumerate}
Then $\N^+\ins Y_1(\N)$.

We say that $Y$ \textbf{almost condenses finely} iff $\N^+\ins Y_1(\N)$ whenever the hypotheses 
above hold with $\N^+,\pi,\sigma\in V$.
\end{definition}

In fact, the two notions above are equivalent.

\begin{lemma}\label{lem:almost_condenses_finely}
Let $Y$ be an operator on a cone with base in $\HC$. Suppose that $Y$ almost condenses 
finely. Then $Y$ condenses finely.
\end{lemma}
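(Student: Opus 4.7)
The plan is a forcing absoluteness reduction: any putative failure of condenses finely in $V[G]$ yields, via a countable elementary submodel, a failure of almost condenses finely in $V$. I first handle the easier first clause of the hypothesis, where $\N^* = \core_{k+1}(\M^*)$. Since $\M^* \in V$, the $(k{+}1)$st core is computed in $V$, so $\N^* \in V$; hence $\N^+ \in V$, as it is the unique continuing $\J$-structure over $\N$ with $\N^+ \downarrow \bar a = \N^*$. Almost condenses finely applied in $V$ directly gives $\N^+ \triangleleft Y_1(\N)$.

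For the second case (the dropdown configuration, with its three subcases (a)--(c) on the embedding $\pi$), suppose for contradiction that $(\N^+, \pi, \sigma) \in V[G]$ satisfies the hypothesis but $\N^+ \ntriangleleft Y_1(\N)$. Fix names $\dot\N^+, \dot\pi, \dot\sigma \in V$ and a condition $q \in G$ forcing both the case hypotheses and the failure of the conclusion. Pick a countable $X \prec H_\theta^V$, for $\theta$ large, containing $\M^*, \P^*, \N, Y_1(\N), \mathrm{Col}(\om, \P \cup \N), q, \dot\N^+, \dot\pi, \dot\sigma$, and let $\pi_X \colon \bar X \to X$ be the uncollapse, with $\bar X$ countable in $V$ and $\bar{\,\cdot\,} := \pi_X^{-1}$. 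Since $\bar X$ is countable in $V$, pick an $\bar X$-generic $\bar G$ below $\bar q$. By the forcing theorem inside $\bar X$ and elementarity of $\pi_X$, the interpretations $\tilde\N^+, \tilde\pi, \tilde\sigma$ of the names satisfy, in $\bar X[\bar G]$, the hypotheses of condenses finely for parameters $\bar\M^*, \bar\P^*, \bar\N$, together with $\tilde\N^+ \ntriangleleft \overline{Y_1(\N)}$.

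Since $\bar X[\bar G]$ is countable in $V$, the witnesses $\tilde\N^+, \tilde\pi, \tilde\sigma$ all lie in $V$. The hypotheses of condenses finely are first-order conditions on the fixed structures and maps, so they persist from $\bar X[\bar G]$ to $V$. Applying almost condenses finely in $V$ then yields $\tilde\N^+ \triangleleft Y_1(\bar\N)^V$. Because $Y_1(\N)$ was placed in $X$ and $Y$ is uniformly $\Sigma_1$, the pulled-back object $\overline{Y_1(\N)}$ agrees with $Y_1(\bar\N)^V$ at least up to the ordinal height of $\tilde\N^+$. Hence $\tilde\N^+ \triangleleft \overline{Y_1(\N)}$ is a $\Delta_0$ fact about fixed structures in $V$, and therefore holds in $\bar X[\bar G]$ as well, contradicting our choice of $\bar G$. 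The main technical obstacle is precisely this last absoluteness step --- confirming that $Y_1(\bar\N)$ really matches on the two sides, equivalently that passing $Y_1(\N)$ through $\pi_X^{-1}$ reproduces $V$'s $Y_1(\bar\N)$ on the relevant initial segment. This is where the uniform $\Sigma_1$ definability of $Y$ (built into the standing hypothesis) plays its essential role, by pinning down the defining property of initial segments of $Y_1(\bar\N)$ in a way that is absolute between $\bar X$, $\bar X[\bar G]$, and $V$.
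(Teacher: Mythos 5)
Your overall strategy---reflect a putative counterexample into a countable elementary hull, realize the names with a filter generic over the collapsed hull (available in $V$ since the hull is countable), and invoke almost fine condensation in $V$---is the standard route to this lemma (the paper itself gives no proof, deferring to the reference). But there is a genuine gap exactly where you yourself flag ``the main technical obstacle,'' and the tool you invoke there is the wrong one. After applying almost fine condensation in $V$ you get $\tilde{\N}^+\ins Y_1(\bar{\N})^V$, while the forced statement gives $\tilde{\N}^+\nins \overline{Y_1(\N)}$, where $\overline{Y_1(\N)}=\pi_X^{-1}(Y_1(\N))$. Uniform $\Sigma_1$-ness of $Y$ only concerns definability of the whole proper segments \emph{inside} a given $Y$-premouse; it says nothing about how $Y_1$ computed in the hull compares with $Y_1$ computed in $V$, so it cannot identify $\overline{Y_1(\N)}$ with $Y_1(\bar{\N})^V$. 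What does relate them is coarse condensation (part of the standing hypothesis): applied to $\pi_X\rest\overline{Y_1(\N)}:\overline{Y_1(\N)}\to Y_1(\N)$ it yields $\overline{Y_1(\N)}\ins Y_1(\bar{\N})^V$---but only $\ins$, not $=$. Hence $\tilde{\N}^+$ and $\overline{Y_1(\N)}$ are comparable initial segments of $Y_1(\bar{\N})^V$, and your contradiction evaporates in the case $\overline{Y_1(\N)}\pins\tilde{\N}^+$. You need a further step ruling this out, e.g.\ upgrading the coarse condensation of the fully elementary collapse of the whole structure $Y_1(\N)$ to equality (in the spirit of Lemma \ref{lem:strong_condenses_coarsely}(b)), or arguing that the condition $q$ bounds the ordinal height of $\dot{\N}^+$ appropriately.

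Two smaller points. First, ``the hypotheses of condenses finely are first-order conditions on the fixed structures and maps'' is not accurate: being a $Y$-premouse is not first-order, so verifying that $\bar{\M}^*,\bar{\P}^*,\bar{\N}$ are $Y$-premice of the required kind in $V$ again needs coarse condensation, applied segment by segment, not mere elementarity of $\pi_X$. Second, in your first case you place $\N^+$ in $V$ and then ``directly'' apply almost fine condensation, but its hypothesis requires $\pi\in V$ as well, and your $\pi$ lives only in $V[G]$; you should either note that clause (1) of the definition imposes nothing on $\pi$ beyond $\pi(\N)=\M$ and exhibit such a map in $V$ (the core embedding, after checking it sends $\N$ to $\M$), or simply fold this case into the reflection argument.
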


We end this section with the following lemma (proved in Section 2 of \cite{trang2013}), which states that the $L^{\Fop}[\es]$-construction (relative to some class of background extenders) runs smoothly for a certain class of operators. In the following, if $(\N,G)\in C$, then $G$ is backgrounded as in \cite{FSIT} or as in \cite{CMIP} (we additionally demand that the structure $N$ in \cite[Definition 1.1]{CMIP} is closed under $\Fop$).

\begin{lemma}\label{lem:le_converges}
Let $\Fop$ be a projecting, uniformly $\Sigma_1$ operator which condenses 
finely. Suppose $\Fop$ is defined on a cone with bases in $\rm{HC}$. Let $\CC=\left<\N_\alpha\right>_{\alpha\leq\lambda}$ be the ($C$-certified) 
$L^\Fop[\es,b]$-construction for $b\in C_\Fop$.
Then (a) $\N_\lambda$ is $0$-$\Fop$-solid (i.e., is an 
$\Fop$-premouse).

Now suppose 
that $\N_\lambda$ is $k$-$\Fop$-solid.

Suppose that for a club of countable elementary $\pi:\M\to\core_k(\N_\lambda)$, 
there is an $\Fop$-putative, $(k,\om_1,\om_1+1)$-iteration 
strategy $\Sigma$ for $\M$, such that every tree $\Tt$ via 
$\Sigma$ is $(\pi,\CC)$-realizable.\footnote{See \cite[Section 2]{trang2013} for a precise definition of $(\pi,\CC)$-realizability. Roughly speaking this means that models along the tree $\T$ are embedded into the $\N_\alpha$'s.}

Then (b) $\N_\lambda$ is $(k+1)$-$\Fop$-solid.
\end{lemma}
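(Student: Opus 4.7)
The plan is to follow the classical convergence/solidity pattern of Mitchell--Steel for $L[E]$-constructions \cite{FSIT}, adapted to the hybrid setting of $\Fop$-premice. For part (a), I would argue by induction on $\alpha \leq \lambda$ that each $\N_\alpha$ is a (not necessarily fully iterable) $\Fop$-premouse over $b$. Limit stages are immediate from the lim-inf clause of \rdef{dfn:Fop_con} together with the cofinality of whole proper segments. At an extender-adding successor step, one checks the premouse axioms for $(\N_\alpha, G)$ exactly as in the pure extender case, noting that the $\Fop$-premouse requirements add nothing new at an $E$-active level. At the step in which $\Fop_1$ is applied, the key points are that the core operation preserves $\Fop$-premousehood --- which is the first place coarse condensation of $\Fop$ is invoked --- and that $\Fop_1(\M)$ is by definition a continuing $\Fop$-premouse over the largest whole segment $\M$ of $\core_\omega(\N_\alpha)$, so the concatenation $\N_{\alpha+1}\downarrow b$ satisfies the structural clauses of \rdef{potentialJpremouse}.

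For part (b), the goal is the solidity and universality of $p_{k+1}(\core_{k+1}(\N_\lambda))$ together with the existence of the standard solidity witnesses. I would execute the Mitchell--Steel solidity argument in its phalanx form: suppose a witness is missing, form the phalanx built from the offending bad hull, and compare it against the construction $\CC$. On the phalanx side, the club of countable $\pi: \M \to \core_k(\N_\lambda)$ provided by the hypothesis, together with the $(\pi, \CC)$-realizable $\Fop$-putative strategy $\Sigma$, yields an iteration strategy for $\M$ that is also compatible with $\CC$ via the ambient realizations into the $\N_\alpha$'s. A standard reflection argument forces the comparison to terminate, and analyzing the drop pattern on each side delivers the desired contradiction. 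Universality of $p_{k+1}$ is handled by the same template.

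The main obstacle, and the reason that fine condensation is hypothesized rather than merely coarse condensation, is the need to guarantee that every intermediate structure generated during copying and comparison is again an $\Fop$-premouse and that the copy maps carry the elementarity required to invoke condensation at the right level. Each time an iterate of $\M$ or a resurrected level of $\CC$ is formed, the $\Fop$-layers above the relevant cutpoints must agree with those prescribed by $\Fop$; the three clauses of \rdef{dfn:condenses_finely} --- covering $k$-good, fully elementary, and realization-factoring embeddings --- are tailored to precisely the three kinds of copy map that arise in a Mitchell--Steel style comparison. When a copied structure lives only in a forcing extension, \rlem{lem:almost_condenses_finely} lets us reduce to almost-condensation in $V$. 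Everything else is bookkeeping: tracking drops, choosing the correct flavor of elementarity for each copy map, and verifying the premouse axioms at resurrection steps.
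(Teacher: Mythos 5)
The paper does not prove this lemma: it is quoted verbatim from the framework of \cite{trang2013}, with the text explicitly stating that it is ``proved in Section 2 of \cite{trang2013}.'' There is therefore no in-paper argument to compare against. That said, your outline is the standard one that the cited source adapts --- induction through the construction for (a), with coarse condensation handling the $\core_\omega$/$\Fop_1$ steps, and a Mitchell--Steel phalanx comparison against $\CC$ for solidity and universality in (b), with the $(\pi,\CC)$-realizable strategy supplying iterability on the phalanx side and the three clauses of \rdef{dfn:condenses_finely} matched to the three kinds of copy maps. You have correctly identified why fine (rather than merely coarse) condensation is hypothesized and where \rlem{lem:almost_condenses_finely} enters. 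Be aware, though, that what you call ``bookkeeping'' is where most of the actual work in such arguments lives (verifying that the realization maps commute with resurrection, that drops on the two sides of the comparison line up, and that the copied structures remain $\Fop$-premice after each ultrapower), so as written this is a correct plan rather than a complete proof.
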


\begin{lemma}\label{lem:strong_condenses_coarsely}
Let $Y,\Fop$ be uniformly $\Sigma_1$ operators defined on a cone over some $\her_\kappa$, with bases in $\HC$.\footnote{We also say ``operator over $\her_\kappa$ with bases in $\HC$" for short.} Suppose that $Y$ 
condenses finely. Suppose that $\Fop$ is a whole continuing $Y$-mouse operator. Then  
$\Fop$ 
condenses finely. 
\end{lemma}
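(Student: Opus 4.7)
The plan is to reduce fine condensation of $\Fop$ to that of $Y$, exploiting the fact that every $\Fop$-premouse canonically carries the structure of a $Y$-premouse: the underlying $\J$-hierarchy, the extender predicate $\dot{E}$, and the branch predicate $\dot{B}$ agree, and the $\Fop$-whole proper segments form a subset of the $Y$-whole proper segments, since each application of $\Fop$ produces a segment of $\Lp^Y_+$ through which $Y$ has already been iterated to completion. By Lemma \ref{lem:almost_condenses_finely}, it suffices to show that $\Fop$ almost condenses finely, so I would work entirely in $V$.

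Fix data $\M^*,\M,\M^+,\P^*,\P,\P^+,\N,\N^*,\N^+,\pi,\sigma$ as in Definition \ref{dfn:condenses_finely} for $\Fop$. First I would reinterpret every $\Fop$-premouse here as a $Y$-premouse and check that the hypotheses of fine condensation for $Y$ still apply: $\N$ is a sound whole $Y$-premouse (since $\Fop$-wholeness implies $Y$-wholeness), the dropdown hypothesis lifts since drops at levels above $\M$ inside $\Fop_1(\M)$ are in particular drops in the $Y$-sense, and the required properties of $\pi,\sigma$ (being $k$-good, weakly elementary, or factoring through $\sigma$) transfer unchanged. Applying fine condensation of $Y$ then yields $\N^+\ins Y_1(\N)$, giving that $\N^+$ is a $Y$-premouse over $\N$, sound and projecting to $\N$. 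To upgrade to $\N^+\ins \Fop_1(\N)$, I would use the uniform $\Sigma_1$ formula $\varphi^\Fop_\wh$ isolating the $\Fop$-whole proper segments of any continuing $\Fop$-premouse: since $\M^+\ins \Fop_1(\M)$, and $\Fop_1(\M)$ is the least $Y$-segment above $\M$ inside $\Lp^Y_+(\M)$ at which $\varphi^\Fop_\wh$ first becomes true, the assertion ``$\M^+\ins \Fop_1(\M)$'' is a first-order statement about $\M^+$ over $\M$ that $\pi$ pulls back to $\N^+$ over $\N$, placing $\N^+$ inside $\Fop_1(\N)$. The iterability needed for $\N^+$ to sit inside $\Lp^Y_+(\N)$ rather than only inside $Y_1(\N)$ is inherited from $\M^+$ through $\pi$ by realization arguments of the sort used in Lemma \ref{lem:le_converges}.

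The main obstacle is bookkeeping the fine-structural hypotheses: $\pi$ is not assumed fully elementary, only $k$-good, a weak $k$-embedding admitting a $\sigma$-factoring, or the relevant core embedding, and the alignment between $\Fop$-whole and $Y$-whole segments must be checked at dropdown levels rather than only at the topmost whole levels. I would handle this by invoking downward preservation of $\Sigma_1$ statements under near $k$-embeddings together with the standard $Q$-structure analysis for identifying drops, so that the uniformly $\Sigma_1$ definition of $\Fop$-wholeness and the $Y$-premouse structure of $\Lp^Y_+$ are both preserved in the relevant directions. Once these preservation facts are in place, the reduction to fine condensation of $Y$ is routine.
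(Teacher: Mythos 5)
The paper does not actually reproduce a proof of this lemma (it is imported from \cite{trang2013}), so I can only judge your argument on its own terms; unfortunately its central step does not work. You propose to regard $\M^*,\N^*$ as $Y$-premice, apply fine condensation of $Y$ once, and conclude $\N^+\ins Y_1(\N)$. But the hypotheses of Definition \ref{dfn:condenses_finely} for $Y$ require $\M$ to be the \emph{largest $Y$-whole} proper segment of $\M^*$ and require the top structure to be sound with $\rho_{k+1}=\M$ over that segment (or to sit in the relevant dropdown sequence over it). In your situation $\M$ is only the largest $\Fop$-whole proper segment, and since $\Fop_1(\M)$ is an initial segment of $\Lp^Y_+(\M)$ obtained by stacking many applications of $Y$, the part of $\M^*$ above $\M$ generally contains many $Y$-whole levels; the largest $Y$-whole proper segment is some $\M_0$ with $\M\pins\M_0\pins\M^*$, the pair $(\M^*,\M)$ does not satisfy the ``largest whole segment'' hypothesis for $Y$, and the pair $(\M^*,\M_0)$ fails the projectum hypothesis because $\M^*$ still projects down to $o(\M)<o(\M_0)$. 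Moreover the conclusion you claim is self-defeating: since $Y_1(\N)\ins\Fop_1(\N)$ (the first $Y$-block of $\Fop_1(\N)$ is exactly $Y_1(\N)$), if you really had $\N^+\ins Y_1(\N)$ you would be finished and your ``upgrade'' step would be vacuous. In fact $\N^+$ is typically much longer than $Y_1(\N)$, so that conclusion is simply false, which shows the one-shot application of $Y$'s fine condensation cannot be the right reduction.

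What the argument actually requires is a block-by-block analysis: the $Y$-whole levels cut $\N^*$ above $\N$ into blocks, each a single application of $Y$; the non-top blocks are fully sound proper segments and must be certified by condensation of $Y$ applied to them individually (after checking that the corresponding levels of $\M^*$ exist and that suitable maps between corresponding blocks are induced by $\pi$ --- not automatic, since $\pi$ is only a weak $k$-embedding and may be discontinuous at $o(\M_0)$, so $\M_0$ need not even lie in $\rg(\pi)$), while only the topmost block is a candidate for the fine-condensation clause, via the dropdown-sequence alternative rather than the soundness alternative. One must then separately pull back countable $Y$-iterability through $\pi$ to place $\N^+$ inside the stack $\Lp^Y_+(\N)$, and only at that point compare $\N^+$ with $\Fop_1(\N)$. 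Your appeal to the uniformly $\Sigma_1$ formula $\varphi^\Fop_\wh$ belongs to this last comparison, but note that this formula identifies $\Fop$-whole segments correctly only inside structures already known to be $\Fop$-premice, so it cannot be invoked before the block analysis is complete. I recommend working through the proof in \cite{trang2013} to see how these steps are organized.
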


The following lemma gives a stronger condensation property than fine condensation in certain circumstances. So if $\Fop$ satisfies the hypothesis of Lemma \ref{lem:strong_condenses_coarsely} (particularly, if $\Fop$ is one of the operators constructed in our core model induction) then the $L^\Fop[\es]$-construction converges by Lemma \ref{lem:le_converges}. 

\begin{lemma}\label{lem:strong_condenses_coarsely}
Let $Y,\Fop$ be uniformly $\Sigma_1$ operators with bases in $\HC$. Suppose that $Y$ 
condenses finely. Suppose that $\Fop$ is a whole continuing $Y$-mouse operator. Then (a) 
$\Fop$ 
condenses finely. Moreover, (b) let 
$\M$ be an $\Fop$-whole $\Fop$-premouse. Let $\pi:\N\to\M$ be fully elementary with $a^\N\in 
C_\Fop$. Then $\N$ is an 
$\Fop$-whole 
$\Fop$-premouse. So regarding $\Fop$, the conclusion of \ref{dfn:condenses_coarsely} may be 
modified by 
replacing ``$\ins$'' with ``$=$''.
\end{lemma}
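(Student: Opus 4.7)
The plan is to deduce the fine condensation of $\Fop$ from the fine condensation of $Y$ via two ingredients: the elementarity of the embedding, and the transfer of countable $Y$-iterability along the embedding. For part (a), I will verify the conditions of Definition \ref{dfn:condenses_finely} for $\Fop$. So let $\pi : \N^+ \to \M^+$ be one of the embeddings in the conclusion of that definition, where $\M^+$ is obtained from $\Fop$ applied to appropriate inputs. First I would use the fine condensation of $Y$ applied to every proper segment of $\M^+$ that is actually a $Y$-premouse (or continuing $Y$-premouse) level: since the predicate of $\M^+$ coded by $\dot{B}$ adds something only at the top, all proper levels of $\N^+$ obtained by pulling back via $\pi$ are $Y$-premice over $a^{\N^+}$ by applying the fine condensation of $Y$ to each such level. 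Hence $\N^+$ has the correct $Y$-premouse structure below its top level.

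Next I would check soundness, projection, and the drop-down structure: these are preserved by $\pi$ because they are first-order (or $\Sigma_1$) properties, and the hypotheses in Definition \ref{dfn:condenses_finely} explicitly ensure the appropriate degree of elementarity (full, $k$-good, near $k$-embedding with a realization, etc.). The main obstacle is countable iterability of $\N^+$. I would argue by a copying construction: a countable elementary substructure $\bar{\N}^+$ of $\N^+$ can be mapped into $\M^+$ via the composition with $\pi$; since $\M^+ \ins \Lp^Y_+(\cdot)$ is countably $Y$-iterable, a strategy for $\bar{\N}^+$ is obtained by copying up through the composite map and pulling back the strategy on the copied tree. A little care is required because the copied tree must itself be a tree on a $Y$-premouse and the branches chosen must produce $Y$-premice as models; this follows from the $Y$-iterability of $\M^+$ and the fact that $Y$ condenses finely (so the copied tree is in fact a tree on $Y$-premice with all models correct).

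Having established that $\N^+$ is a countably $Y$-iterable, fully sound continuing $Y$-premouse over $a^{\N^+}$ that projects as required, $\N^+$ is eligible to appear in $\Lp^Y_+(a^{\N^+})$. Since $\Fop$ is uniformly $\Sigma_1$, the formula $\varphi^\Fop_\wh$ picks out $\Fop_i(a^{\N^+})$ as a specific segment of $\Lp^Y_+(a^{\N^+})$; because $\M^+ = \Fop_i(\cdot)$ satisfies $\varphi^\Fop_\wh$ and $\pi$ is at least $\Sigma_1$-elementary, $\N^+$ satisfies $\varphi^\Fop_\wh$ too, and in particular $\N^+ \ins \Fop_i(a^{\N^+})$. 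This yields fine condensation of $\Fop$.

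For part (b), I would use the same transfer argument in the special case where $\M$ is already $\Fop$-whole and $\pi : \N \to \M$ is fully elementary. By the argument in the previous paragraph, $\N$ is a countably $Y$-iterable sound $Y$-premouse over $a^\N$, and moreover satisfies the $\Sigma_1$ ``wholeness'' formula inherited by full elementarity from $\M$. By uniform $\Sigma_1$-definability of $\Fop$ on whole inputs, the unique $Y$-premouse over the largest whole proper segment of $\N$ satisfying $\varphi^\Fop_\wh$ is $\Fop_1$ applied there, and it agrees with $\N$. Thus $\N$ is $\Fop$-whole (equivalently, the ``$\ins$'' in Definition \ref{dfn:condenses_coarsely} improves to ``$=$'' for $\Fop$). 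The principal technical obstacle throughout is the iterability transfer; all remaining ingredients are first-order consequences of elementarity combined with the fine condensation hypothesis on $Y$.
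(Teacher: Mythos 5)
The paper does not actually prove this lemma: it is stated as part of a summary of the operator-mouse framework and the proof is deferred wholesale to \cite{trang2013}, so there is no in-paper argument to compare against. Your reconstruction follows the route one would expect (and the one the cited source takes): since $\Fop$ is a $Y$-mouse operator, each value of $\Fop$ sits inside $\Lp^Y_+$ of its input, so fine condensation for $\Fop$ reduces to (i) showing the collapsed structure $\N^+$ is a sound, countably $Y$-iterable continuing $Y$-premouse over $\N$ — via level-by-level use of the fine condensation of $Y$ together with an iterability transfer by copying through $\pi$ into $\M^+$ — and then (ii) using the uniform $\Sigma_1$ definability of $\Fop$ to locate $\N^+$ inside $\Fop_1(\N)$ rather than merely inside $\Lp^Y_+(\N)$. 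That is the correct decomposition, and part (b) is the same argument run with full elementarity so that the defining formula transfers both ways.

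Two points you gloss over deserve explicit mention. First, in Definition \ref{dfn:condenses_finely} the data $\N^+,\pi,\sigma$ may live in a generic extension $V[G]$; your copying argument should be run after reducing to the case $\N^+,\pi,\sigma\in V$ via Lemma \ref{lem:almost_condenses_finely}, and in the remaining case 2(c) the composite used for copying is only a weak $k$-embedding, so you need the precise notion of countable $Y$-iterability of $\M^+$ to tolerate that. Second, step (ii) needs an argument that $\N^+$ does not \emph{overshoot}, i.e.\ that $\Fop_1(\N)\npins\N^+$: since $\M$ is the largest whole proper segment of $\M^*$, the structure $\M^+$ has no whole proper segments strictly above its base, and this is what transfers via $\varphi^\Fop_\wh$ and $\Sigma_1$-elementarity to rule out $\Fop_1(\N)$ being a proper segment of $\N^+$; saying only that "$\N^+$ satisfies $\varphi^\Fop_\wh$" elides this comparison of the two initial segments of the stack $\Lp^Y_+(\N)$. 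Neither point is a wrong turn — both are standard and repairable — so the proposal is correct in outline and matches the intended proof.
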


\begin{remark}
In the context of the core model induction of this paper (and elsewhere), we often construct mouse operators $\Fop$ defined over some $\her_\kappa$ with base $a \notin \HC$. So given an $\Fop$-premouse $\N$, $\pi:\N^*\rightarrow \N$ elementary, and $\N^*$ countable, $\N^*$ may not be an $\Fop$ premouse. We have to make some changes for the theory above to work for these $\Fop$. For instance, in Lemma \ref{lem:le_converges}, with the notation as there, we can modify the hypothesis of the lemma in one of two ways:
\begin{enumerate}
\item We can either require that $a\in \M$, $|\M| = |a|$, and the $(\pi,\CC)$-realizable strategy $\Sigma$ is $(k,|a|^+,|a|^+ + 1)$-iterable.
\item We can still require $\M$ is countable but the strategy $\Sigma$ is a $(k,\omega_1,\omega_1+1)$-$\Fop^\pi$-strategy, where $\Fop^\pi$ is the $\pi$-pullback operator of $\Fop$.\footnote{For instance, if $\Fop$ corresponds to a strategy $\Sigma$, then $\Fop^\pi$ corresponds to $\Sigma^\pi$, the $\pi$-pullback of $\Sigma$. If $\Fop$ is a first order mouse operator defined by ($\varphi, a$), then $\Fop^\pi$ is defined by ($\varphi,\pi^{-1}(a)$).}
\end{enumerate}
\end{remark}

\subsubsection{STRATEGY PREMICE}
We now proceed to defining $\Sigma$-premice, for an iteration strategy
$\Sigma$. We first define the operator to be used to feed in $\Sigma$.

\begin{definition}[$\BBB(a,\Tt,b)$, $b^\N$]\label{dfn:B(M,T,b)}
Let $a,\P$ be transitive, with $\P\in\J_1(\ahat)$. Let
$\lambda>0$ and let
$\Tt$ be an iteration tree\footnote{We formally take an \emph{iteration tree} to include
the entire sequence $\left<M^\Tt_\alpha\right>_{\alpha<\lh(\Tt)}$ of models. So it is 
$\Sigma_0(\Tt,\param)$ to assert that ``$\Tt$ is an iteration tree on
$\param$''.} on $\P$, of length $\om\lambda$, with $\Tt\rest\beta\in a$ for 
all $\beta\leq\om\lambda$. Let $b\sub\om\lambda$. We
define
$\N=\BBB(a,\Tt,b)$ recursively on $\lh(\Tt)$, as the $\J$-model $\N$ over
$a$, with parameter $\P$,\footnote{$\P=M^\Tt_0$ is determined by $\Tt$.} such that:
\begin{enumerate}
 \item $\l(\N)=\lambda$,
 \item for each $\gamma\in(0,\lambda)$,
$\N|\gamma=\BBB(a,\Tt\rest\om\gamma,[0,\om\gamma]_\Tt)$,
 \item $B^\N$ is the set of ordinals $\OR(a)+\gamma$ such that $\gamma\in
b$,
\item $E^\N=\emptyset$.
\end{enumerate}
We also write $b^\N=b$.
\end{definition}

It is easy to see that every initial segment of $\N$ is sound, so $\N$ is 
acceptable and is indeed a $\J$-model (not just a $\J$-structure). 

In the context of a $\Sigma$-premouse $\M$ for an iteration strategy $\Sigma$,
if $\Tt$ is the $<_\M$-least tree for which $\M$ lacks instruction regarding
$\Sigma(\Tt)$,
then $\M$ will already have been instructed regarding $\Sigma(\Tt\rest\alpha)$
for all
$\alpha<\lh(\Tt)$. Therefore if $\lh(\Tt)>\om$ then
$\BBB(\M,\Tt,\Sigma(\Tt))$ codes redundant information (the branches already in
$\Tt$) before coding $\Sigma(\Tt)$. This redundancy seems to allow one to
prove slightly stronger condensation properties, given that $\Sigma$ has nice
condensation properties (see Lemma \ref{StategyCondensation}). It also
simplifies the definition.

\begin{definition}
 Let $\Sigma$ be a partial iteration strategy. Let
$C$ be a class of iteration trees, closed under
initial segment. We say that $(\Sigma,C)$ is \textbf{suitably condensing} iff
for every $\Tt\in C$ such that $\Tt$ is via $\Sigma$ and $\lh(\Tt)=\lambda+1$
for some limit $\lambda$, either (i) $\Sigma$ has hull condensation with respect
to $\Tt$, or (ii) $b^\Tt$ does not drop and $\Sigma$ has branch
condensation with respect to $\Tt$, that is, any hull $\U^\smallfrown c$ of $\T^\smallfrown b$ is according to $\Sigma$.
\end{definition}

When $C$ is the class of all iteration trees according to $\Sigma$, we simply omit it from our notation. 

\begin{definition}\label{dfn:J_premice}
 Let $\varphi$ be an $\Ll_0$-formula. Let 
$\P$
be transitive. Let $\M$ be a $\J$-model (over some $a$), with parameter $\P$. 
Let $\Tt\in\M$. We
say that $\varphi$ \textbf{selects $\Tt$ for $\M$}, and write 
$\Tt=\Tt^\M_\varphi$, iff
\begin{enumerate}[(a)]
 \item $\Tt$ is the unique 
$x\in\M$ such that $\M\sats\varphi(x)$,
 \item $\Tt$ is an iteration tree on $\P$ of
limit length,
 \item for every $\N\pins\M$, we have $\N\not\sats\varphi(\Tt)$, and
 \item for every limit $\lambda<\lh(\Tt)$, there is $\N\pins\M$ such that
$\N\sats\varphi(\Tt\rest\lambda)$.\qedhere
\end{enumerate}
\end{definition}

One instance of $\phi(\P,\T)$ is, in the case $a$ is self-wellordered, the formula ``$\T$ is the least tree on $\P$ that doesn't have a cofinal branch", where least is computed with respect to the canonical well-order of the model.

\begin{definition}[Potential $\P$-strategy-premouse, $\Sigma^\M$]
\label{PotStrPremouse}
Let $\varphi\in\Ll_0$. Let $\P,a$ be transitive with $\P\in\J_1(\ahat)$. A
\textbf{potential $\P$-strategy-premouse (over $a$, of type
$\varphi$)} is a $\J$-model $\M$ over $a$, with parameter $\P$, 
such that
the $\BBB$ operator is used to 
feed in
an iteration strategy for trees on $\P$, using the sequence of trees
naturally determined by $S^\M$ and selection by $\varphi$. We let $\Sigma^\M$ 
denote the
partial strategy coded by the predicates $B^{\M|\eta}$, for
$\eta\leq\l(\M)$.

In more detail, there is an
increasing, closed sequence of ordinals
$\left<\eta_\alpha\right>_{\alpha\leq\iota}$ with the following properties.
We will also define $\Sigma^{\M|\eta}$ for all $\eta\in[1,\l(\M)]$ and
$\Tt_\eta=\Tt^\M_\eta$ for
all $\eta\in[1,\l(\M))$.\begin{enumerate}
 \item $1=\eta_0$ and 
$\M|1=\Jop_1(a;\P)$ and 
$\Sigma^{\M|1}=\emptyset$.
 \item $\l(\M)=\eta_\iota$, so $\M|\eta_\iota=\M$.
 \item Given $\eta\leq\l(\M)$ such that $B^{\M|\eta}=\emptyset$, we set
$\Sigma^{\M|\eta}=\bigcup_{\eta'<\eta}\Sigma^{\M|\eta'}$.
\end{enumerate}

Let $\eta\in[1,\l(\M)]$. Suppose
there is $\gamma\in[1,\eta]$ and $\Tt\in\M|\gamma$ such that
$\Tt=\Tt^{\M|\gamma}_\varphi$, and $\Tt$ is via 
$\Sigma^{\M|\eta}$, but no
proper extension of $\Tt$ is via
$\Sigma^{\M|\eta}$. Taking $\gamma$ minimal such, let
$\Tt_\eta=\Tt^{\M|\gamma}_\varphi$.
Otherwise let $\Tt_\eta=\emptyset$.
\begin{enumerate}\setcounter{enumi}{3}
 \item Let $\alpha+1\leq\iota$. Suppose $\Tt_{\eta_\alpha}=\emptyset$.
Then $\eta_{\alpha+1}=\eta_\alpha+1$ and
$\M|\eta_{\alpha+1}=\Jop_1(\M|\eta_\alpha;\P)\downarrow a$.
\item Let $\alpha+1\leq\iota$.
Suppose $\Tt=\Tt_{\eta_\alpha}\neq\emptyset$.
Let $\om\lambda=\lh(\Tt)$.
Then for some $b\sub\om\lambda$, and
$\mathcal{S}=\BBB(\M|\eta_\alpha,\Tt,b)$, we have:
\begin{enumerate}
\item $\M|\eta_{\alpha+1}\ins\mathcal{S}$.
\item If $\alpha+1<\iota$ then $\M|\eta_{\alpha+1}=\mathcal{S}$.
\item If $\mathcal{S}\ins\M$ then $b$ is a $\Tt$-cofinal branch.\footnote{We allow
$\M^\Tt_b$ to be illfounded, but then
$\Tt\conc b$ is not an iteration tree, so is not continued by $\Sigma^\M$.}
\item For $\eta\in[\eta_\alpha,\l(\M)]$ such that
$\eta<\l(\mathcal{S})$,
$\Sigma^{\M|\eta}=\Sigma^{\M|\eta_\alpha}$.
\item If $\mathcal{S}\ins\M$ then
then $\Sigma^{\mathcal{S}}=\Sigma^{\M|\eta_\alpha}\un\{(\Tt,b^\mathcal{S})\}$.
\end{enumerate}
\item For each limit $\alpha\leq\iota$, $B^{\M|\eta_\alpha}=\emptyset$.\qedhere
\end{enumerate}
\end{definition}

\begin{definition}[Whole]\label{dfn:whole_strategy}
Let $\M$ be a potential $\P$-strategy-premouse of type $\varphi$. We say $\P$
is \textbf{$\varphi$-whole} (or just \textbf{whole} if $\varphi$ is fixed) iff
for every $\eta<\l(\M)$, if
$\Tt_{\eta}\neq\emptyset$ and $\Tt_\eta\neq\Tt_{\eta'}$ for all
$\eta'<\eta$, then for some $b$,
$\BBB(\M|\eta,\Tt_{\eta},b)\ins\M$.\footnote{\emph{$\varphi$-whole} depends on
$\varphi$ as the definition of $\Tt_\eta$ does.}
\end{definition}

\begin{definition}[Potential $\Sigma$-premouse]
Let $\Sigma$ be a (partial) iteration strategy for a transitive
structure $\P$.
A \textbf{potential $\Sigma$-premouse (over $a$, of type $\varphi$)}
is a
potential
$\P$-strategy premouse $\M$ (over $a$, of type $\varphi$) such that
$\Sigma^\M\sub\Sigma$.\footnote{If $\M$ is a model all of whose proper segments
are potential $\Sigma$-premice, and the rules for potential $\P$-strategy
premice
require that $B^\M$ code a $\Tt$-cofinal branch, but $\Sigma(\Tt)$ is not
defined,
then $\M$ is not a potential $\Sigma$-premouse, whatever its predicates are.}
\end{definition}

\begin{definition}
 Let $\R,\M$ be $\J$-structures for $\Ll_0$, $a=a^\R$ and $b=a^\M$. 
Suppose that $a,b$ code $\P,\Q$ respectively. Let $\pi:\R\to\M$ (or
\[ \pi:\OR(\R)\un\P\un\{\P\}\to\OR(\M)\un\Q\un\{\Q\} \]
respectively). Then $\pi$ is a
\textbf{$(\P,\Q)$-weak $0$-embedding} (resp., \textbf{$(\P,\Q)$-very weak 
$0$-embedding}) iff
$\pi(\P)=(\Q)$ and with respect to the language 
$\Ll_0$, $\pi$ is
$\Sigma_0$-elementary, and there is an $X\sub\R$ (resp., $X\sub\OR(\R)$) such 
that $X$ is cofinal in $\in^\R$ and 
$\pi$ is
$\Sigma_1$-elementary on parameters in $X\un\P\un\{\P\}$. If also $\P=\Q$ and 
$\pi\rest\P\un\{\P\}=\id$,
then we just say that $\pi$ is a \textbf{$\P$-weak $0$-embedding} (resp., 
\textbf{$\P$-very weak $0$-embedding}).
\end{definition}

Note that, for $(\P,\Q)$-weak $0$-embeddings, we can in fact take 
$X\sub\OR(\R)$. The following lemma is again proved in \cite[Section 3]{trang2013}.

\begin{lemma}\label{StategyCondensation}
 Let $\M$ be a $\P$-strategy premouse over $a$, of type $\varphi$, where $\varphi$ is $\Sigma_1$. Let
$\R$
be a $\J$-structure for $\Ll_0$ and $a'=a^\R$, and let $\P'$ be a 
transitive structure coded
by $a'$.

\begin{enumerate}
\item\label{item:piRtoM} Suppose $\pi:\R\to\M$ is a partial map such that 
$\pi(\P')=\P$ and either:
\begin{enumerate}[(a)]
 \item\label{item:piRtoMP'Pweak} $\pi$ is a $(\P',\P)$-weak $0$-embedding, or
 \item\label{item:piRtoMveryweak} $\pi$ is a $(\P',\P)$-very weak 
$0$-embedding, and if $E^\R\neq\emptyset$ then item \ref{item:extender} of 
\ref{dfn:model} holds for 
$E^\R$.
\end{enumerate}
Then $\R$ is a $\P'$-strategy premouse of type $\varphi$.
Moreover, if $\pi\rest\{\P'\}\un\P'=\id$ and
if $\M$ is a $\Sigma$-premouse, where
$(\Sigma,\dom(\Sigma^\M))$ is suitably condensing, then
$\R$ is also a $\Sigma$-premouse.

\item\label{item:piMtoRgen} Suppose there is $\pi:\M\to\R$ is such that 
$\pi(a,\P)=(a',\P')$ and
either
\begin{enumerate}[(a)]
\item $\pi$ is $\Sigma_2$-elementary; or
\item $\pi$ is cofinal and $\Sigma_1$-elementary, and $B^\M=\emptyset$.
\end{enumerate}
Then $\R$ is a $\P'$-strategy premouse of type $\varphi$, and $\R$ is whole iff
$\M$
is whole.

\item\label{item:piMtoRBactive} Suppose $B^\M\neq\emptyset$. Let 
$\Tt=\Tt^\M_\eta$ where
$\eta<\l(\M)$ is largest such that $\M|\eta$ is whole. Let $b=b^\M$ and
$\om\gamma=\bigcup b$. So $\M\ins\BBB(\M|\eta,\Tt,b)$.
Suppose there is $\pi:\M\to\R$ such that
$\pi(\P)=\P'$ and $\pi$ is cofinal and
$\Sigma_1$-elementary. Let $\om\gamma'=\sup\pi``\om\gamma$.
\begin{enumerate}[(a)]
\item\label{item:piMtoRBactive(a)} $\R$ is a $\P'$-strategy premouse of type 
$\varphi$ iff we have either
(i) $\om\gamma'=\lh(\pi(\Tt))$,
or (ii) $\om\gamma'<\lh(\pi(\Tt))$ and 
$b^\R=[0,\om\gamma']_{\pi(\Tt)}$.
\item\label{item:piMtoRBactive(b)} If either $b^\M\in\M$ or $\pi$ is continuous 
at $\lh(\Tt)$
then $\R$ is a $\P'$-strategy premouse of type $\varphi$.
\end{enumerate}
\end{enumerate}
\end{lemma}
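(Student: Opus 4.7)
The proof naturally divides along the three parts, and the plan is to handle each by a careful analysis of how the defining clauses of Definition \ref{PotStrPremouse} transfer under the relevant form of elementarity. Since being a $\P'$-strategy premouse of type $\varphi$ amounts to the existence of the closed sequence $\langle \eta_\alpha \rangle_{\alpha \leq \iota}$ together with the selection of trees $\Tt_\eta = \Tt^\M_\eta$ (a $\Sigma_1$ matter as $\varphi$ is $\Sigma_1$) and the coding of branches via $\BBB$, the overall strategy is: (i) transfer this sequence across $\pi$, (ii) check that the predicates $\dot{E}$ and $\dot{B}$ on $\R$ match what a $\P'$-strategy premouse demands, and (iii) invoke suitable condensation for the final ``moreover'' assertions.

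For part (\ref{item:piRtoM}), I would induct on $\l(\R)$. Passive levels transfer by $\Sigma_0$-elementarity alone. At a level where $B^{\M|\eta}$ codes a cofinal $b \subseteq \om\gamma$ through some $\Tt = \Tt^{\M|\eta}_\varphi$, the $\Sigma_0$-elementarity of $\pi$ transports the coding to $B^{\R|\bar\eta}$, yielding some $\bar b \subseteq \om\bar\gamma$, and the $\Sigma_1$-elementarity on the cofinal set $X$ guarantees that $\Ttbar = \pi^{-1}(\Tt)$ is the unique object selected by $\varphi$ in $\R|\bar\eta$. The potential-branch analysis (corresponding to the $\BBB$-condensation discussed between Definitions \ref{dfn:B(M,T,b)} and \ref{PotStrPremouse}) tells us that $\bar b$ is either a cofinal branch of $\Ttbar\rest\om\bar\gamma$ or, when $\om\bar\gamma < \lh(\Ttbar)$, is forced by the tree order to equal $[0,\om\bar\gamma]_{\Ttbar}$. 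For the ``moreover'' clause, since $\pi\rest\{\P'\}\cup\P'=\id$, we have $\pi(\Ttbar) = \Tt$, and suitable condensation for $(\Sigma,\dom(\Sigma^\M))$ implies $\bar b = \Sigma(\Ttbar)$, so $\R$ is indeed a $\Sigma$-premouse. The extra hypothesis on $E^\R$ in case \ref{item:piRtoMveryweak} is precisely what is needed to verify the premouse axioms at the topmost active level, because the very weak $0$-embedding is only $\Sigma_1$-elementary on ordinals.

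For part (\ref{item:piMtoRgen}), one pushes forward rather than pulls back. Under $\Sigma_2$-elementarity, the entire first-order condition ``$\M$ is a $\P$-strategy premouse of type $\varphi$'' transfers directly to $\R$, as it can be phrased as the existence of a sequence satisfying a $\Pi_1$-specification relative to $\varphi$. Under cofinal $\Sigma_1$-elementarity with $B^\M = \emptyset$, $\M$ is a union of whole proper segments, which push forward to whole proper segments cofinal in $\R$, so no topmost branch-block needs to be checked and the wholeness equivalence is immediate.

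Part (\ref{item:piMtoRBactive}) is the most delicate step, and I expect it to be the main obstacle, since here $\pi$ only has cofinal $\Sigma_1$-elementarity while $B^\M$ actively encodes a branch $b$ through $\Tt$. The plan is to compute $\sup \pi``\om\gamma = \om\gamma'$ and compare it to $\lh(\pi(\Tt))$. If $\om\gamma' = \lh(\pi(\Tt))$, then $\pi``b$ is cofinal in $\pi(\Tt)$ and $B^\R$ must code a $\pi(\Tt)$-cofinal branch, which identifies $b^\R$. If $\om\gamma' < \lh(\pi(\Tt))$, the only branch through an initial segment that can be coded coherently (once $B^\R$ is forced by $\pi$-images of ordinals in $b$) is $[0,\om\gamma']_{\pi(\Tt)}$; conversely this is forced by uniqueness of the tree order. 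The hypothesis $b^\M \in \M$ or continuity of $\pi$ at $\lh(\Tt)$ in (\ref{item:piMtoRBactive(b)}) guarantees one of these two alternatives automatically, yielding the clean conclusion there. The principal technical hurdle throughout is maintaining the correspondence between the external branch data encoded by $\dot B$ and the internal tree-selection mechanism through $\varphi$, especially in the cofinal-but-not-$\Sigma_2$-elementary regime of part (\ref{item:piMtoRBactive}).
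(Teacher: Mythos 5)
The paper does not actually prove this lemma — it is quoted verbatim from \cite{trang2013} (``The following lemma is again proved in [trang2013, Section 3]'') — so there is no in-paper argument to compare against; but your sketch follows precisely the route the paper's machinery is built for, namely a condensation analysis of the $\BBB(a,\Tt,b)$-blocks under partial $\Sigma_0$/$\Sigma_1$-elementary maps (pull back or push forward the $\left<\eta_\alpha\right>$-sequence, identify the tree selected by $\varphi$, decide whether the coded set is a cofinal branch or is forced to equal $[0,\om\gamma']_{\pi(\Tt)}$, and invoke suitable condensation for the ``moreover'' clause), which is exactly the content of the auxiliary $\BBB$-condensation lemma of \cite{trang2013} that this result rests on. Your proposal is correct at the level of detail given; the only imprecision worth flagging is in part (2)(b), where $B^\M=\emptyset$ does not literally make $\M$ a union of whole proper segments (the top block may be $\Jop$-padding past the last whole segment), though the argument survives since no branch is being coded at the top.
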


\begin{remark}
The preceding lemma left open the possibility that $\R$ fails to be a
$\P$-strategy premouse under certain circumstances (because $B^\R$ should
be coding a branch that has in fact already been coded at some
proper segment of $\R$, but codes some other branch instead). In the main
circumstance we are interested in, this does not arise, for a couple of reasons.
Suppose that $\Sigma$ is an iteration
strategy for $\P$ with hull condensation, $\M$ is a $\Sigma$-premouse, and
$\Lambda$ is a $\Sigma$-strategy for $\M$. Suppose $\pi:\M\to\R$ is a degree $0$
iteration embedding and $B^\M\neq\emptyset$ and $\pi$ is discontinuous at
$\lh(\Tt)$. Then \cite[Section 3]{trang2013} shows that $b^\M\in\M$. (It's not relevant whether $\pi$
itself is via $\Lambda$.) It then follows from \ref{item:piMtoRBactive(b)} of Lemma \ref{StategyCondensation} that $\R$ is a $\Sigma$-mouse.

The other reason is that, supposing $\pi:\M\to\R$ is via $\Lambda$ (so $\pi\rest \P\cup\{\P\} = \rm{id}$), then
trivially, $B^\R$ must code branches according to $\Sigma$. We
can obtain such a $\Lambda$ given that we can realize iterates of $\M$ back
into a fixed $\Sigma$-premouse (with $\P$-weak $0$-embeddings as realization
maps).
\end{remark}

\begin{definition}\label{dfn:strategy_op}
Let $\P$ be transitive and $\Sigma$ a partial iteration strategy for $\P$.
Let $\varphi\in\Ll_0$. Let $\Fop=\Fop_{\Sigma,\varphi}$ be 
the operator such that:
\begin{enumerate}
 \item $\Fop_0(a)=\Jop_1(a;\P)$, for all transitive 
$a$ such that $\P\in\J_1(\ahat)$;
 \item Let $\M$ be a sound branch-whole $\Sigma$-premouse of 
type $\varphi$. Let $\lambda=\l(\M)$ and with notation as in \ref{PotStrPremouse}, let 
$\Tt=\Tt_\lambda$. If $\Tt=\emptyset$ then $\Fop_1(\M)=\Jop_1(\M;\P)$. If 
$\Tt\neq\emptyset$ then $\Fop_1(\M)=\BBB(\M,\Tt,b)$ where $b=\Sigma(\Tt)$.
\end{enumerate}
We say that $\Fop$ is a \textbf{strategy operator}.
\end{definition}

\begin{lemma}\label{lem:FSigma_condenses}
Let $\P$ be countable and transitive. Let $\varphi$ be a formula of $\Ll_0$.
Let $\Sigma$ be a partial strategy for $\P$. Let $D_\varphi$ be the class of 
iteration trees $\Tt$ on $\P$ such that for some $\J$-model $\M$, with 
parameter $\P$, we have $\Tt=\Tt^\M_\varphi$. Suppose that $(\Sigma,D_\varphi)$ 
is suitably condensing. Then 
$\Fop_{\Sigma,\varphi}$ is uniformly $\Sigma_1$, projecting, and condenses finely.
\end{lemma}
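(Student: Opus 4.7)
The plan is to verify in turn that $\Fop = \Fop_{\Sigma,\varphi}$ is (i) uniformly $\Sigma_1$, (ii) projecting, and (iii) condenses finely. The first two follow essentially by inspection of Definitions \ref{potentialJpremouse} and \ref{dfn:strategy_op}; the substance lies in (iii), which is a near-direct application of Lemma \ref{StategyCondensation}.

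For uniform $\Sigma_1$-definability, take $\varphi^\Fop_\wh(\N)$ to assert that $\N$ is a proper segment of the form $\M|\eta_\alpha$ in the sense of Definition \ref{PotStrPremouse}. Concretely, $\N$ is whole iff $B^\N = \emptyset$ and either $\l(\N)$ is a limit, or $\N$ was obtained by a single $\Jop_1$-step over its immediate $S$-predecessor; or alternatively $B^\N$ encodes the full cofinal branch completing some $\BBB$-step initiated earlier along the $S$-sequence of $\N$. Each of these alternatives is expressible by a $\Sigma_1$ formula in $\Ll_0^-$, so $\Fop$ is uniformly $\Sigma_1$. For projection, $\Fop_1(\M)$ is either $\Jop_1(\M;\P)$ or $\BBB(\M, \Tt, b)$: in the first case every element is $\Sigma_1$-definable over $\Fop_1(\M)$ from $\M$ and a bounded ordinal parameter; in the second case the branch $b$ is coded directly by $B^{\Fop_1(\M)}$ and the rest of $\Fop_1(\M)\setminus\M$ lies in $\J_1(\M;\P)$. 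Either way $\rho_\omega^{\Fop_1(\M)} = \M$, so $\Fop$ is projecting.

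The main step is fine condensation. Fix data as in Definition \ref{dfn:condenses_finely}; I focus on the case $\M^* \pins \Fop_1(\M)$, which is the nontrivial one (the boundary case $\M^* = \Fop_1(\M)$ is handled in the same way). In each of the three subcases of the hypothesis (k-good, fully elementary, or realizable weak $k$-embedding), $\pi: \N^* \to \M^*$ extends naturally to at least a $(\P,\P)$-weak $0$-embedding fixing $\P$, since $\P$ is the fixed parameter of $\Fop$. Lemma \ref{StategyCondensation}, item \ref{item:piRtoMP'Pweak}, then gives that $\N^+$ is a $\P$-strategy premouse of type $\varphi$. The ``Moreover'' clause of the same part, together with the hypothesis that $(\Sigma, D_\varphi)$ is suitably condensing (and $\dom(\Sigma^{\Fop_1(\M)}) \sub D_\varphi$), upgrades this to: $\N^+$ is a $\Sigma$-premouse. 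Unraveling Definition \ref{dfn:strategy_op} and comparing the length of $\N^+$ with that of $\Fop_1(\N)$ then yields $\N^+ \ins \Fop_1(\N)$, as required.

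The principal technical obstacle is the $B$-active case: one must ensure that the branch coded by $B^{\N^+}$ is $\Sigma$'s value on precisely the tree currently being processed by $\varphi$ over $\N$, rather than some earlier tree whose branch was already threaded through a segment of $\N$. Because $\pi$ is $\Sigma_0$-elementary in $\Ll_0$ and the selection rule $\varphi$ is uniform across the $S$-sequence, the ``currently processed'' tree inside $\N^+$ is the $\pi$-preimage of the tree selected at the corresponding stage of $\Fop_1(\M)$; suitable condensing of $(\Sigma, D_\varphi)$ then pins down $B^{\N^+}$ to code exactly $\Sigma$'s branch on that tree, which completes the argument.
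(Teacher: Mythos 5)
The paper does not prove this lemma in-text (it is quoted from \cite[Section 3]{trang2013}), but your route is the intended one: uniform $\Sigma_1$-definability and projection by inspection of Definitions \ref{PotStrPremouse} and \ref{dfn:strategy_op}, and fine condensation by applying Lemma \ref{StategyCondensation}(1) to see that $\N^*$ is a $\Sigma$-premouse of type $\varphi$, with suitable condensation of $(\Sigma,D_\varphi)$ pinning down the branch coded by $B^{\N^+}$. The only points you gloss that deserve a line each are the reduction of the generic-extension clause of Definition \ref{dfn:condenses_finely} to the $V$-case via Lemma \ref{lem:almost_condenses_finely}, and the verification (using the uniform $\Sigma_1$-definability of wholeness together with cofinal $\Sigma_1$-elementarity of $\pi$) that $\N$ really is the largest whole proper segment of $\N^*$, so that $\N^+$ consists of a single partial $\Fop$-step and the length comparison with $\Fop_1(\N)$ makes sense.
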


\begin{definition}\label{dfn:MFsharp}
Let $a$ be transitive and let $\Fop$ be an operator. 
We say that 
\textbf{$\M_1^{\Fop,\#}(a)$ exists}
iff there is a $(0,|a|,|a|+1)$-$\Fop$-iterable, non-$1$-small 
$\Fop$-premouse over $a$. We write 
$\M_1^{\Fop,\#}(a)$ for the least such 
sound structure. For $\Sigma,\P,a,\varphi$ as in \ref{dfn:strategy_op}, we 
write 
$\M_1^{\Sigma,\varphi,\#}(a)$ for $\M_1^{\Fop_{\Sigma,\varphi},\#}(a)$.

Let $\Ll_0^+$ be the language $\Ll_0\un\{\dot{\prec},\dot{\Sigma}\}$, 
where $\dot{\prec}$ is the binary relation defined by ``$\dot{a}$ is 
self-wellordered, with ordering $\prec_{\dot{a}}$, and $\dot{\prec}$ is the 
canonical wellorder of the universe extending $\prec_{\dot{a}}$'', and 
$\dot{\Sigma}$ is the partial function defined ``$\dot{\param}$ is a transitive 
structure and the universe is a potential $\dot{\param}$-strategy premouse over 
$\dot{a}$ and $\dot{\Sigma}$ is the associated partial putative iteration 
strategy for $\dot{\param}$''. Let $\varphi_\textrm{all}(\Tt)$ be the 
$\Ll_0$-formula 
``$\Tt$ is the $\dot{\prec}$-least limit length iteration tree $\Uu$ on 
$\dot{\param}$ such that $\Uu$ is via $\dot{\Sigma}$, but no proper extension 
of $\Uu$ is via $\dot{\Sigma}$''. Then for $\Sigma,\P,a$ as in 
\ref{dfn:strategy_op}, we sometimes write $\M_1^{\Sigma,\#}(a)$ for 
$\M_1^{\Fop_{\Sigma,\varphi_{\textrm{all}}},\#}(a)$.

Let $\kappa$ be a cardinal and suppose that $\MFsharp=\M_1^{\Fop,\#}(a)$ 
exists and is $(0,\kappa^++1)$-iterable.
We write $\Lambda_\MFsharp$ for the unique $(0,\kappa^++1)$-iteration strategy 
for $\MFsharp$ (given that $\kappa$ is fixed).
\end{definition}

\begin{definition}\label{dfn:suitable}
We say that $(\Fop,\Sigma,\varphi,D,a,\param)$ is 
\textbf{suitable} iff $a$ is transitive and 
$\M_1^{\Fop,\#}(a)$ exists, where either
\begin{enumerate}
 \item\label{item:mouse_op} $\Fop$ is a projecting, uniformly $\Sigma_1$ operator, $C_\Fop$ is the 
(possibly swo'd) cone above $a$, $D$ is the set of pairs $(i,X)\in\dom(\Fop)$ such 
that either $i=0$ or $X$ is a sound whole $\Fop$-premouse,  and 
$\Sigma=\varphi=0$, or
 \item $\P,\Sigma,\varphi,D_\varphi$ are as in \ref{lem:FSigma_condenses}, $\Fop=\Fop_{\Sigma,\varphi}$, $D_\varphi\sub D$, $D$ is a class of limit length 
iteration trees on $\P$, via $\Sigma$, $\Sigma(\Tt)$ is defined for 
all $\Tt\in D$, $(\Sigma,D)$ is suitably condensing and $\P\in\J_1(\ahat)$.
\end{enumerate}
We write 
$\G_\Fop$ for the function 
with domain $C_\Fop$, such that for all $x \in C_\Fop$, $\G_\Fop(x)=\Sigma(x)$ in case (ii), and in case (i), 
$\G_\Fop(0,x)=\Fop(0,x)$ and $\G_\Fop(1,x)$ is the least $\R\ins\Fop_1(x)\downarrow a^x$ such that 
either $\R=\Fop_1(X)\downarrow a^X$ or $\R$ is unsound. 

\end{definition}

\begin{lemma}\label{lem:Sigma_N_condensation}
Let $\Fop$ be as in \ref{dfn:suitable} and $\MFsharp=\M_1^{\Fop,\#}$. Then 
$\Lambda_\MFsharp$ has branch condensation and hull condensation.
\end{lemma}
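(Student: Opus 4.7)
The plan is to show that $\Lambda_\MFsharp$ is guided by $Q$-structures, and then to read off both condensation properties from this characterization together with the fine condensation of $\Fop$ (Lemma \ref{lem:strong_condenses_coarsely}).

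Since $\MFsharp = \M_1^{\Fop,\#}(a)$ is the least non-$1$-small $\Fop$-premouse over $a$, any iteration tree $\T$ via $\Lambda_\MFsharp$ of limit length admits a $Q$-structure characterization of the correct branch: $\Lambda_\MFsharp(\T)$ is the unique cofinal branch $b$ through $\T$ such that $\Q(b,\T)$, the shortest initial segment of $\M^\T_b$ past $\delta(\T)$ that defines a counterexample to ``$\delta(\T)$ is Woodin'', is a sound $\Fop$-premouse over $\M(\T)$ which projects across $\delta(\T)$ and is $\Fop$-iterable above $\delta(\T)$. Uniqueness here is the usual comparison argument applied to two $\Fop$-iterable $\Fop$-mice sharing a common base, which requires that $\Fop$ condense finely so that coiteration remains inside the category of $\Fop$-premice.

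For hull condensation, let $\pi\colon\T'\to\T$ be a hull embedding of iteration trees on $\MFsharp$, with $\T$ via $\Lambda_\MFsharp$. I argue by induction on stages of $\T'$. Successor stages cause no trouble, since a hull embedding transports extenders and tree-predecessors faithfully. At a limit $\lambda'<\lh(\T')$ with $\lambda=\pi(\lambda')$, write $b'=[0,\lambda']_{\T'}$ and $b=[0,\lambda]_\T$. The hull embedding canonically induces an elementary map $\sigma\colon\Q(b',\T'\rest\lambda')\to\Q(b,\T\rest\lambda)$ fixing $\M(\T'\rest\lambda')$ and the relevant parameters; since $\Q(b,\T\rest\lambda)$ is $\Fop$-iterable above $\delta(\T\rest\lambda)$, the $\sigma$-pullback of its strategy is an $\Fop$-iteration strategy for $\Q(b',\T'\rest\lambda')$. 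Thus $b'$ meets the $Q$-structure-guided characterization and so equals $\Lambda_\MFsharp(\T'\rest\lambda')$ by uniqueness.

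For branch condensation, suppose $\T^\smallfrown b$ is via $\Lambda_\MFsharp$ and $\T'^\smallfrown c$ is a hull of $\T^\smallfrown b$. By the hull condensation just proved, $\T'$ is via $\Lambda_\MFsharp$. The induced $\sigma\colon\Q(c,\T')\to\Q(b,\T)$ pulls back $\Fop$-iterability, so $\Q(c,\T')$ is an $\Fop$-iterable $\Fop$-premouse over $\M(\T')$ of the appropriate form, and uniqueness forces $c=\Lambda_\MFsharp(\T')$. The main obstacle will be verifying that the $Q$-structures genuinely are $\Fop$-premice and that their pullbacks along $\sigma$ remain so; this is where the fine condensation of $\Fop$ and, in the strategy-operator case of \ref{dfn:suitable}, the hypothesis that $(\Sigma,D)$ is suitably condensing become essential. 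A secondary technicality, handled uniformly, is the case where $b$ (or $c$) drops in model or degree on a tail: then $\Q(b,\T)$ is a proper segment of $\M^\T_b$, but the $Q$-structure is designed exactly for this, and the rest of the argument goes through unchanged.
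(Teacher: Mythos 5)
The paper states this lemma without proof, deferring to \cite{trang2013}, so there is no in-paper argument to measure you against; your route --- characterize $\Lambda_\MFsharp$ as the $\Q$-structure-guided strategy and pull $\Q$-structures back along hull embeddings, using the fine condensation of $\Fop$ to stay inside the category of $\Fop$-premice --- is the standard one and is essentially what the cited source does.

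There is, however, one genuine gap. Because $\MFsharp=\M_1^{\Fop,\#}$ is non-$1$-small, it is \emph{not} true that every limit-length tree $\T$ via $\Lambda_\MFsharp$ has its branch identified by a $\Q$-structure: if $b=\Lambda_\MFsharp(\T)$ does not drop and $i^\T_b(\delta^\MFsharp)=\delta(\T)$, then $\M^\T_b\vDash$ ``$\delta(\T)$ is Woodin'' and $\Q(b,\T)$ does not exist. Such ``maximal'' stages can occur \emph{internally} in a normal tree, since the tree may be continued past $\lambda$ by an extender of $\M^\T_\lambda$ with index above $\delta(\T\rest\lambda)$ (e.g.\ the top extender). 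At such a stage your inductive step has nothing to pull back, so both the hull-condensation induction and the branch-condensation argument break down as written. The fix is standard but must be said: in the maximal case the induced map $\sigma:\M^{\T'}_{b'}\to\M^\T_b$ still exists, so $\M^{\T'}_{b'}$ is wellfounded and inherits the $\sigma$-pullback strategy, and one checks that $b'$ is the \emph{unique} cofinal branch of $\T'$ with wellfounded model and no $\Q$-structure, and that no branch with an iterable $\Q$-structure can coexist with it (compare the putative $\Q$-structure with $\M^{\T'}_{b'}$ above the cutpoint $\delta(\T')$ and use the minimality of $\MFsharp$ together with the zipper lemma). Only with both cases in hand does ``the unique branch satisfying the characterization'' actually characterize $\Lambda_\MFsharp$. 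A smaller imprecision: $\Q(b,\T)$ need not project across $\delta(\T)$; it is the least level of $\M^\T_b$ (equivalently of $\Lp^\Fop(\M(\T))$) defining a failure of Woodinness of $\delta(\T)$, which can happen with no projectum drop. The remaining ingredients you flag --- that hulls and pullbacks of $\Q$-structures remain $\Fop$-premice (fine condensation of $\Fop$, or suitable condensing of $(\Sigma,D)$ in case (2) of \ref{dfn:suitable}) and that comparison applies to $\Fop$-iterable $\Q$-structures --- are correctly identified as the points where the hypotheses of \ref{dfn:suitable} enter.
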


\subsubsection{G-ORGANIZED $\mathcal{F}$-PREMICE}\label{g_organized}
Now we give an outline of the general treatment of \cite{trang2013} on $\Fop$-premice over an arbitrary set; following the terminology of \cite{trang2013}, we will call these $g$-organized $\Fop$-premice and $\Theta$-$g$-organized $\Fop$-premice. For $(\Theta)$-$g$-organized $\Fop$-premice to be useful,
we need to assume that the following absoluteness property holds of the operator
$\Fop$. We then show that if $\Fop$ is the operator for a nice enough iteration
strategy, then it does hold. We write $\MFsharp$ for $\MFsharp_\Fop$ and fix $a, \param, \Fop, \P, C$ as in the previous subsection. In the following, $\delta^\MFsharp$ denotes the 
Woodin cardinal of $\MFsharp$. Again, the reader should see \cite{trang2013} for proofs of lemmas stated here.

\begin{definition}\label{dfn:determines}
Let $(\Fop,\Sigma,\varphi,C,a,\param)$ be suitable.
We say that $\M_1^{\F,\sharp}(a)$ \textbf{generically interprets $\F$}\footnote{In \cite{trang2013}, this notion is called $\F$ determines itself on generic extensions. In this paper, ``determines itself on generic extensions" will have a different meaning, as defined later.} iff, writing 
$\MFsharp=\M_1^{\Fop,\#}(a)$, there
are formulas $\Phi,\Psi$ in
$\Ll_0$ such that there is some $\gamma > \delta^\MFsharp$ such that 
$\MFsharp|\gamma
\vDash\Phi$ and for any non-dropping
$\Sigma_\MFsharp$-iterate $\N$ of
$\MFsharp$, via a countable iteration tree $\Tt$, any $\N$-cardinal $\delta$, 
any
$\gamma\in\Ord$ such that $\N|\gamma\models\Phi\ \&\ $``$\delta$ is
Woodin'', and any $g$ which is set-generic over $\N|\gamma$ (with $g\in V$),
then $(\N|\gamma)[g]$
is closed under $\G_\Fop$, and $\G_\Fop\rest(\N|\gamma)[g]$ is defined 
over
$(\N|\gamma)[g]$ by
$\Psi$. We say such a pair $(\Phi,\Psi)$ \textbf{generically determines 
$(\Fop,\Sigma,\varphi,C,a)$} (or just $\Fop$).

We say an operator $\Fop$ is \textbf{nice} iff for some 
$\Sigma,\varphi,C,a,\param$, 
$(\Fop,\Sigma,\varphi,C,a,\param)$ is suitable and $\M_1^{\F,\sharp}$ generically interprets $\Fop$.

Let $\P\in\HC$, let $\Sigma$ be an iteration strategy for $\P$ and let $C$ be 
the class of all limit length trees via $\Sigma$. Suppose $\M_1^{\Sigma,\#}(\P)$ 
exists, $(\Sigma,C)$ is suitably condensing. We say that $\M_1^{\Sigma,\#}(\P)$ 
\textbf{generically interprets $\Sigma$} iff  some $(\Phi,\Psi)$ generically determines
$(\Fop_{\Sigma,\varphi_\mathrm{all}},\Sigma,\varphi_\mathrm{all},C,\P)$. (Note then that
the latter is suitable.)
\end{definition}

\begin{lemma}\label{lem:determines_extend}
Let $\N,\delta$, etc, be as in \ref{dfn:determines}, except that we allow 
$\Tt$ to have uncountable length, and allow $g$ to be in a set-generic extension of $V$. Then 
$(\N|\gamma)[g]$ is closed 
under $\G_\Fop$ 
and letting $\G'$ be the interpretation of $\Psi$ over $(\N|\gamma)[g]$,
$\G'\rest C=\G_\Fop\rest(\N|\gamma)[g]$.
\end{lemma}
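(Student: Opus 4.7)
The plan is to reduce to the countable-tree, $V$-generic case of Definition \ref{dfn:determines} via a L\"owenheim--Skolem hull argument together with absoluteness of the forcing relation. I would first fix some $x \in (\N|\gamma)[g] \cap C_\Fop$ and aim to show that $\G_\Fop(x) \in (\N|\gamma)[g]$ with value computed by $\Psi$. Let $V[h]$ be a set-generic extension of $V$ containing $g$. Working in $V[h]$, take a large regular $\theta$ and an elementary substructure $H \prec V_\theta^{V[h]}$ of cardinality $\aleph_0 + |a|$ containing $\MFsharp, \Tt, \N, \gamma, \delta, g, x, h, \Phi, \Psi$, and containing $a$ pointwise. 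Let $\sigma : \bar H \to H$ be the Mostowski uncollapse, with overbars denoting preimages. Since $a$, hence $\MFsharp$, is in $H$ pointwise, $\sigma$ fixes $\MFsharp$; then $\bar\Tt$ is an iteration tree on $\MFsharp$ in $V[h]$ of length at most $\aleph_0 + |a|$ with last model $\bar\N$, and $\sigma$ is a hull embedding of $\bar\Tt$ into $\Tt$. By hull condensation of $\Sigma_\MFsharp$ (Lemma \ref{lem:Sigma_N_condensation}), $\bar\Tt$ is via $\Sigma_\MFsharp$, so $\bar\N$ is a legitimate non-dropping $\Sigma_\MFsharp$-iterate.

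The second step is to verify that the conclusion of Definition \ref{dfn:determines} applies to $\bar\N, \bar\gamma, \bar g$, notwithstanding that $\bar g \in V[h]$. Let $\bar{\mathbb{Q}} \in \bar\N|\bar\gamma$ be the forcing for which $\bar g$ is generic. In $V$, one may pick a genuine $V$-generic $g^*$ for $\bar{\mathbb{Q}}$ over $\bar\N|\bar\gamma$, available by Cohen forcing since $|\bar\N|\bar\gamma|\leq \aleph_0 + |a|$. Applying the hypothesis to $\bar\N, \bar\gamma, g^*$ yields that $(\bar\N|\bar\gamma)[g^*]$ is closed under $\G_\Fop$ and that $\Psi$ defines $\G_\Fop$ on it. But this closure/definability is equivalent to a forcing-theoretic statement about $\bar\N|\bar\gamma$ and $\bar{\mathbb{Q}}$: namely, for every $\bar{\mathbb{Q}}$-name $\tau \in \bar\N|\bar\gamma$ for an element of $C_\Fop$, there exists a $\bar{\mathbb{Q}}$-name $\eta \in \bar\N|\bar\gamma$ such that $\bar\N|\bar\gamma$ forces $\eta$ to be the $\Psi$-interpretation of $\tau$, and $\eta^{g^*} = \G_\Fop(\tau^{g^*})$. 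The former is a property of $\bar\N|\bar\gamma$ and $\bar{\mathbb{Q}}$ alone, so it holds for any generic; thus applied to $\bar g$, $(\bar\N|\bar\gamma)[\bar g]$ is closed under the $\Psi$-interpretation, which coincides with $\G_\Fop$ on the relevant inputs (using the canonical extension of $\G_\Fop$ to $V[h]$ given by hull/branch condensation, together with uniqueness of the $\Psi$-value).

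Finally, I would transfer the conclusion back up through $\sigma$. By elementarity and the definability of $\G_\Fop$ from its forcing-theoretic description, $\sigma(\G_\Fop(\bar x)) = \G_\Fop(\sigma(\bar x)) = \G_\Fop(x)$, which therefore lies in $(\N|\gamma)[g]$ and is given by the $\Psi$-interpretation at $x$; since $x$ was arbitrary, both claims of the lemma follow. The hard part is the second step: showing that the generic interpretation property, established for one $V$-generic $g^*$, survives passage to $V[h]$. The crucial points are, first, that this property really reduces to a statement about $\bar{\mathbb{Q}}$-names in $\bar\N|\bar\gamma$ (hence independent of the particular generic), and, second, that $\Sigma_\MFsharp$ extends canonically to $V[h]$ thanks to its hull/branch condensation properties (Lemma \ref{lem:Sigma_N_condensation}), ensuring that $\bar\Tt$ remains via $\Sigma_\MFsharp$ in $V[h]$.
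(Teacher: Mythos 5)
The paper itself gives no proof of this lemma (it is imported from \cite{trang2013}), so I am judging your argument on its own terms. Your opening move --- collapsing to a small hull and using hull condensation of $\Lambda_\MFsharp$ (Lemma \ref{lem:Sigma_N_condensation}) to see that $\bar{\Tt}$ is still via the strategy --- is the right idea for the ``uncountable length'' relaxation. The problems are in your handling of the ``$g$ in a set-generic extension'' relaxation. You take the hull inside $V[h]$, so $\bar{\Tt}$, $\bar{\N}$ and $\bar{\mathbb{Q}}$ need not lie in $V$ at all. Definition \ref{dfn:determines} is a hypothesis about iterates via countable trees and about generics $g\in V$; it says nothing about an iterate that only exists in $V[h]$. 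In particular you cannot ``in $V$ pick a $V$-generic $g^*$ over $\bar{\N}|\bar{\gamma}$'' when $\bar{\N}|\bar{\gamma}\notin V$, and even granting that a suitable $g^*$ exists in $V[h]$, the hypothesis does not apply to the pair $(\bar{\N},g^*)$ without a further absoluteness argument that you do not supply. The two relaxations should be decoupled: first handle uncountable $\Tt$ with $g\in V$ by hulls taken in $V$, and only then attack generic $g$ by working with the forcing relation over $\N|\gamma$ itself.

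The more serious gap is your claim that closure-plus-correctness ``is equivalent to a forcing-theoretic statement about $\bar{\mathbb{Q}}$-names in $\bar{\N}|\bar{\gamma}$, hence independent of the particular generic.'' Only half of it is. That $\bar{\N}|\bar{\gamma}$ forces $\eta$ to be the $\Psi$-interpretation of $\tau$ is indeed a statement about names; but the correctness clause $\eta^{g^*}=\G_\Fop(\tau^{g^*})$ is a statement about the particular generic. For $\tau$ a check name one can recover generic-independence from the truth lemma, but for a genuinely new $x=\tau^{\bar g}\in(\N|\gamma)[g]\setminus V$ the sets $\tau^{g^*}$ and $\tau^{\bar g}$ are simply different objects in different extensions, there is no homogeneity to appeal to for an arbitrary $\bar{\mathbb{Q}}$, and $\G_\Fop(\tau^{\bar g})$ is only even meaningful via the canonical extension of $\G_\Fop$ to $V[h]$. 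What the lemma actually requires --- and what your closing appeal to ``uniqueness of the $\Psi$-value'' merely gestures at --- is a direct verification that whatever $\Psi$ computes over $(\N|\gamma)[g]$ coincides with that canonical extension: for strategy operators, that the branch $\Psi$ selects is the unique one certified by the appropriate $Q$-structure, together with the fact that the extension of $\Sigma$ to $V[h]$ is likewise $Q$-structure guided. That verification is the real content of the lemma and is missing from your proposal.
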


We fix a nice $\Fop$, $\MFsharp$, $\Lambda_{\MFsharp} = \Lambda$, $(\Phi,\Psi)$ for the rest of the section. We define $\M_1^\Sigma$ from $\MFsharp$ in the standard way.

See \cite[Section 4]{trang2013} for a proof that if $\Sigma$ is a strategy (of a hod mouse, a suitable mouse) with branch condensation and is fullness preserving with respect to mice in some sufficiently closed, determined pointclass $\Gamma$ or if $\Sigma$ is the unique strategy of a sound ($Y$)-mouse for some mouse operator $Y$ that is projecting, uniformly $\Sigma_1$, $\M_1^{Y,\sharp}$ generically interprets $Y$, and condenses finely then $M_1^{\F,\sharp}$ generically interprets $\F$.

Now we are ready to define $g$-organized $\F$-premice.

\begin{definition}[Sargsyan, \cite{ATHM}]\label{genGenTree}
Let $M$ be a transitive structure. Let $\dot{G}$ be the name for the generic 
$G\subseteq\Coll(\omega,M)$ and let $\dot{x}_{\dot{G}}$ be the
canonical name for the real coding $\{(n,m) \ | \ G(n) \in G(m)\}$, where we
identify $G$ with $\bigcup G$. The 
\textbf{tree
$\Tt_M$ for making $M$
generically generic}, is the iteration tree $\Tt$ on $\MFsharp$ of maximal 
length such
that:
\begin{enumerate}
\item $\Tt$ is via $\Lambda$ and is everywhere non-dropping.
\item $\Tt\rest\OR(M)+1$ is the tree given by linearly iterating the first total 
measure of $\MFsharp$ and its images.
\item Suppose $\lh(\Tt)\geq\OR(M)+2$ and let $\alpha+1\in(\OR(M),\lh(\Tt))$.
Let $\delta=\delta(\M^\Tt_\alpha)$ and let $\BB=\BB(M^\Tt_\alpha)$ be the 
extender algebra of
$M^\Tt_\alpha$ at $\delta$. Then $E^\mathcal{T}_\alpha$ is the extender $E$ with
least index in $M^\mathcal{T}_\alpha$ such that for some condition
$p\in\Coll(\omega,M)$, $p \Vdash$``There is a $\mathbb{B}$-axiom induced
by $E$ which fails for $\dot{x}_{\dot{G}}$''.
\end{enumerate}
Assuming that $\MFsharp$ is sufficiently iterable, then $\Tt_M$ exists and has 
successor length.
\end{definition}

Sargsyan noticed that one can feed in 
$\Fop$ into a structure $\N$ indirectly, by feeding in the branches for 
$\Tt_\M$, for various $\M\ins\N$. The operator ${^\g\Fop}$, defined below, and 
used in building g-organized $\Fop$-premice, feeds in 
branches for such $\Tt_\M$. We will also ensure that being such a structure is 
first-order - other than wellfoundedness and the correctness of the branches - 
by allowing sufficient spacing between these branches.

In the following, we let $\N^\T$ denote the last model of the tree $\T$.
\begin{definition} \label{dfn:P^Phi}Given a formula $\Phi$. Given a successor length, 
nowhere dropping tree $\Tt$ on $\MFsharp$, let
$P^\Phi(\Tt)$
be the least $P\ins\N^\Tt$ such that for some cardinal $\delta'$ of
$\N^\Tt$, we have
$\delta'<\OR(P)$ and $P\sats\Phi+$``$\delta'$ is Woodin''. Let
$\lambda=\lambda^\Phi(\Tt)$ be
least such that $P^\Phi(\Tt)\ins M^\Tt_\lambda$. Then $\delta'$ is a
cardinal of $M^\Tt_\lambda$. Let $I^\Phi=I^\Phi(\Tt)$ be
the set of limit ordinals $\leq\lambda$.
\end{definition}

We can now define the operator used for g-organization:

\begin{definition}[${^\g\Fop}$]
\label{modelOpGenerala2}
We define the forgetful operator ${^\g\Fop}$, for $\Fop$ such that $\M_1^{\F,\sharp}$ generically interprets $\F$ as witnessed by a pair $(\Phi,\Psi)$.
Let $b$ be a transitive structure with $\MFsharp\in\J_1(\hat{b})$.

We define $\M={^\g\Fop}(b)$, a $\J$-model over $b$, with parameter $\MFsharp$, 
as follows.

For each $\alpha\leq\l(\M)$, $E^{\M|\alpha}=\emptyset$.

Let $\alpha_0$ be the least $\alpha$ such that $\J_\alpha(b)\sats\ZF$. Then 
$\M|\alpha_0=\Jop_{\alpha_0}(b;\MFsharp)$.

Let $\Tt=\Tt_{\M|\alpha_0}$.
We use the notation $P^\Phi=P^\Phi(\Tt)$, $\lambda=\lambda^\Phi(\Tt)$, etc, 
as in \ref{dfn:P^Phi}.
The predicates
$B^{\M|\gamma}$ for $\alpha_0<\gamma\leq\l(\M)$ will be used to feed in branches
for $\Tt\rest\lambda+1$, and therefore $P^\Phi$ itself, into $\M$. Let 
$\left<\xi_\alpha\right>_{\alpha<\iota}$ enumerate 
$I^\Phi\un\{0\}$.

There is a closed, increasing sequence of ordinals
$\left<\eta_\alpha\right>_{\alpha\leq\iota}$ and an increasing sequence of
ordinals $\left<\gamma_\alpha\right>_{\alpha\leq\iota}$ such that:

\begin{enumerate}
 \item $\eta_1=\gamma_0=\eta_0=\alpha_0$.
 \item For each $\alpha<\iota$, 
$\eta_\alpha\leq\gamma_\alpha\leq\eta_{\alpha+1}$, and if
$\alpha>0$ then $\gamma_\alpha<\eta_{\alpha+1}$.
 \item $\gamma_\iota=\l(\M)$, so $\M=\M|\gamma_\iota$.
 \item\label{item:gamma_alpha} Let $\alpha\in(0,\iota)$. Then $\gamma_\alpha$ 
is the least ordinal of
the form $\eta_\alpha+\tau$ such that
$\Tt\rest\xi_\alpha\in\J_\tau(\M|\eta_\alpha)$ and if 
$\alpha>\alpha_0$ then
$\delta(\Tt\rest\xi_\alpha)<\tau$. (We explain below why such $\tau$ 
exists.) And $\M|\gamma_\alpha= \Jop_\tau(\M|\eta_\alpha;\MFsharp)\downarrow b$.
 \item Let $\alpha\in(0,\iota)$. Then
$\M|\eta_{\alpha+1}=\BBB(\M|\gamma_\alpha,\Tt\rest\xi_\alpha,
\Lambda(\Tt\rest\xi_\alpha))\downarrow b$.
 \item Let $\alpha<\iota$ be a limit. Then $\M|\eta_\alpha$ is passive.
 \item $\gamma_\iota$ is the least ordinal of the form $\eta_\iota+\tau$ such
that $\Tt\rest\lambda+1\in\J_{\eta_\iota+\tau}(\M|\eta_\iota)$ and
$\tau>\OR(M^\Tt_\lambda)$; with this $\tau$, $\M=\Jop_\tau(\M|\eta_\iota;\MFsharp)\downarrow 
b$ and furthermore, $^g\Fop(b)$ is acceptable and every strict segment of $^g\Fop(b)$ is sound.\qedhere
\end{enumerate}
\end{definition}

\begin{remark}
It's not hard to see (cf. \cite{trang2013}) $\Mbar\ins\M={^\g\Fop}(b)$, the sequences
$\left<\M|\eta_\alpha\right>_{\alpha\leq\iota}\inter\Mbar$ and
$\left<\M|\gamma_\alpha\right>_{\alpha\leq\iota}\inter\Mbar$ and
$\left<\Tt\rest\alpha\right>_{\alpha\leq\lambda+1}\inter\Mbar$ are
$\Sigma_1^{\Mbar}$ in $\Ll_0^-$, uniformly in $b$ and $\Mbar$.
\end{remark}

\begin{definition}
\label{PotJPremice2}
Let $b$ be transitive with $\MFsharp\in\J_1(\hat{b})$.
A \textbf{potential g-organized $\Fop$-premouse over $b$}
is a potential ${^\g\Fop}$-premouse over 
$b$, with parameter $\MFsharp$.
\end{definition}

\begin{lemma}\label{lem:varphi_g}
There is a formula 
$\varphi_\g$ in $\Ll_0$, such that for any transitive $b$ with 
$\MFsharp\in\J_1(\hat{b})$, and any
$\J$-structure $\M$ over $b$, $\M$ is a potential 
g-organized $\Fop$-premouse over $b$ iff $\M$ is a potential 
$\Lambda_\MFsharp$-premouse over $b$, of type $\varphi_\g$.
\end{lemma}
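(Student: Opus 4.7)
The plan is to exhibit an explicit $\Ll_0$-formula $\varphi_\g$ which, when interpreted inside a $\J$-model $\M$ over $b$ with parameter $\MFsharp$, selects exactly the restrictions $\Tt\rest \xi_\alpha$ of the generically-generic tree $\Tt = \Tt_{\M|\alpha_0}$ (with $\alpha_0$ and $\xi_\alpha$ as in Definition \ref{modelOpGenerala2}) that are processed in the construction of ${^\g\Fop}(b)$. The key observation is the remark following Definition \ref{modelOpGenerala2}: the bookkeeping sequences $\langle \M|\eta_\alpha\rangle$, $\langle \M|\gamma_\alpha\rangle$ and $\langle \Tt\rest\alpha\rangle$ are uniformly $\Sigma_1$ in $\Ll_0^-$. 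Thus the branch-feeding schedule of ${^\g\Fop}$ is recoverable first-order from the predicate structure of $\M$, without appealing to $\Lambda$.

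Concretely, I would let $\varphi_\g(x)$ assert: ``there exist an ordinal $\alpha_0$ which is least such that $\J_{\alpha_0}(\dot{a})\sats\ZF$, and ordinals $\eta\leq \l(\cdot)$ and $\xi$, such that (i) $\eta$ is the reconstructed $\eta_\alpha$ coming from the canonical sequence $\langle \eta_\beta,\gamma_\beta\rangle$ defined over the current universe as in Definition \ref{modelOpGenerala2}; (ii) $x=\Tt_{\dot{a}|\alpha_0}\rest\xi$, where $\Tt_{\dot{a}|\alpha_0}$ is defined by the extender-algebra recipe of Definition \ref{genGenTree} applied to $\dot{\param}=\MFsharp$; (iii) the spacing condition of clause~\ref{item:gamma_alpha} of Definition \ref{modelOpGenerala2} is met, i.e.\ $\Tt\rest\xi$ is coded into $\J_\tau(\cdot|\eta)$ and $\delta(\Tt\rest\xi)<\tau$; and (iv) no proper extension of $x$ has had its branch fed in by predicate $B$ at an earlier segment.'' The uniform $\Sigma_1$-definability mentioned above shows this can be written in $\Ll_0$; clauses 3 and 4 of Definition \ref{dfn:J_premice} are exactly enforced by the minimality of $\eta$ and the spacing requirement.

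For the forward implication, given a potential g-organized $\Fop$-premouse $\M$ over $b$, I would induct on $\alpha\leq\iota$ (with $\eta_\alpha,\gamma_\alpha,\xi_\alpha$ as in Definition \ref{modelOpGenerala2}) to verify that at each $\eta_{\alpha+1}$, the tree selected over $\M|\gamma_\alpha$ by $\varphi_\g$ is precisely $\Tt\rest\xi_\alpha$, and the branch $B^{\M|\eta_{\alpha+1}}$ coded via $\BBB$ is $\Lambda_\MFsharp(\Tt\rest\xi_\alpha)$. Between such stages, the spacing requirement (iii) ensures $\varphi_\g$ selects no tree, so the sequence proceeds by $\Jop_1$-steps, matching the $\gamma_\alpha$-definition. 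For the converse, given a potential $\Lambda_\MFsharp$-premouse of type $\varphi_\g$, the same induction, now driven by the uniqueness clause of Definition \ref{dfn:J_premice}(a) and the minimality clause (c), recovers the unique sequence $\langle\eta_\alpha,\gamma_\alpha\rangle$ demanded by Definition \ref{modelOpGenerala2}, and the inclusion $\Sigma^\M\subseteq\Lambda_\MFsharp$ guarantees the fed-in branches agree with those chosen by ${^\g\Fop}$.

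The main obstacle is item (ii): writing ``$x = \Tt_{\dot{a}|\alpha_0}\rest\xi$'' as a bounded formula of $\Ll_0$ over $\M$. This requires that the construction of $\Tt_{\M|\alpha_0}$ from Definition \ref{genGenTree}---which uses $\Lambda$ to choose branches at limits---can be recovered inside $\M$ from the history encoded by the predicates $B^{\M|\eta}$ at earlier segments. This is precisely what the inductive construction buys us: any proper initial segment of the tree that has already been processed has its branch recorded in $\M$'s predicates, so the next extender to use (the least inducing a failing extender-algebra axiom relative to a $\Coll(\omega,\M|\alpha_0)$-name) is itself definable in $\M$. Once this is established, the uniform $\Sigma_1$-definability remark of Definition \ref{modelOpGenerala2} collapses the whole bookkeeping into a single $\Ll_0$-formula, completing the argument.
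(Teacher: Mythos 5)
Your proposal is correct and follows essentially the route the paper intends: the lemma is quoted from \cite{trang2013}, and the remark immediately preceding it (that the sequences $\langle\M|\eta_\alpha\rangle$, $\langle\M|\gamma_\alpha\rangle$, $\langle\Tt\rest\alpha\rangle$ are uniformly $\Sigma_1$ in $\Ll_0^-$) is exactly the engine you use to write down $\varphi_\g$ and to recover $\Tt_{\M|\alpha_0}$ level-by-level from the branches already coded by the $B$-predicates. Your treatment of the selection clauses of Definition \ref{dfn:J_premice} via the spacing requirement, and of the converse via $\Sigma^\M\subseteq\Lambda_\MFsharp$, matches the intended argument.
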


\begin{lemma}\label{lem:gF_props}
${^\g\Fop}$ is projecting, uniformly $\Sigma_1$, basic, and condenses finely. \end{lemma}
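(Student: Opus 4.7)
The plan is to verify the four properties in turn. The first three are essentially by inspection of Definition \ref{modelOpGenerala2}. Being \emph{basic} is immediate: the construction of ${^\g\Fop}(b)$ stipulates $E^{\M|\alpha}=\emptyset$ for every $\alpha\leq\l(\M)$, and all feeding of information occurs through the $\dot B$ predicate via the $\BBB$ operator. \emph{Uniformly $\Sigma_1$} follows from Lemma \ref{lem:varphi_g}, which exhibits a single formula $\varphi_\g\in\Ll_0$ making g-organized $\Fop$-premice a special case of $\Lambda_\MFsharp$-premice of type $\varphi_\g$; combined with the remark after Definition \ref{modelOpGenerala2} (that the sequences $\langle\M|\eta_\alpha\rangle$, $\langle\M|\gamma_\alpha\rangle$, and $\langle\Tt\rest\alpha\rangle$ are $\Sigma_1$ in $\Ll_0^-$ uniformly in $b$), this gives the uniform $\Sigma_1$ formulas $\varphi_1,\varphi_2$ identifying whole proper segments (namely the $\M|\gamma_\alpha$). \emph{Projecting} follows from clause (4)(7) of the construction: the final stage $\gamma_\iota$ codes $\Tt\rest\lambda+1$ together with the entire $P^\Phi(\Tt)$ (which itself has $b$ in a set-generic extension by the choice of $\Tt_b$ as the tree for making $b$ generically generic), so a surjection from $b$ onto ${^\g\Fop}(b)$ is definable, yielding $\rho_\omega^{{^\g\Fop}(b)}=b$.

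The main work is fine condensation. By Lemma \ref{lem:varphi_g}, every g-organized $\Fop$-premouse is a $\Lambda_\MFsharp$-premouse of type $\varphi_\g$; and since $(\Phi,\Psi)$ witnesses niceness of $\Fop$, the tree $\Tt_b$ used to feed in branches is (via $\Lambda_\MFsharp$ and) of the form $\Tt^\M_{\varphi_\g}$ in the sense of Definition \ref{dfn:J_premice}. By Lemma \ref{lem:Sigma_N_condensation}, $\Lambda_\MFsharp$ has both branch and hull condensation, so $(\Lambda_\MFsharp,D)$ is suitably condensing where $D$ is the class of trees selected by $\varphi_\g$. Hence Lemma \ref{StategyCondensation} applies. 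Given a configuration $\pi:\N^*\to\M^*$ as in Definition \ref{dfn:condenses_finely}, the embedding $\pi$ is (in the weakest case) a weak $0$-embedding fixing $\MFsharp$, so clause \ref{item:piRtoM} of Lemma \ref{StategyCondensation} gives that $\N^*$ is a $\Lambda_\MFsharp$-premouse of type $\varphi_\g$, i.e. a g-organized $\Fop$-premouse. Combining with the fact that ${^\g\Fop}$ is forgetful and uniformly $\Sigma_1$, one then verifies $\N^+\ins {^\g\Fop}_1(\N)$ as required. The coarse condensation clause (the $i=0$ case) is handled analogously but is easier since we need not track history.

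The hard part will be verifying that the specific \emph{bookkeeping} of the ${^\g\Fop}$-construction is preserved under $\pi$ in cases (b) and (c) of Definition \ref{dfn:condenses_finely}, where $\pi$ is only a weak or near $k$-embedding. Concretely, one must check that the image of $\Tt_{\bar X}$ (the $\N^*$-internal tree for making $\bar X$ generically generic) agrees with an initial segment of $\Tt_X$, so that the stages $\eta_\alpha,\gamma_\alpha$ of $\N^*$ map into the corresponding stages of $\M^*$. This uses the uniform definability of $\Tt_b$ from $b$ and $\MFsharp$, the fact that $\pi$ fixes $\MFsharp$ (it lies below the critical point of relevant drops), and the suitable-condensation of $\Lambda_\MFsharp$ at limit stages to conclude that the branches $\Lambda_\MFsharp(\Tt_{\bar X}\rest\xi_\alpha)$ pull back correctly. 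Once this is established, the almost-fine-condensation conclusion $\N^+\ins {^\g\Fop}_1(\N)$ is obtained as above, and Lemma \ref{lem:almost_condenses_finely} upgrades this to fine condensation.
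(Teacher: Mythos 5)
Your proposal is correct and follows exactly the route the paper intends: the paper itself defers the proof to \cite{trang2013}, and the argument there is precisely your reduction via Lemma \ref{lem:varphi_g} (g-organized $\Fop$-premice are $\Lambda_\MFsharp$-premice of type $\varphi_\g$), Lemma \ref{lem:Sigma_N_condensation} (hull/branch condensation of $\Lambda_\MFsharp$, giving suitable condensation), Lemma \ref{StategyCondensation} for the condensation step, and Lemma \ref{lem:almost_condenses_finely} to upgrade to fine condensation, with basic, projecting, and uniform $\Sigma_1$-ness read off from Definition \ref{modelOpGenerala2} and the remark following it. The only imprecision is in your justification of \emph{projecting}, where the relevant point is the minimality of the $\tau$'s in clauses (4) and (7) (each level is a minimal coding of an initial segment of $\Tt$ over $b$, hence sound over $b$), rather than genericity of $b$ over $P^\Phi(\Tt)$.
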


\begin{definition}\label{dfn:whole_reorganized}
Let $\M$ be a g-organized $\Fop$-premouse over $b$. We say $\M$ is 
\textbf{$\Fop$-closed} iff $\M$ is a limit 
of ${^\g\Fop}$-whole proper segments.
\end{definition}

Because $\M_1^{\F,\sharp}$ generically interprets $\Fop$,
$\Fop$-closure ensures closure under $\G_\Fop$:

\begin{lemma}
\label{ClosedUnderJ}
Let $\M$ be an $\Fop$-closed g-organized $\Fop$-premouse over $b$. Then
$\M$ is closed under $\G_\Fop$. In fact, for any set generic extension $\M[g]$ 
of $\M$, with $g\in V$, $\M[g]$ is closed under
$\G_\Fop$ and $\G_\Fop\rest\M[g]$ is definable over $\M[g]$, via a 
formula in $\Ll_0^-$, uniformly in $\M,g$.\end{lemma}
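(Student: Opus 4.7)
The plan is to exploit the generic interpretation of $\Fop$ by $\MFsharp$ --- witnessed by the formulas $(\Phi,\Psi)$ of Definition~\ref{dfn:determines} --- together with the branches of the making-generically-generic trees that the ${^\g\Fop}$-construction records inside $\M$ via its $\BBB$-predicates. I fix $g\in V$ set-generic over $\M$ via some poset $\PP\in\M$, let $x\in\M[g]$ with a $\PP$-name $\sigma\in\M$, and write $\Lambda=\Lambda_{\MFsharp}$. Using $\Fop$-closure, first pick a ${^\g\Fop}$-whole proper segment $\M|\eta\pins\M$ with $\sigma,\PP,\MFsharp\in\M|\eta$.

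Next I apply Definition~\ref{modelOpGenerala2} to this $\M|\eta$: the ensuing block of the ${^\g\Fop}$-construction processes the tree $\T=\T_{\M|\eta}$ on $\MFsharp$ for making $\M|\eta$ generically generic, and feeds the branches $\Lambda(\T\restriction\xi_\alpha)$ for $\alpha\leq\iota$ into $\M$ through the $\BBB$-predicates of the successive $\M|\eta_{\alpha+1}$. Thus $\T$, its cofinal branches, and the level $P^\Phi(\T)$ of $\N=\M^\T_\lambda$ (Definition~\ref{dfn:P^Phi}) are all reconstructible inside $\M$. Let $\gamma'$ be the ordinal with $P^\Phi(\T)=\N|\gamma'$, so that $\N|\gamma'\sats\Phi+$``$\delta'$ is Woodin'' for the corresponding $\delta'$; the construction of $\T$ also makes $\M|\eta$ generic over $\N|\delta'$ via the extender algebra $\BB(\N)$ at $\delta'$, so let $h\in\M$ be the extender-algebra generic coded by $\M|\eta$. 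Then $(\N|\gamma')[h][g]$ is a single set-generic extension of $\N|\gamma'$ containing $x$, and since $\N$ is a nondropping $\Lambda$-iterate of $\MFsharp$ and $h,g$ are set-generic over $\N|\gamma'$ in $V$, Lemma~\ref{lem:determines_extend} yields that $(\N|\gamma')[h][g]$ is closed under $\G_\Fop$ and that $\G_\Fop$ on it is defined by $\Psi$; hence $\G_\Fop(x)\in(\N|\gamma')[h][g]\subseteq\M[g]$.

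For the uniform $\Ll_0^-$-definability of $\G_\Fop\restriction\M[g]$ over $\M[g]$, I would let $\Psi'(v,w)$ assert: ``there is a ${^\g\Fop}$-whole $\M|\eta$ such that, decoding $\T=\T_{\M|\eta}$, its $\Lambda$-branches, $\N|\gamma'=P^\Phi(\T)$, and the extender-algebra generic $h$ all from $\M|\eta$ inside $\M$, we have $v\in(\N|\gamma')[h][g]$ and $\Psi$ evaluates $\G_\Fop(v)$ to $w$ in $(\N|\gamma')[h][g]$''. The search for $\eta$ and the internal reconstruction of $\T,\N|\gamma',h$ are $\Sigma_1^{\M}$ in $\Ll_0^-$ by the remark following Definition~\ref{modelOpGenerala2} together with Lemma~\ref{lem:varphi_g}, so $\Psi'$ is $\Ll_0^-$ over $\M[g]$. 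The main obstacle will be verifying the independence-of-choice step: any two large enough witnessing $\M|\eta$ must produce the same value of $w$. This reduces to comparing the two corresponding iterates of $\MFsharp$ inside a common further generic extension of a still larger whole segment of $\M$, where both readings of $\Psi$ must agree by the coherence already built into Lemma~\ref{lem:determines_extend}.
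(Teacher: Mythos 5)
Your proposal is correct and follows essentially the same route as the proof the paper defers to (\cite{trang2013}, Lemma 4.21): use $\Fop$-closure to capture a name for $x$ in a ${^\g\Fop}$-whole segment, note that the g-organization has already fed the tree $\T_{\M|\eta}$, its $\Lambda$-branches, and hence $P^\Phi(\T)$ into $\M$, and then invoke generic interpretability (Lemma~\ref{lem:determines_extend}) in the two-step extension $(\N|\gamma')[h][g]\subseteq\M[g]$. The only remark worth making is that the ``independence-of-choice'' step you flag as the main obstacle is automatic: Definition~\ref{dfn:determines} guarantees that $\Psi$ computes the \emph{true} $\G_\Fop$ on each $(\N|\gamma')[h][g]$, so any two witnessing whole segments yield the same value without any comparison argument.
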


The analysis of scales in $\Lp^{^\g\Fop}(\RR)$ runs into a problem (see \cite[Remark 6.8]{trang2013} for an explanation). Therefore we will analyze scales in a slightly 
different hierarchy.

\begin{definition}\label{dfn:self-scaled}
Let $X\sub\mathbb{R}$. We say that 
$X$ is \textbf{self-scaled} iff there are scales on $X$ and $\RR\cut 
X$ which are analytical (i.e., $\Sigma^1_n$ for some $n<\om$) in $X$.
\end{definition}

\begin{definition}\label{theta_g_organized}
 Let $b$ be transitive with $\MFsharp\in\J_1(\hat{b})$.
 
 Then 
${^\gTheta\F}(b)$ denotes the least $\N\ins{^\g\Fop}(b)$ such that either 
$\N={^\g\Fop}(b)$ or $\J_1(\N)\sats$``$\Theta$ does not exist''. (Therefore 
$\Jop_1(b;\MFsharp)\ins{^\gTheta\Fop}(b)$.)

We say that $\M$ is a 
\textbf{potential 
$\Theta$-g-organized $\Fop$-premouse over $X$} iff $\MFsharp\in\HC^\M$ and for some $X\sub\HC^\M$, 
$\M$ is a
potential ${^\gTheta\Fop}$-premouse over $(\HC^\M,X)$ with 
parameter $\MFsharp$ and $\M\sats$``$X$ is self-scaled''. We write $X^\M=X$.
\end{definition}

In our application to core model induction, we will be most interested in the cases that either 
$X=\emptyset$ or $X=\Fop\rest\HC^\M$.
Clearly $\Theta$-g-organized $\Fop$-premousehood is not first order. Certain aspects of the 
definition, however, are:

\begin{definition} Let ``I am a $\Theta$-g-organized premouse over $X$'' be the 
$\Ll_0$ formula $\psi$ such that for all $\J$-structures $\M$ and $X\in\M$ we 
have
$\M\sats\psi(X)$ iff (i) $X\sub\HC^\M$; (ii) $\M$ is a $\J$-model over 
$(\HC^\M,X)$; (iii) $\M|1\sats$``$X$ is self-scaled''; (iv) every proper segment of 
$\M$ is sound; and (v) for every $\N\ins\M$:
\begin{enumerate}
 \item if $\N\sats$``$\Theta$ exists'' then $\N\downarrow(\N|\Theta^\N)$ is 
a $\param^\N$-strategy premouse of type $\varphi_\g$;
 \item if $\N\sats$``$\Theta$ does not exist'' then $\N$ is passive.\qedhere
\end{enumerate}
\end{definition}

\begin{lemma}\label{lem:charac_Tg-org}
Let $\M$ be a $\J$-structure and $X\in\M$. Then the following are equivalent:
(i) $\M$ is 
a $\Theta$-g-organized $\Fop$-premouse over $X$; (ii) 
$\M\sats$``I am a $\Theta$-g-organized premouse over $X$'' and $\param^\M=\MFsharp$ and 
$\Sigma^\M\sub\Lambda_\MFsharp$; (iii) $\M|1$ is a $\Theta$-g-organized premouse over 
$X$ and every proper segment of $\M$ is sound and for every 
$\N\ins\M$,
\begin{enumerate}
 \item if $\N\sats$``$\Theta$ exists'' then $\N\downarrow(\N|\Theta^\N)$ 
is a g-organized $\Fop$-premouse;
 \item if $\N\sats$``$\Theta$ does not exist'' then $\N$ is 
passive.
\end{enumerate}
\end{lemma}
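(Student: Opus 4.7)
The plan is to prove the cycle (i) $\Rightarrow$ (ii) $\Rightarrow$ (iii) $\Rightarrow$ (i), tracing at each step the correspondence between the external inductive definition of $\Theta$-g-organized $\Fop$-premousehood in Definition \ref{theta_g_organized}, the internal first-order formula ``I am a $\Theta$-g-organized premouse over $X$'', and the block-wise structure of $\M$ induced by its local $\Theta$'s.

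For (i) $\Rightarrow$ (ii), unpack Definition \ref{theta_g_organized}: $\MFsharp \in \HC^\M$, $X \sub \HC^\M$, $\M \sats $``$X$ is self-scaled'', $\M$ is a $\J$-model over $(\HC^\M, X)$, and $\M$ is a potential ${^\gTheta\Fop}$-premouse with parameter $\MFsharp$. Soundness of proper segments is built into Definition \ref{operatorWithParama}, giving clause (iv) of the defining formula. Clause (v) records the fact that $^{\gTheta}\Fop$ coincides with $^{\g}\Fop$ up to the first segment where $\Theta$ does not exist and is passive thereafter; translating between ``g-organized'' and ``$\param^\N$-strategy premouse of type $\varphi_\g$'' via Lemma \ref{lem:varphi_g} yields (v.1), while (v.2) is immediate. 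The conditions $\param^\M = \MFsharp$ and $\Sigma^\M \sub \Lambda_\MFsharp$ are explicit in Definitions \ref{PotJPremice2} and \ref{theta_g_organized}.

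For (ii) $\Rightarrow$ (iii), since ``I am a $\Theta$-g-organized premouse over $X$'' is an $\Ll_0$-formula whose clauses localize to $\M|1$, the first conjunct of (iii) follows; clause (iv) of the formula gives soundness of proper segments. Now fix $\N \ins \M$: if $\N \sats $``$\Theta$ exists'', clause (v.1) gives that $\N \downarrow (\N|\Theta^\N)$ is a $\param^\N$-strategy premouse of type $\varphi_\g$, and since $\param^\N = \MFsharp$ and $\Sigma^\N \sub \Sigma^\M \sub \Lambda_\MFsharp$, Lemma \ref{lem:varphi_g} upgrades this to a g-organized $\Fop$-premouse; otherwise (v.2) gives passivity. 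For (iii) $\Rightarrow$ (i), the $\M|1$ hypothesis supplies $X \sub \HC^\M$, self-scaledness of $X$, and the $\J$-model structure over $(\HC^\M, X)$. The case split in (iii) matches exactly the defining specification of a potential ${^\gTheta\Fop}$-premouse: each g-organized block $\N \downarrow (\N|\Theta^\N)$ is a potential ${^\g\Fop}$-premouse (Definitions \ref{potentialJpremouse} and \ref{PotJPremice2}), and the passive-above-$\Theta$ condition matches the truncation of $^{\g}\Fop$ at the first level where $\Theta$ is undefined. Reading off the closed, increasing ordinal sequences of Definition \ref{modelOpGenerala2} from $\M$ itself (using their uniform $\Sigma_1$-definability observed in the remark following that definition) completes the reconstruction of $\M$ as a $\Theta$-g-organized $\Fop$-premouse over $X$ with parameter $\MFsharp$.

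The main obstacle lies in (iii) $\Rightarrow$ (i): the $B$-predicates coded inside each block must encode $\Lambda_\MFsharp$-branches of the specific generic-generic-making trees $\Tt_{\M|\alpha_0}$ from Definition \ref{genGenTree}, not arbitrary branches. This is resolved by observing that, block by block, type-$\varphi_\g$-ness (Lemma \ref{lem:varphi_g}) already forces the encoded branches to agree with $\Lambda_\MFsharp$ up to the local $\Theta$, which is precisely the content needed to place the entire assembly under the operator $^{\gTheta}\Fop$. Once this alignment is secured, acceptability and the hierarchical arrangement follow from the corresponding properties of each g-organized block, completing the equivalence.
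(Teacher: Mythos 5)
The paper itself gives no proof of this lemma: it is one of the results imported from \cite{trang2013}, so there is no in-paper argument to compare yours against. Judged on its own terms, your cycle (i) $\Rightarrow$ (ii) $\Rightarrow$ (iii) $\Rightarrow$ (i) is the natural route, and you correctly identify the pivot on which the whole equivalence turns: Lemma \ref{lem:varphi_g}, which converts ``$\param$-strategy premouse of type $\varphi_\g$ whose internal strategy is contained in $\Lambda_\MFsharp$'' into ``g-organized $\Fop$-premouse'' block by block. Your treatment of (ii) $\Rightarrow$ (iii) is exactly right: the formula alone only yields the syntactic strategy-premouse structure of each block, and it is the extra conjuncts $\param^\M=\MFsharp$ and $\Sigma^\M\sub\Lambda_\MFsharp$ that upgrade it to genuine g-organization.

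One caveat on your closing paragraph. In (iii) $\Rightarrow$ (i) you describe the ``main obstacle'' as showing that the $B$-predicates encode $\Lambda_\MFsharp$-branches of the trees $\Tt_{\M|\alpha_0}$, and you say this is ``resolved by observing that type-$\varphi_\g$-ness already forces the encoded branches to agree with $\Lambda_\MFsharp$.'' That is not quite where the content sits: type-$\varphi_\g$-ness by itself does \emph{not} force correctness of branches --- that is precisely why (ii) needs the separate conjunct $\Sigma^\M\sub\Lambda_\MFsharp$. In (iii) the correctness of branches is simply part of the hypothesis, since each block is assumed to be an actual g-organized $\Fop$-premouse (equivalently, a $\Lambda_\MFsharp$-premouse of type $\varphi_\g$). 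The genuine residual work in (iii) $\Rightarrow$ (i) is instead the bookkeeping you only gesture at: reading off from $\M$ the closed increasing sequence $\left<\zeta_\alpha\right>$ witnessing that $\M$ is a potential ${^\gTheta\Fop}$-premouse over $(\HC^\M,X)$, i.e.\ checking that the successor blocks are exactly applications of ${^\gTheta\Fop}$ (the least initial segment of ${^\g\Fop}$ past which $\J_1$ thinks $\Theta$ does not exist) and that the passivity clause governs the levels in between. This uses the uniform $\Sigma_1$-definability of the sequences in Definition \ref{modelOpGenerala2}, which you do cite, so the sketch is sound; it just needs that verification written out rather than folded into the branch-correctness issue.
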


\begin{lemma}\label{lem:Th-g_very_con}
${^\gTheta\Fop}$ is basic and condenses finely. \end{lemma}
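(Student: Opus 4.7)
The plan is to deduce both claims from Lemma \ref{lem:gF_props} (which gives the analogous properties for ${^\g\Fop}$) together with the observation that ${^\gTheta\Fop}(b)$ is defined as a canonical, first-order definable initial segment of ${^\g\Fop}(b)$. Basicness will be immediate: by Definition \ref{theta_g_organized}, every proper segment of ${^\gTheta\Fop}(b)$ is a proper segment of ${^\g\Fop}(b)$, and the latter has no $E$-active levels by Lemma \ref{lem:gF_props}; the continuing case is identical.

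For fine condensation, I would start with the setup of Definition \ref{dfn:condenses_finely} applied to ${^\gTheta\Fop}$ in place of $Y$: an embedding $\pi$ between fine-structural data $\N^*,\M^*$, with the target $\M^+$ a segment of ${^\gTheta\Fop}_1(\M)$. Composing with the inclusion ${^\gTheta\Fop}_1(\M)\ins{^\g\Fop}_1(\M)$, I would view the same hypotheses as witnessing fine condensation for ${^\g\Fop}$, so Lemma \ref{lem:gF_props} yields $\N^+\ins{^\g\Fop}_1(\N)$. The remaining task is to upgrade this to $\N^+\ins{^\gTheta\Fop}_1(\N)$, i.e., to show that $\N^+$ lies below the $\Theta$-cutoff over $\N$.

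For this last step, the key observation is that the cutoff condition ``$\J_1(\R)\sats\Theta$ does not exist'' is first-order over $\R$ in $\Ll_0$, and by the minimality stipulated in Definition \ref{theta_g_organized} it fails for every proper segment $\R\pins{^\gTheta\Fop}_1(\M)$. If some $\bar\R\pins\N^+$ strictly above $\N$ satisfied ``$\J_1(\bar\R)\sats\Theta$ does not exist'', then by the (sufficient) elementarity of $\pi$ so would $\pi(\bar\R)\pins{^\gTheta\Fop}_1(\M)$, contradicting minimality. Hence no such $\bar\R$ exists, and $\N^+$ is an initial segment of ${^\gTheta\Fop}_1(\N)$, as needed. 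The main obstacle will be verifying that the cutoff condition is preserved across all fine-structural variants ($k$-good, fully elementary, realized weak $k$-embeddings) appearing in Definition \ref{dfn:condenses_finely}; however, since the condition is genuinely first-order over a level of the premouse and does not involve higher fine-structural data, each case should reduce to the $\Sigma_1$-type elementarity already enjoyed by weak $0$-embeddings, so the verification is uniform.
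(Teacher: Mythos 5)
The paper itself gives no proof of this lemma (it is imported from \cite{trang2013}), so I am judging your argument on its own terms. The basicness claim is fine: by Definition \ref{modelOpGenerala2} every level of ${^\g\Fop}(b)$ is $E$-passive, and ${^\gTheta\Fop}(b)\ins{^\g\Fop}(b)$, so every segment of a value of ${^\gTheta\Fop}$ is $E$-passive. Your supplementary idea for fine condensation --- that the cutoff clause ``$\J_1(\R)\sats\Theta$ does not exist'' fails at every proper segment of ${^\gTheta\Fop}_1(\M)$ by minimality, and a counterexample $\bar{\R}\pins\N^+$ would push forward under $\pi$ to contradict this --- is indeed the right extra ingredient that distinguishes the $\Theta$-g-organized case from the g-organized one.

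The gap is in the reduction itself. The hypotheses of Definition \ref{dfn:condenses_finely} for $Y={^\g\Fop}$ require $\M^*$ to be a \emph{${^\g\Fop}$-premouse} whose largest \emph{${^\g\Fop}$-whole} proper segment is $\M$, and require $\N$ to be a sound \emph{${^\g\Fop}$-whole} ${^\g\Fop}$-premouse. What you are handed are only the ${^\gTheta\Fop}$-versions of these statements, and the two notions genuinely diverge: once the $\Theta$-cutoff fires inside some block, the next ${^\gTheta\Fop}$-block is computed over the truncated segment rather than over the full ${^\g\Fop}$-value, so a ${^\gTheta\Fop}$-premouse is in general not a ${^\g\Fop}$-premouse (this divergence is the whole point of introducing the second hierarchy; cf.\ Remark \ref{SamePR}), and $\M$ being ${^\gTheta\Fop}$-whole does not make it ${^\g\Fop}$-whole. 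Hence ``composing with the inclusion ${^\gTheta\Fop}_1(\M)\ins{^\g\Fop}_1(\M)$'' does not place you within the hypotheses of Lemma \ref{lem:gF_props}, and that lemma cannot be cited as a black box. To repair this you must either (i) rerun the \emph{proof} of fine condensation for ${^\g\Fop}$, which factors through the strategy-premouse condensation Lemma \ref{StategyCondensation} and concerns only the top block (the $\BBB$-coded branches of the genericity-iteration tree according to $\Lambda_\MFsharp$, interleaved with passive $\Jop$-blocks), so that it applies verbatim to the top block of a ${^\gTheta\Fop}$-premouse; or (ii) invoke the characterization Lemma \ref{lem:charac_Tg-org} to pass to $\M^*\downarrow(\M^*|\Theta^{\M^*})$ and $\N^*\downarrow(\N^*|\Theta^{\N^*})$, which \emph{are} g-organized premice, and verify that $\pi$ induces an embedding of the required fine-structural type between these. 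With either repair in place, your cutoff-preservation step does finish the argument.
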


\begin{definition}\label{Lp}
Suppose $\Fop$ is a nice operator and is an iteration strategy and $X\subseteq\mathbb{R}$ is self-scaled. We define 
$\Lp^{^\gTheta\Fop}(\mathbb{R},X)$ as the stack of all $\Theta$-g-organized $\Fop$-mice $\N$ 
over  
$(H_{\om_1},X)$ (with parameter $\MFsharp$). We also say ($\Theta$-g-organized) $\Fop$-premouse \textbf{over 
$\mathbb{R}$} to in fact
mean over $H_{\omega_1}$.\end{definition}
\begin{remark}\label{SamePR}
It's not hard to see that for any such $X$ as in Definition \ref{Lp}, $\powerset(\mathbb{R})\cap \textrm{Lp}^{^\g\Fop}(\mathbb{R},X) = \powerset(\mathbb{R})\cap \textrm{Lp}^{^\gTheta\Fop}(\mathbb{R},X)$. Suppose $\M$ is an initial segment of the first hierarchy and $\M$ is $E$-active. Note that $\M\vDash ``\Theta$ exists" and $\M|\Theta$ is $\Fop$-closed. By induction below $\M|\Theta^\M$, $\M|\Theta^\M$ can be rearranged into an initial segment $\N'$ of the second hierarchy. Above $\Theta^\M$, we simply copy the $E$ and $B$-sequence from $\M$ over to obtain an $\N\lhd \textrm{Lp}^{^\gTheta\Fop}(\mathbb{R},X)$ extending $\N'$.
\end{remark}

In core model induction applications, we often have a pair $(\P,\Sigma)$
where $\P$ is a hod premouse and $\Sigma$ is $\P$'s strategy with branch
condensation and is fullness preserving (relative to mice in some pointclass) or
$\P$ is a sound (hybrid) premouse projecting to some countable set $a$ and
$\Sigma$ is the unique (normal) $\omega_1+1$-strategy for $\P$.  Let $\Fop$ be the operator corresponding to 
$\Sigma$ (using the formula $\varphi_{\rm{all}}$) and suppose $\M_1^{\Fop,\sharp}$ exists. \cite[Lemma 4.8]{trang2013} shows that $\Fop$
condenses finely and $\M_1^{\Fop,\sharp}$ generically interprets $\F$. Also, the core model induction will give us that $\Fop\rest \mathbb{R}$ is self-scaled. \footnote{We abuse notation here, and will continue to do so in the future. Technically, we should write $\Fop\rest$HC.}Thus, we can 
define $\Lp^{^\gTheta\Fop}(\mathbb{R},\Fop\rest\mathbb{R})$ as above (assuming sufficient iterability of $\M_1^{\Fop,\sharp}$). A core model induction 
is then used to prove that
$\Lp^{^\gTheta\Fop}(\mathbb{R},\Fop\rest\mathbb{R})\vDash \textsf{AD}^+$. What's needed
to prove this is the scales analysis of $\Lp^{^\gTheta\Fop}(\mathbb{R},\Fop\rest\mathbb{R})$,
from the optimal hypothesis (similar to those used by Steel; see
\cite{K(R)} and \cite{Scalesendgap}).\footnote{Suppose $\P=\M_1^\sharp$ 
and $\Sigma$ is $\P$'s unique iteration strategy. Let $\Fop$ be the operator corresponding to $\Sigma$. Suppose 
$\rm{Lp}^{^\gTheta\Fop}(\mathbb{R},\Fop\rest \mathbb{R})\vDash \sf{AD}^+ + \sf{MC}$. Then in fact 
$\rm{Lp}^{^\gTheta\Fop}(\mathbb{R})\cap \powerset(\mathbb{R}) = 
\rm{Lp}(\mathbb{R})\cap \powerset(\mathbb{R})$. This is because in 
$L(\rm{Lp}^{^\gTheta\Fop}(\mathbb{R},\Fop\rest\mathbb{R}))$, $L(\powerset(\mathbb{R})) \vDash \sf{AD}^+ \ 
+ \ $$\Theta = \theta_0 + \sf{MC}$ and hence by \cite{sargsyan2014Rmice}, in 
$L(\rm{Lp}^{^\gTheta\Fop}(\mathbb{R},\Fop\rest\mathbb{R}))$, $\powerset(\mathbb{R})\subseteq 
\rm{Lp}(\mathbb{R})$.  Therefore, even though the hierarchies 
$\rm{Lp}(\mathbb{R})$ and $\rm{Lp}^{^\gTheta\Fop}(\mathbb{R},\Fop\rest\mathbb{R})$ are different, as far 
as sets of reals are concerned, we don't lose any information by analyzing the 
scales pattern in $\rm{Lp}^{^\gTheta\Fop}(\mathbb{R},\Fop\rest\mathbb{R})$ instead of that in 
$\rm{Lp}(\mathbb{R})$.} This is carried out in \cite{trang2013}; we will not go into details here, though we simply note that for the scales analysis to go through under optimal hypotheses, we need to work with the $\Theta$-g-organized hierarchy, instead of the g-organized hieararchy.
\subsubsection{BRIEF REMARKS ON $S$-CONSTRUCTIONS}
Suppose $\Fop$ is a nice operator (with parameter $\param$) and suppose $\M$ is a $\mathcal{G}$-mouse (over some transitive $a$), where $\mathcal{G}$ is either $^\g\Fop$ or $^\gTheta\Fop$. Suppose $\delta$ is a cutpoint of $\M$ and suppose $\N$ is a transitive structure such that $\delta\subseteq \N\subseteq \M|\delta$, $\param\in \N$. Suppose $\mathbb{P}\in \J_\omega[\N]$ is such that $\M|\delta$ is $\mathbb{P}$-generic over $\J_\omega[\N]$ and suppose whenever $\Q$ is a $\mathcal{G}$-mouse over $\N$ such that $H^\Q_\delta = \N$ then $\M|\delta$ is $\mathbb{P}$-generic over $\Q$. Then the $S$-constructions (or $P$-constructions) from \cite{schindler2009self} gives a $\mathcal{G}$-mouse $\R$ over $\N$ such that $\R[\M|\delta] = \M$. The $S$-constructions give the sequence $(\R_\alpha : \delta < \alpha\leq \lambda)$ of $\mathcal{G}$-premice over $\N$, where 
\begin{enumerate}[(i)]
\item $\R_{\delta+1} = \Jop_\omega(\N)$;
\item if $\alpha$ is limit then let $\R_\alpha^* = \bigcup_{\beta<\alpha}\R_\beta$. If $\M|\alpha$ is passive, then let $\R_\alpha = \R_\alpha^*$. So $\R_\alpha$ is passive. If $B^{\M|\alpha}\neq \emptyset$, then let $\R_\alpha = (|\R_\alpha^*|; \emptyset, B^{\M|\alpha}, \bigcup_{\beta<\alpha} S^{\R_\beta}, \N ,\param)$. Suppose $E^{\M|\alpha}\neq \emptyset$; let $E^* = E^{\M|\alpha}\cap |\R_\alpha^*|$, then we let $\R_\alpha=(|\R_\alpha^*|; E^*, \emptyset, \bigcup_{\beta<\alpha} S^{\R_\beta}, \N ,\param)$. By the hypothesis, we have $\R_\alpha[\M|\delta]=\M|\alpha$.
\item Suppose we have already constructed $\R_\alpha$ and (by the hypothesis) maintain that $\R_\alpha[\M|\delta]=\M|\alpha$. Then $\R_{\alpha+1} = \Jop_\omega(\R_\alpha)$.
\item $\lambda$ is such that $\R_\lambda[\M|\delta]=\M$. We set $\R_\lambda = \R$.
\end{enumerate} 	
We note that the full constructions from \cite{schindler2009self} does not require that $\delta$ is a cutpoint of $\M$ but we don't need the full power of the $S$-constructions in our paper. Also, the fact that $\M$ is g-organized (or $\Theta$-g-organized) is important for our constructions above because it allows us to get past levels $\M|\alpha$ for which $B^{\M|\alpha}\neq \emptyset$. Because of this fact, in this paper, hod mice are reorganized into the g-organized hierarchy, that is if $\P$ is a hod mouse then $\P(\alpha+1)$ is a g-organized $\Sigma_{\P(\alpha)}$-premouse for all $\alpha<\lambda^\P$. The $S$-constructions are also important in many other contexts. One such context is the local HOD analysis of levels of Lp$^{^\gTheta\Fop}(\mathbb{R},\Fop\rest\mathbb{R})$, which features in the scales analysis of Lp$^{^\gTheta\Fop}(\mathbb{R},\Fop\rest\mathbb{R})$ (cf. \cite{trang2013}).
\subsubsection{CORE MODEL INDUCTION OPERATORS}
To analyze $\Omega$, we adapt the framework for the core model induction developed above and the scales analysis in \cite{trang2013}, \cite{Scalesendgap}, and \cite{K(R)}. We are now in a position to introduce the core model induction operators that we will need in this paper. These are particular kinds of (hybrid) mouse operators that are constructed during the course of the core model induction. These operators can be shown to satisfy the sort of condensation described above and determine themselves on generic extensions. 

Suppose $\Fop$ is a nice operator and $\Gamma$ is an inductive-like pointclass that is determined. Let $\MFsharp = \M_1^{\Fop,\sharp}$. Lp$^{^g\Fop}(x)$ is defined as in the previous section. We write Lp$^{^g\Fop,\Gamma}(x)$ for the stack of $^g\Fop$-premice $\M$ over $x$ such that every countable, transitive $\M^*$ embeddable into $\M$ has an $\omega_1$-$^g\Fop$-iteration strategy in $\Gamma$.

\begin{definition}\label{dfn:k-suitable}
  Let $t\in\HC$ with $\MFsharp\in\J_1(t)$. Let $1\leq k<\om$. A 
premouse $\N$ over $t$ is
\emph{$\Fop$-$\Gamma$-$k$-suitable} (or just \emph{$k$-suitable} if $\Gamma$ and $\Fop$ are clear from the context) iff there is a strictly increasing sequence
$\left<\delta_i\right>_{i<k}$ such that
\begin{enumerate}
 \item $\all\delta\in\N$, $\N\sats$``$\delta$ is Woodin'' if and only if $\ex 
i<k(\delta=\delta_i)$.
 \item $\OR(\N)=\sup_{i<\om}(\delta_{k-1}^{+i})^\N$.
\item\label{item:cutpoint} If $\N|\eta$ is a ${^\g\Fop}$-whole strong 
cutpoint of $\N$ then
$\N|(\eta^+)^\N=\Lp^{^g\Fop,\Gamma}(\N|\eta)$.\footnote{Literally we should 
write ``$\N|(\eta^+)^\N=\Lp^{\Gamma}(\N|\eta)\downarrow t$'', but we 
will be lax about this from now on.}
 \item\label{item:Qstructure} Let $\xi<\OR(\N)$, where $\N\sats$``$\xi$ 
is 
not Woodin''. Then $C_\Gamma(\N|\xi)\sats$``$\xi$ is not Woodin''.
 \end{enumerate}
We write $\delta^\N_i=\delta_i$; also let $\delta_{-1}^\N=0$ and $\delta_k^\N=\OR(\N)$.
\end{definition}


\begin{definition}[relativizes well]\label{relativizeWell}
Let $\Fop$ be a $Y$-mouse operator for some operator $Y$. We say that $\Fop$ \textbf{relativizes well} if there is a formula $\phi(x,y,z)$ such that for any $a,b\in \textrm{dom}(\Fop)$ such that $a\in L_1(b)$ and have the same cardinality, whenever $N$ is a transitive model of $\textsf{ZFC}^-$ such that $N$ is closed under $Y$, $\Fop(b)\in N$ then $\Fop(a)\in N$ and is the unique $x\in N$ such that $N\vDash \phi[x, a, \Fop(b)]$.
\end{definition}
\begin{definition}[determines itself on generic extensions]\label{detGenExts}
Suppose $\F$ is a $Y$-mouse operator for some operator $Y$. We say that $\Fop$ \textbf{determines itself on generic extensions} if there is a formula $\phi(x,y,z)$, a parameter $a$ such that for almost all transitive structures $N$ of $\sf{ZFC}^-$ such that $\omega_1\subset N$, $N$ contains $a$ and is closed under $\Fop$, for any generic extension $N[g]$ of $N$ in $V$, $\Fop\cap N[g]\in N[g]$ and is definable over $N[g]$ via $(\phi,a)$, i.e. for any $x\in N[g]\cap \dom(\Fop)$, $\Fop(a)=b$ if and only if $b$ is the unique $c\in N[g]$ such that $N[g]\vDash \phi[x,c,a]$.\footnote{By ``almost all", we mean for all such $N$ with the properties listed above and $N$ satisfies some additional property. In practice, this additional property is: $N$ is closed under $\M_1^{\Fop,\sharp}$.}
\end{definition}

The following definition gives examples of ``nice model operators''. This is not a standard definition and is given here for convenience more than anything. These are the kind of model operators that the core model induction in this paper deals with. We by no means claim that these operators are all the useful model operators that one might consider. Recall we fixed a $V$-generic $G\subseteq Col(\omega,\kappa)$. 

\begin{definition}[Core model induction operators]\label{cmi operator} \index{core model induction operators}Suppose $(\P, \Sigma)$ is a hod pair below $\kappa$; assume furthermore that $\Sigma$ is a $(\lambda^+,\lambda^+)$-strategy. Let $\Fop = \Fop_{\Sigma,\varphi_{\rm{all}}}$ (note that $\Fop$, $^g\Fop$ are basic, projecting, uniformly $\Sigma_1$, and condenses finely). Assume $\Fop\rest\mathbb{R}$ is self-scaled. We say $J$ is a $\Sigma$ core model induction operator or just a $\Sigma$-cmi operator if in $V[G]$, one of the following holds:
\begin{enumerate}
 \item $J$ is a projecting, uniformly $\Sigma_1$, first order $\Fop$-mouse operator (or $^\g\Fop$-mouse operator) defined on a cone of $(H_{\omega_1})^{V[G]}$ above some $a\in (H_{\omega_1})^{V[G]}$. Furthermore, $J$ relativizes well. 
 
\item  For some $\a\in \mathrm{OR}$ such that $\a$ ends either a weak or a strong gap in the sense of \cite{K(R)} and \cite{trang2013}, letting $M=\textrm{Lp}^{^\gTheta\Fop}(\mathbb{R},\Fop\rest\mathbb{R})||\a$ and $\Gamma = (\Sigma_1)^M$, $M\models \sf{AD}$$^++\sf{MC}$$(\Sigma)$\footnote{\label{MC}$\textsf{MC}(\Sigma)$ stands for the Mouse Capturing relative to $\Sigma$ which says that for $x, y\in \mathbb{R}$, $x$ is $\mathrm{OD}(\Sigma, y)$ (or equivalently $x$ is $\mathrm{OD}(\Fop, y)$) iff $x$ is in some $^\g\Fop$-mouse over $y$. $\sf{SMC}$ is the statement that for every hod pair $(\P,\Sigma)$ such that $\Sigma$ is fullness preserving and has branch condensation, then $\textsf{MC}(\Sigma)$ holds.}. For some transitive $b\in H^{V[G]}_{\omega_1}$ and some g-organized $\Fop$-premouse $\Q$ over $b$, $J=\Fop_\Lambda$, where $\Lambda$ is an $(\omega_1, \omega_1)$-iteration strategy for a $1$-suitable (or more fully $\Fop$-$\Gamma$-$1$-suitable) $\Q$ which is $\Gamma$-fullness preserving, has branch condensation and is guided by some self-justifying-system (sjs) $\vec{A}=(A_i: i<\omega)$ such that $\vec{A}\in OD_{b, \Sigma, x}^M$ for some real $x$ and $\vec{A}$ seals the gap that ends at $\alpha$\index{seal a gap}\footnote{This implies that $\vec{A}$ is Wadge cofinal in $\bf{Env}(\Gamma)$, where $\Gamma = \Sigma_1^{M}$. Note that $\bf{Env}(\Gamma) = \powerset(\mathbb{R})^M$ if $\alpha$ ends a weak gap and $\bf{Env}(\Gamma) = \powerset(\mathbb{R})^{\textrm{Lp}^\Sigma(\mathbb{R})|(\alpha+1)}$ if $\alpha$ ends a strong gap.}. 
\end{enumerate}
\end{definition}
\begin{remark}\label{cmi-op-remark}
1) The $\Sigma$-cmi operators $J$ we construct in this paper also determine themselves on generic extensions. If $J$ is defined as in (1) and determines itself on generic extensions then so does the ``next operator" $\M_1^{J,\sharp}$. If $J$ is defined as in (2), then \cite{trang2013} shows that $M_1^{J,\sharp}$ generically interprets $J$; from this, the proof of Lemma \ref{ClosedUnderJ} (see \cite{trang2013}[Lemma 4.21]) shows that $J$ determines itself on generic extensions.\\
\indent 2) Suppose $J$ is defined on a cone over $(H_{\omega_1})^{V[G]}$ above some transitive $a\in H^V_{\kappa^+}$ and $J\rest V\in V$. During the course of construction, we show that knowing $J$ on $V$ is sufficient to determine $J$ on $V[G]$. During the course of the core model induction, we'll be first constructing these $\Sigma$-cmi operators $J$'s on $H^V_{\kappa^+}$ (above some $a$); then we show how to extend $J$ to $HC^{V[G]}$; we then lift $J$ to $H^V_{\lambda^+}$, which then extend $J$ to $H_{\lambda^+}^{V[G]}$.\\
\indent 3) By results in \cite{ATHM}, under $(\dag)$, if $(\P,\Sigma)$ is a hod pair such that $\Sigma$ has branch condensation, then $\Sigma$ has hull condensation. The same is true for $(\Q,J)$ in Definition \ref{cmi operator}. This implies that $\Sigma$ ($J$) is suitably condensing.
\end{remark}

\subsection{GETTING $M_1^{J,\sharp}$ AND LIFTING}
\label{M1sharp}
We assume the hypothesis of Theorem \ref{main_technical_theorem}. We fix a $V$-generic $G\subseteq Col(\omega,\kappa)$ and recall we that we let $\lambda = 2^{\kappa}$. Suppose $(\P^*,\Sigma)$ is a hod pair below $\kappa$ such that $\Sigma$ is an $(\kappa^+,\kappa^+)$-strategy in $V[G]$ and $\Sigma\rest V\in V$ (or $(\P^*,\Sigma) = (\emptyset,\emptyset)$). Suppose $J$ is a $\Sigma$-cmi-operator. As part of the induction, we assume $J$ is defined on a cone in $H_{\lambda^+}^{V[G]}$ above some $x\in H_{\kappa^+}^V$ and $J\rest V\in V$. \footnote{We note the specific requirement that the cone over which $J$ is defined is above some $x\in V$. These are the $\Sigma$-cmi-operators that we will propagate in our core model induction. We will not deal with all $\Sigma$-cmi-operators.} We first show $\M_1^{J,\sharp}(a)$ exists (and is $(\kappa^+,\kappa^+)$-iterable) for $a\in H^V_{\kappa^+}\cap \textrm{dom}(J)$. We then show that $\M_1^{J,\sharp}(a)$ is defined on $H_{\lambda^+}^V$ and $\M_1^{J,\sharp}$ is $(\lambda^+,\lambda^+)$-iterable for all $a\in H_{\lambda^+}^V$. Finally, we get that  $\M_1^{J,\sharp}$ is a $\Sigma$-cmi operator defined on a cone in $H_{\lambda^+}^{V[G]}$.

Let $\Fop = \Fop_{\Sigma,\varphi_{\rm{all}}}$. Let $A$ code $\P^*$ and $\textrm{Lp}_*^\Fop(A)$ be the union of all $\N$ such that $\N$ is $\omega$-sound above $A$, $\N$ is a countably iterable $\Sigma$-premouse over $A$ and $\rho_{\omega}(\N) \leq \textrm{sup}(A)$. This means whenever $\pi:\N^*\rightarrow \N$ is elementary, $\N^*$ is countable, transitive, then $\N^*$ is $(\omega,\omega_1+1)$ iterable via a unique strategy $\Lambda$ such that whenever $\M$ is a $\Lambda$-iterate of $\N^*$, then $\M$ is a $\Sigma^\pi$-premouse. As a matter of notations, in $V$, for $A$ a bounded subset of $(\lambda^+)^V$, we set
\begin{center}
$\textrm{Lp}_1^\Sigma(A) = \textrm{Lp}_*^\Fop(A)$.
\end{center}
Suppose $\textrm{Lp}_\alpha^\Sigma(A)$ has been defined for $\alpha<\lambda^+$,
\begin{center}
$\textrm{Lp}_{\alpha+1}^\Sigma(A) = \textrm{Lp}^{\Fop}_{*,+}(\textrm{Lp}^\Sigma_\alpha(A))$,\footnote{$\textrm{Lp}^{\Fop}_{*,+}(\textrm{Lp}^\Sigma_\alpha(A))$ is defined similarly to Lp$_*^\Fop$ but here we stack continuing, $\Fop$-sound $\Fop$-premice.} and
\end{center}
for $\xi<\kappa^+$ limit,
\begin{center}
$\textrm{Lp}_\xi^\Sigma(A) = \bigcup_{\alpha<\xi} \textrm{Lp}_{\alpha}^\Sigma(A)$.
\end{center}
We define Lp$^{^\g\Sigma}_\lambda(A)$ and Lp$_\lambda^{^\gTheta\Sigma}(A)$ similarly for $\xi\leq \lambda^+$, in the presence of $\M_1^{\Sigma,\sharp}$. We also write Lp$^\Sigma(A)$ for Lp$_*^\Fop(A)$ and similarly for Lp$^{^\g\Sigma}(A)$ and Lp$^{^\gTheta\Sigma}(A)$. We work in $V$ for a while. 
\begin{lemma}
\label{BasicFacts}
Let $A$ be a subset of $\lambda$ coding $\P^*$. Then $\textrm{Lp}^\Sigma_{\lambda^+}(A) \vDash \lambda^+$ exists. Similarly, $\textrm{Lp}^{^\g\Sigma}_{\lambda^+}(A)$, $\textrm{Lp}^{^\gTheta\Sigma}_{\lambda^+}(A) \vDash \lambda^+$ exists.
\end{lemma}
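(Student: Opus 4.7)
The approach is to argue by contradiction: assume $o(\textrm{Lp}^\Sigma_{\lambda^+}(A)) \leq \lambda^+$ and derive a $\square_\lambda$-sequence (hence $\square(\lambda^+)$ by the remark earlier in the paper), contradicting the hypothesis $\neg\square(\lambda^+)$ of Theorem \ref{main_technical_theorem} applied at $\alpha = (2^\kappa)^+$. Set $M = \textrm{Lp}^\Sigma_{\lambda^+}(A)$.

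First, I would observe that the stacking $\alpha \mapsto \textrm{Lp}^\Sigma_\alpha(A)$ strictly increases in ordinal height at every stage $\alpha < \lambda^+$: one step of the $L^\Sigma[\textrm{Lp}^\Sigma_\alpha(A)]$-construction always produces a next fully-sound continuing $\Sigma$-mouse projecting to $\textrm{Lp}^\Sigma_\alpha(A)$, with countable iterability furnished by the hypotheses on $\Sigma$ (it is a $(\kappa^+,\kappa^+)$-strategy with branch condensation and $\Omega$-fullness preservation; cf.\ Lemma \ref{lem:le_converges} and Lemma \ref{lem:Sigma_N_condensation}). So $o(M) \geq \lambda^+$, and under our contradiction hypothesis $o(M) = \lambda^+$ exactly.

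With $o(M) = \lambda^+$, I would carry out the standard fine-structural $\square$-construction inside $M$. For each limit $\gamma \in (\lambda, \lambda^+)$, let $\beta(\gamma)$ be the least $\beta < \lambda^+$ with $\gamma < o(M|\beta)$ and $\rho_\omega(M|\beta) \leq \lambda$; such $\beta(\gamma)$ exists because every proper initial segment of $M$ is a sound $\Sigma$-mouse projecting to a structure of $V$-cardinality $\leq \lambda$, so cofinally many initial segments of $M$ project to $\leq \lambda$. Using the standard parameter $p$ of $M|\beta(\gamma)$, define $C_\gamma \subseteq \gamma$ as the continuously-closed sequence of sup-points below $\gamma$ of the $\Sigma_1$-Skolem hull of $\lambda \cup \{p\}$ inside $M|\beta(\gamma)$; by construction $\textrm{ot}(C_\gamma) \leq \lambda$. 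Coherence, i.e.\ $C_\delta = C_\gamma \cap \delta$ for every limit point $\delta$ of $C_\gamma$, follows from the fine-structural condensation for $\Sigma$-premice in Lemma \ref{StategyCondensation}: the transitive collapse of the hull is a $\Sigma$-premouse that must equal $M|\beta(\delta)$ by minimality of $\beta(\delta)$. No thread $E$ can exist, since $E$ would be club in $\lambda^+$ (hence of order type $\lambda^+$), whence sufficiently late limit points $\alpha$ of $E$ with $\alpha \in E$ have $\textrm{ot}(E \cap \alpha) > \lambda$, contradicting $C_\alpha = E \cap \alpha$ and $\textrm{ot}(C_\alpha) \leq \lambda$. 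This gives $\square_\lambda$ in $V$, hence $\square(\lambda^+)$, contradicting the hypothesis.

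The statements for $\textrm{Lp}^{^g\Sigma}_{\lambda^+}(A)$ and $\textrm{Lp}^{^{g\Theta}\Sigma}_{\lambda^+}(A)$ reduce to the same argument: by Remark \ref{SamePR} together with Lemmas \ref{lem:gF_props} and \ref{lem:Th-g_very_con}, these hierarchies are reorganizations of the $\Sigma$-hierarchy with identical fine-structural condensation. The main obstacle I anticipate is the coherence verification at levels $M|\beta(\gamma)$ whose predicate $\dot{B}$ is actively coding a branch of a tree on $\P^*$: here one must invoke the precise form of condensation in parts (1) and (3) of Lemma \ref{StategyCondensation}, together with branch condensation of $\Sigma$, to ensure that the transitive collapse of the Skolem hull is a legitimate $\Sigma$-premouse rather than one whose $B$-predicate codes an incorrect branch; this is the content one reads off from the $\Sigma$-condensation machinery of \cite{trang2013}.
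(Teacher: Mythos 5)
Your proposal is correct and takes essentially the same route as the paper, whose entire proof is the one-liner "Suppose not; this easily implies we can construct over $\textrm{Lp}^\Sigma_{\lambda^+}(A)$ a $\square_\lambda$-sequence, contradicting $\neg\square_\lambda$ in $V$" (with $\neg\square_\lambda$ coming from the hypothesis $\neg\square(\lambda^+)$, exactly as you note); you have simply filled in the standard fine-structural square construction and the condensation issues for the $\dot B$-predicate that the paper's footnote waves at. One small point of precision: the existence of $\beta(\gamma)$ with $\rho_\omega(M|\beta(\gamma))\leq\lambda$ is not a consequence of each level projecting to the previous stage of the stack (whose ordinal height may well exceed $\lambda$), but is exactly where the contradiction hypothesis enters — if $M\vDash$ ``$\lambda^+$ does not exist'' then every ordinal of $M$ has $M$-cardinality $\leq\lambda$, so by acceptability some level above $\gamma$ projects to $\leq\lambda$.
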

\begin{proof}
Suppose not. This easily implies that we can construct over $\textrm{Lp}^\Sigma_{\lambda^+}(A)$ a $\square_{\lambda}$-sequence\footnote{Squares hold in $\textrm{Lp}^\Sigma_{\kappa^+}(A)$ because $\Sigma$ has hull and branch condensation.}. This contradicts $\neg \square_{\lambda}$ in $V$.
\end{proof}

The following gives the main consequence of the failures of squares assumption. It allows us to run covering arguments later.

\begin{lemma}\label{lem:smallCof}
Let $A, \Sigma$ be as in \ref{BasicFacts}. Let $M\in \{Lp^\Sigma_{\lambda^+}(A), \textrm{Lp}^{^\g\Sigma}_{\lambda^+}(A)$, $\textrm{Lp}^{^\gTheta\Sigma}_{\lambda^+}(A)\}$ and $\gamma = (\lambda^+)^M$. Then $cof(\gamma)\leq \kappa$. 
\end{lemma}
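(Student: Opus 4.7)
The plan is to argue by contradiction via a Jensen-style coherent sequence. Suppose $\cof^V(\gamma)=\mu\geq\kappa^+$. A size count first gives $\gamma\leq\lambda^+$: every sound $\Sigma$-mouse in $\mathrm{Lp}^\Sigma(A)$ projects to $|A|\leq\lambda$, hence has height below $\lambda^+$, so $\gamma_0:=\mathrm{ord}(\mathrm{Lp}^\Sigma(A))\leq\lambda^+$; higher stages $\mathrm{Lp}^\Sigma_\alpha(A)$ add only continuing mice projecting to ordinals $\geq\gamma_0$, and hence do not collapse $\lambda$ further, so $(\lambda^+)^M=\gamma_0=\gamma\leq\lambda^+$. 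Thus $\mu$ is a regular cardinal in $[\kappa^+,(2^\kappa)^+]$, so by hypothesis $\neg\square(\mu)$ holds; in the case $\mu=\lambda^+$, we have $\gamma=\lambda^+$ in $V$ and $\neg\square(\lambda^+)$ gives $\neg\square_\lambda$, as recalled in the paper.

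The main step is to build a coherent sequence $\langle C_\alpha\rangle$, indexed by limit $\alpha\in(\lambda,\gamma)$, with $C_\alpha\subseteq\alpha$ club, $\mathrm{ot}(C_\alpha)\leq\lambda$, and $C_\beta=C_\alpha\cap\beta$ for every limit point $\beta$ of $C_\alpha$. Since $|\alpha|^M=\lambda$, for each such $\alpha$ there is a least $N_\alpha\pins M$ with $\alpha<\mathrm{ord}(N_\alpha)$ and $\rho_\omega(N_\alpha)\leq\lambda$. Take $C_\alpha$ to be the Jensen club derived from the $\Sigma_\omega$-Skolem hull $H_\alpha$ of $\lambda\cup\{p_\alpha\}$ inside $N_\alpha$, where $p_\alpha$ is the first standard parameter of $N_\alpha$. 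Soundness together with solidity and universality of $p_\alpha$ force $\mathrm{ot}(C_\alpha)\leq\lambda$, and coherence at each limit point $\beta$ of $C_\alpha$ follows from fine condensation of the underlying operator --- $\Fop$, ${^\g\Fop}$, or ${^\gTheta\Fop}$, whose fine condensation is Lemmas \ref{lem:FSigma_condenses}, \ref{lem:gF_props}, \ref{lem:Th-g_very_con} --- since the transitive collapse of the subhull of $H_\alpha$ at $\beta$ uncollapses into an initial segment of $M$ of the required form, which must then equal $N_\beta$.

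To close the argument: if $\mu=\lambda^+$, the sequence $\langle C_\alpha\rangle$ is a $\square_\lambda$-sequence on $\lambda^+$, contradicting $\neg\square_\lambda$. Otherwise $\kappa^+\leq\mu\leq\lambda$. Fix a continuous strictly increasing cofinal sequence $\langle\gamma_\xi:\xi<\mu\rangle$ in $\gamma$ with $\gamma_0>\lambda$, and for limit $\xi<\mu$ define $D_\xi=\{\eta<\xi:\gamma_\eta$ is a limit point of $C_{\gamma_\xi}\}$. Coherence of $\langle D_\xi\rangle$ descends from that of $\langle C_\alpha\rangle$, and a putative thread $E\subseteq\mu$ stretches via $\xi\mapsto\gamma_\xi$ to a cofinal $V$-club of $\gamma$ coherent with $\langle C_\alpha\rangle$ at its limit points, contradicting the minimal choice of the $N_\alpha$ (no externally defined club of $\gamma$ can agree with all of the internally defined Jensen hulls simultaneously). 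This yields a $\square(\mu)$-sequence, contradicting $\neg\square(\mu)$.

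The hard part will be verifying that the Jensen square construction goes through cleanly for the g-organized and $\Theta$-g-organized hierarchies, whose extra $\dot B$-predicate encodes branches of $\Sigma$. The condensation results of Section \ref{g_organized} are designed precisely for this, but for the ${^\gTheta\Fop}$-hierarchy one must additionally appeal to the internal characterization of $\Theta$-g-organization in Lemma \ref{lem:charac_Tg-org}, since $\Theta$-g-organization is not first order, to ensure that the relevant Skolem hulls of initial segments of $M$ remain $\Theta$-g-organized hybrid premice. A secondary subtlety is the no-thread verification for $\mu<\lambda^+$, which requires tracking carefully how an external thread on $\langle D_\xi\rangle$ reflects back to a cofinal club of $\gamma$ incompatible with the minimality/solidity of the sequence $\langle N_\alpha\rangle$.
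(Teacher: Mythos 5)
Your proposal follows the same route as the paper: produce a fine-structural coherent sequence $\langle C_\alpha\rangle$ on $\gamma$ with order types at most $\lambda$ (the paper invokes the internal $\square_\lambda$ of $M$, available because $\Sigma$ has hull and branch condensation, together with the transfer argument of Schimmerling's coherent-sequences paper; you rebuild the sequence by hand from Jensen hulls), push it down along a continuous cofinal $f:\mu\to\gamma$ to a putative $\square(\mu)$-sequence, and contradict $\neg\square(\mu)$. Note also that the strict inequality $\gamma<\lambda^+$ is exactly Lemma \ref{BasicFacts}, which the statement explicitly imports, so your $\mu=\lambda^+$ case never arises and the preliminary ``size count'' is unnecessary.

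The one step where your justification does not yet constitute an argument is non-threadability. You correctly observe that a thread of $\langle D_\xi\rangle$ stretches to a club $E\subseteq\gamma$ threading $\langle C_\alpha\rangle$, but then dismiss this with ``no externally defined club of $\gamma$ can agree with all of the internally defined Jensen hulls simultaneously.'' That is not a reason; a priori a $V$-club could cohere with every $C_\alpha$ at its limit points, and nothing about the minimality of the $N_\alpha$ rules this out. The actual mechanism, which the paper isolates in its footnote, is that a thread $E$ makes the hulls $H_\alpha$ (equivalently the levels $N_\alpha$) for $\alpha\in\mathrm{Lim}(E)$ cohere under the natural condensation maps, and their direct limit is a sound, countably iterable $\Sigma$-mouse over $A$ projecting to $A$ with ordinal height at least $\gamma$; it therefore qualifies for membership in the Lp-stack defining $M$ but extends $M|\gamma$ and is not in $M$, contradicting the definition of $M$ (equivalently, it collapses $\gamma$, contradicting $\gamma=(\lambda^+)^M$). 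Without this condensation/direct-limit argument the transferred sequence has no reason to be non-threadable and the lemma does not follow. The remaining items you flag (the $\cof=\omega$ points of $\mu$ where $D_\xi$ may fail to be unbounded, and checking that Jensen's construction survives the $\dot{B}$-predicate in the g-organized and $\Theta$-g-organized hierarchies) are standard bookkeeping given fine condensation, and the paper treats them at the same level of detail.
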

\begin{proof}
First note that $\gamma < \lambda^+$ by Lemma \ref{BasicFacts}. Now suppose cof$(\gamma) = \xi$ for some regular cardinal $\xi \in [\kappa^+,\lambda]$. Let $f:\xi \rightarrow \gamma$ be cofinal and continuous. Using $f$ and $\square_\lambda$ in $M$, by a standard argument (see \cite{schimmerling2007coherent}) we can construct a non-threadable sequence of length $\xi$.\footnote{A thread will allows us to construct a $\Sigma$-mouse projecting to $A$ and extends $M|\gamma$ but not in $M$. This is a contradiction to the definition of $M$.}  This contradicts $\neg \square(\xi)$.
\end{proof}

Let $S$ be the set of $X\prec H_{\lambda^{++}}$ such that $\kappa\subseteq X$, $|X|=\kappa$, $X^\omega\subseteq X$, and $X$ is cofinal in the ordinal height of Lp$^\Sigma(B)$, Lp$^{^\gTheta\Sigma}(B)$ and $J, (\P^*\cup\{\P^*\},\Sigma)\in X$.\footnote{This means $(\P^*,\Sigma\rest V)\in X$ and $\Sigma\in X[G]$ but we will abuse notation here.} So $S$ is stationary. As before, we let $\pi_X:M_X\rightarrow H_{\lambda^{++}}$ be the uncollapsed map and $\lambda_X$ be the critical point of $\pi_X$. We first prove some lemmas about ``lifting" operators. In the following, when we write ``Lp$^{^\gTheta\Fop}$", we implicitly assume $\M_1^{\F,\sharp}$ exists and is $(\lambda^+,\lambda^+)$-iterable. We will prove this at the end of the section.
\begin{lemma}
\label{LpClosed}
Suppose $A^*\subseteq \lambda$. Suppose $X\in S$ such that $A^*\in X$ and $X$ is cofinal in Lp$^\Sigma(A^*)$ (there are stationary many such $X$ because cof$(o(Lp^\Sigma(A^*)))\leq \kappa$ by \ref{lem:smallCof}). Let $\pi_X(A)=A^*$.  Then $Lp^\Sigma(A)\subseteq M_X$. The same conclusion holds if we replace $Lp^\Sigma(A)$ by $Lp^{^\gTheta\Sigma}(A)$ or $Lp^{^\g\Sigma}(A)$.
\end{lemma}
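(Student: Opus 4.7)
The plan is to prove the stronger equality $\Lp^\Sigma(A) = (\Lp^\Sigma(A))^{M_X}$, from which $\Lp^\Sigma(A) \subseteq M_X$ is immediate. By Lemma~\ref{BasicFacts}, $o(\Lp^\Sigma(A^*)) < \lambda^+$, so by elementarity of $\pi_X$, $(\Lp^\Sigma(A))^{M_X}$ is a set in $M_X$ satisfying $\pi_X((\Lp^\Sigma(A))^{M_X}) = \Lp^\Sigma(A^*)$. Moreover, the cofinality of $X$ in $o(\Lp^\Sigma(A^*))$ gives $\pi_X[o((\Lp^\Sigma(A))^{M_X})] = X \cap o(\Lp^\Sigma(A^*))$, a cofinal subset.

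For the easier inclusion $(\Lp^\Sigma(A))^{M_X} \subseteq \Lp^\Sigma(A)$, I would transfer countable $\Sigma$-iterability from $M_X$ to $V$. Given $\N \in M_X$ with $M_X \models \N \lhd \Lp^\Sigma(A)$, any countable hull $\sigma : \N_0 \to \N$ formed in $V$ lies in $M_X$ by $X^\omega \subseteq X$; inside $M_X$ the unique $\Sigma^\sigma$-iteration strategy for $\N_0$ exists, and by uniqueness together with $\Sigma \rest V \in V$ this is the genuine $V$-strategy. Hence $\N$ is countably $\Sigma$-iterable in $V$, so $\N \lhd \Lp^\Sigma(A)$.

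For the reverse inclusion, suppose toward a contradiction that $\N \lhd \Lp^\Sigma(A)$ satisfies $o(\N) > o((\Lp^\Sigma(A))^{M_X})$. I copy $\N$ along the bijection $\pi_X \rest A : A \to A^*$ via the standard copy construction for $\Sigma$-premice, justified by the condensation in Lemma~\ref{StategyCondensation}. This produces a sound $\Sigma$-premouse $\N^*$ over $A^*$ together with a copy map $\tilde{\pi} : \N \to \N^*$ extending $\pi_X \rest A$ and agreeing with $\pi_X$ on those segments of $\N$ that happen to lie in $M_X$. The copy $\N^*$ inherits $\omega$-soundness and projects to $A^*$, and its countable $\Sigma$-iterability in $V$ follows by pulling back strategies through $\tilde{\pi}$; hence $\N^* \lhd \Lp^\Sigma(A^*)$, so $o(\N^*) < o(\Lp^\Sigma(A^*))$. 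On the other hand, $o(\N^*) = \tilde{\pi}(o(\N)) \geq \tilde{\pi}(o((\Lp^\Sigma(A))^{M_X})) = \pi_X(o((\Lp^\Sigma(A))^{M_X})) = o(\Lp^\Sigma(A^*))$, a contradiction.

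The arguments for $\Lp^{^\gTheta\Sigma}(A)$ and $\Lp^{^\g\Sigma}(A)$ proceed in exactly the same form, invoking the fine condensation of $^\g\Fop$ and $^\gTheta\Fop$ from Lemmas~\ref{lem:gF_props} and~\ref{lem:Th-g_very_con} in place of that of $\Fop$, and using the obvious analogue of Lemma~\ref{BasicFacts} for those hierarchies. The principal obstacle is the copying step: one must verify that the structure obtained by transferring $\N$ through $\pi_X \rest A$ is genuinely a $\Sigma$-premouse over $A^*$ and that its countable $\Sigma$-iterability is correctly inherited via $\tilde{\pi}$. This is precisely where the condensation properties in Lemma~\ref{StategyCondensation} and the hypothesis $\Sigma \rest V \in V$ (so that the pulled-back strategy is a genuine $V$-object) are essential.
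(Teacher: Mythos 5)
Your overall strategy is the paper's: push a putative new level of $\Lp^\Sigma(A)$ up through $\pi_X$ to obtain a level of $\Lp^\Sigma(A^*)$ lying strictly above $o(\Lp^\Sigma(A^*))$, contradicting the cofinality of $X$ there. (Your first half, establishing $(\Lp^\Sigma(A))^{M_X}\subseteq \Lp^\Sigma(A)$, is fine but not needed: the lemma only asserts the inclusion $\Lp^\Sigma(A)\subseteq M_X$.) However, the step you yourself flag as the principal obstacle is left open, and the hint you give for closing it points in the wrong direction. You cannot "copy $\N$ along the bijection $\pi_X\rest A$": a bijection of the underlying sets does not determine a stretch of the structure above $A$. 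The correct formalization is the one the paper uses: let $E$ be the $(\lambda_X,\lambda)$-extender derived from $\pi_X$ and set $\N^* = \Ult(\N,E)$, with $\tilde\pi = i_E$ the ultrapower map.

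More importantly, "countable $\Sigma$-iterability \ldots follows by pulling back strategies through $\tilde\pi$" is backwards. Pulling back through $\tilde\pi:\N\to\N^*$ transfers iterability from $\N^*$ to $\N$, which is the opposite of what you need; iterability does not push forward along an embedding. What actually closes the gap is the countable completeness of $E$, which holds precisely because $X^\omega\subseteq X$ (the same hypothesis you used in your easier inclusion): given a countable $\sigma:\N_0\to\N^*$, countable completeness lets you factor it as $\sigma = i_E\circ\tau$ with $\tau:\N_0\to\N$, and since $\N\lhd\Lp^\Sigma(A)$ and $\sigma^{-1}(\P^*)=\tau^{-1}(\P^*)$ with $\sigma$ and $\tau$ agreeing on it, $\N_0$ inherits an $(\omega,\omega_1+1)$-$\Sigma^\sigma$-strategy from $\N$. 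That yields $\N^*\lhd\Lp^\Sigma(A^*)$, and the height comparison you already have gives the contradiction. So the architecture is right, but the one step you identified as essential needs the derived-extender/countable-completeness argument rather than a copy construction with strategy pullback.
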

\begin{proof}
We just prove the first clause. Suppose not. Then let $\M\lhd \textrm{Lp}^\Sigma(A)$ be the least counterexample. Let $E$ be the $(\lambda_X,\lambda)$-extender derived from $\pi_X$. Let $\N = \textrm{Ult}(\M,E)$. Then any countable transitive $\N^*$ embeddable into $\N$ (via $\sigma$) is embeddable into $\M$ (via $\tau$) such that $i_E\circ \tau = \sigma$ by countable completeness of $E$. So $\N^*$ is $\omega_1+1$ $\Sigma^\sigma$-iterable because $\M\lhd \textrm{Lp}^{\Sigma}(A)$, $\sigma^{-1}(\P^*)=\tau^{-1}(\P^*)$, and $\sigma\rest \sigma^{-1}(\P^*) = \tau\rest \sigma^{-1}(\P^*)$. So $\N\lhd \textrm{Lp}^\Sigma(A^*)$. But since $\pi_X$ is cofinal in Lp$^\Sigma(A^*)$, $\N\notin \textrm{Lp}^\Sigma(A^*)$. Contradiction.
\end{proof}
\begin{lemma}
\label{LiftOperators}
\begin{enumerate}
\item If $H$ is defined by $(\psi,a)$ on $H^{V[G]}_{\omega_1}$ (as in clause 1 of \ref{cmi operator}) with $a\in V$ and $H\rest V\in V$, then $H$ can be extended to an operator $H^+$ defined by $(\psi,a)$ on $H_{\lambda^+}$. Furthermore, $H^+$ relativizes well.
\item If $(\Q,F)$ and $\Gamma$ are as in clause 2 of Definition \ref{cmi operator}, where $F$ plays the role of $\Lambda$ there with $(\Q,F\rest V)\in V$, then $F$ can be extended to a $(\lambda^+,\lambda^+)$-strategy that has branch condensation. Furthermore, there is a unique such extension.
\end{enumerate}
\end{lemma}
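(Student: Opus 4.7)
The plan for both parts is to exploit the stationary set $S$ of hulls together with the uncollapse maps $\pi_X$. Given an input $b\in H_{\lambda^+}$, I pick $X\in S$ with $b\in X$, pull $b$ down to $\bar b=\pi_X^{-1}(b)\in M_X$, apply the already-defined operator (in $V[G]$, $\bar b$ is countable since $|M_X|=\kappa$ and $G$ collapses $\kappa$), and push the result back up through $\pi_X$. Lemma \ref{LpClosed} keeps the intermediate mouse or iterate inside $M_X$ so that the lift makes sense, while the condensation and fullness-preservation hypotheses on $H$ and $F$ ensure the lift is correct and independent of $X$.

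For part (1), given $b\in H_{\lambda^+}^V$ in the cone above $a$, I choose $X\in S$ with $b,a\in X$ and $X$ cofinal in $\mathrm{Lp}^\Sigma(B)$ for some $B\in X$ coding $b$; such $X$ are stationary by Lemma \ref{lem:smallCof}. Let $\bar b=\pi_X^{-1}(b)$. Since $\bar b$ is countable in $V[G]$, $H(\bar b)$ is defined and is an $\Fop$- (or $^\g\Fop$-)mouse over $\bar b$; by Lemma \ref{LpClosed} it lies inside $M_X$, after an $S$-construction if needed to absorb the relevant piece of $G$. I then set $H^+(b):=\pi_X(H(\bar b))$. Independence from the choice of $X$ is the standard common-refinement argument: for $X,Y\in S$ both containing $b$, pick $Z\in S$ with $X\cup Y\subseteq Z$ and factor both candidates through $Z$; elementarity then forces agreement. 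That $H^+$ is defined on $H_{\lambda^+}$ by the same pair $(\psi,a)$ is immediate from first-orderness and the elementarity of $\pi_X$, and the relativizes-well clause transfers by the same reflection.

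Part (2) follows an analogous scheme for the strategy. Given a stack $\vec\T$ on $\Q$ of length $<\lambda^+$, pick $X\in S$ with $\vec\T,\Q,F\rest V\in X$ and set $\bar{\vec\T}=\pi_X^{-1}(\vec\T)$. Then $\bar c:=F(\bar{\vec\T})$ is defined because $\bar{\vec\T}$ is countable in $V[G]$. I define $F^+(\vec\T)$ to be the unique cofinal branch $c$ of $\vec\T$ such that $(\bar{\vec\T}\conc\bar c,\pi_X)$ embeds as a hull of $(\vec\T\conc c,\mathrm{id})$. Existence of $c$ is delivered by lifting $\bar c$ through $\pi_X$; $\Gamma$-fullness preservation of $F$ rules out spurious drops of $\bar c$, and branch condensation gives correctness and $X$-independence. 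Iterating up the stack produces the $(\lambda^+,\lambda^+)$-strategy $F^+$, which inherits branch condensation from $F$ by construction.

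Uniqueness of the extension in part (2) follows because any other $(\lambda^+,\lambda^+)$-extension of $F$ with branch condensation must agree with $F$ on all countable trees, and then by the same hull-reflection argument via $S$ must agree with $F^+$ on all trees. The main technical obstacle throughout is showing that countable iterability and branch-correctness transfer upward through $\pi_X$: this is exactly what the fine-condensation lemmas (notably Lemmas \ref{lem:strong_condenses_coarsely} and \ref{StategyCondensation}) and fullness preservation deliver, playing the same role as in the parallel lifting arguments in \cite{ATHM} and \cite{trang2013}.
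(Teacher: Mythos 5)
Part (1) of your proposal matches the paper's argument: pull $b$ back along $\pi_X$ for a suitable $X\in S$, apply $H$ to the now-countable $\bar b$, use Lemma \ref{LpClosed} to see $H(\bar b)\in M_X$, push forward, and spread to all of $H_{\lambda^+}$ by relativizing well. That part is fine.

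In part (2), however, the sentence ``existence of $c$ is delivered by lifting $\bar c$ through $\pi_X$'' hides the step on which the whole lemma turns, and as stated it fails. The map $\pi_X$ is continuous at $\lh(\vec\T)$ only when $\cf(\lh(\vec\T))\leq\kappa$; if $\cf(\lh(\vec\T))\geq\kappa^+$ then $X\cap\lh(\vec\T)$ is bounded (as $|X|=\kappa$), so the pointwise image $\pi_X[\bar c]$ of the branch $\bar c=F(\pi_X^{-1}(\vec\T))$ is bounded in $\lh(\vec\T)$ and does not determine a cofinal branch of $\vec\T$ at all. To get a branch in that case one must show that $\bar c$ is actually an \emph{element} of $M_X$, so that the elementary image $\pi_X(\bar c)$ (not the pointwise image) can be taken; the paper does this by arranging $X$ cofinal in $\mathrm{Lp}^{^\g\Sigma}_{*,+}(\M(\vec\T))$, invoking the argument of Lemma \ref{LpClosed} to get $\Q(\pi_X^{-1}(\vec\T))\in M_X$, and recovering $\bar c$ inside $M_X$ by absoluteness from its $\Q$-structure. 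Neither branch condensation nor fullness preservation substitutes for this. A second omission: well-definedness across choices of $X$ in the case $\cf(\lh(\vec\T))=\omega$ is not ``immediate from branch condensation''; the paper needs a pressing-down (Fodor) argument to stabilize the least $\alpha$ with $f_X[\alpha]\cap b_X$ cofinal, and only then does hull condensation of $F$ force agreement. Finally, the extension must be built by induction on the length $\xi$ of the tree (defining $F_\xi$ from the $F_{\xi^*}$ for $\xi^*<\xi$), since the hypothesis that $\vec\T\rest\xi^*$ is according to the already-constructed strategy is used throughout; ``iterating up the stack'' elides this. Your uniqueness argument is essentially the paper's and is acceptable.
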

\begin{proof}
To prove 1), first let $A^*$ be a bounded subset of $\lambda^+$ (in the cone above $a$) and let $X\in S$ such that $A^*\in X$ and $X$ is cofinal in Lp$^\Sigma(A^*)$. Let $\pi_X(A) = A^*$. We assume $H$ is an $\Fop$-mouse operator. By Lemma \ref{LpClosed}, $H(A)\in M_X$ and hence we can define $H^+(A^*) = \pi_X(H(A))$ (as the first level $\M \lhd \textrm{Lp}^\Sigma(A^*)$ that satisfies $\psi[A^*,a]$). This defines $H^+$ on all bounded subsets of $\kappa^+$. The same proof works for $J$ being a $^\g\Fop$-mouse operator. We can then define $H^+$ on all of $H_{\kappa^+}$ using the fact that $J$ relativizes well and $|H_\kappa|=\kappa$. It's easy to see then that $H^+$ also relativizes well.

We first prove the ``uniqueness" clause of 2). Suppose $F_1$ and $F_2$ are two extensions of $F$ and let $\mathcal{T}$ be according to both $F_1$ and $F_2$. Let $b_1 = F_1(\mathcal{T})$ and $b_2 = F_2(\mathcal{T})$. If $b_1\neq b_2$ then cof$(lh(\mathcal{T})) = \omega$. So letting $\mathcal{T}^*$ be a hull of $\mathcal{T}$ such that $|\mathcal{T}^*|\leq \omega_2$ and letting $\pi:\mathcal{T}^*\rightarrow \mathcal{T}$ be the hull embedding, then $b_1\cup b_2 \subseteq \textrm{rng}(\pi)$. Then $\pi^{-1}[b_1]=F(\mathcal{T}^*) \neq \pi^{-1}[b_2] = F(\mathcal{T}^*)$. Contradiction.

To show existence, let $F_{\kappa^+} = F$. Inductively for each $\kappa^+\leq \xi <\lambda^+$ such that $\xi$ is a limit ordinal, we define a strategy $F_\xi$ extending $F_\alpha$ for $\alpha<\xi$ and $F_\xi$ acts on trees of length $\xi$. For $X\prec Y \prec H_{\lambda^{++}}$, let $\pi_{X,Y} = \pi_Y^{-1}\circ \pi_X$. Let $\mathcal{T}$ be a tree of length $\xi$ such that for all limit $\xi^*<\xi$, $\mathcal{T}\rest \xi^*$ is according to $F_{\xi^*}$. We want to define $F_{\xi}(\mathcal{T})$.

For $X\in S$ such that $X$ is cofinal in Lp$_{*,+}^{^\g\Sigma}(\M(\T))$ (such an $X$ exists by the proof of \ref{lem:smallCof} again)\footnote{Recall Lp$_{*,+}^{^\g\Sigma}(\M(\T))$ is just Lp$_{*,+}^{^\g\Fop}(\M(\T))$.}, let $(\mathcal{T}_X,\xi_X) = \pi_X^{-1}(\mathcal{T},\xi)$ and $b_X = F(\mathcal{T}_X)$. Let $c_X$ be the downward closure of $\pi_X[b_X]$ and $c_{X,Y}$ be the downward closure of $\pi_{X,Y}[b_X]$.
\\
\\
\noindent \textbf{Claim:} For all $\gamma<\xi$, either $\forall^* X\in S\footnote{This means the set of such $X$ is $C\cap S$ for some club $C$.} \ \gamma \in c_X$ or $\forall^* X \in S \ \gamma \notin c_X$.
\begin{proof}
The proof is similar to that of Lemma 2.5 in \cite{PFA} so we only sketch it here. Suppose for contradiction that there are stationarily many $X\in S$ such that $\gamma\in c_X$ and there are stationarily many $Y\in S$ such that $\gamma\notin c_Y$. Suppose first cof$(\xi)\in [\omega_1,\kappa]$. Note that crt$(\pi_X)$, crt$(\pi_Y) > \kappa$. It's easy then to see that $\pi_X[b_X]$ is cofinal in $\xi$ and $\pi_Y[c_Y]$ is cofinal in $\xi$. Hence $c_X = c_Y$. Contradiction.

Now suppose cof$(\xi) = \omega$. Fix a surjection $f:\lambda\twoheadrightarrow \xi$. $\forall^* X\in S \ (f,\xi) \in X$ so let $(f_X,\xi_X) = \pi_X^{-1}(f,\xi)$. For each such $X$, let $\alpha_X$ be least such that $f_X[\alpha_X]\cap b_X$ is cofinal in $\xi_X$. By Fodor's lemma,
\begin{center}
$\exists \alpha \exists U \ (U \textrm{ is stationary} \wedge \forall X \in U \ \alpha_X = \alpha$).
\end{center}
By symmetry and by thinning out $U$, we may assume
\begin{center}
$X\in U \Rightarrow \pi_X^{-1}(\gamma)\in b_X$.
\end{center}
Fix $Y\in S$ such that $\gamma\notin c_Y$ and $\alpha<\lambda_Y$. Since $U$ is stationary, there is some $X\in U$ such that $Y \prec X$, which implies
\begin{center}
$\pi_{Y,X}[f_Y[\alpha]] = f_X[\alpha]$
\end{center}
is cofinal in $b_X$ and hence $\mathcal{T}_Y^\smallfrown \pi^{-1}_{Y,X}[b_X]$ is a hull of $\mathcal{T}_X$. Since $F$ condenses well, $\pi_{Y,X}^{-1}[b_X]=b_Y$. This contradicts the fact that $\pi_X^{-1}(\gamma)\in b_X$ but $\pi_Y^{-1}(\gamma)\notin b_Y$.

Finally, suppose cof$(\xi) \geq \kappa^+$. The case $\mathcal{T}_X$ is maximal is proved exactly as in Lemma 1.25 of \cite{PFA}. Suppose $\T_X$ is short and is according to $F$. Note that lh$(\T_X)$ has uncountable cofinality (in $V$). We claim that $\forall^* X\in S \ b_X = F(\T_X) \in M_X$. Given the claim we get that for any two such $X \prec Y$ satisfying the claim, $\pi_{X,Y}(b_X)$ is cofinal in $\T_Y$ and hence $\pi_{X,Y}(b_X) = b_Y$. This gives $c_{X,Y}$ is an initial segment of $b_Y$, which is what we want to prove.

Now to see $\forall^* X\in S \ b_X = F(\T_X) \in M_X$. We first remind the reader $\Q(\T_X)$ is the least $\Q \lhd \textrm{Lp}^{^\g\Sigma,\Gamma}_{+}(\M(\T_X))$ that defines the failure of Woodinness of $\delta(\T_X)$. Since $\delta(\T)$ has uncountable cofinality (in $V$ and in $V[G]$), by a standard interpolation argument, whenever $\M_0,\M_1\in \textrm{Lp}^{^\g\Sigma,\Gamma}_{+}(\M(\T_X))$ then we have either $\M_0\unlhd \M_1$ or $\M_1 \unlhd \M_0$. So the ``leastness" of $\Q(\T_X)$ is justified in this case. By the same proof as that of Lemma \ref{LpClosed} and the fact that $X$ is cofinal in Lp$^{^\g\Sigma}_{*,+}(\M(\T))$ and $\textrm{Lp}^{^\g\Sigma,\Gamma}_{+}(\M(\T_X))\unlhd \textrm{Lp}^{^\g\Sigma}_{*,+}(\M(\T))$, we get $\Q(\T_X)\in M_X$. 

Now $F(\T_X) = b_X$ is the unique branch $b$ such that $\Q(b,\T_X)$ exists and $(\Q(b,\T_X)^*,\M(\T_X))$\footnote{See \cite{DMATM} for more on $\ast$-translations.$(\Q(b,\T_X)^*,\M(\T_X))$ is fine-structurally equivalent to $\Q(b,\T_X)$ but itself is a $^\gTheta\Sigma$-premouse over $\M(\T_X)$.} is $\Q(\T_X)$. The uniqueness of $b_X$ follows from a standard comparison argument. By an absoluteness argument and the fact that $\Q(\T_X)\in M_X$, $b_X\in M_X$. We're done.
\end{proof}
Using the claim, we can just define 
\begin{center}
$\gamma \in F_\xi(\mathcal{T}) \Leftrightarrow \forall^* X\in S \ \gamma \in c_X$. 
\end{center}
It's easy to verify that with this definition, the unique extension of $F$ to a $(\kappa^+,\kappa^+)$ strategy has branch condensation. This completes the proof sketch of the lemma.
\end{proof}

\begin{lemma}
\label{LiftOperatorsToGenExt}
\begin{enumerate}
\item If $H$ is a defined by $(\psi,a)$ on $H^{V[G]}_{\omega_1}$ as in clause 1 of \ref{cmi operator} with $a\in H_{\kappa^+}^V$, then $H$ can be extended to a first order mouse operator $H^+$ defined by $(\psi,a)$ on $H^{V[G]}_{\lambda^+}$. Furthermore, $H^+$ relativizes well and if $H$ determines itself on generic extensions then so does $H^+$.
\item If $(\Q,\Lambda)$ and $\Gamma$ are as in clause 2 of Definition \ref{cmi operator}, where $F$ plays the role of $J$ there and $(\Q,\Lambda\rest V)\in V$, then $\Lambda$ can be extended to a unique $(\lambda^+,\lambda^+)$-strategy that has branch condensation in $V[G]$.
\end{enumerate}
\end{lemma}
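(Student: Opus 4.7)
The plan is to establish both clauses in two stages: first apply Lemma~\ref{LiftOperators} to extend the operators inside $V$, and then push the $V$-side extension across the generic $G$. The pushing will use $S$-construction translations for the first-order mouse case, and the hull-voting technique from the proof of Lemma~\ref{LiftOperators}(2) for the strategy case.

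For (1), I would first apply Lemma~\ref{LiftOperators}(1) to $H\rest V\in V$ to obtain a first-order mouse operator $H^+$ on $H^V_{\lambda^+}$, given by the same formula $(\psi,a)$ and relativizing well in $V$. To extend $H^+$ to $c\in H^{V[G]}_{\lambda^+}$ in the cone above $a$, I would choose a $\Coll(\omega,\kappa)$-name $\dot c\in H^V_{\lambda^+}$ for $c$ and a transitive $b\in H^V_{\lambda^+}$ of size at least $|c|^{V[G]}$ containing $\dot c$, $\Coll(\omega,\kappa)$, and $a$; compute $H^+(b)\in V$; then since $G$ is $\Coll(\omega,\kappa)$-generic over $H^+(b)$, an $S$-construction over $c$ inside $H^+(b)[G]$ produces a sound $\Sigma$-premouse $\M_c$ over $c$ with $\M_c[G]=H^+(b)$ up to recoding, and I set $H^+(c)=\M_c$. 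That $H^+(c)$ is the unique $\Sigma$-mouse over $c$ satisfying $\psi$, and that the assignment is independent of the choices of $\dot c$ and $b$, follows from the relativizes-well property of $H^+$ on the $V$ side combined with reversibility of the $S$-construction; relativizing well in $V[G]$ is then routine. For the last clause, if $H$ determines itself on generic extensions via some $(\phi,a_0)$ with $a_0\in V$, I would argue that $H^+$ does too by absorbing both $G$ and any further $g\in V[G]$ set-generic over $N\in V[G]$ into a single $V$-side generic extension of a transitive $\tilde N\in V$ closed under $H$, then invoking the $V$-side witness inside $\tilde N$'s extension to define $H^+\rest N[g]$ by $(\phi,a_0)$ over $N[g]$.

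For (2), applying Lemma~\ref{LiftOperators}(2) to $\Lambda\rest V$ gives a unique $(\lambda^+,\lambda^+)$-extension $\Lambda^+$ in $V$ with branch condensation. To define a $(\lambda^+,\lambda^+)$-extension $\Lambda^{++}\in V[G]$, I would repeat the voting construction of Lemma~\ref{LiftOperators}(2), but now with hulls taken inside $V[G]$: let $S^{V[G]}$ be the $V[G]$-analog of the stationary set $S$, consisting of $X\prec H^{V[G]}_{\lambda^{++}}$ of size $\kappa$ in $V[G]$, $\omega$-closed in $V[G]$, cofinal in the relevant $\Lp^{^\g\Sigma}$-stacks, and containing $\Lambda^+\rest V$, the tree $\Ttbar$, and a name for $\Ttbar$. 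For a tree $\Ttbar\in V[G]$ of limit length $\xi\leq\lambda^+$ according to $\Lambda^{++}$ (defined inductively for shorter trees), I set $b_X=\Lambda^+(\pi_X^{-1}(\Ttbar))$ and define $\Lambda^{++}(\Ttbar)$ by the vote: $\gamma$ lies in the branch iff $\forall^*X\in S^{V[G]}$, $\pi_X^{-1}(\gamma)\in b_X$. The Claim inside the proof of Lemma~\ref{LiftOperators}(2) then adapts essentially verbatim, splitting into cases by $\cof(\xi)^{V[G]}$ and by short vs.\ maximal trees, with the additional observation that in the short-tree case the $\Q$-structure for the pullback $\pi_X^{-1}(\Ttbar)$ can be located in the $V$-side $\Lp^{^\g\Sigma}$-hierarchy via Lemma~\ref{LpClosed} and is therefore available inside $M_X$ (and hence in $M_X[G]$). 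Branch condensation of $\Lambda^{++}$ is inherited from that of $\Lambda^+$ via pullback, and uniqueness is as in Lemma~\ref{LiftOperators}(2): any disagreement of two such extensions reflects down to a countable hull already adjudicated by $\Lambda$.

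The main obstacle I anticipate is in part (2), verifying that the vote well-defines a branch of $\Ttbar$ in the short-tree case, since the $\Q$-structures naturally sit in $V[G]$-side hierarchies but the voting argument requires them to be produced by a $V$-side computation inside each $M_X$. I plan to handle this via an $S$-construction identifying the $V[G]$-side $\Q$-structure with a $V$-side hybrid mouse built over an object in $V$ that $S$-construction-translates back after forcing with the residual part of $\Coll(\omega,\kappa)$; combined with the $\neg\square$ cofinality analysis of Lemma~\ref{lem:smallCof} to control the cofinalities of the lengths of the pullback trees, this should allow the case analysis of the Claim to go through exactly as in the proof of Lemma~\ref{LiftOperators}(2).
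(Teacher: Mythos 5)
Your treatment of clause (1) is an acceptable alternative route. The paper instead reflects into a hull $X\in S$, notes $H(\bar\tau)=\pi_X^{-1}(H(\tau))\in M_X$ by condensation, uses the relativizes-well property inside $M_X[G]$ to obtain $H(\bar b)$, and sets $H^+(b)=\pi_X(H(\bar b))$; your $S$-construction translation from $H^+(b)$ to a mouse over $c=\dot c_G$ is the same device the paper uses in Lemmas \ref{gLpFull} and \ref{correctness}, so it is sound here. What you omit is the verification that the resulting structure is countably $\Sigma$-iterable \emph{in $V[G]$} (so that it really is an initial segment of $\Lp^{\Sigma}(c)$ as computed in $V[G]$); the paper does this by passing to a second, larger hull $Y\in S$ with $\mathrm{ran}(\pi)\subseteq\mathrm{ran}(\pi_Y)$ and pulling back countable substructures. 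You should add that step, or argue that iteration trees on $\M_c$ above the relevant cutpoint translate back to trees on $H^+(b)$.

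Clause (2) has a genuine gap. The proposed ``$V[G]$-analog of $S$'' --- hulls $X\prec H^{V[G]}_{\lambda^{++}}$ of size $\kappa$ that are $\omega$-closed in $V[G]$ --- does not exist: $\kappa$ is countable in $V[G]$, and no infinite countable set is closed under $\omega$-sequences. Without $\omega$-closure of the hulls the Claim from the proof of Lemma \ref{LiftOperators}(2) does not ``adapt essentially verbatim'': the Fodor/pressing-down argument in the $\cof(\xi)=\omega$ case and the $\Q$-structure argument in the short-tree case both lean on $X^\omega\subseteq X$ and on the uncountable $V$-cofinality of $\lh(\T)$, and in $V[G]$ every $V$-cofinality $\leq\kappa$ has collapsed to $\omega$, so the entire case trichotomy changes. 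The paper avoids voting in $V[G]$ altogether. It first uses Lemma \ref{LiftOperators} to get the unique $(\lambda^+,\lambda^+)$-strategy $\Lambda^*$ for $\mathfrak{M}=\M_1^{\Lambda,\sharp}$ in $V$; then, given transitive $M\in H^{V[G]}_{\lambda^+}$ with name $\tau$, it forms $N=L^{\Lambda^*}_{\lambda^+}[A,\mathfrak{M}]$ with $A$ coding $\mathrm{tr.cl.}(\tau)$, runs the genericity-iteration tree $\T_{\mathrm{tr.cl.}(\tau)}$ of Definition \ref{genGenTree} inside $N$ (its length is below $\lambda^+$ because $\square_\lambda$ holds in $N$ but fails in $V$), and for $\U\in M$ of limit length reads off $\Lambda^+(\U)$ by generically interpreting $\Lambda$ over the last model $\R$, with well-definedness and branch condensation obtained by a reflection into the hulls $X[G]$. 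Generic interpretability of $\Lambda$ from $\M_1^{\Lambda,\sharp}$ is the missing idea in your proposal, and it is what makes the extension to arbitrary trees in $H^{V[G]}_{\lambda^+}$ uniform.
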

\begin{proof}
For (1), let $b\in H_{\lambda^+}^{V[G]}$ and let $\tau \in H^V_{\lambda^+}$ be a nice Col$(\omega,\kappa)$-name for $b$ (Col$(\omega,\kappa)$ is $\kappa^+$-cc so such a name exists by the choice of $\lambda$).\footnote{In particular, a nice Col$(\omega,\kappa)$-name for a real can be considered a subset of $\kappa$ and hence a nice Col$(\omega,\kappa)$-name for $\mathbb{R}^{V[G]}$ is an element of $H_{\lambda^+}^V$.} Assume $H$ is a $\Sigma$-mouse operator (the other case is proved similarly). Let $X\in S$ be such that $\P^*\cup\{\P^*\},\Sigma, b, \tau, H(\tau) \in X[G]$; here we use Lemma \ref{LiftOperators} to get that $H(\tau)$ is defined. Let $(\bar{b},\bar{\tau}) = \pi_X^{-1}(b,\tau)$. Then $\pi_X^{-1}(H(\tau)) = H(\bar{\tau})\in M_X$ by condensation of $H$. Since $H$ relativizes well, $H(\bar{b})\in M_X[G]$. This means we can define $H^+(b)$ to be $\pi_X(H(\bar{b}))$. We need to see that $H^+(b)$ is countably $\Sigma$-iterable in $V[G]$. So let $\pi: \N\rightarrow H^+(b)$ with $\N$ countable transitive in $V[G]$ and $\pi(b^*)=b$. Let $X\subset Y\in S$ be such that ran$(\pi)\subseteq \textrm{ran}(\pi_Y)$; then $H(\pi_Y^{-1}(b))\in M_Y[G]$ and there is an embedding from $\N$ into $H(\pi_Y^{-1}(b))$, so $\N$ has an $(\omega_1,\omega_1+1)$-$\Sigma$-iteration strategy. The definition doesn't depend on the choice of $X$ and it's easy to see that $H^+$ satisfies the conclusion.

For (2), let $M\in H_{\lambda^+}^{V[G]}$ be transitive and $\tau\in H_{\lambda^+}^V$ be a Col$(\omega,\kappa)$-term for $M$. We define the extension $\Lambda^+$ of $\Lambda$ as follows (it's easy to see that there is at most one such extension). In $N=L_{\lambda^+}^{\Lambda^*}[A,\mathfrak{M}]$, where $A\subseteq\lambda$ codes $tr.cl.(\tau)$ and a well-ordering of $tr.cl.(\tau)$, $\Lambda^*$ is the unique $(\lambda^+,\lambda^+)$-$\Lambda$-strategy for $\mathfrak{M}=\M_1^{\Lambda,\sharp}$ in $V$. $\Lambda^*$ exists by Lemma \ref{LiftOperators}.

Let $\T_{tr.cl.(\tau)}$ be according to $\Lambda^*$ and be defined as in Definition \ref{genGenTree}. Note that $(\lambda^{+})^{N} < (\lambda^+)^V$ by $\neg \square_{\lambda}$ and the fact that $\square_{\lambda}$ holds in $N$, so $\T_{tr.cl.(\tau)}\in N$ and has length less than $o(N)=\lambda^+$. Let $\R$ be the last model of $\T_{tr.cl(\tau)}$ and note that by the construction of $\T_{tr.cl.(\tau)}$, $M$ is generic over $\R$. Let $\U\in M$ be a tree according to $\Lambda^+$ of limit length, then set $\Lambda^+(\U) = b$ where $b$ is given by (the proof of) \cite[Lemma 4.8]{trang2013} by interpreting $\Lambda$ over generic extensions of $\R$. By a simple reflection argument, it's easy to see that $\Lambda^+(\U)$ doesn't depend on $M$.\footnote{Let $M, M^*$ be such that $\U\in M\cap M^*$; let $\tau,\tau^*$ be nice Col$(\omega,\kappa)$-terms for $M, M^*$ respectively. In $V[G]$, let $X[G]$ contain all relevant objects and $X\in S$. Let $\bar{a}=\pi_X^{-1}(a)$ for all $a\in X[G]$. Then letting $b_0,b_1$ be the branches of $\bar{\mathcal{U}}$ given by applying \cite[Lemma 4.8]{trang2013} in $L^{\Lambda^*}[tr.cl.(\bar{\tau}),<_1,\mathfrak{M}], L^{\Lambda^*}[tr.cl.(\bar{\tau^*}),<_2,\mathfrak{M}]$ (built inside $M_X[G]$), where $<_1$ is a well-ordering of $\bar{\tau}$ and $<_2$ is a well-ordering of $\bar{\tau^*}$. Then $b_0 = b_1$ as both are according to $\Lambda$, since $(\mathfrak{M},\Lambda^*)$ generically interprets $\Lambda$ in $V[G]$.} This completes the construction of $\Lambda^+$. It's easy to see that $F^+$ has branch condensation.
\end{proof}

Let $J$ be as above. We now proceed to construct $\M_1^{J,\sharp}$. We denote $\M_0^{J,\sharp}(x)$ for the least $E$-active, sound $J$-mouse over $x$.

\begin{lemma}
\label{SharpsExist}
For every $A$ bounded in $\lambda^+$, $\M_0^{J,\sharp}(A)$ exists.
\end{lemma}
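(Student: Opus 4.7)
The plan is to argue by contradiction using the stationary reflection machinery of Lemmas \ref{LpClosed}--\ref{LiftOperatorsToGenExt}. Suppose there is an $A$ bounded in $\lambda^+$ with no $\M_0^{J,\sharp}(A)$. By the standard lifting argument we may reduce to $A \in H^V_{\kappa^+}$: pick $X \in S$ with $A \in X$ and $X$ cofinal in $\mathrm{Lp}^J(A)$ (stationary-many such $X$ exist by the cofinality computation of Lemma \ref{lem:smallCof}), let $\bar A = \pi_X^{-1}(A) \in M_X$, and show it suffices to produce $\M_0^{J,\sharp}(\bar A)$; from this, the $(\lambda_X,\lambda)$-extender derived from $\pi_X$ produces $\M_0^{J,\sharp}(A)$ by a countable completeness argument exactly as in Lemma \ref{LpClosed}.

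So assume $\M_0^{J,\sharp}(\bar A)$ fails to exist; then every countably $J$-iterable $J$-premouse over $\bar A$ is $E$-passive. Running the fully backgrounded $L^J[\vec E, \bar A]$-construction in $V$ (which is well-defined since $J$ is nice, relativizes well, and determines itself on generic extensions by Lemmas \ref{LiftOperators} and \ref{LiftOperatorsToGenExt}) produces a passive fine-structural $J$-premouse $W$ of height $\lambda^+$ with no total extender on its sequence. Because $J$ condenses finely, the Jensen--Schimmerling--Zeman argument yields $W \vDash \square_\mu$ for every infinite cardinal $\mu$ of $W$; moreover, since $W$ contains no inner model with a measurable, weak covering for hybrid core models (adapted from \cite{JSSS}) gives $(\kappa^+)^W = \kappa^+$. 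Hence the $W$-$\square_\kappa$-sequence is a genuine $\square_\kappa$-sequence in $V$, witnessing $\square(\kappa^+)$ in $V$, contradicting $\neg \square(\kappa^+)$ since $\kappa^+ \in [\kappa^+, (2^\kappa)^+]$.

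The main obstacle is establishing the square-construction and weak-covering steps within the hybrid $J$-premouse framework: one must verify that the fine-structural arguments of \cite{FSIT} and \cite{schimmerling2007coherent} transfer to $J$-premice, relying on the condensation properties of $J$ (guaranteed by $J$ being a nice, uniformly $\Sigma_1$, finely-condensing operator) and its generic self-determination (Definition \ref{detGenExts}). One also needs to handle the degenerate case in which $\kappa$ is regular, where the standard form of weak covering is slightly more delicate; in that case one argues directly that a non-trivial $\square(\kappa^+)$-sequence can be extracted from the $\square_\kappa$-sequence of $W$, using again that all the relevant $C_\beta$'s lie in $V$.
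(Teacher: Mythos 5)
Your argument takes a genuinely different route from the paper's. The paper does not go through core models, covering, or squares at all for this lemma: it fixes $X\in S$ with $X$ cofinal in the $L^J[A]$-successor of $\kappa^+$ (possible because that ordinal has cofinality $\leq\kappa$, by the mechanism of Lemma \ref{lem:smallCof}), derives the filter $\mu=\{B\subseteq\lambda_X \mid \lambda_X\in\pi_X(B),\ B\in L^J[A]\}$, uses the cofinality of $X$ to see that $\mu$ measures \emph{all} of $\powerset(\lambda_X)\cap L^J[A]$ and $X^\omega\subseteq X$ for countable completeness, and then runs a Kunen-style realization argument to show that iterating $L^J[A]$ by $\mu$ and its images keeps producing $L^J[A]$, which is exactly what $\M_0^{J,\sharp}(A)$ requires. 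This is softer than your proposal: it needs no covering lemma and no square construction inside $W$, only the condensation of $J$ and the closure properties of the hulls in $S$.

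Your route — essentially the paper's proof of Lemma \ref{M1sharpExists} transplanted to the sharp — has a concrete gap at the weak covering step. The assertion that covering gives $(\kappa^+)^W=\kappa^+$ is unjustified precisely in the main case, namely $\kappa$ regular (e.g.\ $\kappa=\aleph_2$ under $\textsf{PFA}$ or (T)): weak covering in the Jensen--Steel form only yields $\mathrm{cof}(\gamma)\geq|\gamma|$ for $\gamma$ a successor cardinal of $W$, so if $\gamma=(\kappa^+)^W<\kappa^+$ you merely get $\mathrm{cof}(\gamma)=\kappa$, and the square sequence you extract then witnesses a coherence principle at an ordinal of cofinality $\kappa$ --- but $\kappa\notin[\kappa^+,(2^\kappa)^+]$, so no contradiction with the hypothesis results. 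The ``degenerate case'' you defer is thus the whole point, and the fix is the one used in Lemma \ref{M1sharpExists}: choose $\gamma$ a successor cardinal of $W$ with $\gamma\geq\kappa^+$ \emph{as an ordinal} (available since $o(W)=\lambda^+$ is a limit of $W$-cardinals), so that $\mathrm{cof}(\gamma)\geq|\gamma|\geq\kappa^+$, and then transfer the $W$-square sequence on $\gamma$ to a $\square(\mathrm{cof}(\gamma))$-sequence on the genuine regular cardinal $\mathrm{cof}(\gamma)\in[\kappa^+,2^\kappa]$. With that repair (and granting the nontrivial black boxes of square constructions and weak covering for $J$-premice, which the paper itself only invokes at the later stage of \ref{M1sharpExists}), your argument would go through, but it uses considerably heavier machinery than the lemma needs.
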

\begin{proof}
By Lemma \ref{LiftOperators}, it's enough to show that if $A$ is a bounded subset of $\kappa^+$, then $\M_0^{J,\sharp}(A)$ exists. Fix such an $A$ and let $X\in S$ such that $\textrm{sup}(A)\cup\{A,\textrm{sup}(A), J\}\subseteq X$ and $X$ is cofinal in the $L^J[A]$-successor of $\kappa^+$, which has cofinality at most $\kappa$. Hence $\pi_X(A) = A$. Let
\begin{center}
$\mu = \{B \subseteq \lambda_X \ | \ \lambda_X\in \pi_X(B) \wedge B\in L^J[A]\footnote{We only build $L^J[A]$ up to $\lambda^+$.}\}$.
\end{center}
\begin{claim}
$\mu$ is a countably complete $L^J[A]$-ultrafilter,
\end{claim} 
\begin{proof}

Let $Q = L^J[A]$ and $P = \pi_X^{-1}(Q)$. Let $\eta=\kappa^+$ and $\xi = \pi_X^{-1}((\eta^+)^Q)$. Let $\kappa_0 = \lambda_X$. Then $\xi = (\kappa_0^+)^Q$. This is because $X$ is cofinal in $(\eta^+)^Q$. So $\mu$ is indeed total over $L^J[A]$. Using the fact that $X^\omega\subset X$, we get that $\mu$ is countably complete.
\end{proof}

We need to know that when iterating $L^J[A]$ by $\mu$ and its images, the iterates are $L^J[A]$. This follows from a well-known argument by Kunen. The point is that iterates of $L^J[A]$ by $\mu$ and its images can be realized back into $L^J[A]$ and hence since $J$ condenses well, the ultrapowers are $L^J[A]$. We outline the proof here for the reader's convenience (see \cite[Theorem 28]{DFSR} for a similar argument).

Let $\mu_0=\mu$, $\xi_0=\xi$, and $\M_0 = (L_\xi^J[A],\mu_0)$. By the usual Kunen's argument, $\M_0$ is an amenable structure. By induction on $\alpha<\kappa^+$, we define:
\begin{enumerate}
\item $\M_\alpha$, the $\alpha$-th iterate of $\M_0$ by $\mu_0$ and its images,
\item maps $\pi^*_{\beta,\alpha}:\M_\beta\rightarrow \M_\alpha$ for $\beta<\alpha$,
\item maps $\pi_{\beta,\alpha}: L^J[A]\rightarrow L^J[A]$ extending $\pi^*_{\beta,\alpha}$,
\item maps $\tau_\alpha: L^J[A]\rightarrow L^J[A]$ such that $\forall \beta<\alpha$, $\tau_\beta=\tau_\alpha\circ \pi_{\beta,\alpha}$.
\end{enumerate}

For $\alpha=0$, let $\pi^*_{0,1},\pi_{0,1}$ be the $\mu_0$-ultrapower maps and let $E$ be the $(\kappa_0,\omega_3^V)$-extender derived from $\pi_X$ and let
\begin{center}
$\tau_0: L^J[A]\rightarrow \textrm{Ult}(L^J[A],E)$.
\end{center}

It's easy to see that:
\begin{itemize}
\item $\tau_0$ has a stationary set of fixed points.
\item $\textrm{Ult}(L^J[A],E)=L^J[A]$ (similarly, Ult$(L^J[A],\mu_0) = L^J[A]$). Let $Y\prec H_{\lambda^{++}}$ be countable containing all relevant objects and $\pi:M\rightarrow Y$ be the uncollapse map and for each $a\in Y$, let $a^* = \pi^{-1}(a)$. Using countable completeness of $E$, it is easy to check that in $M$, Ult$(L^{J^*}[A^*],E^*)$ realizes into $L^J[A]$ and is in fact $L^{J^*}[A^*]$.
\end{itemize}

If $\alpha$ is limit, let $\M_\alpha$ be the direct limit of the system $(\M_\beta,\pi^*_{\gamma,\beta})_{\gamma<\beta<\alpha}$, $\tau_\alpha = \lim_{\beta<\alpha} \tau_\beta$, and $\pi^*_{\beta,\alpha}, \pi_{\beta,\alpha}$ be natural direct limit maps.

Suppose $\alpha=\beta+1$ and $\M_\beta=(L_{\xi_\beta}^J[A],\mu_\beta)$ is an amenable structure, $\kappa_\beta = \textrm{crt}(\mu_\beta) = \pi_{0,\beta}(\kappa_0)$, and $\mu_\beta = \pi_{0,\beta}[\mu_0]$. Let $\pi^*_{\beta,\alpha},\pi_{\beta,\alpha}$ be $\mu_\beta$-ultrapower maps. For any $f\in L^J[A]$, 
\begin{center}
$\tau_\alpha(\pi_{\beta,\alpha}(f)(\kappa_\beta)) = \tau_\beta(f)(\kappa_\beta)$.
\end{center}
We need to check that $\tau_\alpha$ is elementary. This is equivalent to checking $\mu_\beta$ is derived from $\tau_\beta$, i.e.
\begin{equation}\label{measDerived}
C\in\mu_\beta \Leftrightarrow \kappa_\beta\in \tau_\beta(C).
\end{equation}
To see \ref{measDerived}, let $\nu<\mu_0$ and $W = \mu_0\cap L^J[A]$, $f:\kappa_0\rightarrow \powerset(\kappa_0)\cap L^J_\nu[A]$. Let $c$ be a finite set of fixed points of $\tau_0$ and such that 
\begin{center}
$\forall \xi<\kappa_0 \ f(\xi) = \tau^{L^J[A]}[c](\xi)\cap \kappa_0$.
\end{center}
So
\begin{equation}
L^J[A]\vDash \forall \xi < \kappa_0\ (\tau[c](\xi)\cap\kappa_0\in W \Leftrightarrow \kappa_0\in \tau[c](\xi)).
\end{equation}
This fact is preserved by $\pi_{0,\beta}$ and gives \ref{measDerived}.

One can also show by induction that crt$(\tau_\gamma)=\kappa_\gamma$ for all $\gamma\leq \beta$. This is because $\kappa_\gamma$ is the only generator of $\mu_\gamma$.

So $\M_0^{J,\sharp}$ exists (and is $(\lambda^+,\lambda^+)$-iterable by Lemma \ref{LiftOperators}).
\end{proof}

\begin{lemma}
\label{M1sharpExists}
Suppose $A$ is a bounded subset of $\lambda^+$. Then $\M_1^{J,\sharp}(A)$ exists and is $(\lambda^+,\lambda^+)$-iterable.
\end{lemma}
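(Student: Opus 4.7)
The plan is to argue by contradiction, building the $J$-relativized core model $K = K^J(A)$ and exploiting the $\omega_2$-guessing property of hulls in $S$, together with the failure of $\square$, to extract a Woodin cardinal in $K$. By Lemmas~\ref{LiftOperators} and~\ref{LiftOperatorsToGenExt}, it suffices to produce $\M_1^{J,\sharp}(A)$ in $V$ with a $(\kappa^+,\kappa^+)$-iteration strategy in $V$ for $A$ bounded in $\kappa^+$; the extensions to bounded $A \subseteq \lambda^+$ and to $V[G]$ are then automatic. So assume toward contradiction that $A \subseteq \kappa^+$ is bounded and $\M_1^{J,\sharp}(A)$ does not exist.

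First I would build $K = K^J(A)$ via the Jensen--Steel $K^c$-construction relative to the operator $J$, noting that $\M_0^{J,\sharp}$ on the cone above $A$ is in hand by Lemma~\ref{SharpsExist}. Under the non-existence of $\M_1^{J,\sharp}(A)$, this produces a universal weasel $K$ of height $\lambda^+$ satisfying Jensen's weak covering theorem. The failures of $\square(\alpha)$ for $\alpha \in [\kappa^+,(2^\kappa)^+]$, exploited exactly in the manner of Lemmas~\ref{BasicFacts} and~\ref{lem:smallCof}, ensure that successor cardinals of $K$ above $\kappa$ are correctly computed in $V$ and, in particular, that $\cof((\kappa^+)^K)\leq \kappa$.

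Next, pick $X \in S$ with $\{A, J, K|\kappa^{++}, (\P^*,\Sigma)\}\in X$ and with $X$ cofinal in $(\kappa^+)^K$; such $X$ exist since $\cof((\kappa^+)^K)\leq \kappa$ and $S$ is stationary. Let $\pi_X : M_X \to H_{\lambda^{++}}$ be the uncollapse and $\kappa_0 = \lambda_X$. The $(\kappa_0,\kappa)$-extender $E$ derived from $\pi_X$ yields, exactly as in the proof of Lemma~\ref{SharpsExist}, a countably complete $K^{M_X}$-ultrafilter $\mu$; iterating $K^{M_X}$ by $\mu$ and its images leaves $K^{M_X}$ fixed via the Kunen-style realization argument (countable completeness of $E$ and fine condensation of $J$). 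This already gives a measurable cardinal reflected into $K$, but by itself only recovers $\M_0^{J,\sharp}$-type information.

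The main obstacle, and the precise reason the $\omega_2$-guessing hypothesis is crucial, is upgrading this to a Woodinness witness. For each $B \in K \cap \powerset(\pi_X(\kappa_0))$, the preimage $\bar{B} = \pi_X^{-1}[B]$ is $\omega_2$-approximated by $X$: for every $c \in X \cap \powerset_{\omega_2}(X)$ one has $c \cap \bar{B} \in X$, thanks to amenable closure of $M_X$ at $\kappa_0$ together with $K|\kappa^{++}\in X$. By the $\omega_2$-guessing property of $X$, there is $c_B \in X$ with $c_B \cap X = \bar{B}\cap X$; from $c_B$ one reads off an extender $F_B$ on the $K^{M_X}$-sequence below $\kappa_0$ whose image under $\pi_X$ reflects $B$ on the $K$-sequence. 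Arranging these $F_B$ coherently, using fullness-preservation of $\Sigma$ and branch condensation of $J$, witnesses Woodinness of $\pi_X(\kappa_0)$ in $K$; then $\M_1^{J,\sharp}(A)$ arises as an initial segment of $K$, contradicting the assumption. Finally, $(\kappa^+,\kappa^+)$-iterability of $\M_1^{J,\sharp}(A)$ in $V$ is obtained by realizing countable elementary submodels back into $K$ in the standard fashion, and Lemma~\ref{LiftOperatorsToGenExt} then yields $(\lambda^+,\lambda^+)$-iterability in $V[G]$.
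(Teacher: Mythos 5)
Your opening moves match the paper: reduce to bounded $A\subseteq\kappa^+$ via Lemmas~\ref{LiftOperators} and~\ref{LiftOperatorsToGenExt}, assume $\M_1^{J,\sharp}(A)$ fails to exist, and form the Jensen--Steel core model $K^J(A)$. But from there the proposal goes wrong in two ways. First, you have weak covering backwards: for a successor cardinal $\gamma$ of $K^J(A)$ with $\gamma\geq\kappa^+$, weak covering gives $\cof(\gamma)\geq|\gamma|\geq\kappa^+$, i.e.\ \emph{large} cofinality, not $\cof((\kappa^+)^K)\leq\kappa$. The small-cofinality computations in Lemmas~\ref{BasicFacts} and~\ref{lem:smallCof} concern the $\Lp^\Sigma$-stacks, which are short precisely because a longer stack would carry a $\square_\lambda$-sequence contradicting $\neg\square$; the core model $K$ is the opposite case, and it is exactly the tension between $K$'s internal square sequences and $V$'s $\neg\square(\alpha)$ that drives the paper's contradiction. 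Concretely: pick a successor cardinal $\gamma\in[\kappa^+,\lambda^+)$ of $K^J(A)$, note $\cof(\gamma)\geq\kappa^+$ by weak covering, take $K$'s $\square$-sequence witnessing $\square(\gamma)$, and push it along a cofinal continuous map to get a $\square(\cof(\gamma))$-sequence in $V$ with $\cof(\gamma)\in[\kappa^+,\lambda]$ --- contradicting the hypothesis. No Woodin cardinal is ever extracted from $K$.

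Second, your ``main obstacle'' paragraph relies on the $\omega_2$-guessing property of the hulls in $S$, but that is not available here: Lemma~\ref{M1sharpExists} is proved under the hypothesis of Theorem~\ref{main_technical_theorem}, which is only $\kappa^\omega=\kappa$ plus $\neg\square(\alpha)$ for $\alpha\in[\kappa^+,(2^\kappa)^+]$. Guessing models enter the paper only as one route (via (T)) to establishing that hypothesis. Even granting them, the step ``from $c_B$ one reads off an extender $F_B$ on the $K^{M_X}$-sequence \dots arranging these $F_B$ coherently witnesses Woodinness'' would fail: an $\omega_2$-guessed set $c_B$ merely agrees with $\bar B$ on $X$ and gives no extender on $K$'s sequence, there is no mechanism to cohere such data into a Woodinness witness, and $K$ is constructed so as to have no Woodin cardinals in the first place. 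The Kunen-style measure argument you import from Lemma~\ref{SharpsExist} is also not used in this lemma; it belongs to the $\M_0^{J,\sharp}$ step only.
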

\begin{proof}

It suffices to show $\M_1^{J,\sharp}(a)$ exists for $a$ a bounded subset of $\kappa^+$ (with $a$ coding $x$). Fix such an $a$ and suppose not. Then the Jensen-Steel core model (cf. \cite{jensenSteelCoreModel}) $K^J(a)$ exists\footnote{By our assumption and the fact that $J$ condenses finely, $K^{c,J}(a)$ (constructed up to $\lambda^+$) converges and is $(\lambda^+,\lambda^+)$-iterable. See Lemma \ref{lem:le_converges}.}. Let $\gamma\geq \kappa^+$ be a successor cardinal in $K^{J}(a)$. Since $\lambda^+ = \textrm{o}(K^J(a)) > \kappa^+$ and is a limit of cardinals in $K^J(a)$ (by the proof of \ref{BasicFacts}), we can take $\gamma<\lambda^+$. Weak covering (cf. \cite[Theorem 1.1 (5)]{jensenSteelCoreModel}) gives us,
\begin{equation}\label{covering}
\rm{cof}(\gamma) \geq |\gamma|\geq \kappa^+.
\end{equation}

Let $\vec{C}$ be a $\square$-sequence in $K^J(a)$ witnessing $\square(\gamma)$. By a standard argument, one can constructs from $\vec{C}$ a sequence witnessing $\square(\rm{cof}(\gamma))$; but cof$(\gamma)\geq \kappa^+$ by \ref{covering} and $\neg \square(\rm{cof}(\gamma))$. Contradiction.
\end{proof}

Lemmas \ref{LiftOperators}, \ref{LiftOperatorsToGenExt}, and \ref{M1sharpExists} allow us to extend $\M_1^{J,\sharp}$ to $H_{\lambda^+}^{V[G]}$.
\begin{lemma}\label{LiftM1Sharp}
Suppose $J$ is defined on a cone above some $a\in H_{\kappa^+}^V$ in $H_{\omega_1}^{V[G]}$ as in case 1 of \ref{cmi operator}. Then for every $b\in H^{V[G]}_{\lambda^+}$ coding $a$, $\M_1^{J,\sharp}(b)$ is defined (and is $(\lambda^+,\lambda^+)$-iterable in $V[G]$). Otherwise, letting $(\Q,\Lambda),x,\Gamma$ be as in 2 of Definition \ref{cmi operator} and $J=\Fop_{\Lambda,\varphi_{\rm{all}}}$, then $\M_1^{J,\sharp}(a)$ is defined for all $a\in H_{\lambda^+}^{V[G]}$ coding $x,\Q$. Furthermore, these operators determine themselves on generic extensions if $J$ does.
\end{lemma}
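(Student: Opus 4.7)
The plan is to combine the ingredients already set up: Lemma~\ref{M1sharpExists} gives us $\M_1^{J,\sharp}(A)$ for all bounded $A\subseteq\lambda^+$ in $V$ (with a $(\lambda^+,\lambda^+)$-iteration strategy $\Lambda^*$ in $V$), and the arguments of Lemma~\ref{LiftOperatorsToGenExt} tell us how to push such a $V$-side object across $\Coll(\omega,\kappa)$. So I would first reduce to bounded subsets of $\lambda^+$ in $V$ and then generically interpret.

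First I would fix $b\in H^{V[G]}_{\lambda^+}$ coding the base of $J$ (either $a$ in case~1 or $x,\Q$ in case~2). Since $\Coll(\omega,\kappa)$ is $\kappa^+$-cc, by the choice $\lambda=2^\kappa$ there is a nice name $\tau\in H^V_{\lambda^+}$ for $b$, and $\tau$ may be coded by a bounded subset $A\subseteq\lambda^+$ in $V$. By Lemma~\ref{M1sharpExists}, $\MFsharp_V:=\M_1^{J,\sharp}(A)$ exists in $V$ and is $(\lambda^+,\lambda^+)$-$J$-iterable in $V$ via a unique strategy $\Lambda^*$. Now I would apply the $S$-construction (from the subsection on $S$-constructions) in $\MFsharp_V[G]$ above the transitive collapse of $b$: the collapse $G$ is generic over $\MFsharp_V$, $\MFsharp_V$ is closed under $J$ (and even under its relativization to generic extensions, since $J$ determines itself on generic extensions), and the resulting structure $\M_1^{J,\sharp}(b)$ is fine-structurally equivalent to $\MFsharp_V$ over $b$. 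This produces a sound, $E$-active $J$-premouse over $b$ of the correct form, and independence from the choice of $\tau$ is by the usual density/absoluteness argument for nice names (any two such names have a common refinement over which the $S$-constructions agree).

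Next I would handle iterability in $V[G]$. Given $\pi:\N\to\M_1^{J,\sharp}(b)$ with $\N$ countable transitive in $V[G]$, I pick $X\in S$ with $\mathrm{rng}(\pi)\subseteq X[G]$ and all parameters $(\P^*,\Sigma,\tau,J,\MFsharp_V)\in X[G]$, as in the proof of Lemma~\ref{LiftOperatorsToGenExt}. Then $\pi_X^{-1}(\MFsharp_V)$ lies in $M_X$ by Lemma~\ref{LpClosed}-style reasoning together with Lemma~\ref{M1sharpExists} inside $M_X$, so $\N$ embeds (via $\pi$ composed with uncollapse) into a structure iterable in $M_X[G]$. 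Countable completeness of the $(\lambda_X,\lambda)$-extender derived from $\pi_X$, plus the fact that $J\rest V\in V$ and $J$ condenses finely, lets me pull back any putative bad tree to $V$ and contradict $(\lambda^+,\lambda^+)$-iterability of $\MFsharp_V$. To produce full $(\lambda^+,\lambda^+)$-iteration in $V[G]$, I lift trees on $\M_1^{J,\sharp}(b)$ to trees on $\MFsharp_V$ via the $S$-construction embedding and use $\Lambda^*$ (already extended to $V[G]$ by Lemma~\ref{LiftOperatorsToGenExt}~(2)) to pick branches; branch condensation of $\Lambda^*$ gives uniqueness of the induced strategy for $\M_1^{J,\sharp}(b)$.

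Finally, for the ``determines itself on generic extensions'' clause: if $J$ is determined on generic extensions by $(\varphi_J, a_J)$, then I claim $\M_1^{J,\sharp}$ is determined on generic extensions by an appropriate $(\varphi_{\sharp},a_J)$. Over any sufficiently closed $N$ containing $a_J$ and the base and closed under $\M_1^{J,\sharp}$, in any set-generic extension $N[h]\in V[G]$, the formula ``the unique countably $J$-iterable, sound, non-$1$-small $J$-premouse over the given base'' picks out $\M_1^{J,\sharp}$ uniformly: closure under $J$ inside $N[h]$ is by hypothesis on $J$, and uniqueness/iterability over $N[h]$ is by the previous paragraph applied inside $N[h]$. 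The main obstacle I expect is the second paragraph: ensuring the $S$-construction over $b$ really produces a $J$-premouse (and not merely a premouse in a weaker operator) and that its iteration strategy is independent of the choice of nice name $\tau$; both are handled exactly as in the ``uniqueness'' argument in Lemma~\ref{LiftOperators}~(2) using branch condensation and suitable condensation of $J$, plus the remark after Lemma~\ref{ClosedUnderJ} that $\M_1^{J,\sharp}$ generically interprets $J$.
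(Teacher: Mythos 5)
Your proposal is correct and follows essentially the route the paper intends: the paper offers no separate proof of this lemma, presenting it as the direct combination of Lemmas \ref{LiftOperators}, \ref{LiftOperatorsToGenExt}, and \ref{M1sharpExists} (plus Remark \ref{cmi-op-remark} for the generic-determination clause), which is exactly what you assemble. The only cosmetic difference is that where you invoke $S$-constructions to pass from $\M_1^{J,\sharp}(\tau)$ to $\M_1^{J,\sharp}(b)$, the paper's Lemma \ref{LiftOperatorsToGenExt}(1) uses the fact that the operator relativizes well; both work, and your hull/countable-completeness argument for iterability in $V[G]$ matches the paper's.
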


\subsection{THE CORE MODEL INDUCTION THEOREM}

Let $(\P^*,\Sigma), \mathcal{F}$ be as in the previous section. When an $^g\Fop$-premouse $\P$ is $1$-$\Fop$-$\Gamma$-suitable, we simply say $\P$ is $\Gamma$-suitable if $\Fop$ is clear from the context. Recall that under $\sf{AD}$, if $X$ is any set then $\theta_X$ is the least ordinal which isn't a surjective image of $\mathbb{R}$ via an $\mathrm{OD}_X$ function. 

The following is an outline of the proof of the core model induction theorem. We will follow the standard convention and use upper-case Greek letters $\Gamma,\Omega$ etc. to denote lightface pointclasses, bold upper-case Greek letters $\bf{\Gamma}, \bf{\Omega}$ etc. to denote boldface pointclasses. Given a point class $\bf{\Gamma}$, we let $\hat{\bf{\Gamma}}$ denote the dual pointclass of $\bf{\Gamma}$ and $\bf{\Delta_\Gamma}$ denote the pointclass $\bf{\Gamma}\cap \hat{\bf{\Gamma}}$. For more on the envelope $\bf{Env}(\Gamma)$, the notion of $C_\Gamma$ and other relevant descriptive set theoretic notions, see \cite{wilson2012contributions}.

We refer the reader to \cite{trang2013} for the scales analysis in Lp$^{^\gTheta\Fop}(\mathbb{R},\Fop\rest\mathbb{R})$ that we use in the proof of Theorem \ref{the cmi theorem}. We recall some notions which are obvious generalizations of those in \cite{CMI} and \cite{wilson2012contributions}. The following definitions take place in $V[G]$.

\begin{definition}
We say that the coarse mouse witness condition $W^{*,^\g\Fop}_\gamma$ holds if, whenever $U\subseteq \mathbb{R}$ and both $U$ and its complement have scales in Lp$^{^\gTheta\F}(\mathbb{R},\Fop\rest\mathbb{R})|\gamma$, then for all $k< \omega$ and $x \in \mathbb{R}$ there is a coarse $(k,U)$-Woodin $^\g\Fop$-mouse\footnote{This is the same as the usual notion of a $(k,U)$-Woodin mouse, except that we demand the mouse is closed under $^\g\Fop$.} containing $x$ with an $(\omega_1 + 1)$-iteration $^\g\Fop$-strategy whose restriction to $H_{\omega_1}$ is in Lp$^{^\gTheta\F}(\mathbb{R},\Fop\rest\mathbb{R})|\gamma$.
\end{definition}
\begin{remark}
By the proof of \cite[Lemma 3.3.5]{CMI}, $W^{*,^\g\Fop}_\gamma$ implies Lp$^{\gTheta\F}(\mathbb{R},\F\rest\mathbb{R})|\gamma \vDash \sf{AD}$.
\end{remark}
\begin{definition}
An ordinal $\gamma$ is a critical ordinal in Lp$^{^\gTheta\F}(\mathbb{R},\Fop\rest\mathbb{R})$ if there is some $U \subseteq \mathbb{R}$
such that $U$ and $\mathbb{R} \backslash U$ have scales in Lp$^{^\gTheta\F}(\mathbb{R},\Fop\rest\mathbb{R})|(\gamma + 1)$ but not in Lp$^{^\gTheta\F}(\mathbb{R},\Fop\rest\mathbb{R})|\gamma$. In other words, $\gamma$
is critical in Lp$^{^\gTheta\F}(\mathbb{R},\Fop\rest\mathbb{R})$ just in case $W^{*,^\g\Fop}_{\gamma+1}$ does not follow trivially from $W^{*,^\g\Fop}_{\gamma}$.
\end{definition}
\begin{definition}
Let $\rm{sLp}$$^{^\gTheta\F}(\mathbb{R},\Fop\rest\mathbb{R})$ be the initial segment of $\rm{Lp}$$^{^\gTheta\F}(\mathbb{R},\Fop\rest\mathbb{R})$ that is the union of all $\M\lhd \rm{Lp}$$^{^\gTheta\F}(\mathbb{R},\Fop\rest\mathbb{R})$ such that every countable $\M^*$ embeddable into $\M$ has an iteration strategy in $\M$.
\end{definition}
We will prove in the next theorem that $\rm{sLp}$$^{^\gTheta\F}(\mathbb{R},\Fop\rest\mathbb{R})\vDash \sf{AD}^+$; in fact, this is the maximal model of $\sf{AD}^+ + \Theta=\theta$$_\Sigma$ in light of \cite{DMATM}[Theorem 17.1]. We note that 
\begin{center}
$\powerset(\mathbb{R})\cap L(\rm{sLp}$$^{^\gTheta\F}(\mathbb{R},\Fop\rest\mathbb{R}))=\powerset(\mathbb{R})\cap \rm{sLp}$$^{^\gTheta\F}(\mathbb{R},\Fop\rest\mathbb{R})$ 
\end{center}
but don't know if $\rm{sLp}$$^{^\gTheta\F}(\mathbb{R},\Fop\rest\mathbb{R})=\rm{Lp}$$^{^\gTheta\F}(\mathbb{R},\Fop\rest\mathbb{R})$ in general.
\begin{theorem}\label{the cmi theorem} Assume the hypothesis of Theorem \ref{main_technical_theorem} and $(\dag)$. Suppose $(\P, \Sigma)$ is a hod pair below $\kappa$ and $\Sigma$ is a $(\lambda^+,\lambda^+)$-strategy in $V[G]$ with branch condensation. Let $\F$ be the corresponding operator (i.e. $\Fop = \Fop_{\Sigma,\varphi_{\rm{all}}}$). Suppose $\F\rest \mathbb{R}$ is self-scaled. Then in $V[G]$,  $\rm{sLp}$$^{^\gTheta\F}(\mathbb{R},\Fop\rest\mathbb{R})\models \textsf{\sf{AD}$$}^+ + \theta_\Sigma=\Theta$. Hence, $\powerset(\mathbb{R})\cap \rm{sLp}^{^\gTheta\F}(\mathbb{R},\Fop\rest\mathbb{R}) \subseteq \Omega$.
\end{theorem}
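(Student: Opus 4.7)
The plan is a standard core model induction in the style of Steel \cite{CMI}, adapted to the hybrid setting by replacing ordinary $\J$-models with $\Theta$-g-organized $\F$-premice in accordance with the scales analysis of \cite{trang2013}. I will argue by simultaneous induction on critical ordinals $\gamma$ below $o(\mathrm{sLp}^{^\g\F}(\RR,\F\rest\RR))$ that the coarse mouse witness condition $W^{*,^\g\F}_\gamma$ holds and that, letting $\bf\Gamma_\gamma$ be the pointclass of sets of reals with scales in $\mathrm{Lp}^{^\gTheta\F}(\RR,\F\rest\RR)|\gamma$, every $A\in \mathbf{\Delta}_{\bf\Gamma_\gamma}$ lies in some $\Sigma$-cmi operator. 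Once this is proved through the whole hierarchy, the standard consequences (see \cite[\S 3.3]{CMI}) give $\mathrm{sLp}^{^\g\F}(\RR,\F\rest\RR)\vDash\sf{AD}$, and the $\infty$-Borel condition of $\sf{AD}^+$ follows from the $\infty$-Borel codes provided by the cmi operators themselves. The equation $\theta_\Sigma=\Theta$ in the model is a maximality statement: any $A\in \powerset(\RR)\cap \mathrm{sLp}^{^\g\F}(\RR,\F\rest\RR)$ is captured (in the sense of $\sf{SMC}$$(\Sigma)$) by some $^\g\F$-mouse, so $w(A)<\theta_\Sigma$.

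The base case and successor steps inside a gap are handled by the scales analysis of $\mathrm{Lp}^{^\gTheta\F}(\RR,\F\rest\RR)$ from \cite{trang2013}, which transfers Steel's arguments from \cite{K(R)} and \cite{Scalesendgap} to the $\Theta$-g-organized hierarchy. Inside a $\Sigma_1$-gap $[\alpha,\beta]$ of $\mathrm{Lp}^{^\gTheta\F}(\RR,\F\rest\RR)$, scales propagate by the usual fine-structural arguments, and the corresponding $\F$-mouse operators are first-order over the previous stage; their existence at the relevant level is handed to us by Lemma \ref{M1sharpExists} and Lemma \ref{LiftM1Sharp} applied to whatever cmi operator $J$ witnesses the current stage. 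These lifting lemmas are what allow us to treat the witnesses as $(\lambda^+,\lambda^+)$-iterable objects in $V[G]$ rather than just in $V$, which is necessary for the scales analysis to produce the needed coarse Woodin mice.

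The real work happens at end-of-gap ordinals $\gamma$, i.e.\ at critical $\gamma$ which end a weak or strong gap. Let $M=\mathrm{Lp}^{^\gTheta\F}(\RR,\F\rest\RR)||\gamma$ and $\Gamma=(\Sigma_1)^M$; assuming inductively that $M\vDash\sf{AD}^++\sf{MC}(\Sigma)$, the envelope $\mathbf{Env}(\Gamma)$ admits a self-justifying system $\vec{A}$ which is $\mathrm{OD}^M_{b,\Sigma,x}$ for appropriate parameters, and which seals the gap. Using $\vec{A}$ I will build, exactly as in \cite[\S 6]{ATHM}, an $\F$-$\Gamma$-$1$-suitable $^\g\F$-premouse $\Q$ together with an $(\omega_1,\omega_1)$-iteration strategy $\Lambda$ guided by $\vec{A}$, that is $\Gamma$-fullness preserving and has branch condensation. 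This $\Lambda$ is by definition a $\Sigma$-cmi operator of the second type in Definition \ref{cmi operator}; invoking Lemma \ref{LiftOperatorsToGenExt}(2) and the existence of $\M_1^{\Lambda,\sharp}$ from Lemma \ref{M1sharpExists}, I can propagate $\Lambda$ to all of $H_{\lambda^+}^{V[G]}$ and verify that the new $^\g\Lambda$-hierarchy extends the old one. This gives scales at the next level and hence $W^{*,^\g\F}_{\gamma+1}$.

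The main obstacle is precisely the construction at end-of-gap stages: producing an iteration strategy with branch condensation for a $\Gamma$-suitable premouse from a self-justifying system living in the current $\sf{AD}^+$-model. This requires the full strength of the hod pair construction machinery of \cite{ATHM}, together with the key point that the strategy obtained is actually preserved under the lifting of Lemma \ref{LiftOperatorsToGenExt}, which in turn depends on the failures-of-square hypothesis of Theorem \ref{main_technical_theorem} (used through Lemmas \ref{BasicFacts} and \ref{lem:smallCof}). A secondary obstacle is verifying that $\F\rest\RR$ remains self-scaled at each stage, so that the $\Theta$-g-organized hierarchy $\mathrm{Lp}^{^\gTheta\F}(\RR,\F\rest\RR)$ is well defined and the scales analysis applies; this follows at successor stages from the scales transfer in \cite{trang2013} and at limit stages by taking unions of sjs's from cofinally many earlier stages. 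When the induction exhausts all critical ordinals below $o(\mathrm{sLp}^{^\g\F}(\RR,\F\rest\RR))$, we conclude $\mathrm{sLp}^{^\g\F}(\RR,\F\rest\RR)\vDash \sf{AD}^++\theta_\Sigma=\Theta$, and the final inclusion $\powerset(\RR)\cap\mathrm{sLp}^{^\g\F}(\RR,\F\rest\RR)\subseteq\Omega$ is immediate from Definition \ref{MaximalModel}.
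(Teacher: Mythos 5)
Your proposal is correct and follows essentially the same route as the paper: a core model induction through the $\Theta$-g-organized hierarchy using the scales analysis of \cite{trang2013}, the lifting lemmas of Subsection \ref{M1sharp}, and, at end-of-gap stages, a self-justifying system in $\mathbf{Env}(\Gamma)$ yielding a $\Gamma$-suitable pair $(\N,\Lambda)$ whose strategy is lifted to a $(\lambda^+,\lambda^+)$-strategy. The only difference is organizational — the paper defines $\alpha$ as the supremum of the successful stages and derives a contradiction from $\mathbf{Env}(\Gamma)\subsetneq\powerset(\RR)\cap\mathrm{sLp}^{^\gTheta\F}(\RR,\F\rest\RR)$, and it spells out the reverse inclusion $\mathbf{Env}(\Gamma)\subseteq\powerset(\RR)\cap\mathrm{sLp}^{^\gTheta\F}(\RR,\F\rest\RR)$ via a hull argument with $X\in S$ and $\sf{MC}(\Sigma)$, which you only gesture at — but the substance is the same.
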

\begin{proof}[Proof sketch]
As shown in subsection \ref{M1sharp}, our hypothesis implies that for every $\Sigma$-cmi operator $H$, $\M_1^{H,\sharp}$ exists and can be extended to $H_{\lambda^+}^{V[G]}$; furthermore, these operators determine themselves on generic extensions. We will use this fact and refer the reader to subsection \ref{M1sharp} for the proof. Working in $V[G]$, let $\alpha$ be the strict supremum of the ordinals $\gamma$ such that
\begin{enumerate}
\item the coarse mouse witness condition $W^{*,^\g\F}_{\gamma+1}$ holds\footnote{This is defined similarly to $W^*_{\gamma+1}$ but relativized to the operator $^\g\F$. Similarly, we can also define the fine-structural mouse witness condition $W^{^\g\Fop}_{\gamma+1}$.};
\item $\gamma$ is a critical ordinal in $\textrm{sLp}^{^\gTheta\F}(\mathbb{R},\Fop\rest\mathbb{R})$ (i.e. $\gamma+1$ begins a gap in $\textrm{sLp}^{^\gTheta\F}(\mathbb{R},\Fop\rest\mathbb{R})$).
\end{enumerate}
Using the fact that $\M_1^{H,\sharp}$ exists for every $\Sigma$-cmi operator $H$, it's easily seen that $\alpha$ is a limit ordinal. By essentially the same proof, with obvious modifications, as that in \cite{PFA}, we can advance past inadmissible gaps and admissible gaps in $\textrm{sLp}^{^\gTheta\F}(\mathbb{R},\Fop\rest\mathbb{R})|\alpha$. In each case, say $[\gamma,\xi]$ is a gap in $\textrm{sLp}^{^\gTheta\F}(\mathbb{R},\Fop\rest\mathbb{R})|\alpha$ and $W^{*,^\g\F}_\gamma$, the proof in \cite[Sections 1.4, 1.5]{PFA} and the scales analysis in \cite{trang2013}\footnote{If $\gamma$ is such that $M = \rm{sLp}^{^\gTheta\F}(\mathbb{R},\F\rest\mathbb{R})|\gamma$ is inadmissible, then $M$ is passive. Then \cite{trang2013} gives us that $\bf{\Sigma}$$_1^M$ has the scales property assuming $M\vDash \textsf{AD}$. This is the main reason why we analyze scales in $\Theta$-g-organized premice; if $M$ were g-organized, it could be that $\dot{B}^M\neq \emptyset$ and the argument in \cite{trang2013} does not seem to give us the scales property of $\bf{\Sigma}$$_1^M$ from $\sf{AD}^M$.} allow us to construct a nice operator $F$ on a cone above some $x$ in $HC^{V[G]}$ such that $x\in H_{\omega_3}^V$, $F\rest V\in V$. By the previous subsection, we can extend $F$ to a nice operator on $H_{\kappa^+}^{V[G]}$ (also called $F$) with $F\rest V\in V$. Again, by the previous subsection, we can construct a sequence of nice operators $(F_n : n<\omega)$, where $F_0 = F$, $F_{n+1} = \M_1^{F_n,\sharp}$, and these operators witness $W^{*,^\g\F}_{\xi+1}$ (or $W^{*,^\g\F}_{\xi+2}$ if the gap is strong).

Hence, the (lightface) pointclass $\Gamma = \Sigma_1^{\textrm{sLp}^{^\gTheta\F}(\mathbb{R},\Fop\rest\mathbb{R})|\alpha}$ is inductive-like and $\bf{\Delta}_\Gamma$$ = \powerset(\mathbb{R})\cap \textrm{sLp}^{^\gTheta\F}(\mathbb{R},\Fop\rest\mathbb{R})|\alpha$. Since $\Gamma$ is inductive-like and $\bf{\Delta}_\Gamma$ is determined, $\bf{Env}(\Gamma)$ is determined by Theorem 3.2.4 of \cite{wilson2012contributions}. Since whenever $\gamma$ is a critical ordinal in $\textrm{sLp}^{^\gTheta\F}(\mathbb{R},\Fop\rest\mathbb{R})$ and $W^{*,^\g\F}_{\gamma+1}$ holds then $\sf{AD}$ holds in $\textrm{sLp}^{^\gTheta\F}(\mathbb{R},\Fop\rest\mathbb{R})|(\gamma+1)$, we have that $\sf{AD}$ holds in $\textrm{sLp}^{^\gTheta\F}(\mathbb{R},\Fop\rest\mathbb{R})|\alpha$. 

Now we claim that $\bf{Env}(\Gamma)$$ = \powerset(\mathbb{R})\cap \textrm{sLp}^{^\gTheta\F}(\mathbb{R},\Fop\rest\mathbb{R})$. This implies $\textrm{sLp}^{^\gTheta\F}(\mathbb{R},\Fop\rest\mathbb{R})\vDash \sf{AD}$$^+ + \Theta = \theta_\Sigma$ as desired. We first show $\bf{Env}(\Gamma)$$\subseteq \powerset(\mathbb{R})\cap \textrm{sLp}^{^\gTheta\F}(\mathbb{R},\Fop\rest\mathbb{R})$. Let $A\in \bf{Env}(\Gamma)$, say $A\in Env(\Gamma)(x)$ for some $x \in \mathbb{R}$. By definition of $Env$, for each countable $\sigma\subseteq \mathbb{R}$, $A\cap \sigma  = A'\cap \sigma$ for some $A'$ that is $\Delta_1$-definable over $\textrm{sLp}^{^\gTheta\F}(\mathbb{R},\Fop\rest\mathbb{R})|\alpha$ from $x$ and some ordinal parameter. In $V$, let $\tau$ be the canonical name for $x$ and let $X\prec H_{\lambda^{++}}$ be such that $|X|^V = \kappa$, $\kappa\subseteq X$, $X^\omega\subseteq X$, $X$ is cofinal in the ordinal height of sLp$^{^\gTheta\Fop}(\mathbb{R},\Fop\rest\mathbb{R})$.\footnote{Let $\epsilon$ be the name of $\mathbb{R}^{V[G]}$. Note that the ordinal height of sLp$^{^\gTheta\Fop}(\mathbb{R},\Fop\rest\mathbb{R})$ is the ordinal height of sLp$^{^\gTheta\Fop}(\epsilon,\Fop\rest\epsilon)$ and the latter is in $V$. This has cofinality at most $\omega_2$ since the construction in \cite{TrangCoherent} gives a coherent, nonthreadable sequence of length o$(\rm{sLp}^{^\gTheta\Fop}(\epsilon,\Fop\rest\epsilon))$.}  We also assume $\P\cup \{\P,\tau\} \subseteq X$, $\Sigma\cap M_X[G]\in M_X[G]$, and $A\in \textrm{ran}(\pi_X)$. Let $\sigma = \mathbb{R}\cap M_X[G]$. By $\textsf{MC}(\Sigma)$ in $\textrm{sLp}^{^\gTheta\F}(\mathbb{R},\Fop\rest\mathbb{R})|\alpha$, $A\cap \sigma \in \textrm{sLp}^{^\gTheta\F}(\sigma,\Fop\rest\sigma)$ (this is because $\textrm{sLp}^{^\gTheta\F}(\sigma,\Fop\rest\sigma)$ and $\textrm{sLp}^{^\g\F}(\sigma,\Fop\rest\sigma)$ have the same $\powerset(\sigma)$, cf. \cite[Section 5]{trang2013}; also, in applying $\sf{MC}(\Sigma)$, we need that definability is done without referencing the extender sequence and we can do this since we are inside sLp$^{^\gTheta\F}(\sigma,\Fop\rest\sigma)$, where the self-iterability condition helps us define the extender sequence). As shown in Lemma \ref{correctness}, $\textrm{sLp}^{^\gTheta\F}(\sigma,\Fop\rest\sigma)=(\textrm{sLp}^{^\gTheta\F}(\sigma,\Fop\rest\sigma))^{M_X[G]}$. Hence 
\begin{center}
$M_X[G] \vDash A\cap \sigma \in \textrm{sLp}^{^\gTheta\F}(\sigma,\Fop\rest\sigma)$.
\end{center}
By elementarity, the fact that $\pi_X(\P) = \P$, and $\pi_X(\Sigma\cap M_X[G])=\Sigma$, we get $A\in \textrm{sLp}^{^\gTheta\F}(\mathbb{R},\Fop\rest\mathbb{R})$.

Now assume toward a contradiction that $\bf{Env}(\Gamma)$$\subsetneq \powerset(\mathbb{R})\cap \textrm{sLp}^{^\gTheta\F}(\mathbb{R},\Fop\rest\mathbb{R})$. Hence $\alpha < \Theta^{\textrm{sLp}^{^\gTheta\F}(\mathbb{R},\Fop\rest\mathbb{R})}$. Let $\beta^*$ be the end of the gap starting at $\alpha$ in $\textrm{sLp}^{^\gTheta\F}(\mathbb{R},\Fop\rest\mathbb{R})$. Let $\beta = \beta^*$ if the gap is weak and $\beta = \beta^* + 1$ if the gap is strong. Note that $\alpha \leq \beta$, $\powerset(\mathbb{R})^{\textrm{sLp}^{^\gTheta\F}(\mathbb{R},\Fop\rest\mathbb{R})|\beta} = \bf{Env}(\Gamma)$$^{\textrm{sLp}^{^\gTheta\F}(\mathbb{R},\Fop\rest\mathbb{R})}\subseteq \footnote{We get equality in this case but we don't need this fact.}\bf{Env}(\Gamma)$$\subsetneq \powerset(\mathbb{R})\cap \textrm{sLp}^{^\gTheta\F}(\mathbb{R},\Fop\rest\mathbb{R})$. Hence $\beta < \Theta^{\textrm{sLp}^{g\F}(\mathbb{R})}$ and $\textrm{sLp}^{^\gTheta\F}(\mathbb{R},\Fop\rest\mathbb{R})|\beta$ projects to $\mathbb{R}$. Furthermore, $\textrm{Lp}^{^\gTheta\F}(\mathbb{R},\Fop\rest\mathbb{R})|\beta \vDash \sf{AD}$$ + \Gamma$-$\textsf{MC}(\Sigma)$, where $\Gamma$-$\sf{MC}$$(\Sigma)$ is the statement: for any countable transitive $a$, $(\textrm{Lp}^{^\g\F}(a))^\Gamma\cap \powerset(a)=C_\Gamma(a)$. $\textrm{sLp}^{^\gTheta\F}(\mathbb{R},\Fop\rest\mathbb{R})|\beta\vDash \Gamma$-$\textsf{MC}(\Sigma)$ is clear; if $\beta = \beta^*$, $\textrm{sLp}^{^\gTheta\F}(\mathbb{R},\Fop\rest\mathbb{R})|\beta \vDash \sf{AD}$ by the fact that $[\alpha,\beta^*]$ is a $\Sigma_1$-gap; otherwise, $\textrm{sLp}^{^\gTheta\F}(\mathbb{R},\Fop\rest\mathbb{R})|\beta\vDash \sf{AD}$ by Kechris-Woodin transfer theorem (see \cite{kechris1983equivalence}). Since $\textrm{sLp}^{^\gTheta\F}(\mathbb{R},\Fop\rest\mathbb{R})|\beta$ projects to $\mathbb{R}$, every countable sequence from $\bf{Env}(\Gamma)$$^{\textrm{sLp}^{^\gTheta\F}(\mathbb{R},\Fop\rest\mathbb{R})}$ is in $\textrm{sLp}^{^\gTheta\F}(\mathbb{R},\Fop\rest\mathbb{R})|(\beta+1)$. The scales analysis of  \cite{trang2013}, Theorem 4.3.2 and Corollary 4.3.4 of \cite{wilson2012contributions} together imply that there is a self-justifying-system $\mathcal{A} =\{ A_i \ | \ i<\omega\}\subseteq \bf{Env}(\Gamma)$$^{\textrm{sLp}^{^\gTheta\F}(\mathbb{R},\Fop\rest\mathbb{R})}$ containing a universal $\Gamma$ set. By a theorem of Woodin and the fact that $\Sigma\cap V\in V$, we can get a pair $(\N,\Lambda)$ such that $\N\in V$, $|\N|^V \leq \kappa$, $\N$ is a $\Gamma$-suitable $^\g\F$-premouse, and $\Lambda$ is the strategy for $\N$ guided by $\mathcal{A}$\footnote{We get first a pair $(\N^*,\Lambda^*)\in V[G]$ with $\N^*$ being $\Gamma$-suitable, $\Lambda^*$ is an $(\omega_1,\omega_1)$-strategy in $V[G]$ guided by $\mathcal{A}$. By boolean comparisons, we can obtain such a pair $(\N,\Lambda)$. The details are given in \cite{PFA} and \cite{CMI}.}. Arguments in the last subsection allow us to then lift $\Lambda$ to an $(\lambda^+,\lambda^+)$-strategy that condenses well in $V[G]$.

Using the hypothesis of Theorem \ref{main_technical_theorem}, we can get a sequence of nice operators $(F_n : n<\omega)$ where each $F_n$ is in $\textrm{sLp}^{^\gTheta\F}(\mathbb{R},\Fop\rest\mathbb{R})|(\beta+1)$. Namely, let $F_0 = \Fop_\Lambda$, and let $F_{n+1} = \M^{F_n,\sharp}_1$ be the $F_n$-Woodin $\Sigma$-cmi operator. Each $F_n$ is first defined on a cone in $H_{\omega_3}^V$; then using the lemmas in the previous subsection, we can extend $F_n$ to $H_{\kappa^+}^{V[G]}$ and furthermore, each $F_n$ is nice (i.e. condenses and relativizes well and determines itself on generic extensions).  These operators are all projective in $\mathcal{A}$ and are cofinal in the projective-like hierarchy containing $\mathcal{A}$, or equivalently in the Levy hierarchy of sets of reals definable from parameters over $\textrm{sLp}^{^\gTheta\F}(\mathbb{R},\Fop\rest\mathbb{R})|\beta$. Together these model operators can be used to establish the coarse mouse witness condition $W^{*,^\g\F}_{\beta+1}$.  Therefore $\beta < \alpha$ by the definition of $\alpha$, which is a contradiction.
\end{proof}

\subsection{BEYOND ``$\textsf{AD}^+ + \Theta=\Theta_\Sigma$"}
\label{theta>theta0}
Let $(\P^*,\Sigma), \Fop$ be as in Section \ref{M1sharp}. In this section, we prove.
\begin{theorem}
\label{GettingNontame}
Let $G\subseteq$ Col$(\omega,\kappa)$ be $V$-generic. Then in $V[G]$, there is a model $M$ such that $\rm{OR}$$\cup \mathbb{R}\subseteq M$ and $M\vDash ``\sf{AD}$$^+$$ + \Theta > \theta_\Sigma$."
\end{theorem}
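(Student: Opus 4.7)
The plan is to build a hod pair $(\Q,\Psi)$ below $\kappa$ that properly end-extends $(\P^*,\Sigma)$ in the hod order, and then apply Theorem \ref{the cmi theorem} to $(\Q,\Psi)$ in place of $(\P^*,\Sigma)$.

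First, let $M_0 = \textrm{sLp}^{^\gTheta\Fop}(\mathbb{R},\Fop\rest\mathbb{R})$, which by Theorem \ref{the cmi theorem} satisfies $\textsf{AD}^+ + \Theta = \theta_\Sigma$. Let $\alpha$ be the end of the last $\Sigma_1$-gap of $M_0$ as in the proof of Theorem \ref{the cmi theorem}, and let $\Gamma = \Sigma_1^{M_0|\alpha}$, an inductive-like pointclass with $\mathbf{Env}(\Gamma) = \powerset(\mathbb{R})^{M_0}$. Exactly as in the last paragraph of the proof of Theorem \ref{the cmi theorem}, one extracts a self-justifying system $\mathcal{A} = \{A_i : i<\om\}\sub \powerset(\mathbb{R})^{M_0}$ containing a universal $\Gamma$-set, produces via Woodin's theorem plus boolean comparisons a $\Fop$-$\Gamma$-$1$-suitable pair $(\N,\Lambda)$ with $\N\in V$, $|\N|^V\leq\kappa$, and $\Lambda\rest V\in V$, where $\Lambda$ is the unique $(\omega_1,\omega_1)$-strategy guided by $\mathcal{A}$, and then lifts $\Lambda$ to a $(\lambda^+,\lambda^+)$-strategy with branch condensation by the arguments of Lemmas \ref{LiftOperators} and \ref{LiftOperatorsToGenExt}. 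The operator $\Fop_\Lambda = \Fop_{\Lambda,\varphi_\mathrm{all}}$ is then a $\Sigma$-cmi operator in the sense of clause (2) of Definition \ref{cmi operator}, so by Subsection \ref{M1sharp} applied to $\Fop_\Lambda$ the operator $\M_1^{\Fop_\Lambda,\sharp}$ exists, is $(\lambda^+,\lambda^+)$-iterable, nice, and determines itself on generic extensions.

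Next, using $\M_1^{\Fop_\Lambda,\sharp}$ as the background, run the hod pair construction of \cite[Section 6]{ATHM} inside $V$ (and lift to $V[G]$ via the arguments of Subsection \ref{M1sharp}) to produce a hod premouse $\Q$ and a $(\lambda^+,\lambda^+)$-strategy $\Psi$ for $\Q$ in $V[G]$ with the following properties: $\Q\in V$, $|\Q|^V\leq\kappa$, $\Psi\rest V\in V$, $\P^*$ is a proper hod-initial segment of $\Q$ (with $\lambda^\Q > \lambda^{\P^*}$), the induced tail strategy $\Psi_{\P^*}$ equals $\Sigma$ so that $(\P^*,\Sigma)\in B(\Q,\Psi)$, and $\Psi$ has branch condensation, is $\Omega$-fullness preserving, positional, and commuting, with every $(\R,\Phi)\in B(\Q,\Psi)$ having $\Phi\rest\HC\in\Omega$. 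In short, $(\Q,\Psi)$ is a hod pair below $\kappa$ in the sense of Definition \ref{hodpairbelowomega2}, properly extending $(\P^*,\Sigma)$.

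Finally, let $\Fop' = \Fop_{\Psi,\varphi_\mathrm{all}}$. Then $\Fop'\rest\mathbb{R}$ is self-scaled (inherited from the sjs $\mathcal{A}$ via the argument producing $\mathcal{A}$ in the preceding step), so applying Theorem \ref{the cmi theorem} to $(\Q,\Psi)$ in place of $(\P^*,\Sigma)$ yields
\[ M_1 := \textrm{sLp}^{^\gTheta\Fop'}(\mathbb{R},\Fop'\rest\mathbb{R}) \vDash \textsf{AD}^+ + \Theta = \theta_\Psi. \]
Since $(\P^*,\Sigma)\in B(\Q,\Psi)$ and $\P^*$ is a proper hod-initial segment of $\Q$, we have $\theta_\Sigma < \theta_\Psi$ in $M_1$, so $M_1\vDash \textsf{AD}^+ + \Theta > \theta_\Sigma$, giving the desired model $M$.

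The main obstacle is the middle step: carrying out the hod pair construction of \cite{ATHM} in the ambient model $V[G]$, which need not satisfy $\textsf{AD}^+$, while simultaneously ensuring $(\Q,\Psi\rest V)\in V$ so that $(\Q,\Psi)$ qualifies as a hod pair below $\kappa$. The three technical points that need care are (i) termination of the comparison of hod pairs, which will be handled by the countable-closure of hulls $X\in S$ together with the failure of squares hypothesis (as in the proof of Lemma \ref{BasicFacts}); (ii) branch condensation of $\Psi$, verified via the $\forall^* X\in S$ definition of branches used in the proof of Lemma \ref{LiftOperators}; and (iii) $\Omega$-fullness preservation, which reduces to showing that appropriate $\Sigma$-cmi operators exist, and this follows from Theorem \ref{the cmi theorem} together with the closure of the class of $\Sigma$-cmi operators under $\M_1^{\cdot,\sharp}$ established in Subsection \ref{M1sharp}.
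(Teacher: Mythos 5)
Your proposal breaks down at the step where you extract the self-justifying system, and this is precisely where the paper's proof has its real content. In the proof of Theorem \ref{the cmi theorem}, the sjs is produced only in the contradiction case $\mathbf{Env}(\Gamma)\subsetneq\powerset(\mathbb{R})\cap\textrm{sLp}^{^\gTheta\F}(\mathbb{R},\Fop\rest\mathbb{R})$: there the gap starting at $\alpha$ ends at some $\beta$ with $\textrm{sLp}^{^\gTheta\F}(\mathbb{R},\Fop\rest\mathbb{R})|\beta$ projecting to $\mathbb{R}$, so every countable sequence from $\mathbf{Env}(\Gamma)$ lies in the next level, and that closure is exactly the hypothesis Wilson's Theorem 4.3.2 and Corollary 4.3.4 need. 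In the situation of Theorem \ref{GettingNontame} you are at the top of $M_0$: $\mathbf{Env}(\Gamma)=\powerset(\mathbb{R})^{M_0}$ and there is no level above projecting to $\mathbb{R}$, so ``exactly as in the last paragraph'' does not apply. One must instead verify Wilson's stability hypothesis directly, namely that every $\mu\in\textrm{meas}^{\bf{\Gamma}}(\eta^{<\omega})$ (where $\eta$ is the largest Suslin cardinal of $M_0$ and $T$ the tree of scales on a universal $\Gamma$-set) is stabilized by a countable set of measures. The paper does this by first showing that $M_X[G]$ correctly computes $\textrm{Lp}(\mathbb{R}_X)$ and captures $\textrm{meas}^{\bf{\Gamma_X}}(\eta_X^{<\omega})$ (Lemma \ref{correctness}), and then by a closed-game argument (Lemma \ref{IIwins}) showing Player II wins $G^{\sigma,\mu}_T$ with $\sigma=\pi_X[(\textrm{meas}^{\bf{\Gamma_X}}(\eta_X^{<\omega}))^{M_X[G]}]$; both steps lean on $M_X^\omega\subseteq M_X$ and the covering consequences of the square failures. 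This measure-stabilization argument is the missing idea in your proposal; without it you have no sjs and hence no strategy $\Psi\notin\Omega_0$.

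Separately, your middle step is a heavier route than the paper takes and its difficulties are left unresolved. The paper does not build a new hod pair end-extending $\P^*$ at this stage: it takes a $\Gamma$-suitable (i.e.\ $1$-suitable) $^\g\Fop$-premouse $\N$ with an sjs-guided, $\Gamma$-fullness preserving strategy $\Psi$ with branch condensation, lifts $\Psi$, shows $\Fop_{\Psi,\varphi_{\rm{all}}}$ is self-scaled, and reruns the core model induction to get a model of $\textsf{AD}^++\Theta=\theta_{\Sigma}^+$ in the Solovay sense. The full hod-pair machinery (direct limits $\mathcal{H}$, $\mathcal{H}^+$, fullness preservation via Lemma \ref{FullnessPreserving}, condensation of $\pi_X$) is deferred to Section \ref{thetareg}, where it is needed for ``$\Theta$ regular''; importing it here both overshoots and leaves unproved exactly the points (comparison, fullness preservation in $V[G]$) that you flag as obstacles.
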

The rest of this subsection is devoted to proving Theorem \ref{GettingNontame}. We assume $(\P^*,\Sigma) = (\emptyset,\emptyset)$ (the proof of the general case just involves more notations; in particular, for the general case, we work in the hierarchy Lp$^{^\gTheta\Fop}(\mathbb{R},\Fop\rest\mathbb{R})$ instead of in Lp$(\mathbb{R})$). Suppose the conclusion of the theorem fails. By the results of Subsection \ref{M1sharp} and Theorem \ref{the cmi theorem}, in $V[G]$
\begin{center}
sLp$(\mathbb{R}) \vDash \sf{AD}$$^+ + \Theta = \theta_0$.\footnote{We could have also worked with the hierarchy sLp$^{^\gTheta\F}(\mathbb{R},\F\rest\mathbb{R})$ where $\F$ is associated with the canonical strategy of $\M_1^\sharp$. As mentioned before, these hierarchies construct the same sets of reals.}
\end{center}
Working in $V[G]$, let $\bf{\Omega_0} = \powerset(\mathbb{R})\cap \textrm{sLp}(\mathbb{R})$, $\theta=\Theta^{\Omega_0}$, and $\Gamma = (\Sigma_1)^{\textrm{sLp}(\mathbb{R})}$. 

In $V$, let $\tau\in H_{\lambda^+}$ be a canonical name for $\mathbb{R}^{V[G]}$. Let $S$ be as in Section \ref{M1sharp}. For $X\in S$ such that $X$ is cofinal in o$(\textrm{Lp}(\mathbb{R}))$ and o$(\textrm{sLp}(\mathbb{R}))$,\footnote{One can construct a coherent sequence of length $\theta$ in sLp$(\mathbb{R}^{V[G]})$ as in \cite{TrangCoherent}. Our hypothesis and the properties of the sequence then imply that cof$(o(\textrm{sLp}(\mathbb{R}))\leq \kappa$. Similarly, one can show cof$(o(\textrm{Lp}(\mathbb{R}))\leq \kappa$}  let $\mathbb{R}_X = \pi_X^{-1}(\mathbb{R})$, $\bf{\Omega_X} = \pi_X^{-1}(\Omega_0)$, $\Gamma_X = \pi_X^{-1}(\Gamma)$, and $\theta_X = \pi_X^{-1}(\theta)$; we note that $\theta = \Theta^{\bf{\Omega_0}}=$ o$(\rm{sLp}(\mathbb{R})^{V[G]})$ is the supremum of the Wadge ranks of sets in $\bf{\Omega_0}$. Also, let $\eta = (\undertilde{\delta^2_1})^{\textrm{sLp}(\mathbb{R})}$ and $\eta_X = \pi_X^{-1}(\eta)$; so $\eta$ is the largest Suslin cardinal of sLp$(\mathbb{R})$. Let $T$ be the tree of scales for a universal $\Gamma$-set and $T_X = \pi_X^{-1}(T)$. As usual, $T$ is a tree on $\omega\times \eta$. For $s\in \omega^{<\omega}$, we let $T_s = \{ t \ | \ (s,t)\in T \}$.

Following \cite{wilson2012contributions}, we define.
\begin{definition}
Let $\Gamma, T, \eta$ be as above. 
\begin{enumerate}
\item $\powerset^{\bf{\Gamma}}(\eta)$ is the $\sigma$-algebra consisting of subsets $Y\subseteq \eta$ such that $Y\in L[T,z]$ for some real $z$.
\item meas$^{\bf{\Gamma}}(\eta)$ is the set of countably complete measures on $\powerset^{\bf{\Gamma}}(\eta)$.
\item Using the canonical bijection $\eta \rightarrow \eta^{<\omega}$, we can define $\powerset^{\bf{\Gamma}}(\eta^n)$, meas$^{\bf{\Gamma}}(\eta^n)$, $\powerset^{\bf{\Gamma}}(\eta^{<\omega})$, meas$^{\bf{\Gamma}}(\eta^{<\omega})$ in a similar fashion.
\end{enumerate}
\end{definition}
\begin{lemma}\label{correctness}
\label{MeasureFull}
Suppose $X\in S$. Then in $V[G]$, 
\begin{center}
$Lp(\mathbb{R}_X)^{\Omega_0}\subseteq Lp(\mathbb{R}_X)^{M_X[G]} = \textrm{Lp}(\mathbb{R}_X)$.
\end{center} 
This implies 
\begin{center}
$meas^{\bf{\Gamma_X}}(\eta_X^{<\omega})\subseteq (meas^{\bf{\Gamma_X}}(\eta_X^{<\omega}))^{M_X[G]}$. 
\end{center}
\end{lemma}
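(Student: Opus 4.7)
The plan is to reduce everything to $S$-constructions over an auxiliary set $A\in M_X$ coding a name for $\mathbb{R}_X$, and then apply the covering argument of Lemma \ref{LpClosed}. Choose $X\in S$ with $\tau,T\in X$, set $\bar{\tau}=\pi_X^{-1}(\tau)$ and $A=\textrm{tr.cl.}(\{\bar{\tau}\})\in M_X$; then $|A|^V\leq\kappa$, and $\mathbb{R}_X=\bar{\tau}[G]$ is coded generically by $A$ inside $M_X[G]$. By hypothesis $X^\omega\subseteq X$, and we have $X$ cofinal in $o(\textrm{Lp}(\mathbb{R}^{V[G]}))$ and $o(\textrm{sLp}(\mathbb{R}^{V[G]}))$.

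The central claim is the equality $\textrm{Lp}(\mathbb{R}_X)^{M_X[G]}=\textrm{Lp}(\mathbb{R}_X)$, interpreting the right-hand side in $V[G]$. For the nontrivial direction, given $\M\lhd\textrm{Lp}(\mathbb{R}_X)$ in $V[G]$, apply $S$-constructions inside a sufficiently iterable ambient structure to produce an $A$-premouse $\bar{\M}\in V$ with $\bar{\M}[G]=\M$ fine-structurally. Rerunning the proof of Lemma \ref{LpClosed} with $A$ in place of the ambient set --- using $X^\omega\subseteq X$ to obtain countable completeness of the $(\lambda_X,\lambda)$-extender $E_X$ derived from $\pi_X$, together with cofinality of $X$ in $o(\textrm{Lp}(A))$ (which corresponds via $S$-constructions to cofinality in $o(\textrm{Lp}(\mathbb{R}^{V[G]}))$) --- gives $\bar{\M}\in M_X$, hence $\M\in M_X[G]$. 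Countable iterability of $\M$ in $M_X[G]$ follows from the uniqueness of the $Q$-structure-guided strategy, whose $Q$-structures sit inside lower levels of $\textrm{Lp}$ shown absolute by induction. The reverse direction $\textrm{Lp}(\mathbb{R}_X)^{M_X[G]}\subseteq\textrm{Lp}(\mathbb{R}_X)$ uses the same uniqueness to lift iterability from $M_X[G]$ to all countable hulls in $V[G]$. The inclusion $\textrm{Lp}(\mathbb{R}_X)^{\Omega_0}\subseteq\textrm{Lp}(\mathbb{R}_X)^{M_X[G]}$ is then immediate, since $\Omega_0\subseteq V[G]$ and the above equality applies.

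For the implication about measures, fix $\mu\in\textrm{meas}^{\bf{\Gamma_X}}(\eta_X^{<\omega})$. By elementarity, $\Omega_X:=\pi_X^{-1}(\Omega_0)$ satisfies $\sf{AD}^++\Theta=\theta_0$ with $\Gamma_X$ inductive-like. Kunen's theorem applied inside $\Omega_X$ shows that every countably complete measure on $\powerset^{\bf{\Gamma_X}}(\eta_X^{<\omega})$ is $\textrm{OD}$ from a real parameter in $\Omega_X$; by $\sf{MC}$ holding in $\Omega_X$ (a consequence of $\sf{AD}^+$), such a defining object is captured by a mouse in $\textrm{Lp}(\mathbb{R}_X)^{\Omega_X}$. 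By the correctness just established, this mouse lands in $M_X[G]$; since $T_X\in M_X[G]$ too, the measure $\mu$ is recoverable from it and $T_X$ inside $M_X[G]$, as required.

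The main obstacle is the correctness $\textrm{Lp}(\mathbb{R}_X)^{M_X[G]}=\textrm{Lp}(\mathbb{R}_X)$: propagating countable iterability both across the embedding $\pi_X$ and the generic extension by $G$ requires all three hypotheses on $X$ (namely $|X|=\kappa$, $X^\omega\subseteq X$, and cofinality in $o(\textrm{Lp}(\mathbb{R}^{V[G]}))$), with $S$-constructions providing the essential bridge to the covering-type argument of Lemma \ref{LpClosed}.
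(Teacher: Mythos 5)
Your handling of the equality $\textrm{Lp}(\mathbb{R}_X)^{M_X[G]}=\textrm{Lp}(\mathbb{R}_X)$ follows the paper's route ($S$-translation over the name for $\mathbb{R}_X$, Lemma \ref{LpClosed}, uniqueness of the $\Q$-structure--guided strategies), but you dismiss the inclusion $\textrm{Lp}(\mathbb{R}_X)^{\Omega_0}\subseteq\textrm{Lp}(\mathbb{R}_X)$ as ``immediate since $\Omega_0\subseteq V[G]$.'' It is not: membership in $\textrm{Lp}(\mathbb{R}_X)^{\Omega_0}$ only provides $\omega_1$-iteration strategies coded by sets of reals in $\Omega_0$, whereas membership in $\textrm{Lp}(\mathbb{R}_X)$ as computed in $V[G]$ requires $(\omega,\omega_1+1)$-iterability of countable hulls. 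The paper closes this gap by passing to the $S$-untranslation $\M^*$ over $\tau_X$, using homogeneity of the collapse to see that its unique strategy restricted to $V$ lies in $V$, and then invoking Lemmas \ref{LiftOperators} and \ref{LiftOperatorsToGenExt} to extend that strategy to a $(\lambda^+,\lambda^+)$-strategy in $V[G]$. That step needs to appear.

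The serious gap is in the measures clause. You apply ``Kunen's theorem inside $\Omega_X$'' to conclude that every countably complete measure on $\powerset^{\bf{\Gamma_X}}(\eta_X^{<\omega})$ is ordinal definable from a real in $\Omega_X$. But the measures quantified over on the left-hand side of the conclusion are objects of $V[G]$, not (yet) of $M_X[G]$ or of $L(\Omega_X,\mathbb{R}_X)$; Kunen's analysis classifies the measures that already belong to the determinacy model, so invoking it here presupposes exactly what is to be proved. The argument that actually works is far more elementary and is the observation you are missing: $\eta_X$ is an ordinal of $M_X$, a structure of $V$-cardinality $\kappa$, hence $\eta_X$ is countable in $V[G]$, and therefore every countably complete measure on $\powerset^{\bf{\Gamma_X}}(\eta_X^{<\omega})$ is principal. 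Its code set $C_\mu=\{x \mid Y_x\in\mu\}$ is then trivially $OD^{\Omega_0}(\mathbb{R}_X)$, being definable from the single concentrating point and the canonical $\Sigma_1^{\textrm{sLp}(\mathbb{R}_X)}$ coding map; $\mathsf{MC}$ in $\Omega_0$ together with the first clause then puts $C_\mu$, hence $\mu$, into $M_X[G]$, where countable completeness is inherited. Without the principality observation your route does not go through.
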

\begin{proof}
We first prove in $V[G]$,
\begin{center}
$\textrm{Lp}(\mathbb{R}_X)^{M_X[G]} = \textrm{Lp}(\mathbb{R}_X)$.
\end{center}
If $\M\lhd \textrm{Lp}(\mathbb{R}_X)^{M_X[G]}$ is a sound mouse that projects to $\mathbb{R}_X$, then $\M$ is embeddable into a level of Lp$(\mathbb{R})^{V[G]}$. So $\M\lhd \textrm{Lp}(\mathbb{R}_X)$. To see the converse, let $\M\lhd \textrm{Lp}(\mathbb{R}_X)$, then letting $\M^*\lhd \textrm{Lp}(\tau_X)$ be the $S$-translation of $\M$, then by Lemma \ref{LpClosed}, $\M^*\in M_X$; so $\M\in M_X[G]$. Now in $M_X[G]$, let $\pi:\N\rightarrow \M$ be an elementary embedding and $\N$ is countable, transitive; then in $V[G]$, $\N$ is iterable via a unique iteration strategy; so in $M_X[G]$, $\N$ is iterable via a unique iteration strategy. This means $\M\lhd \textrm{Lp}(\mathbb{R}_X)^{M_X[G]}$.


To see the first inclusion, note that $\textrm{Lp}(\mathbb{R}_X)^{\bf{\Omega_0}}$ can easily be computed from $\textrm{Lp}(\tau_X)$ (using $G$ and the $S$-constructions) and $\textrm{Lp}(\tau_X) \in M_X$ by Lemma \ref{LpClosed}. This gives $\textrm{Lp}^{\bf{\Omega_0}}(\mathbb{R}_X)\subseteq \textrm{Lp}(\mathbb{R}_X)^{M_X[G]}$ because given any $\M\lhd \textrm{Lp}(\mathbb{R}_X)^{\bf{\Omega_0}}$, letting $\M^*$ be a premouse over $\tau_X$ which results from the $S$-constructions of $\M$. $\M^*$ has a unique strategy $\Lambda$ in Lp$(\mathbb{R})^{V[G]}$ because $\M$ does. So $\Lambda\rest V\in V$ by homogeneity. By the proof of Lemma \ref{LiftOperators}, $\Lambda$ can be uniquely extended in $V$ to an $(\lambda^+,\lambda^+)$-strategy (also called $\Lambda$)\footnote{Note that every tree $\T$ according to $\Lambda$ is short and guided by $\Q$-structures.}. The previous subsection also shows that $\M_1^{\Lambda,\sharp}$ exists and is $(\kappa^+,\kappa^+)$-iterable in $V$. By lemma \ref{LiftOperatorsToGenExt}, $\Lambda$ can be extended to an $(\lambda^+,\lambda^+)$ strategy in $V[G]$, but this means $\M\lhd \textrm{Lp}(\mathbb{R}_X)$ in $V[G]$.

We just prove $meas^{\bf{\Gamma_X}}(\eta_X)\subseteq (meas^{\bf{\Gamma_X}}(\eta_X))^{M_X[G]} $  as the proof of the general case is an easy generalization of the proof of the special case. We need to define ``code sets" for measures in $meas^{\bf{\Gamma_X}}(\eta_X)$. Fix a map from $\mathbb{R}_X$ onto $\powerset(\eta_X)^{\Gamma_X}$:  $x \mapsto Y_x$ in $M_X[G]$ such that the relation $\{(x,\alpha) \ | \ \alpha\in Y_x\} \in \Sigma_1^{\textrm{sLp}(\mathbb{R}_X)}$. We then define the code set $C_\mu$ for each $\mu\in meas^{\bf{\Gamma_X}}(\eta_X)$ as $x\in C_\mu \Leftrightarrow Y_x\in \mu$. For each such $\mu$, $C_\mu$ is easily seen to be $OD^{\Omega_0}(\mathbb{R}_X)$ (as each such measure is principal, being a countably complete measure on a countable set in $V[G]$), and so $C_\mu \in \textrm{Lp}(\mathbb{R}_X)^{M_X[G]}$ by $\textsf{MC}$ in $\Omega_0$ and by the first part. So $\mu\in M_X[G]$ and is countably complete there. This proves the second inclusion. 
\end{proof}

For $X$ as in the lemma, we can choose a set $C_X\in M_X$ of canonical names for measures in $[meas^{\bf{\Gamma_X}}(\eta_X^{<\omega})]^{M_X[G]}$. Since $M_X^{\omega}\subset M_X$, $M_X$ contains all $\omega$-sequences of its terms for code sets of measures in $[meas^{\bf{\Gamma_X}}(\eta_X^{<\omega})]^{M_X[G]}$.

Now let $\sigma = \pi_X[[meas^{\bf{\Gamma_X}}(\eta_X^{<\omega})]^{M_X[G]}]\subseteq meas^{\bf{\Gamma}}(\eta^{<\omega})$. $\sigma$ is a countable set of measures in $V[G]$. For $\mu\in meas^{\bf{\Gamma}}(\eta^{<\omega})\cap \sigma$, let $\overline{\mu}$ be such that $\pi_X(\overline{\mu})=\mu$. Suppose $\mu$ concentrates on $\eta^n$ and let $\langle\mu_i \ | \ i\leq n \rangle$ be the projections of $\mu$ (that is $A\in \mu_i \Leftrightarrow \{ s\in \eta^n \ | \ s\rest i \in A \} \in \mu$). Note that $\mu_0$ is the trivial measure. Define $\langle\overline{\mu}_i \ | \ i\leq n\rangle$ similarly for $\overline{\mu}$. Let $G^{\sigma,\mu}_T$ be the game defined in Definition 4.1.2 of \cite{wilson2012contributions}. For the reader's convenience, we give the definition of $G^{\sigma,\mu}_T$. I starts by playing $m_0, \cdots, m_n$, $s_n, h_n$; II responds by playing a measure $\mu_{n+1}$. From the second move on, I plays $m_{i},s_i, h_i$ and II plays a measure $\mu_{i+1}$ for all $i > n$. \footnote{The game can be defined over $V$ using the forcing relation and $C_X$.}

\noindent \textbf{Rules for I:}
\begin{itemize}
\item $m_k < \omega$ for all $k<\omega$
\item $T_{(m_0,\cdots, m_{n-1})} \in \mu = \mu_n$
\item $s_i \in j_{\mu_i}(T_{(m_0,\cdots, m_{i-1})})$, in particular $s_i \in j_{\mu_i}(\eta)^{i+1}$ for all $i\geq n$
\item $s_n \supsetneq [\rm{id}]$$_{\mu_n}$
 \item $j_{\mu_i,\mu_{i+1}}(s_i) \subsetneq s_{i+1}$ for all $i\geq n$
\item $h_i \in \rm{OR}$ for all $i\geq n$
\item $j_{\mu_i,\mu_{i+1}}(h_i)>h_{i+1}$ for all $i\geq n$
\end{itemize}
\noindent \textbf{Rules for II:}
\begin{itemize}
 \item $\mu_{i+1}\in \sigma$ is a measure on $\eta^{i+1}$ projecting to $\mu_i$ for all $i\geq n$.
\item $\mu_{i+1}$ concentrates on $T_{(m_0,\cdots,m_i)}\subset \eta^{i+1}$.
\end{itemize}
The first player that violates one of these rules loses, and if both players follow the rules for all $\omega$ moves, then I wins. The game is closed, so is determined.

\begin{lemma}
\label{IIwins}
Player II has a winning strategy for $G^{\sigma,\mu}_T$ for all $\mu\in \sigma$.
\end{lemma}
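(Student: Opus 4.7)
The game $G^{\sigma,\mu}_T$ is closed for Player I (i.e., II wins any play that satisfies the rules for all $\omega$ moves), so by Gale--Stewart it is determined, and it suffices to show that Player I has no winning strategy. The plan is to reflect the game into $M_X[G]$, use the $\mathsf{AD}^+$-theory in sLp$(\mathbb{R}_X)$ to produce a winning strategy for II there, and then transfer it back to $V[G]$ via $\pi_X$.

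More precisely, let $\bar\mu = \pi_X^{-1}(\mu)$ and $\bar\sigma = \pi_X^{-1}[\sigma] \in M_X[G]$. By Lemma \ref{correctness} we have sLp$(\mathbb{R}_X) \sats \mathsf{AD}^+ + \Theta = \theta_0$ inside $M_X[G]$, and $T_X, \eta_X, \Gamma_X$ play the same structural role there that $T,\eta,\Gamma$ play in $V[G]$. Consider the analogous game $\bar{G}^{\bar\sigma,\bar\mu}_{T_X}$ played inside $M_X[G]$. By the standard analysis of such ``generic tower'' games under $\mathsf{AD}^+$ (cf.\ the arguments in \cite[Section 4.1]{wilson2012contributions} and \cite{CMI}), Player II has a winning strategy $\bar\tau_{II}$ in $M_X[G]$; moreover, using the homogeneity properties of towers of measures derived from trees of scales, $\bar\tau_{II}$ can be arranged to depend only on Player I's natural-number moves $m_i$ (and not on the ordinal moves $\bar s_i,\bar h_i$). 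Define II's strategy $\tau_{II}$ in $V[G]$ by $\tau_{II}(m_0,\ldots,m_i) := \pi_X(\bar\tau_{II}(m_0,\ldots,m_i)) \in \sigma$; this is well-defined because $m_0,\ldots,m_i \in \omega$ are absolute between $V[G]$ and $M_X[G]$.

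To see that $\tau_{II}$ wins, suppose a run against $\tau_{II}$ satisfies all rules for all $\omega$ moves. Then the $V[G]$-tower $\langle \mu_i\rangle_{i<\omega}$ is the pointwise $\pi_X$-image of the $M_X[G]$-tower $\langle \bar\mu_i\rangle_{i<\omega}$ produced by $\bar\tau_{II}$. Since $\bar\tau_{II}$ wins in $M_X[G]$, the latter tower has wellfounded direct limit in $M_X[G]$. This wellfoundedness transfers to $V[G]$: any infinite descending sequence in the $V[G]$-direct limit would give, via countable completeness of each $\mu_i$ in $V[G]$ (so each representing function can be captured on a measure-one set) together with $M_X^\omega \subseteq M_X$, an infinite descending sequence in the $M_X[G]$-direct limit, a contradiction. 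But Player I's moves enforce $j_{\mu_i,\mu_{i+1}}(h_i) > h_{i+1}$, producing a strictly descending $\omega$-sequence of ordinals in the direct limit of $\langle \mu_i\rangle$, which is then ill-founded---contradicting what we just showed. Hence Player I must have violated a rule at a finite stage, and $\tau_{II}$ wins.

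The main obstacle will be point (i) above: producing the canonical strategy $\bar\tau_{II} \in M_X[G]$ that depends only on the $m_i$'s. This requires that $\bar\sigma$ is sufficiently rich---closed under the operations needed to realize any finite play and to witness wellfoundedness of the resulting tower---which is exactly what the definition of $\sigma = \pi_X[[\mathrm{meas}^{\boldsymbol\Gamma_X}(\eta_X^{<\omega})]^{M_X[G]}]$ (capturing \emph{all} such measures of $M_X[G]$) is designed to provide. The transfer of wellfoundedness via $\pi_X$ is the second delicate point, and is handled by the countable closure of $M_X$ together with the countable completeness of each element of $\sigma$ in $V[G]$.
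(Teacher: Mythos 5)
Your overall frame (the game is determined by Gale--Stewart, so it suffices to rule out a winning strategy for Player I) matches the paper, but the core of your argument does not work. First, a small slip: it is Player I, not Player II, who wins the infinite runs in which all rules are obeyed; your later reasoning shows you understand this, but the parenthetical in your first sentence has it backwards. The serious problem is step (i), which you yourself flag: there is no justification for a winning strategy $\bar\tau_{II}$ in $M_X[G]$ that depends only on the integer moves $m_i$, and the richness of $\sigma$ is not what would supply one. In the paper's proof II's strategy depends essentially on Player I's ordinal moves: when I plays $(m_i, s_i, h_i)$, II answers with $\mu_{i+1} = \pi_X(\bar\mu_{i+1})$ where $\bar\mu_{i+1}$ is the measure \emph{derived from $s_i$}, namely $A \in \bar\mu_{i+1} \Leftrightarrow s_i \in j_{\mu_i}(\pi_X(A))$. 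The point of taking $\sigma$ to be the $\pi_X$-image of \emph{all} of $(\mathrm{meas}^{\Gamma_X}(\eta_X^{<\omega}))^{M_X[G]}$ is precisely that these derived measures are guaranteed to be legal moves for II; it does not let II ignore the $s_i$'s. An integer-only strategy would amount to a homogeneity system for $T$ whose towers are wellfounded wherever needed, which is far stronger than anything available at this stage (it is essentially what the lemma is being used to build).

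Second, your closing contradiction is aimed at the wrong target. ``II wins'' does not mean II's tower is wellfounded, and in the actual argument the tower $\langle \mu_i \rangle$ produced by an infinite run typically \emph{is} illfounded (I's $h_i$'s witness this directly), so no wellfoundedness-transfer argument can finish. What the paper does instead: by $M_X^\omega \subseteq M_X$ the preimage tower $\langle \bar\mu_i \rangle$ lies in $M_X[G]$ and is illfounded there; Wilson's Lemma 3.5.9 then yields a tree $\bar W \in L[T_X,x]$ on which the $\bar\mu_i$ concentrate and for which $\bar h(i) = [\mathrm{rk}_{\bar W}]_{\bar\mu_i}$ is a \emph{pointwise minimal} witness to illfoundedness. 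Pushing forward with $\pi_X$ and setting $h'(i) = \mathrm{rk}_{j_{\mu_i}(W)}(s_i)$, the rules $j_{\mu_i,\mu_{i+1}}(s_i) \subsetneq s_{i+1}$ and $[\mathrm{id}]_{\mu_n} \subsetneq s_n$ force $h'$ to be another witness with $h'(n) < h(n)$, contradicting minimality. This minimality argument is the essential content of the proof and is absent from your proposal.
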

\begin{proof}
Suppose for contradiction that I has a winning strategy in $G^{\sigma,\mu}_T$, that is if both players follow all the rules of the game, then I can continue playing for $\omega$ moves. Suppose I plays integers $m_0,\dots, m_n$ such that $T_{(m_0,\dots,m_{n-1})}\in\mu = \mu_n$, an $s_n \in j_{\mu_n}(\eta)^{n+1}$ such that $s_n\in j_{\mu_n}(T_{(m_0,\dots, m_n)})$ such that $[id]_{\mu_n}\subsetneq s_n$, and some $h_n\in \textrm{OR}$ on his first move. II then responds with $\mu_{n+1} = \pi_X(\overline{\mu}_{n+1})$, where
\begin{center}
$A\in \overline{\mu}_{n+1} \Leftrightarrow s_n \in j_{\mu_n}(\pi_X(A))$.
\end{center}
We have that $\overline{\mu}_{n+1}\in meas^{\bf{\Gamma_X}}(\eta_X^{<\omega})\in M_X[G]$ (so $\mu_{n+1}$ is defined). Similarly, suppose for $i>n$, I has played $(m_i,s_i,h_i)$ such that 
\begin{itemize}
\item $m_i\in \omega$,
\item $s_i\in j_{\mu_i}(T_{(m_0,\dots,m_{i})})$,
\item $j_{\mu_j,\mu_{j+1}}(s_j)\subsetneq s_{j+1}$ for $j < i$,
\item $h_i\in$OR,
\item $j_{\mu_j,\mu_{j+1}}(h_j) > h_{j+1}$ for all $j<i$.
\end{itemize}
II then responds with $\mu_{i+1} = \pi_X(\overline{\mu}_{i+1})$, where
\begin{center}
$A\in \overline{\mu}_{i+1} \Leftrightarrow s_i\in j_{\mu_i}(\pi_X(A))$.
\end{center}
Again, $\mu_{i+1}$ makes sense since $\bar{\mu}_{i+1}\in M_X[G]$. 

After $\omega$ many moves, the players play a real $x = (m_0,m_1, \dots)$, a tower of measures $(\mu_i \ | \ i<\omega)$, a sequence of ordinals $(h_i \ | \ n\leq i<\omega)$ witnessing the tower $(\mu_i \ | \ n\leq i<\omega)$ is illfounded, and the sequence $(s_i \ | \ n\leq i < \omega)$.  By closure of $M_X$ and the fact that we can find a canonical name for each $\bar{\mu}_i$ in $M_X$,\footnote{In fact, the definition of $\bar{\mu}_i$ only depends on $s_i$ and not on $G$. Furthermore, (the codeset of) $\bar{\mu}_i$ is also $OD$ in $M_X[G]$ from $T_X=\pi_X^{-1}(T)$ (so the $\bar{\mu}_i$'s have symmetric names in $M_X$); hence we can think of the game $G^{\sigma,\mu}_T$ as being defined in $V$ where player II plays finite sequences of ordinals in $X[G]$, which are $\pi_X$-images of the sequences of ordinals that define the $\bar{\mu}_i$'s in $M_X[G]$.} the sequence $(\bar{\mu}_i \ | \ i<\omega) \in M_X[G]$ (and so $(\mu_i \ | \ i<\omega) = \pi_X(\bar{\mu}_i \ | \ i<\omega)$). 

In $M_X[G]$, the tower $(\bar{\mu}_i \ | \ i<\omega)$ is illfounded. By Lemma 3.5.9 of \cite{wilson2012contributions}, there is a tree $\bar{W}\in L[T_X,x]$ for some $x\in \mathbb{R}^{V[G]}$ on $\omega\times \eta_X$ such that the $\bar{\mu}_i$'s concentrate on $\bar{W}$ and the function $\bar{h}(i) = [\textrm{rk}_{\bar{W}}]_{\bar{\mu}_i}$ is a pointwise minimal witness to the illfoundedness of $(\bar{\mu}_i \ | \ i<\omega)$. Let $h = \pi_X(\bar{h})$ and $W = \pi_X(\bar{W})$; since $\bar{\mu}_i$'s concentrate on $\bar{W}$, $s_i\in j_{\mu_i}(W)$ for all $i$.

Let $h'(i) = \textrm{rk}_{j_{\mu_i}(W)}(s_i)$. We have
\begin{enumerate}
\item $j_{\mu_i,\mu_{i+1}}(h'(i)) = j_{\mu_i,\mu_{i+1}}(\textrm{rk}_{j_{\mu_i}(W)}(s_i)) > h'(i+1)$ for all $i$ as $j_{\mu_i,\mu_{i+1}}(s_i)\subsetneq s_{i+1}$;
\item $h'(n) = \textrm{rk}_{j_{\mu_n}(W)}(s_n) < \textrm{rk}_{j_{\mu_n}(W)}([id]_{\mu_n}) = h(n)$ because $[id]_{\mu_n}\subsetneq s_n$.  
\end{enumerate} 
So $h'$ witnesses $(\mu_i \ | \ i<\omega)$ is illfounded and $h'(n)<h(n)$. Contradiction.
\end{proof}
Lemma \ref{IIwins} easily implies that for each $\mu\in meas^{\bf{\Gamma}}(\eta^{<\omega})$, there is a countable set of measures $\sigma\subset meas^{\bf{\Gamma}}(\eta^{<\omega})$ that stabilizes $\mu$ (in the sense of \cite[Section 4]{wilson2012contributions}). By a simple argument using $\textsf{DC}$ and the fact that if $\sigma$ stabilizes $\mu$ then any $\sigma'\supseteq \sigma$ stabilizes $\mu$, we get a countable $\tau\subset meas^\Gamma(\eta^{<\omega})$ that stabilizes every $\mu\in \tau$.

Knowing this, \cite[Sections 4.1, 4.3]{wilson2012contributions} constructs a self-justifying system $\mathcal{A}$ for $\bf{Env}(\Gamma) = \Omega_0$ in $V[G]$. Using the argument in \cite[Section 5.5]{CMI}, we can then find a pair $(\N,\Psi)$ such that $\N\in V$, $|\N|^V \leq \kappa$, $\N$ is $\Gamma$-suitable, and $\Psi$ is a $(\omega_1^{V[G]},\omega_1^{V[G]})$-strategy for $\N$ such that $\Psi$ is $\Gamma$-fullness preserving and has branch condensation (and hence hull condensation by results in \cite{ATHM}); furthermore, $\Psi\rest V\in V$. In fact $\Psi$ is guided by $\mathcal{A}$ and hence $\Psi\notin \Omega_0$. By the lemmas in Section \ref{M1sharp}, we can then extend $\Psi$ to a $(\lambda^+,\lambda^+)$-strategy in $V$ and further to an $(\lambda^+,\lambda^+)$ strategy in $V[G]$ (also called $\Psi$) that has branch condensation. Furthermore, results of the previous section allow us to construct operators $x\mapsto \M_n^{\Psi,\sharp}(x)$ for all $n<\omega$. This means $PD(\Psi)$ holds and since $\Psi$ is guided by a self-justifying system, we can conclude by standard methods that the operator $\Fop = \Fop_{\Psi,\varphi_{\rm{all}}}$ is self-scaled. This allows us to run a core model induction as before to show in $V[G]$
\begin{center}
sLp$^{^\gTheta\Fop}(\mathbb{R},\Fop\rest\mathbb{R}) \vDash ``L(\powerset(\mathbb{R})) \vDash  \sf{AD}$$^+ + \Theta = \theta_1$''.\footnote{Of course, what we showed in the previous section also shows $\M_1^{\Psi,\sharp}$ exists and is $(\lambda^+,\lambda^+)$-iterable.}
\end{center}
The above construction works in general and allows us to show that ``sLp$^{^\gTheta\Fop}(\mathbb{R},\Fop\rest\mathbb{R})\vDash\sf{AD}$$^+ + \Theta = \theta_{\alpha+1}$'' for any hod pair $(\P^*,\Sigma)\in \Omega$ below $\kappa$ and $\Sigma$ is $\Omega$-fullness preserving, and $\Sigma$ is projectively equivalent to a set of Wadge rank $\theta_\alpha$, where $\Fop=\Fop_{\Sigma,\varphi_{\rm{all}}}$. In other words, we have shown that the Solovay sequence of $\Omega$ is of limit length.


\subsection{GETTING $\sf{AD}$$_\mathbb{R} + \Theta$ IS REGULAR}
\label{thetareg}
In the previous subsections, we show that the core model induction cannot stop at successor stages, i.e. in $V[G]$, $L(\Omega,\mathbb{R})$ cannot satisfy $\textsf{AD}^+ + \Theta = \theta_{\alpha+1}$ for some $\alpha\geq -1$.\footnote{$\Omega$ can also be characterized as the set of all $A\subseteq \mathbb{R}$ such that $A$ is Wadge reducible to a $\Sigma$-cmi-operator $J$ that determines itself on generic extensions, for some hod pair $(\P,\Sigma)$ below $\omega_2$.} This means the Solovay sequence of $\Omega$ is of limit order type. In this subsection, we show that there is some Solovay initial segment $\Omega^*$ of $\Omega$ such that $L(\Omega^*,\mathbb{R})\vDash ``\textsf{AD}_\mathbb{R} + \Theta$ is regular." This contradicts $(\dag)$. So we get after all that there is a model of ``$\textsf{AD}_\mathbb{R} + \Theta$ is regular".


Let $\langle\theta_\alpha^\Omega \ | \ \alpha\leq \lambda\rangle$ be the Solovay sequence of $\Omega$. We write $\theta_\beta$ for $\theta_\beta^\Omega$ and $\Theta$ for $\Theta^\Omega$ and let $\alpha = \textrm{cof}^{V[G]}(\Theta)$. Note that $\lambda$ is limit and for each $\beta < \lambda$, $L(\Omega\rest\theta_\beta,\mathbb{R})\cap \powerset(\mathbb{R})=\Omega\rest\theta_\beta$. Note also that $\Theta < \kappa^{++}$ since otherwise, we've already reached a model of ``$\sf{AD}$$_\mathbb{R} + \Theta$ is regular'' by the following lemma. 
\begin{lemma}\label{ThetaOmega4}
Suppose $\Theta = \kappa^{++}$. Then in $V[G]$, $\Omega = \powerset(\mathbb{R})\cap L(\Omega,\mathbb{R})$. Consequently, $L(\Omega,\mathbb{R})\vDash ``\sf{AD}_\mathbb{R} + \Theta$ is regular."
\end{lemma}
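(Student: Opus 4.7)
The inclusion $\Omega \subseteq \powerset(\mathbb{R}) \cap L(\Omega,\mathbb{R})$ is immediate: since $\Omega \subseteq \powerset(\mathbb{R})^{V[G]}$ is a set in $V[G]$ (as $V[G] \vDash \sf{ZFC}$), every member of $\Omega$ lies already in $L_0(\Omega,\mathbb{R})$. The substance of the lemma is the reverse inclusion, from which the ``consequently'' clause will follow.

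The plan for the reverse inclusion is to establish first that $L(\Omega,\mathbb{R}) \vDash \sf{AD}^+$, and then appeal to the maximality of $\Omega$ in the sense of \rdef{MaximalModel}. By the results of Section \ref{theta>theta0}, the Solovay sequence $\langle \theta_\beta^\Omega \ | \ \beta \leq \lambda \rangle$ has limit order type, so $\Omega = \bigcup_{\beta<\lambda} \Omega\!\upharpoonright\!\theta_\beta$, and for each $\beta<\lambda$ the model $L(\Omega\!\upharpoonright\!\theta_\beta,\mathbb{R})$ is contained in an $\sf{AD}^+$-model and therefore satisfies $\sf{AD}^+$, with $\powerset(\mathbb{R})\cap L(\Omega\!\upharpoonright\!\theta_\beta,\mathbb{R}) = \Omega\!\upharpoonright\!\theta_\beta$ (using that Wadge rank is absolute between transitive models of $\sf{AD}$ containing the relevant sets). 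The hypothesis $\Theta = \kappa^{++}$, which is regular in $V[G]$, forces $\cof^{V[G]}(\lambda) = \kappa^{++}$. This high cofinality is what makes a reflection argument go through: given $A \in L(\Omega,\mathbb{R}) \cap \powerset(\mathbb{R})$ definable in $L(\Omega,\mathbb{R})$ from $\Omega$ and parameters $\vec p$, a countable-size hull/reflection argument in $V[G]$ will show that only a bounded initial segment $\Omega\!\upharpoonright\!\theta_\beta$ of $\Omega$ is needed to define $A$; hence $A \in L(\Omega\!\upharpoonright\!\theta_\beta,\mathbb{R}) \cap \powerset(\mathbb{R}) = \Omega\!\upharpoonright\!\theta_\beta \subseteq \Omega$. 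This simultaneously yields $\powerset(\mathbb{R})\cap L(\Omega,\mathbb{R}) \subseteq \Omega$ and, via the same analysis of definitions, the clauses of Definition \ref{AD+} in $L(\Omega,\mathbb{R})$.

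Given $\Omega = \powerset(\mathbb{R}) \cap L(\Omega,\mathbb{R})$, the consequences are straightforward. First, $L(\Omega,\mathbb{R}) \vDash \sf{AD}^+$ since $\Omega$ is the union of end-extending $\sf{AD}^+$-pointclasses and both the $\infty$-Borel representation and $\lambda$-ordinal determinacy of any fixed $A \in \Omega$ hold in the $\sf{AD}^+$-model from which $A$ originates and therefore in $L(\Omega,\mathbb{R})$. Next, $\Theta^{L(\Omega,\mathbb{R})} = \Theta^{V[G]} = \kappa^{++}$, and this ordinal remains regular in the inner model $L(\Omega,\mathbb{R})$: any cofinal map of order type $<\kappa^{++}$ into $\kappa^{++}$ in $L(\Omega,\mathbb{R})$ would also lie in $V[G]$, contradicting that $\text{Col}(\omega,\kappa)$ is $\kappa^+$-cc and so preserves the regularity of $\kappa^{++}$. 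Finally, in $L(\Omega,\mathbb{R})$ the Solovay sequence has limit order type $\lambda$, so by the standard characterization (under $\sf{AD}^+$, $\sf{AD}_\mathbb{R}$ is equivalent to the Solovay sequence having limit length), $L(\Omega,\mathbb{R}) \vDash \sf{AD}_\mathbb{R}$.

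The main obstacle is the reflection step in the middle paragraph: one must verify that evaluating $\phi[\,\cdot\,,\vec p]$ in $L(\Omega,\mathbb{R})$ versus in $L(\Omega\!\upharpoonright\!\theta_\beta,\mathbb{R})$ gives the same set of reals, despite the fact that quantifiers in $\phi$ can a priori range over all of $\Omega$. The argument will use that the transitive closure of any set in $L(\Omega,\mathbb{R})$ of $V[G]$-cardinality $<\kappa^{++}$ meets $\Omega$ in a set whose Wadge ranks are, by regularity of $\Theta = \kappa^{++}$ in $V[G]$, bounded by some $\theta_\beta$ with $\beta<\lambda$; combined with the end-extension structure of the $L(\Omega\!\upharpoonright\!\theta_\beta,\mathbb{R})$'s, this allows the definition of $A$ to be truncated to $\Omega\!\upharpoonright\!\theta_\beta$ without change.
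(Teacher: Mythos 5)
Your overall strategy is the paper's strategy: reflect any $A\in\powerset(\mathbb{R})\cap L(\Omega,\mathbb{R})$ into a constructibly closed Solovay initial segment $\Omega\rest\theta_\gamma$, using the regularity of $\Theta=\kappa^{++}$ to bound the Wadge ranks appearing in a hull, and then read off $\sf{AD}_\mathbb{R}+\Theta$ regular from the limit length of the Solovay sequence together with downward absoluteness of regularity. The treatment of the ``consequently'' clause is fine. But the step you yourself flag as ``the main obstacle'' is exactly the content of the lemma, and you leave it as a promissory note rather than an argument; as written there is a gap there.

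Here is how the paper closes it, so you can see what is actually needed. Suppose $\Omega\neq\powerset(\mathbb{R})\cap L(\Omega,\mathbb{R})$ and let $\alpha$ be least with $\rho_\omega(L_\alpha(\Omega,\mathbb{R}))=\mathbb{R}$; by assumption $\alpha\geq\Theta$. Fix a surjection $f:\alpha\times\Omega\twoheadrightarrow L_\alpha(\Omega,\mathbb{R})$ definable over $L_\alpha(\Omega,\mathbb{R})$ and build a Skolem hull $H=\bigcup_n H_n$ with $H_0=\mathbb{R}$, where at each stage one adds, for every existential statement with parameters in $H_n$, the witness $f(\gamma_{a,\psi},B)$ for the lexicographically least pair (ordinal, Wadge rank) that produces one. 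Each $H_n$ is a surjective image of $\mathbb{R}$, so regularity of $\Theta$ bounds $\sup\{\beta_{a,\psi}\}$ below $\Theta$; hence $H\prec L_\alpha(\Omega,\mathbb{R})$ and $H\cap\Omega$ is contained in some $\Omega\rest\theta_\gamma$ with $\theta_\gamma<\Theta$. The transitive collapse of $H$ is then $L_\beta(\Omega\rest\theta_\gamma,\mathbb{R})$ and projects to $\mathbb{R}$, contradicting $\powerset(\mathbb{R})\cap L(\Omega\rest\theta_\gamma,\mathbb{R})=\Omega\rest\theta_\gamma$. Two points where your sketch would go wrong if taken literally: the hull must contain \emph{all} of $\mathbb{R}$ (a ``countable-size hull'' collapses $\mathbb{R}$ and the collapsed structure no longer computes $A$ as a genuine set of reals), and the relevant cardinality/regularity computation is that the set of Wadge ranks used is a surjective image of $\mathbb{R}$ and hence bounded below the regular $\Theta$ — not a statement about $\cof^{V[G]}(\lambda)$, which the paper never needs.
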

\begin{proof}
\indent Suppose not. Let $\alpha$ be the least such that $\rho_\omega(L_\alpha(\Omega,\mathbb{R})) = \mathbb{R}$. Hence $\alpha \geq \Theta$ by our assumption. Let $f: \alpha \times \Omega \twoheadrightarrow L_\alpha(\Omega,\mathbb{R})$ be a surjection that is definable over $L_\alpha(\Omega,\mathbb{R})$ (from parameters). 

We first define a sequence $\langle H_i \ | \ i<\omega\rangle$ as follows. Let $H_0 = \mathbb{R}$. By induction, suppose $H_n$ is defined and there is a surjection from $\mathbb{R} \twoheadrightarrow H_n$. Suppose $(\psi, a)$ is such that $a\in H_n$ and $L_\alpha(\Gamma,\mathbb{R}) \vDash \exists x\psi[x,a]$. Let $(\gamma_{a,\psi},\beta_{a,\psi})$ be the $<_{lex}$-least pair such that there is a $B\in \Gamma$ with Wadge rank $\beta_{a,\psi}$ such that
\begin{center}
$L_\alpha(\Omega,\mathbb{R}) \vDash \psi[f(\gamma_{a,\psi},B),a]$.
\end{center}
Let then $H_{n+1} = H_n\cup \{f(\gamma_{a,\psi},B) \ | \ L_\alpha(\Omega,\mathbb{R}) \vDash \exists x\psi[x,a]\wedge w(B) = \beta_{a,\psi}\wedge a\in H_n\}$. It's easy to see that there is a surjection from $\mathbb{R}\twoheadrightarrow H_{n+1}$. This uses the fact that $\Theta^\Omega = \Theta$ is regular, which implies sup$\{\beta_{a,\psi} \ | \ a\in H_n \wedge L_\alpha(\Omega,\mathbb{R})\vDash \exists x\psi[x,a]\} < \Theta$. Let $H = \bigcup_n H_n$. By construction, $H\prec L_\alpha(\Omega,\mathbb{R})$. Finally, let $M$ be the transitive collapse of $H$. 
\\
\indent Say $M = L_\beta(\Omega^*,\mathbb{R})$. By construction, $\Omega^* = \Omega\rest \theta_\gamma$ for some $\gamma$ such that $\theta_\gamma < \Theta$. But then $\rho_\omega(L_\beta(\Omega^*,\mathbb{R})) = \mathbb{R}$. This contradicts that $\Omega^*$ is constructibly closed. This gives $\Omega = \powerset(\mathbb{R})\cap L(\Omega,\mathbb{R})$ and in fact, $L(\Omega,\mathbb{R})\vDash \textsf{AD}_\mathbb{R} + \Theta$ is regular.

\end{proof}

Now let $\mathcal{H}$ be the direct limit of all hod pairs $(\Q,\Lambda)\in \Omega$ such that $\Lambda$ is $\Omega$-fullness preserving and has branch condensation. For each $X\in S$ such that $\{\mathcal{H}\}\subseteq X$, $\Omega\in X[G]$, let $(\Omega_X,\mathcal{H}_X,\Theta_X,\alpha_X) = \pi_X^{-1}(\Omega,\mathcal{H},\Theta,\alpha)$. For each $\beta < \lambda'_X$, where $\lambda'_X$ is the order-type of the closure of the set of Woodin cardinals in $\mathcal{H}_X$, let $\Sigma_{X,\beta}$ be the canonical strategy for $\mathcal{H}_X(\beta)$, which is the tail of a hod pair $(\Q,\Lambda)$ below $\kappa$ (in $M_X[G]$) and $\mathcal{H}_X(\beta)$ is the direct limit of all $\Lambda$-iterates in $M_X[G]$. The fact that $\mathcal{H}$ ($\mathcal{H}_X$, respectively) is the direct limit of hod mice in $\Omega$ follows from our smallness assumption $(\dag)$ and the remarks after it. Then $(H_X(\beta),\Sigma_{X,\beta})$ is a hod pair below $\kappa$. Let $\Sigma_X^- = \oplus_{\beta<\lambda_X}\Sigma_{X,\beta}$ and 
\begin{center}
$\mathcal{H}_X^+ = [\textrm{Lp}^{^\g\Sigma_X^-}(\mathcal{H}_X)]^{\Omega}$. \footnote{Recall that our convention is: $\mathcal{H}_X(\alpha+1)$ is a g-organized $\Sigma_{\mathcal{H}_X(\alpha)}=\Sigma_{X,\alpha}$-mouse for each $\alpha<\lambda_X$. In general, hod mice in this paper are g-organized.}
\end{center}
Finally, let $\mathcal{H}^+$ be the union of all $\M$ such that $\M$ is sound, $\mathcal{H}\lhd \M$, $\rho_\omega(\M)\leq o(\mathcal{H})$, and whenever $\M \in X\in S$, then $\pi_X^{-1}(\M)\lhd \mathcal{H}_X^+$. 
\begin{lemma}
\label{NotProjectAcross}
\begin{enumerate}
\item $\forall^* X\in S \ \mathcal{H}_{X}^+ \in M_X$ and $\pi_X(\mathcal{H}_X^+) = \mathcal{H}^+$.
\item Let $X$ be as in 1). Then no levels of $\mathcal{H}_X^+$ project across $\Theta_X$.
\end{enumerate}
\end{lemma}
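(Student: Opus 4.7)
The plan for (1) is to combine Lemma~\ref{LpClosed} with a cofinality estimate. First, I argue as in Lemma~\ref{lem:smallCof} that $\mathrm{cof}^{V[G]}(o(\mathcal{H}_X^+))\le\kappa$: if instead this cofinality were some regular $\xi\in[\kappa^+,\lambda]$, then an increasing continuous sequence of length $\xi$ in $\mathcal{H}_X^+$, combined with the internal $\square$-sequence provided by the mouse-structure of $\mathcal{H}_X^+$, would yield a $\square(\xi)$-sequence in $V[G]$, violating the hypothesis of Theorem~\ref{main_technical_theorem}. Hence a club of $X\in S$ are cofinal in $o(\mathcal{H}_X^+)$. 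For such $X$, every $\M\lhd \mathcal{H}_X^+$ is a sound $^g\Sigma_X^-$-mouse over $\mathcal{H}_X$ that projects to $\Theta_X$, and countable completeness of the $(\lambda_X,\lambda)$-extender $E$ derived from $\pi_X$ gives, exactly as in Lemma~\ref{LpClosed}, that $\mathrm{Ult}(\M,E)\lhd \mathcal{H}^+$ and hence $\M\in M_X$. Stacking these, $\mathcal{H}_X^+\in M_X$, and $\pi_X(\mathcal{H}_X^+)=\mathcal{H}^+$ is immediate from the defining union for $\mathcal{H}^+$.

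For (2) I argue by contradiction. Suppose some $\M\lhd \mathcal{H}_X^+$ has $\rho_\omega(\M)<\Theta_X$, and pick $\M$ least such. Then $\rho_\omega(\M)<\delta^{\mathcal{H}_X}_\beta$ for some $\beta<\lambda_X$. Because $\M$ is a continuing $^g\Sigma_X^-$-mouse, the $S$-construction sketched in the subsubsection on $S$-constructions allows me to reorganize the extender- and branch-sequences of $\M$ above $\mathcal{H}_X(\beta+1)$ into a sound $^g\Sigma_{\mathcal{H}_X(\beta)}$-mouse $\M^*$ over $\mathcal{H}_X(\beta+1)$ with $\rho_\omega(\M^*) < \delta^{\mathcal{H}_X}_\beta$. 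The unique $(\omega_1,\omega_1)$-iteration strategy $\Psi$ of $\M^*$ inherits branch condensation and $\Omega_X$-fullness preservation from $\Sigma_{X,\beta}$ together with the minimality of $\M$, so $(\mathcal{H}_X(\beta+1),\Psi)$ is a hod pair in $\Omega_X$ whose tail strategy properly extends $\Sigma_{X,\beta+1}$ in the Wadge hierarchy of $\Omega_X$. Since $\mathcal{H}_X$ is by construction the direct limit of all $\Omega_X$-fullness-preserving, branch-condensing hod pairs in $\Omega_X$, this is a contradiction.

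The main obstacle is the $S$-construction in (2): to reorganize $\M$ cleanly one must verify that the strategies $\Sigma_{X,\gamma}$ for $\gamma>\beta$ that are encoded into $\M$ above the various $\delta_\gamma^{\mathcal{H}_X}$ are absorbed faithfully by the reorganized hierarchy, and that $\M^*$ retains its original projectum and full soundness. As with Schindler--Steel $S$-constructions, this reduces to checking that $\M^*$ is generically closed under $\Sigma_{X,\beta}$, which follows from the fact that each operator $\Fop_{\Sigma_{X,\beta},\varphi_{\mathrm{all}}}$ determines itself on generic extensions, as established in Subsection~\ref{M1sharp}; once this closure is in place, the standard translation from ${^g\Sigma_X^-}$-mice to ${^g\Sigma_{\mathcal{H}_X(\beta)}}$-mice over $\mathcal{H}_X(\beta+1)$ goes through by the induction of Remark~\ref{SamePR}.
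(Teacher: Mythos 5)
Your part (1) is essentially the paper's argument, but note two slips: the cofinality computation must be carried out for $o(\mathcal{H}^+)$ (in $V$, using $\neg\square_{\kappa^+}$ and $\neg\square(\kappa^+)$), not for $o(\mathcal{H}_X^+)$, so that club-many $X$ have range cofinal in $o(\mathcal{H}^+)$ and the ultrapower of a purported extra level of $\mathcal{H}_X^+$ lands beyond $\pi_X^{-1}(\mathcal{H}^+)$ yet must belong to $\mathcal{H}^+$ by its definition; and the reverse inclusion $\mathcal{H}^+\unlhd\pi_X(\mathcal{H}_X^+)$ is not ``immediate'' for a fixed $X$, since the club of good $X$ a priori depends on the level $\M\lhd\mathcal{H}^+$ being tested --- the paper removes this dependence by pressing down on a stationary set of counterexamples.

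Part (2) has a genuine gap, and the route you propose does not go through. First, the $S$-construction is inapplicable: it translates a mouse over $a$ to a mouse over $\N$ only when $\M|\delta$ is \emph{generic} over $\J_\omega[\N]$ (and over every candidate mouse over $\N$), and there is no such genericity relation between $\mathcal{H}_X|\Theta_X$ above $\delta_\beta$ and $\mathcal{H}_X(\beta+1)$ --- the higher hod blocks are full $\Lp$-closures, not generic extensions, so you cannot ``reorganize'' $\M$ into a sound mouse $\M^*$ over $\mathcal{H}_X(\beta+1)$ this way. Second, even granting some translation, your contradiction is aimed at the wrong target: a new projecting mouse over $\mathcal{H}_X(\beta+1)$ is not a ``hod pair whose tail strategy properly extends $\Sigma_{X,\beta+1}$''; the relevant contradiction is with $\Omega$-fullness of $\mathcal{H}_X(\beta+1)$, and for that you must show the new object is countably iterable \emph{in $\Omega$}, which is precisely the hard step you wave at. The paper's actual mechanism is different: it chooses $\beta$ least with $\rho_\omega(\P)\leq\delta_\beta^\P$ \emph{and} $\delta_\beta>\mathrm{cof}^\P(\lambda^\P)$, uses $\pi_X$ to equip $\P$ with a strategy $\Lambda$ above $\delta_\beta^\P$, runs a core model induction to get $Code(\Lambda\rest\mathrm{HC})\in\Omega$, and then shows that the subset $A\subseteq\delta_\beta^\P$ witnessing the projectum drop is $\mathrm{OD}_{\Sigma_{X,\beta}}$ in some $L(\mathbb{R},C)\vDash\mathsf{AD}^++\mathsf{SMC}$ (via the direct limit $\M_\infty$ of the system of pairs Dodd--Jensen equivalent to $(\P,\Lambda)$, which lies in $\mathrm{HOD}_{\Sigma_{X,\beta}}$ there); $\mathsf{SMC}$ plus fullness of $\mathcal{H}_X(\beta+1)$ then force $A\in\P$, contradicting that $A$ witnesses $\rho_\omega(\P)\leq\delta_\beta^\P$. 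This definability-plus-$\mathsf{SMC}$ step is the heart of the lemma and is entirely absent from your proposal.
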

\begin{proof}
To prove 1), note that cof$^{V}(o(\mathcal{H}^+))\leq \kappa$. To see this, first note that $o(\mathcal{H}^+) < \kappa^{++}$; this follows from $\neg \square_{\kappa^+}$. We can then rule out cof$^{V}(o(\mathcal{H}^+))=\kappa^+$ using $\neg \square(\kappa^+)$ (since otherwise, the $\square$-sequence constructed in $\mathcal{H}^+$ of length o$(\mathcal{H}^+)$ gives rise to an nonthreadable coherent sequence witnessing $\square(\kappa^+)$, contradicting $\neg \square(\kappa^+)$). This means, $\forall^* X\in S$, the range of $\pi_X$ is cofinal in $o(\mathcal{H}^+)$. This implies 
\begin{center}
$\forall^* X\in S$, $\mathcal{H}^+_X \unlhd\pi_X^{-1}(\mathcal{H}^+)\in X$.
\end{center} 
Otherwise, fix such an $X$ and let $\M_X \lhd \mathcal{H}^+_X$ be such that $\M_X\notin \pi_X^{-1}(\mathcal{H}^+)$, $o(\M_X)$ has cofinality $\omega$, $\rho_1(\M_X)=\Theta_X$ and let $\M = \textrm{Ult}_0(\M_X,E_X)$ where $E_X$ is the $(\lambda_X,\Theta)$-extender derived from $\pi_X$. Since $\pi_X$ is cofinal in $o(\mathcal{H}^+)$, $\M\notin \mathcal{H}^+$. But whenever $Y\in S$ is such that $\M\in Y$, it's easy to see that $\pi_Y^{-1}(\M)\in \mathcal{H}_Y^+.$\footnote{Note that $\pi_Y^{-1}(\M) = \textrm{Ult}_0(\M_X,E_{X,Y})$ where $E_{X,Y}$ is derived from $\pi_{X,Y}$ the same way $E_X$ is derived from $\pi_X$. Let $Z\prec H_{\omega_6}$ be countable and contain all relevant objects and $\pi:M\rightarrow Z$ be the uncollapse map. Write $a^*$ for $\pi^{-1}(a)$ for $a\in Z$. Then it's easy to see using countable completeness of $E_{X,Y}$ that $(\pi_Y^{-1}(\M))^*$ is embeddable into $\M_X$, which in turns gives $(\pi_Y^{-1}(\M))^*\lhd ((\mathcal{H}_Y)^+)^*$.} So $\M\in \mathcal{H}^+$ after all. Contradiction. 

Suppose equality fails. By pressing down, there is some $\M\lhd \mathcal{H}^+$, some stationary set $T\subseteq S$ such that for $X\in T$, $\M\notin \pi_X(\mathcal{H}_X^+)$. But $\forall^* X \ \pi^{-1}_X(\M)\lhd \mathcal{H}_X^+$ by definition of $\mathcal{H}^+$. Contradiction. This completes the proof of 1).

To prove 2), suppose for contradiction that there is a $\P\lhd \mathcal{H}_X^+$ such that $\rho_\omega(\P)<\Theta_X$. Let $\P$ be the least such. By 1), $\P\in M_X$. Let $\beta<\lambda'_X$ be least such that $\rho_\omega(\P)\leq \delta_\beta^\P$ and $\delta_\beta > \textrm{cof}^\P(\lambda^\P)$. $\P$ can be considered a hod premouse over $(\mathcal{H}_X(\beta),\Sigma_{X,\beta})$. Using $\pi_X$, we can define a strategy $\Lambda$ for $\P$ such that $\Lambda$ acts on stacks above $\delta_\beta^\P$ and extends $\oplus_{\alpha<\lambda_X}\Sigma_{X,\alpha}$ (the strategy is simply $\oplus_{\alpha<\lambda_X}\Sigma_{X,\alpha}$ for stacks based on $\mathcal{H}_X$ (above $\delta^\P_\beta$), but the point is that it also acts on all of $\P$ because of $\pi_X$). By a core model induction similar to the previous subsections using the fact that $\Lambda$ has branch condensation and noting that $\Lambda$ can be extended to $H_{\kappa^+}^{V[G]}$, we can show $L^{^\gTheta\Fop}(\mathbb{R})\vDash \textsf{AD}^+$, where $\Fop=\Fop_{\Lambda,\varphi_{\rm{all}}}$, and hence $L(\Lambda\rest\rm{HC}$$,\mathbb{R})\vDash \textsf{AD}^+$. This implies $Code(\Lambda\rest\rm{HC})\in \Omega$ by definition of $\Omega$.

In $\Omega$, let $\mathcal{F}$ be the direct limit system of $\Sigma_{X,\beta}$ hod pairs $(\Q,\Psi)$ Dodd-Jensen equivalent to $(\P,\Lambda)$. $\mathcal{F}$ can be characterized as the direct limit system of $\Sigma_{X,\beta}$ hod pairs $(\Q,\Psi)$ in $\Omega$ such that $\Psi$ is $\Gamma(\P,\Lambda)$-fullness preserving and has branch condensation and $\Gamma(\Q,\Psi)=\Gamma(\P,\Lambda)$. $\mathcal{F}$ only depends on $\Sigma_{X,\beta}$ and the Wadge rank of $\Gamma(\P,\Lambda)$ and hence is $OD^{L(\mathbb{R},C)}_{\Sigma_{X,\beta}}$ for some $C\in\Omega$. 

Fix such a $C$ and note that $L(\mathbb{R},C)\vDash \sf{AD}^+ + \sf{SMC}$. Let $A\subseteq \delta^\P_\beta$ witness $\rho_\omega(\P)\leq \delta_\beta^\P$, that is, there is a formula $\phi$ such that for all $\alpha\in \delta^\P_\beta$,
\begin{center}
$\alpha\in A \Leftrightarrow \P \vDash \phi[\alpha,p]$,
\end{center}
where $p$ is the standard parameter of $\P$. Now $A$ is $OD_{\Sigma_{X,\beta}}$ in $L(\mathbb{R},C)$; this is because letting $\M_\infty$ be the direct limit of $\mathcal{F}$ under iteration maps, then in $L(\mathbb{R},C)$, $\M_\infty \in HOD_{\Sigma_{X,\beta}}$ and $A$ witnesses that $\rho_\omega(\M_\infty)\leq \delta_\beta^\P$. By $\sf{SMC}$ in $L(\mathbb{R},C)$ and the fact that $\mathcal{H}_X(\beta+1)$ is $\Omega$-full, we get that $A\in \P$. This is a contradiction.
\end{proof}
Now let $X$ be as in Lemma \ref{NotProjectAcross}. Using the embedding $\pi_X$ and the construction in \cite[Section 11]{sargsyanCovering2013}, we obtain a strategy $\Sigma_X$ for $\mathcal{H}_X^+$ such that
\begin{enumerate}
\item $\Sigma_X$ extends $\Sigma_X^-$;
\item for any $\Sigma_X$-iterate $\P$ of $\mathcal{H}_X^+$ via a stack $\vec{\mathcal{T}}$ such that $i^{\vec{\mathcal{T}}}$ exists, there is an embedding $\sigma:\P \rightarrow \mathcal{H}^+$ such that $\pi_X = \sigma \circ i^{\vec{\mathcal{T}}}$. Furthermore, letting $\Sigma_\P$ be the $\vec{\mathcal{T}}$-tail of $\Sigma_X$, for all $\alpha< \lambda^\P$, $\Sigma_{\P(\alpha)}\in \Omega$ has branch condensation.
\item $\Sigma_X$ is $\Gamma(\mathcal{H}^+_X,\Sigma_X)$-fullness preserving.
\end{enumerate}
\begin{remark} the construction in \cite{sargsyanCovering2013} is nontrivial in the case that $\mathcal{H}_X^+ \vDash \textrm{cof}(\Theta_X)$ is measurable; otherwise, as mentioned in the proof of Lemma \ref{NotProjectAcross}, $\Sigma_X$ is simply $\Sigma_X^-$ but because of $\pi_X$, it acts on all of $\mathcal{H}^+_X$. 
\end{remark}
We claim that $\Sigma_X\in \Omega$. Let $(\Q,\Lambda)$ be a $\Sigma_X$-iterate of $\mathcal{H}_X^+$ such that 
\begin{enumerate}[a)]
\item $\Q\in V$, $|\Q|^V\leq \kappa$; 
\item $\Lambda\rest V\in V$;
\item $\Lambda$ has branch condensation.
\end{enumerate}
c) follows from results in \cite{ATHM}. a) and b) can be ensured using boolean comparisons (see \cite{ATHM}). Using a), b), c), and arguments in previous subsections, we get that in $V[G]$,
\begin{center}
$L^{^\gTheta\F}(\mathbb{R},\F\rest\mathbb{R}) \vDash \sf{AD}$$^+$,
\end{center}
where $\F = \F_{\Lambda,\varphi_{\rm{all}}}$. This means $\Lambda \in \Omega$, and hence $\Sigma\in \Omega$.

\begin{lemma}
\label{FullnessPreserving}
$\forall^* X\in S$ $\Sigma_X$ is $\Omega$-fullness preserving .
\end{lemma}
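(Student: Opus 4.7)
The plan is to argue by contradiction. Suppose there is a stationary $T\subseteq S$ such that for each $X\in T$, $\Sigma_X$ is not $\Omega$-fullness preserving. Fix $X\in T$, a $\Sigma_X$-iterate $\P$ of $\mathcal{H}_X^+$ via a stack $\vec{\mathcal{T}}$ (with $i^{\vec{\mathcal{T}}}$ defined), together with the realization embedding $\sigma:\P\to \mathcal{H}^+$ provided by item (2) of the construction of $\Sigma_X$, so $\pi_X=\sigma\com i^{\vec{\mathcal{T}}}$. Fix also a witness to non-fullness: some $\beta\leq\lambda^\P$ and a sound $^\g\Sigma_{\P(\beta)}$-premouse $\N$ with $\P(\beta)\lhd\N$, $\rho_\omega(\N)\leq\delta^\P_\beta$, such that $\N$ is countably iterable in $\Omega$ (witnessing $\N\lhd [\textrm{Lp}^{^\g\Sigma_{\P(\beta)}}(\P|\delta^\P_\beta)]^\Omega$) but $\N\not\lhd\P$.

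First I would dispense with the case $\beta<\lambda^\P$. Here $\Sigma_{\P(\beta)}$ is a tail of $\Sigma_{X,\beta}$, which is the strategy of a hod pair below $\kappa$ that lies in $\Omega$, has branch condensation and (by the results of \cite{ATHM} together with $(\dag)$) is $\Omega$-fullness preserving. Since $\Sigma_{X,\beta}$ is $\Omega$-fullness preserving, so is any of its tails; hence $\P(\beta+1)$ already contains every $\Omega$-iterable sound $^\g\Sigma_{\P(\beta)}$-mouse projecting below $\delta^\P_\beta$, so $\N\lhd \P(\beta+1)\lhd \P$, a contradiction. Thus we may assume $\beta=\lambda^\P$; that is, $\N$ extends $\P$ itself and $\rho_\omega(\N)\leq o(\P)$.

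Next, I would transfer $\N$ via $\sigma$. Copying $\N$ in the usual way by $\sigma$ (extending $\sigma$ to $\N$ using the ultrapower by the long extender derived from $\sigma$ on $\P$) yields a sound $^\g\Sigma_{\mathcal{H}^+}$-premouse $\N^*$ with $\mathcal{H}^+\lhd \N^*$, $\rho_\omega(\N^*)\leq o(\mathcal{H}^+)$, and an embedding $\sigma^+:\N\to\N^*$ extending $\sigma$. Because $\N$ is iterable in $\Omega$, its $\sigma^+$-pullback strategy is an iteration strategy for $\N$ in $V[G]$ in the sense of countable iterability; combined with the lifting lemmas from Subsection \ref{M1sharp} (Lemmas \ref{LiftOperators} and \ref{LiftOperatorsToGenExt}) applied to the associated strategy operator, $\N^*$ is countably iterable in $V[G]$, and the unique such strategy has its restriction to $V$ lying in $V$. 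Now for any sufficiently closed $Y\in S$ with $\N^*\in Y$, the countable iterability of $\N^*$ in $V[G]$, pulled back through $\pi_Y$, shows that $\pi_Y^{-1}(\N^*)$ is a $^\g\Sigma_{\mathcal{H}_Y}$-mouse over $\mathcal{H}_Y$ that is iterable in $M_Y[G]$, and hence $\pi_Y^{-1}(\N^*)\lhd \mathcal{H}_Y^+$ by the definition of $\mathcal{H}_Y^+$ (the superscript $\Omega$ in its definition is absorbed by the correctness of $M_Y[G]$ established in Lemma \ref{correctness}). By the definition of $\mathcal{H}^+$ this forces $\N^*\lhd \mathcal{H}^+$. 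But then $\sigma^+$ is an elementary embedding from $\N$ into a level of $\mathcal{H}^+\lhd \mathcal{H}^+$, realizing $\N$ as an initial segment of $\textrm{Lp}$ over $\P$ in $V$, and by the same copying/realization argument used in the proof of Lemma \ref{NotProjectAcross}(2), this forces $\N\lhd \P$, the desired contradiction.

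The main obstacle will be ensuring that the copy map $\sigma^+$ is well-defined and preserves the $^\g\Sigma$-structure when $\N$ is an extender-active continuing premouse coding branches of trees on $\mathcal{H}_X^+$; this requires verifying that $\sigma$ commutes with the branch-predicate $\BBB$-operator used to feed in $\Sigma_{\mathcal{H}^+}$, which in turn relies on the commutativity $\pi_X=\sigma\com i^{\vec{\mathcal{T}}}$ and the branch condensation of $\Sigma_X$. Once this is set up, the rest is a routine application of the correctness lemma (Lemma \ref{correctness}) and the definition of $\mathcal{H}^+$.
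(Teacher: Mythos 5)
Your reduction to the top case is plausible, but the second half of the argument has a genuine gap at exactly the point where the lemma's content lies. First, the copying step is problematic: $\sigma$ is only a factor map with $\pi_X=\sigma\circ i^{\vec{\mathcal{T}}}$, so the ultrapower of $\N$ by the extender derived from $\sigma$ is a premouse over $i_{E_\sigma}(\P)$ (or over $\mathcal{H}^+|\sup\sigma[o(\P)]$), which need not be $\mathcal{H}^+$ or even a level of it; the claim ``$\N^*\lhd\mathcal{H}^+$'' does not typecheck without further work. Second, and more seriously, even granting $\N^*\lhd\mathcal{H}^+$, the final step ``this forces $\N\lhd\P$'' is unjustified: an elementary embedding $\sigma^+:\N\to\N^*$ with $\N^*$ a level of $\mathcal{H}^+$ does not let you pull $\N^*$ back to a level of $\P$, because $\sigma^+$ is not surjective and no condensation principle for the stack over iterates of $\mathcal{H}_X^+$ is available at this stage (such condensation is essentially what Lemma \ref{GettingACondensation} establishes later, by a much harder argument). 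The proof of Lemma \ref{NotProjectAcross}(2) that you cite does something different (an $\OD$/$\mathsf{SMC}$ computation showing no level of $\mathcal{H}_X^+$ projects across $\Theta_X$) and does not supply the realization-to-initial-segment step you need. In short, realization into $\mathcal{H}^+$ by itself does not transfer $\Omega$-fullness to the iterate; that is the whole difficulty.

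The paper's proof goes a different route. It fixes an auxiliary hod pair $(\P_X,\Sigma_{\P_X})\in V$ with branch condensation whose pointclass $\Gamma(\P_X,\Sigma_{\P_X})$ already sees the failure of fullness of $\Q_X$ as witnessed by $\M$, proves that $\mathcal{H}_X^+$ is \emph{internally} full in $\P_X$ (in particular $o(\mathcal{H}_X^+)$ is a cardinal of $\P_X$, via a Mitchell-order-zero ultrapower argument), and then reflects the statement ``$\Q$ is not full'' into the derived model of an iterate of (a countable hull of an ultrapower of) $\P_X$ using $\M_\omega^{\Sigma_\P,\sharp}$, generic genericity iterations, and countable completeness of the extender derived from $\pi_X$. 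In that derived model the missing mouse $\bar{\M}$ becomes $\OD_{\Sigma_{\bar{\Q}}}$, hence by mouse capturing it lies in $\bar{\R}$, contradicting internal fullness. You would need some version of this reflection-plus-internal-fullness mechanism; the direct copy-and-pull-back strategy does not close.
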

\begin{proof}
Suppose not. Let $\vec{\mathcal{T}}_X$ be according to $\Sigma_X$ with end model $\Q_X$ such that $\Q_X$ is not $\Omega$-full. This means there is a strong cut point $\gamma$ such that letting $\alpha\leq \lambda^{\Q_X}$ be the largest such that $\delta^{\Q_X}_\alpha \leq \gamma$, then in $\Omega$, there is a mouse $\M\lhd \textrm{Lp}^{^\g\Sigma_{\Q_X(\alpha)}}(\Q_X|\gamma)$\footnote{The case where $\gamma = \delta_\alpha$ and $\M\lhd \textrm{Lp}^{^\gTheta \oplus_{\beta<\alpha}\Sigma_{\Q_X(\beta)}}(\Q|\gamma)$ is similar.} such that $\M\notin \Q_X$. Let $k: \Q_X\rightarrow \mathcal{H}^+$ be such that $\pi_X = k \circ i^{\vec{\mathcal{T}}_X}$. We use $i$ to denote $i^{\vec{\mathcal{T}}_X}$ from now on. 

Let $(\P_X,\Sigma_{\P_X})\in V$ be a $\Sigma_X^-$-hod pair such that 
\begin{itemize}
\item $\Gamma(\P_X,\Sigma_{\P_X})\vDash \Q$ is not full as witnessed by $\M$. 
\item $\Sigma_{\P_X}\in \Omega$ is fullness preserving and has branch condensation.
\item $\lambda^{\P_X}$ is limit and cof$^{\P_X}(\lambda^{\P_X})$ is not measurable in $\P_X$.
\end{itemize}
Such a pair $(\P_X,\Sigma_{\P_X})$ exists by boolean comparisons.

By arguments similar to that used in \ref{NotProjectAcross}, for almost all $X\in S$, no levels of $\P_X$ projects across $\mathcal{H}_X$ and in fact, $o(\mathcal{H}_X^+)$ is a cardinal of  $\P_X$. The second clause follows from the following argument. Suppose not and let $\N_X\lhd \P_X$ be least such that $\rho_\omega(\N_X) = \Theta_X$ for stationary many $X\in S$. By minimality of $\N_X$ and an argument similar to that in Lemma \ref{NotProjectAcross}, we may assume for stationary many $X\in S$, $\N_X\in M_X$. Fix such an $X$. Let $f: \kappa^* \rightarrow \Theta_X$ be an increasing and cofinal map in $\mathcal{H}_X^+$, where $\kappa^* = \textrm{cof}^{\mathcal{H}_X^+}(\Theta_X)$. We can construe $\N_X$ as a sequence $g = \langle \N_\alpha \ | \ \alpha < \kappa^*\rangle$, where $\N_\alpha = \N_X\cap \delta^{\mathcal{H}_X^+}_{f(\alpha)}$. Note that $\N_\alpha\in \mathcal{H}_X^+$ for each $\alpha<\kappa^*$. Now let $\R_0 = \textrm{Ult}_{0}(\mathcal{H}_X^+,\mu)$, $\R_1= \textrm{Ult}_{\omega}(\N_X,\mu)$, where $\mu\in \mathcal{H}^+_X$ is the (extender on the sequence of $\mathcal{H}^+_X$ coding a) measure on $\kappa^*$ with Mitchell order $0$. Let $i_0:\mathcal{H}_X^+ \rightarrow \R_0$, $i_1:\N_X\rightarrow \R_1$ be the ultrapower maps. Letting $\delta = \delta_{\lambda^{\mathcal{H}^+_\sigma}} = \Theta_X$, it's easy to see that $i_0\rest (\delta+1)=i_1\rest(\delta+1)$ and $\powerset(\delta)^{\R_0} = \powerset(\delta)^{\R_1}$. This means $\langle i_1(\N_\alpha) \ | \ \alpha<\kappa^*\rangle \in \powerset(\delta)^{\R_0}$. By fullness of $\mathcal{H}_X^+$ in $\Omega$,\footnote{Any $A\subset \delta$ in $\R_0$ is $OD^\Omega_{\Sigma_X^-}$ (as in the proof of Lemma \ref{NotProjectAcross}, this means OD$^{L(\mathbb{R},C)}_{\Sigma_X^-}$ for some $C\in\Omega$) and so by Strong Mouse Capturing ($\sf{SMC}$, see \cite{ATHM}), $A\in \mathcal{H}_X^+$.} $\langle i_1(\N_\alpha) \ | \ \alpha<\kappa^*\rangle \in \mathcal{H}^+_X$. Using $i_0$, $\langle i_1(\N_\alpha) \ | \ \alpha<\kappa^*\rangle \in \mathcal{H}^+_X$, and the fact that $i_0\rest \mathcal{H}_X^+|\Theta_X = i_1\rest \N_X|\Theta_X \in \mathcal{H}^+_X$, we can get $\N_X \in \mathcal{H}^+_X$ as follows. For any $\alpha,\beta < \Theta_X$, $\alpha \in \N_\beta$ if and only if $i_0(\alpha) \in i_1(\N_\beta)= i_0(\N_\beta)$. Since $\mathcal{H}_X^+$ can compute the right hand side of the equivalence, it can compute the sequence $\langle \N_\alpha \ | \ \alpha<\kappa^*\rangle$. Contradiction.

In other words, $\P_X$ thinks $\mathcal{H}^+_X$ is full. For here on, let $\P=\P_X$, $\Sigma_\P=\Sigma_{\P_X}$, $(\vec{\T}_X,\Q_X)=(\vec{\T},\Q)$. Let 
\begin{center}
$\pi_X^*:\P\rightarrow \mathcal{H}^{++}$
\end{center}
be the ultrapower map by the $(\textrm{crt}(\pi_X),\Theta)$-extender $E_{\pi_X}$ induced by $\pi_X$. Note that $\pi_X^*$ extends $\pi_X\rest \mathcal{H}_X^+$ and $\mathcal{H}^{++}$ is wellfounded since $X$ is closed under $\omega$-sequences. Let 
\begin{center}
$i^*: \P\rightarrow \R$
\end{center}
be the ultrapower map by the $(\textrm{crt}(i),\delta^\Q)$-extender induced by $i$. Note that $\Q\lhd \R$ and $\R$ is wellfounded since there is a natural map \begin{center}
$k^*: \R \rightarrow \mathcal{H}^{++}$
\end{center} 
extending $k$ and $\pi_X^* = k^*\circ i^*$. Without loss of generality, we may assume $\M$'s unique strategy $\Sigma_\M \leq_w \Sigma_\P$. Also, let $(\dot{\Q},\dot{\vec{\T}})$ be the canonical $Col(\omega,\omega_2)$-names for $(\Q,\vec{\T})$. Let $K$ be the transitive closure of $H_{\omega_2}\cup (\dot{\Q},\dot{\vec{\T}})$.

Let $\W = \M_\omega^{\Sigma_\P,\sharp}$ and $\Lambda$ be the unique strategy of $\W$. Let $\W^*$ be a $\Lambda$-iterate of $\W$ below its first Woodin cardinal that makes $K$-generically generic. Then in $\W^*[K]$, the derived model $D(\W^*[K])$ satisfies
\begin{center}
$L(\Gamma(\P,\Sigma_\P),\mathbb{R}) \vDash \dot{\Q} \textrm{ is not full}.$\footnote{This is because we can continue iterating $\W^*$ above the first Woodin cardinal to $\W^{**}$ such that letting $\lambda$ be the sup of the Woodin cardinals of $\W^{**}$, then there is a $Col(\omega,<\lambda)$-generic $h$ such that $\mathbb{R}^{V[G]}$ is the symmetric reals for $\W^{**}[h]$. And in $\W^{**}(\mathbb{R}^{V[G]})$, the derived model satisfies that $L(\Gamma(\P,\Sigma_\P)) \vDash \Q$ is not full.}
\end{center}
So the above fact is forced over $\W^*[K]$ for $\dot{\Q}$.

Let $H\prec H_{\lambda^{+++}}$ be countable such that all relevant objects are in $H$. Let $\pi: M\rightarrow H$ invert the transitive collapse and for all $a\in H$, let $\overline{a}=\pi^{-1}(a)$. By the countable completeness of $E_{\pi_X}$ there is a map $\overline{\pi}:\overline{\R}\rightarrow \mathcal{H}_X^{++}$ such that 
\begin{center}
$\pi\rest \overline{\P} = \overline{\pi}\circ \overline{i^*}$.\footnote{This is because $i^*\circ \bar{\pi} = \pi\rest \bar{\R}$ and $\pi\rest i^*\circ \bar{\P} = \pi\rest \bar{\R}\circ \bar{i^*}$. }
\end{center}
Let $\Sigma_{\overline{\P}}$ be the $\pi$-pullback of $\Sigma_\P$ and $\Sigma_{\overline{\R}}$ be the $\overline{\pi}$-pullback of $\Sigma_{\P}$. Note that $\Sigma_{\overline{\P}}$ extends $\pi^{-1}(\Sigma_\P)$ and $\Sigma_{\overline{\P}}$ is also the $\overline{i^*}$-pullback of $\Sigma_{\overline{\R}}$; so in particular, $\Sigma_{\bar{\P}} \leq_w \Sigma_{\bar{\R}}$. We also confuse $\bar{\Lambda}$ with the $\pi$-pullback of $\Lambda$. Hence $\Gamma(\overline{\P},\Sigma_{\overline{\P}})$ witnesses that $\overline{\Q}$ is not full and this fact is forced over $\bar{\W^*}[\bar{K}]$ for the name $\bar{\dot{\Q}}$. This means if we further iterate $\bar{\W^*}$ to $\mathcal{Y}$ such that $\mathbb{R}^{V[G]}$ can be realized as the symmetric reals over $\mathcal{Y}$ then in the derived model $D(\mathcal{Y})$, 
\begin{equation}\label{notFull}
L(\Gamma(\bar{\P},\Sigma_{\bar{\P}})) \vDash \bar{\Q} \textrm{ is not full}. 
\end{equation}
In the above, we have used the fact that the interpretation of the UB-code of the strategy for $\bar{\P}$ in $\mathcal{Y}$ to its derived model is $\Sigma_{\bar{\P}}\rest\mathbb{R}^{V[G]}$; this key fact is proved in \cite[Theorem 3.26]{ATHM}.

Now we iterate $\overline{\R}$ to $\S$ via $\Sigma_{\bar{\R}}$ to realize $\mathbb{R}^{V[G]}$ as the symmetric reals for the collapse $Col(\omega,<\delta^\S)$, where $\delta^\S$ is the sup of $\S$'s Woodin cardinals. By \ref{notFull} and the fact that $\Sigma_{\bar{\P}}\leq_w \Sigma_{\bar{\R}}$, we get that in the derived model $D(\S)$,
\begin{center}
$\bar{\Q}$ is not full as witnessed by $\bar{\M}$.
\end{center}
So $\Sigma_{\bar{\M}}$ is OD$_{\Sigma_{\overline{\Q}}}$ in $D(\S)$ and hence $\overline{\M}\in \overline{\R}$. This contradicts internal fullness of $\bar{\Q}$ in $\bar{\R}$. 
\end{proof}

We continue with a key definition, due to G. Sargsyan. This definition is first formulated in \cite{sargsyanCovering2013} and we reformulate it a bit to fit our situation.

\begin{definition}[Sargsyan]
\label{ACondensation}
Suppose $X\in S$ and $A\in \mathcal{H}^+_X \cap \powerset(\Theta_X)$. We say that $\pi_X$ has \textbf{$A$-condensation} if whenever $\Q$ is such that there are elementary embeddings $\upsilon: \mathcal{H}^+_X \rightarrow \Q$, $\tau:\Q\rightarrow \mathcal{H}^+$ such that $\Q$ is countable in $V[G]$ and $\pi_X = \tau\circ \upsilon$, then $\upsilon(T_{\mathcal{H}^+_X,A}) = T_{\Q,\tau,A}$, where  
\begin{center}
$T_{\mathcal{H}^+_X,A} = \{(\phi,s) \ | \ s\in [\Theta_X]^{<\omega}\wedge \mathcal{H}^+_X \vDash \phi[s,A]\}$,
\end{center}
and
\begin{center}
$T_{\Q,\tau,A} = \{(\phi,s) \ | \ s\in [\delta_\alpha^\Q]^{<\omega} \textrm{ for some } \alpha<\lambda_\Q \wedge \mathcal{H}^+ \vDash \phi[i^{\Sigma^{\tau,-}_{\Q}}_{\Q(\alpha),\infty}(s),\pi_X(A)]\}$,
\end{center}
where $\Sigma^{\tau}_\Q$ is the $\tau$-pullback strategy and $\Sigma^{\tau,-}_\Q = \oplus_{\alpha<\lambda^\Q} \Sigma^{\tau}_{\Q(\alpha)}$. We say $\pi_X$ has \textbf{condensation} if it has $A$-condensation for every $A\in \mathcal{H}^+_X \cap \powerset(\Theta_X)$.
\end{definition}
The following is the key lemma (cf. \cite[Section 11]{sargsyanCovering2013}).
\begin{lemma}
\label{GettingACondensation}
$\forall^* X\in S$ $\pi_X$ has condensation.
\end{lemma}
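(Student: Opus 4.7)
The plan is to argue by contradiction, reducing the failure of condensation to a violation of the direct-limit characterization of $\mathcal{H}^+$ supplied by Lemma~\ref{FullnessPreserving} together with Dodd--Jensen uniqueness of iteration maps into hod direct limits. Suppose the set $T\subseteq S$ of $X$ for which $\pi_X$ fails condensation is stationary. For each such $X$ pick $A_X\in \mathcal{H}^+_X\cap \powerset(\Theta_X)$, witnesses $(\Q_X,\upsilon_X,\tau_X)$, and a pair $(\phi_X,s_X)$ realizing the inequality $\upsilon_X(T_{\mathcal{H}^+_X,A_X}) \neq T_{\Q_X,\tau_X,A_X}$. Using that $S$ is closed under $\omega$-sequences and that the relevant data live in $H_{\lambda^{++}}$, a Fodor-style pressing-down argument lets me thin $T$ so that on the remaining stationary set $A_X = \pi_X^{-1}(A)$ for a single $A\in \mathcal{H}^+\cap \powerset(\Theta)$, the formula $\phi_X = \phi$ is fixed, and the critical $\alpha<\lambda^{\Q_X}$ with $\upsilon_X(s_X)\in [\delta^{\Q_X}_\alpha]^{<\omega}$ is uniformly determined.

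Next I unpack the definitions. Since $\pi_X=\tau_X\circ\upsilon_X$ is fully elementary, $\mathcal{H}^+_X\vDash \phi[s_X,A_X]$ iff $\mathcal{H}^+\vDash \phi[\tau_X(\upsilon_X(s_X)),A]$, so the failure is exactly that $\phi$ has different truth values in $\mathcal{H}^+$ when evaluated at $\tau_X(\upsilon_X(s_X))$ versus at $i_X(\upsilon_X(s_X))$, where $i_X = i^{\Sigma^{\tau_X,-}_{\Q_X}}_{\Q_X(\alpha),\infty}$. To get a contradiction I intend to show $\tau_X\rest\Q_X(\alpha) = i_X$. This will follow from Dodd--Jensen uniqueness of the iteration embedding into the direct limit, provided the pullback pair $(\Q_X(\alpha),\Sigma^{\tau_X}_{\Q_X(\alpha)})$ lies in the same direct limit system $\mathcal{F}_\alpha$ that gives $\mathcal{H}(\alpha)$, i.e.\ provided it is a hod pair in $\Omega$ which is $\Omega$-fullness preserving and has branch condensation. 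Branch condensation of the pullback is immediate from branch condensation of $\Sigma_{X,\alpha}$ listed in item~(2) following the construction of $\Sigma_X$ in Subsection~\ref{thetareg}, and membership in $\Omega$ follows by absorbing the pullback into a $\Sigma_X$-cmi operator via the techniques of Subsection~\ref{M1sharp}.

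The principal obstacle will be verifying $\Omega$-fullness preservation of the pullback $\Sigma^{\tau_X}_{\Q_X(\alpha)}$. The plan is to mimic the derived-model reflection at the end of Lemma~\ref{FullnessPreserving}: take countable $H\prec H_{\lambda^{+++}}$ absorbing all the relevant data, use countable completeness of the $(\mathrm{crt}(\pi_X),\Theta)$-extender derived from $\pi_X$ to produce a realization $\bar\pi\colon \bar{\mathcal{R}}\to\mathcal{H}^{++}$ factoring through the analogous $\bar{i^*}$-ultrapower, iterate $\M^{\Sigma_{\P_X},\sharp}_\omega$ above its Woodin cardinals so that $\mathbb{R}^{V[G]}$ is realized as the symmetric reals, and then read off any hypothetical failure of fullness for the pullback inside the associated derived model. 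Any such failure would reflect back to a failure of fullness for $\Sigma_X$ itself, contradicting Lemma~\ref{FullnessPreserving}. A secondary technical issue is ensuring that Dodd--Jensen applies to the pullback pair; this will use the standard comparison machinery for hod pairs with branch condensation developed in \cite{ATHM}, which has already been invoked repeatedly in this section.
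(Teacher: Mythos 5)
There is a genuine gap at the heart of your plan: the step ``$\tau_X\rest\Q_X(\alpha) = i^{\Sigma^{\tau_X,-}_{\Q_X}}_{\Q_X(\alpha),\infty}$ follows from Dodd--Jensen uniqueness'' is exactly the hard content of the lemma, and Dodd--Jensen does not deliver it. The Dodd--Jensen property gives uniqueness of \emph{iteration maps} between members of the direct limit system; but in the definition of $A$-condensation the factor map $\tau$ is an arbitrary elementary embedding with $\pi_X=\tau\circ\upsilon$ --- it is not given to be an iteration map according to any strategy, and there is no uniqueness theorem identifying an arbitrary elementary $\tau\rest\delta^{\Q}$ with the direct limit map of the pullback strategy. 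If such a pointwise argument worked, the lemma would hold for \emph{every} $X$ with no stationarity needed, and the elaborate machinery the paper deploys would be superfluous. Your Fodor pressing-down to fix a single $A$ is a correct first move (the paper does the same), but after that a single counterexample $X$ gives you nothing to contradict.

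What the paper actually does is assemble a \emph{bad tuple}: an infinite chain $X_0\prec X_1\prec\cdots\prec Y$ of counterexamples with $\pi_{X_i,X_{i+1}}(A_{X_i})=A_{X_{i+1}}$, together with the interpolating maps $\xi_i,\pi_i,\sigma_i,\tau_i$ into $\M_{\infty,Y}$, each $\xi_i$ witnessing the failure $\xi_i(T_{\P_i,A_{X_i}})\neq T_{\Q_i,\sigma_i,A_{X_i}}$. It then reflects the whole configuration into $\Gamma(\P_0^+,\Pi)$ for a suitable hod pair, takes a countable hull, and runs \emph{simultaneous} $\mathbb{R}^{V[G]}$-genericity iterations along the chain. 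The contradiction comes from wellfoundedness of the direct limit of the resulting models: the ordinal parameter $t$ that (together with a formula $\theta$) computes the sets $T_{\P_i,A_i}$ in the derived models must eventually be fixed by the maps $\overline{\xi^+_{i,\omega}}$, and once $\overline{\xi^+_{i,\omega}}(t)=t$, elementarity transfers the correct theory computation from $\overline{\P_i}$ to $\overline{\Q_i}$, contradicting the assumed failure at stage $i$. So the mechanism is an infinite-descent/wellfoundedness argument along a coherent $\omega$-chain of counterexamples, not a uniqueness-of-embeddings argument at a single $X$. Your derived-model reflection ideas are in the right spirit for the reflection step, but you are missing the chain construction and the stabilization argument that actually produce the contradiction.
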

\begin{proof}
Suppose not. Let $T$ be the set of counterexamples. Hence $T$ is stationary. For each $X\in T$, let $A_X$ be the $\lesssim_X$-least such that $\pi_X$ fails to have $A_X$-condensation, where $\lesssim_X$ is the canonical well-ordering of $\mathcal{H}_X^+$. Recall that if $(\P,\Sigma)$ is a hod pair such that $\delta^\P$ has measurable cofinality then $\Sigma^- = \oplus_{\alpha < \lambda^{\P}} \Sigma_{\P(\alpha)}$. We say that a tuple $\{\langle \P_i,\Q_i,\tau_i,\xi_i,\pi_i,\sigma_i \ | \ i<\omega \rangle, \M_{\infty,Y}\}$ is a \textbf{bad tuple} if
\begin{enumerate}
\item $Y\in S$;
\item $\P_i = \mathcal{H}^+_{X_i}$ for all $i$, where $X_i\in T$;
\item for all $i < j$, $X_i \prec X_j \prec Y$;
\item $\M_{\infty,Y}$ be the direct limit of iterates $(\Q,\Lambda)$ of $(\mathcal{H}_Y^+,\Sigma_Y)$ such that $\Lambda$ has branch condensation; 
\item for all $i$, $\xi_i:\P_i\rightarrow \Q_i$, $\sigma_i:\Q_i \rightarrow \M_{\infty,Y}$,  $\tau_i: \P_{i+1}\rightarrow \M_{\infty,Y}$, and $\pi_i: \Q_i \rightarrow \P_{i+1}$;
\item for all $i$, $\tau_i = \sigma_i\circ \xi_i$, $\sigma_i = \tau_{i+1} \circ \pi_i$, and $\pi_{X_i,X_{i+1}}\rest \P_i=_{\textrm{def}} \phi_{i,i+1} = \pi_i \circ \xi_i$;
\item $\phi_{i,i+1}(A_{X_i}) = A_{X_{i+1}}$;
\item for all $i$, $\xi_i(T_{\P_i,A_{X_i}}) \neq T_{\Q_i,\sigma_i,A_{X_i}}$. 
\end{enumerate}
In (8), $T_{\Q_i,\sigma_i,A_{X_i}}$ is computed relative to $\M_{\infty,Y}$, that is
\begin{center}
$T_{\Q_i,\sigma_i,A_{X_i}} = \{(\phi,s) \ | \ s\in [\delta_\alpha^{\Q_i}]^{<\omega} \textrm{ for some } \alpha<\lambda^{\Q_i} \wedge \M_{\infty,Y} \vDash \phi[i^{\Sigma^{\sigma_i,-}_{\Q_i}}_{\Q_i(\alpha),\infty}(s),\tau_i(A_{X_i})]\}$
\end{center}
\noindent \textbf{Claim: } There is a bad tuple.
\begin{proof}
For brevity, we first construct a bad tuple $\{\langle \P_i,\Q_i,\tau_i,\xi_i,\pi_i,\sigma_i \ | \ i<\omega \rangle, \mathcal{H}^+\}$ with $\mathcal{H}^+$ playing the role of $\M_{\infty,Y}$. We then simply choose a sufficiently large $Y\in S$ and let $i_Y:\mathcal{H}_Y^+\rightarrow \M_{\infty,Y}$ be the direct limit map, $m_Y: \M_{\infty,Y}\rightarrow \mathcal{H}^+$ be the natural factor map, i.e. $m_Y\circ i_Y = \pi_Y$. It's easy to see that for all sufficiently large $Y$, the tuple $\{\langle \P_i,\Q_i,m_Y^{-1}\circ \tau_i,m_Y^{-1}\circ \xi_i,m_Y^{-1}\circ \pi_i,m_Y^{-1}\circ \sigma_i \ | \ i<\omega \rangle, \M_{\infty,Y}\}$ is a bad tuple.

The key point is (6). Let $A_X^* = \pi_X(A_X)$ for all $X\in T$. By Fodor's lemma, there is an $A$ such that $\exists^* X\in T \ A^*_X = A$. So there is an increasing and cofinal sequence $\{X_\alpha \ | \ \alpha < \omega_3 \} \subseteq T$ such that for $\alpha < \beta$, $\pi_{X_\alpha,X_\beta}(A_{X_\alpha}) = A_{X_\beta} = \pi_{X_\beta}^{-1}(A)$. This easily implies the existence of such a tuple $\{\langle \P_i,\Q_i,\tau_i,\xi_i,\pi_i,\sigma_i \ | \ i<\omega \rangle, \mathcal{H}^+\}$.
\end{proof}
Fix a bad tuple $\mathcal{A} = \{\langle \P_i,\Q_i,\tau_i,\xi_i,\pi_i,\sigma_i \ | \ i<\omega \rangle, \M_{\infty,Y}\}$. Let $(\P_0^+,\Pi)$ be a $^\g\Sigma^-_{\P_0}$-hod pair such that 
\begin{center}
$\Gamma(\P_0^+,\Pi) \vDash \mathcal{A}$ is a bad tuple.
\end{center}
We may also assume $(\P_0^+,\Pi\rest V)\in V$, $\lambda^{\P_0^+}$ is limit of nonmeasurable cofinality in $\P_0^+$ and there is some $\alpha<\lambda^{\P_0^+}$ such that $\Sigma_{Y} \leq_w \Pi_{\P_0^+(\alpha)}$. This type of reflection is possible because we replace $\mathcal{H}^+$ by $\M_{\infty,Y}$. Let $\W = \M_{\omega}^{\sharp, \Sigma_Y, \Pi, \oplus_{n<\omega}\Sigma_{X_n}}$ and $\Lambda$ be the unique strategy of $\W$. If $\mathcal{Z}$ is the result of iterating $\W$ via $\Lambda$ to make $\mathbb{R}^{V[G]}$ generic, then letting $h$ be $\mathcal{Z}$-generic for the Levy collapse of the sup of $\mathcal{Z}$'s Woodin cardinals to $\omega$ such that $\mathbb{R}^{V[G]}$ is the symmetric reals of $\mathcal{Z}[h]$, then in $\mathcal{Z}(\mathbb{R}^{V[G]})$,

\begin{center}
$\Gamma(\P_0^+,\Pi) \vDash \mathcal{A}$ is a bad tuple.
\end{center}

Now we define by induction $\xi_i^+: \P_i^+ \rightarrow \Q_i^+$, $\pi_i^+: \Q_i^+ \rightarrow \P_{i+1}^+$, $\phi_{i,i+1}^+: \P_i^+\rightarrow \P_{i+1}^+$ as follows. $\phi_{0,1}^+: \P_0^+\rightarrow \P_{1}^+$ is the ultrapower map by the $(\textrm{crt}(\pi_{X_0,X_1}),\Theta_{X_1})$-extender derived from $\pi_{X_0,X_1}$. Note that $\phi_{0,1}^+$ extends $\phi_{0,1}$. Let $\xi_0^+: \P_0^+ \rightarrow \Q_0^+$ extend $\xi_0$ be the ultrapower map by the $(\textrm{crt}(\xi_0),\delta^{\Q_0})$-extender derived from $\xi_0$. Finally let $\pi_0^+ = (\phi^+_{0,1})^{-1}\circ \xi_0^+$. The maps $\xi_i^+, \pi_i^+, \phi_{i,i+1}^+$ are defined similarly. Let also $\M_Y = \textrm{Ult}(\P_0^+,E)$, where $E$ is the $(\lambda_X,\Theta_Y)$-extender derived from $\pi_{X,Y}$. There are maps $\epsilon_{2i}: \P_i^+ \rightarrow \M_Y$, $\epsilon_{2i+1}:\Q_i^+\rightarrow \M_Y$ for all $i$ such that $\epsilon_{2i} = \epsilon_{2i+1}\circ \xi^+_i$ and $\epsilon_{2i+1} = \epsilon_{2i+2}\circ \pi_i^+$. When $i = 0$, $\epsilon_0$ is simply $i_E$. Letting $\Sigma_i = \Sigma^-_{\P_i}$ and $\Psi=\Sigma_{\Q_i}^-$, $A_i = A_{X_i}$, there is a finite sequence of ordinals $t$ and a formula $\theta(u,v)$ such that in $\Gamma(\P_0^+,\Pi)$
\begin{enumerate}
\setcounter{enumi}{8}
\item for every $i<\omega$, $(\phi,s)\in T_{\P_i,A_i} \Leftrightarrow \theta[i^{\Sigma_i}_{\P_i(\alpha),\infty},t]$, where $\alpha$ is least such that $s\in [\delta_\alpha^{\P_i}]^{<\omega}$;
\item for every $i$, there is $(\phi_i,s_i)\in T_{\Q_i,\xi_i(A_i)}$ such that $\neg \theta[i^{\Psi_i}_{\Q_i(\alpha)}(s_i),t]$ where $\alpha$ is least such that $s_i\in [\delta_\alpha^{\Q_i}]^{<\omega}$.
\end{enumerate}
The pair $(\theta,t)$ essentially defines a Wadge-initial segment of $\Gamma(\P_0^+,\Pi)$ that can define the pair $(\M_{\infty,Y}, A)$, where $\tau_i(A_i)=A$ for some (any) $i$. 

Now let $X\prec H_{\lambda^{+++}}$ be countable that contains all relevant objects and $\pi: M\rightarrow X$ invert the transitive collapse. For $a\in X$, let $\overline{a}=\pi^{-1}(a)$. By countable completeness of the extender $E$, there is a map $\pi^*: \overline{\M_Y} \rightarrow \P_0$ such that $\pi\rest \overline{\M_Y} = \epsilon_0 \circ \pi^*$. Let $\overline{\Pi_i}$ be the $\pi^*\circ \overline{\epsilon_i}$-pullback of $\Pi$. Note that in $V[G]$, $\overline{\Sigma_Y} \leq_w \overline{\Pi_0} \leq_w \overline{\Pi_1} \dots \leq_w \Pi^{\pi^*}$. 

Let $\dot{\mathcal{A}}\in (H_{\bar{\kappa}})^M$ be the canonical name for $\bar{\mathcal{A}}$. It's easy to see (using the assumption on $\W$) that if $\W^*$ is a result of iterating $\bar{\W}$ via $\bar{\Lambda}$ (we confuse $\bar{\Lambda}$ with the $\pi$-pullback of $\Lambda$; they coincide on $M$) in $M$ below the first Woodin of $\bar{\W}$ to make $H$-generically generic, where $H$ is the transitive closure of $H_{\omega_2}^M\cup \dot{A}$, then in $\W^*[H]$, the derived model of $\W^*[H]$ at the sup of $\W^*$'s Woodin cardinals satisfies:
\begin{center}
$L(\bar{\P}_0,\mathbb{R}) \vDash \dot{\mathcal{A}}$ is a bad tuple.
\end{center} 

Now we stretch this fact out to $V[G]$ by iterating $\W^*$ to $\W^{**}$ to make $\mathbb{R}^{V[G]}$-generic. In $\W^{**}(\mathbb{R}^{V[G]})$, letting $i: \W^* \rightarrow \W^{**}$ be the iteration map then
\begin{center}
$\Gamma(\bar{\P_0}^+,\bar{\Pi}) \vDash i(\bar{\mathcal{A}})$\footnote{We abuse the notation slightly here. Technically, $\bar{\mathcal{A}}$ is not in $\W^*$ but $\W^*$ has a canonical name $\dot{{\mathcal{A}}}$ for $\bar{\mathcal{A}}$. Hence by $i(\bar{\mathcal{A}})$, we mean the interpretation of $i(\dot{{\mathcal{A}}})$.} is a bad tuple.
\end{center} 

By a similar argument as in Theorem 3.1.25 of \cite{trangThesis2013}, we can use the strategies $\overline{\Pi_i}^+$'s to simultanously execute a $\mathbb{R}^{V[G]}$-genericity iterations. The last branch of the iteration tree is wellfounded. The process yields a sequence of models $\langle\overline{\P^+_{i,\omega}},\overline{\Q_{i,\omega}^+} \ | \ i<\omega\rangle$ and maps $\overline{\xi^+_{i,\omega}}:\overline{\P^+_{i,\omega}}\rightarrow \overline{\Q^+_{i,\omega}}$, $\overline{\pi^+_{i,\omega}}:\overline{\Q^+_{i,\omega}}\rightarrow \overline{\P^+_{i+1,\omega}}$, and $\overline{\phi^+_{i,i+1,\omega}} = \overline{\pi^+_{i,\omega}}\circ \overline{\pi^+_{i,\omega}}$. Furthermore, each $\overline{\P^+_{i,\omega}}, \overline{\Q^+_{i,\omega}}$ embeds into a $\Pi^{\pi^*}$-iterate of $\overline{\M_Y}$ and hence the direct limit $\P_\infty$ of $(\overline{\P^+_{i,\omega}}, \overline{\Q^+_{j,\omega}} \ | \ i,j<\omega)$ under maps $\overline{\pi^+_{i,\omega}}$'s and $\overline{\xi^+_{i,\omega}}$'s is wellfounded. We note that $\overline{\P^+_{i,\omega}}$ is a $^{\g}\Sigma^\pi_i$-premouse and $\overline{\Q^+_{i,\omega}}$ is a $^{\g}\Psi^\pi_i$-premouse because the genericity iterations are above $\overline{\P_i}$ and $\overline{\Q_i}$ for all $i$ and by \cite[Theorem 3.26]{ATHM}, the interpretation of the strategy of $\bar{\P_i}$ ($\bar{\Q_i}$ respectively) in the derived model of $\bar{\P_{i,\omega}^+}$ ($\bar{\P_{i,\omega}^+}$, respectively) is $^\g\Sigma^\pi_i$ ($^\g\Psi^\pi_i$, respectively). Let $C_i$ be the derived model of $\overline{\P^+_{i,\omega}}$, $D_i$ be the derived model of $\overline{\Q^+_{i,\omega}}$ (at the sup of the Woodin cardinals of each model), then $\mathbb{R}^{V[G]} = \mathbb{R}^{C_i} = \mathbb{R}^{D_i}$. Furthermore, $C_i\cap \powerset(\mathbb{R})\subseteq D_i\cap \powerset(\mathbb{R})\subseteq C_{i+1}\cap \powerset(\mathbb{R})$ for all $i$.

(9), (10) and the construction above give us that there is a $t\in [\textrm{OR}]^{<\omega}$, a formula $\theta(u,v)$ such that
\begin{enumerate}
\setcounter{enumi}{10}
\item for each $i$, in $C_i$, for every $(\phi,s)$ such that $s\in \delta^{\overline{\P_i}}$, $(\phi,s)\in T_{\overline{\P_i},\overline{A_i}}\Leftrightarrow \theta[i^{\overline{\Sigma_i}}_{\overline{\P_i}(\alpha),\infty}(s),t]$ where $\alpha$ is least such that $s\in [\delta_\alpha^{\overline{\P_i}}]^{<\omega}$.
\end{enumerate}
Let $n$ be such that for all $i\geq n$, $\overline{\xi^+_{i,\omega}}(t) = t$. Such an $n$ exists because the direct limit $\P_\infty$ is wellfounded as we can arrange that $\P_\infty$ is embeddable into a $\Pi^{\pi^*}$-iterate of $\bar{\M_Y}$. By elementarity of $\overline{\xi^+_{i,\omega}}$ and the fact that $\overline{\xi^+_{i,\omega}}\rest \P_i = \overline{\xi_i}$,
\begin{enumerate}
\setcounter{enumi}{11}
\item for all $i\geq n$, in $D_i$, for every $(\phi,s)$ such that $s\in \delta^{\overline{\Q_i}}$, $(\phi,s)\in T_{\overline{\Q_i},\overline{\xi_i}(\overline{A_i})}\Leftrightarrow \theta[i^{\overline{\Psi_i}}_{\overline{\Q_i}(\alpha),\infty}(s),t]$ where $\alpha$ is least such that $s\in [\delta_\alpha^{\overline{\Q_i}}]^{<\omega}$.
\end{enumerate}
However, using (10), we get
\begin{enumerate}
\setcounter{enumi}{12}
\item for every $i$, in $D_i$, there is a formula $\phi_i$ and some $s_i\in [\delta^{\overline{\Q_i}}]^{<\omega}$ such that $(\phi_i,s_i)\in T^{\overline{\Q_i},\overline{\xi_i}(\overline{A_i})}$ but $\neg \phi[i^{\overline{\Psi_i}}_{\overline{\Q_i}(\alpha),\infty}(s_i),t]$ where $\alpha$ is least such that $s\in [\delta_\alpha^{\overline{\Q_i}}]^{<\omega}$.
\end{enumerate}
Clearly (12) and (13) give us a contradiction. This completes the proof of the lemma.
\end{proof}
\begin{remark}
The main ideas of the proof above originate from \cite[Lemma 11.15]{sargsyanCovering2013}. The main difference is in the situation of \cite[Lemma 11.15]{sargsyanCovering2013}, there is an elementary embedding $j$ acting on all of $V$, so roughly speaking, the iterability of the $\P_i^+$'s is justified by embedding them into $j(\P_0^+)$. Here we don't have such a $j$, we use pressing down arguments, countable closure of hulls $X\in S$ and reflection arguments instead.
\end{remark}
Fix an $X$ satisfying the conclusion of Lemma \ref{GettingACondensation}. Suppose $(\Q,\vec{\mathcal{T}}) \in I(\mathcal{H}^+_X,\Sigma_X)$ is such that $i^{\vec{\mathcal{T}}}:\mathcal{H}^+_X\rightarrow \Q$ exists. Let $\gamma^{\vec{\mathcal{T}}}$ be the sup of the generators of $\vec{\mathcal{T}}$. For each $x\in \Q$, say $x = i^{\vec{\mathcal{T}}}(f)(s)$ for $f\in \mathcal{H}_X^+$ and $s\in [\delta^\Q_\alpha]^{<\omega}$, where $\delta^\Q_\alpha\leq \gamma^{\vec{\mathcal{T}}}$ is least such, then let $\tau_\Q(x) = \pi_X(f)(i^{\Sigma_{\Q,\vec{\mathcal{T}}}}_{\Q(\alpha),\infty}(s))$. 

\begin{remark}
By Lemma \ref{GettingACondensation} and \cite[Theorem 3.26]{ATHM}, $\tau_\Q$ is elementary and $\tau_{\Q}\rest \delta^\Q = i^{\Sigma_{\Q,\vec{\mathcal{T}}}}_{\Q|\delta^\Q,\infty}\rest \delta^\Q = i^\Lambda_{\Q|\delta^\Q,\infty}\rest \delta^\Q$, where $\Lambda$ is the $\tau_\Q$-pullback strategy of $\Q|\delta^\Q$.
\end{remark}
\begin{lemma}
\label{commutativity}
Fix an $X$ satisfying the conclusion of Lemma \ref{GettingACondensation}. Suppose $(\Q,\vec{\mathcal{T}})\in I(\mathcal{H}^+_X,\Sigma_X)$ and $(\R,\vec{\mathcal{U}})\in I(\Q,\Sigma_{\Q,\vec{\mathcal{T}}})$ are such that $i^{\vec{\mathcal{T}}}, i^{\vec{\mathcal{U}}}$ exist and $\Sigma_{\Q,\vec{\mathcal{T}}}$ and $\Sigma_{\R,\vec{\mathcal{U}}}$ have branch condensation. Then $\tau_\Q = \tau_\R\circ i^{\vec{\mathcal{U}}}$.
\end{lemma}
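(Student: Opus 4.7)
The plan is to unfold the definition of $\tau_\Q$ and $\tau_\R$ and reduce the claim $\tau_\Q = \tau_\R \circ i^{\vec{\mathcal{U}}}$ to the coherence of iteration maps in the hod direct limit system. Fix $x \in \Q$ and write $x = i^{\vec{\mathcal{T}}}(f)(s)$ with $f \in \mathcal{H}^+_X$ and $s \in [\delta^\Q_\alpha]^{<\omega}$ for the least possible $\alpha$. Applying $i^{\vec{\mathcal{U}}}$ and using elementarity together with $i^{\vec{\mathcal{U}}} \circ i^{\vec{\mathcal{T}}} = i^{\vec{\mathcal{T}} \smallfrown \vec{\mathcal{U}}}$, I would write
\[
i^{\vec{\mathcal{U}}}(x) = i^{\vec{\mathcal{T}} \smallfrown \vec{\mathcal{U}}}(f)\bigl(i^{\vec{\mathcal{U}}}(s)\bigr).
\]
Let $\beta$ be least such that $i^{\vec{\mathcal{U}}}(s) \in [\delta^\R_\beta]^{<\omega}$. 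By the definition of $\tau_\R$,
\[
\tau_\R\bigl(i^{\vec{\mathcal{U}}}(x)\bigr) = \pi_X(f)\Bigl(i^{\Sigma_{\R,\vec{\mathcal{U}}\circ\vec{\mathcal{T}}}}_{\R(\beta),\infty}\bigl(i^{\vec{\mathcal{U}}}(s)\bigr)\Bigr),
\qquad
\tau_\Q(x) = \pi_X(f)\Bigl(i^{\Sigma_{\Q,\vec{\mathcal{T}}}}_{\Q(\alpha),\infty}(s)\Bigr),
\]
so the lemma reduces to the identity
\[
i^{\Sigma_{\R,\vec{\mathcal{U}}\circ\vec{\mathcal{T}}}}_{\R(\beta),\infty}\bigl(i^{\vec{\mathcal{U}}}(s)\bigr) = i^{\Sigma_{\Q,\vec{\mathcal{T}}}}_{\Q(\alpha),\infty}(s). \tag{$\star$}
\]

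The next step is to identify $\beta$ as the hod-index corresponding to $\alpha$ under $i^{\vec{\mathcal{U}}}$. Decompose $\vec{\mathcal{U}}$ into the part $\vec{\mathcal{U}}_{\leq\alpha}$ based on $\Q(\alpha)$ and the portion strictly above $\delta^\Q_\alpha$. The latter is an iteration above a strong cutpoint of $\Q$ and therefore fixes $\delta^\Q_\alpha$ and $s$ pointwise. The former is a stack on $\Q(\alpha)$ according to $\Sigma_{\Q(\alpha),\vec{\mathcal{T}}}$, the restriction of $\Sigma_{\Q,\vec{\mathcal{T}}}$, with last model $\R(\beta)$. Thus $i^{\vec{\mathcal{U}}}\rest \Q(\alpha)$ is the iteration embedding $i^{\vec{\mathcal{U}}_{\leq\alpha}}: \Q(\alpha) \to \R(\beta)$, $i^{\vec{\mathcal{U}}}(s) = i^{\vec{\mathcal{U}}_{\leq\alpha}}(s)$, and in particular $\beta$ really is least for $i^{\vec{\mathcal{U}}}(s)$.

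Finally, $(\star)$ is just the standard commutativity of the direct limit maps in the system of $\Sigma_{\Q(\alpha),\vec{\mathcal{T}}}$-iterates: because $(\R(\beta),\Sigma_{\R(\beta),\vec{\mathcal{U}}\circ\vec{\mathcal{T}}})$ is an iterate of $(\Q(\alpha),\Sigma_{\Q(\alpha),\vec{\mathcal{T}}})$ via $\vec{\mathcal{U}}_{\leq\alpha}$, both lie in the same direct limit system (using that both tails have branch condensation, so the Dodd--Jensen arguments of \cite{ATHM} identify the direct limits), and the general factorization
\[
i^{\Sigma_{\Q(\alpha),\vec{\mathcal{T}}}}_{\Q(\alpha),\infty} = i^{\Sigma_{\R(\beta),\vec{\mathcal{U}}\circ\vec{\mathcal{T}}}}_{\R(\beta),\infty} \circ i^{\vec{\mathcal{U}}_{\leq\alpha}}
\]
applied to $s$ gives exactly $(\star)$. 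Putting these three steps together yields $\tau_\Q(x) = \tau_\R(i^{\vec{\mathcal{U}}}(x))$, as required.

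The main obstacle is the bookkeeping in the second paragraph: one must verify that every piece of the stack $\vec{\mathcal{U}}$ either lives on $\Q(\alpha)$ (contributing to $\vec{\mathcal{U}}_{\leq\alpha}$) or else is based strictly above $\delta^\Q_\alpha$ and hence fixes $s$, and that the resulting factor $\vec{\mathcal{U}}_{\leq\alpha}$ is a legitimate stack according to $\Sigma_{\Q(\alpha),\vec{\mathcal{T}}}$. This is where one uses the definition of the component strategies $\Sigma_{\Q(\alpha),\vec{\mathcal{T}}}$ as restrictions of $\Sigma_{\Q,\vec{\mathcal{T}}}$ and the fact that stacks on hod premice respect the hod stratification, together with branch condensation of the tails (which we have by hypothesis). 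Once this decomposition is set up cleanly, the rest of the argument is an exercise in unwinding definitions.
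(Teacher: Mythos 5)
Your proof is correct and follows essentially the same route as the paper: write $x = i^{\vec{\mathcal{T}}}(f)(s)$, unwind both sides, and reduce the claim to the commutativity $i^{\Sigma_{\R,\vec{\mathcal{U}}}}_{\R,\infty}\circ i^{\vec{\mathcal{U}}} = i^{\Sigma_{\Q,\vec{\mathcal{T}}}}_{\Q,\infty}$ of the direct limit maps. The only difference is that you carry out the hod-index bookkeeping (locating $\beta$ and splitting $\vec{\mathcal{U}}$ at $\delta^\Q_\alpha$) explicitly, which the paper's one-line computation elides.
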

\begin{proof}
Let $x\in \Q$. There are some $f\in \mathcal{H}^+_X$ and $s\in [\gamma^{\vec{\mathcal{T}}}]^{<\omega}$ such that $x = i^{\vec{\mathcal{T}}}(f)(s)$. So $\tau_\Q(x) = \pi_X(f)(i^{\Sigma_{\Q,\vec{\mathcal{T}}}}_{\Q,\infty}(s))$. On the other hand, $\tau_\R\circ i^{\vec{\mathcal{U}}}(x) = \tau_\R\circ i^{\vec{\mathcal{U}}}(i^{\vec{\mathcal{T}}}(f)(s)) = \pi_X(f)(i^{\Sigma_{\R,\vec{\mathcal{U}}}}_{\R,\infty}\circ i^{\vec{\U}}\circ i^{\Sigma_{\Q,\vec{\mathcal{T}}}}(s)) = \pi_X(f)(i^{\Sigma_{\Q,\vec{\mathcal{T}}}}_{\Q,\infty}(s)) = \tau_\Q(x)$.
\end{proof}

Let $\M_\infty(\mathcal{H}_X^+,\Sigma_X)$ be the direct limit of all hod pairs $(\Q,\Lambda)\in I(\mathcal{H}_X^+,\Sigma_X)$ such that $\Lambda$ has branch condensation. The lemma implies that the map $\sigma: \M_\infty(\mathcal{H}_X^+,\Sigma_X)\rightarrow \mathcal{H}^+$ defined as: 

\begin{adjustwidth}{2cm}{2cm}
$\sigma(x) = y$ iff whenever $(\R,\Lambda)\in I(\mathcal{H}_X^+,\Sigma_X)$ is such that $\Lambda$ has branch condensation, and $i^\Lambda_{\R,\infty}(x^*)=x$ for some $x^*$, then $y = \tau_\R(x^*)$
\end{adjustwidth}
is elementary and crt$(\sigma) = \delta =_{\rm{def}} \delta^{\M_\infty(\mathcal{H}_X^+,\Sigma_X)}$. This implies that $\M_\infty(\mathcal{H}_X^+,\Sigma_X)\vDash ``\delta$ is regular". Let $(\Q,\Lambda)\in I(\mathcal{H}_X^+,\Sigma_X)$ be such that $\Lambda$ has branch condensation. By a similar argument as those used before, we get $\Lambda\in \Omega$ and in fact since $\Q\vDash ``\delta^\Q$ is regular", we easily get that $N=L(\Gamma(\Q,\Lambda),\mathbb{R})\vDash ``\Theta$ is regular" (note that $\Theta^N$ is the image of $\delta^\Q$ under the direct limit map into the direct limit of all $\Lambda$-iterates). This contradicts the assumption that there is no model $M$ satisfying ``$\sf{AD}$$_\mathbb{R} + \Theta$ is regular". Such an $M$ has to exist after all. This finishes this subsection and the proof of Theorem \ref{main_technical_theorem}.
\begin{remark}\label{InV}
In the above, there are $(\Q, \Lambda)\in V$ that are in $I(\mathcal{H}_X^+,\Sigma_X)$ (this is via a standard boolean comparison argument, cf. \cite{ATHM}). By taking a countable hull, we can find a countable hod pair $(\Q^*,\Lambda^*)$ that generates in $V$ a model of ``$\sf{AD}_\mathbb{R}$$ + \Theta$ is regular" by an $\mathbb{R}$-genericity iteration argument using the fact that $\Lambda^*$ has branch condensation and is $\kappa$-universally Baire. 
\end{remark}

\section{QUESTIONS AND OPEN PROBLEMS}
We conjecture the following (the proof of which will settle Conjecture \ref{ConLSA}).
\begin{conjecture}\label{LSAFromPFA}
Suppose $\kappa$ is a cardinal such that $\kappa^\omega=\kappa$. Let $\lambda = 2^{\kappa}$. Suppose for all cardinal $\alpha\in [\kappa^+,\lambda^+]$, $\neg \square(\alpha)$. Then in $V^{Col(\omega,\kappa)}$, there are models $M$ containing $\mathbb{R}\cup \rm{OR}$ such that $M \vDash \sf{LSA}$.
\end{conjecture}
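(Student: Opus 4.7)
The plan is to extend the core model induction carried out in Section \ref{lowerbound} past the ``$\sf{AD}_\mathbb{R}+\Theta$ is regular'' barrier all the way up to $\sf{LSA}$, using the failure of $\square(\alpha)$ on the larger interval $[\kappa^+,\lambda^+]$ (rather than $[\kappa^+,(2^\kappa)^+]$) together with an extension of the hod-mouse theory beyond \cite{ATHM}. Concretely, fix a $V$-generic $G\subseteq\Coll(\omega,\kappa)$ and adopt the contradictory smallness hypothesis $(\dag_{\sf{LSA}})$: in $V[G]$ there is no transitive class model $M\supseteq\mathbb{R}\cup\OR$ with $M\vDash\sf{LSA}$. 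As in Definition \ref{MaximalModel} let $\Omega$ be the maximal $\sf{AD}^+$-pointclass computed under $(\dag_{\sf{LSA}})$, and let $\langle\theta_\alpha^\Omega\ |\ \alpha\leq\lambda\rangle$ be its Solovay sequence. By Theorem \ref{main_technical_theorem} (applied inside the core model induction) the Solovay sequence is very long: in fact, iteratively running the subsections \ref{framework}--\ref{thetareg} above each hod pair already constructed will produce, under $(\dag_{\sf{LSA}})$, pointclasses strictly past ``$\sf{AD}_\mathbb{R}+\Theta$ is regular'', ``$\sf{AD}_\mathbb{R}+\Theta$ is measurable'', and so on.

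The first block of the proof is to upgrade the operator-lifting machinery of Subsection \ref{M1sharp} to the stronger hypothesis. Using $\neg\square(\alpha)$ for every $\alpha\in[\kappa^+,\lambda^+]$, the Jensen--Steel weak covering argument of Lemma \ref{M1sharpExists} generalizes to yield $\M_1^{J,\sharp}$ on a cone in $H_{\lambda^+}^{V[G]}$ for every $\Sigma$-cmi operator $J$ associated to any hod pair $(\P,\Sigma)$ below $\kappa$, where now $\Sigma$ is permitted to be the strategy of hod premice of the more complex kinds needed for $\sf{LSA}$. The failures of squares on the extended interval are used exactly as in Lemma \ref{lem:smallCof} to keep cofinalities below $\kappa$, which in turn powers Lemmas \ref{LpClosed}--\ref{LiftOperatorsToGenExt} for the new operators. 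One then runs the core model induction Theorem \ref{the cmi theorem} in this broader setting, advancing through weak and strong gaps in $\rm{sLp}^{^\gTheta\F}(\mathbb{R},\F\rest\mathbb{R})$ exactly as before, with $\F$ now built from hod pairs of increasing complexity.

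The second and main block is the construction of hod pairs in $\Omega$ whose associated derived models realize the $\sf{LSA}$ structure. Here one needs the hod-mouse theory strictly beyond \cite{ATHM}: premice carrying strategies for Woodin limits of Woodin cardinals with appropriate internal covering, and with a top layer whose direct-limit $\M_\infty$ produces a Suslin cardinal of the form $\theta_\alpha$ satisfying $\Theta=(\theta_\alpha)^+$ in its derived model. The key structural claim to be proved is the analogue of Lemma \ref{NotProjectAcross}: for $\forall^* X\in S$, no level of $\mathcal{H}_X^+$ projects strictly across $\Theta_X$, where $\mathcal{H}$ is now the direct limit of the richer class of hod pairs. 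The proof uses $\sf{SMC}$ inside the $\sf{AD}^+$-models already constructed, together with the ultrapower argument by the order-$0$ measure on $\cof^{\mathcal{H}_X^+}(\Theta_X)$, all of which survives the upgrade because the failures-of-squares hypothesis still delivers the relevant covering. The condensation lemma \ref{GettingACondensation} for $\pi_X$, and the commutativity argument of Lemma \ref{commutativity}, are then adapted to the new hod mice.

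The main obstacle is exactly this second block: the combinatorics of hod pairs at the $\sf{LSA}$ level (their comparison theory, branch condensation, $\Omega$-fullness preservation, and the Dodd--Jensen--style direct-limit system) is not yet developed in the published literature at the level of generality required, and is the content of work the author describes as forthcoming. Once one has such a theory, the final step mirrors Subsection \ref{thetareg}: find $(\Q,\Lambda)\in V$ with $\Lambda$ $\Omega$-fullness preserving, having branch condensation, and with $\Q$ a hod mouse of $\sf{LSA}$-type; lift $\Lambda$ to a $(\lambda^+,\lambda^+)$-strategy via Lemmas \ref{LiftOperators} and \ref{LiftOperatorsToGenExt}; conclude by a genericity iteration (as in Remark \ref{InV}) that $L(\Gamma(\Q,\Lambda),\mathbb{R})\vDash\sf{LSA}$, contradicting $(\dag_{\sf{LSA}})$.
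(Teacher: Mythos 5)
This statement is an open conjecture of the paper (Conjecture \ref{LSAFromPFA}); the paper offers no proof of it, only the remark that the author is ``hopeful'' the methods of Section \ref{lowerbound} can be extended to settle it. So there is no proof in the paper to compare your argument against, and the relevant question is whether your proposal closes the gap on its own. It does not, and you have flagged the failure point yourself: the entire second block --- constructing hod pairs in $\Omega$ of $\sf{LSA}$ type and establishing their comparison theory, branch condensation, $\Omega$-fullness preservation, and the analogues of Lemmas \ref{NotProjectAcross}, \ref{GettingACondensation} and \ref{commutativity} --- rests on a theory of hod mice strictly beyond \cite{ATHM} that is developed neither in this paper nor in the literature it cites. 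The paper itself states that even the weaker extension to ``$\sf{AD}_{\mathbb{R}}+\Theta$ is measurable'' already requires ``techniques for constructing hod mice beyond those developed in \cite{ATHM}'' which ``will appear in a future publication.'' An $\sf{LSA}$-type hod premouse has a top layer in which the largest Suslin cardinal $\theta_\alpha$ sits strictly below $\Theta=\theta_{\alpha+1}$; the layering of its strategy predicates, the splitting into short-tree and full strategies, and the associated direct-limit system are qualitatively different from the ``$\Theta$ regular'' case treated in Subsection \ref{thetareg}, so none of the paper's lemmas can simply be ``adapted'' without new proofs. Writing that the obstacle disappears ``once one has such a theory'' is a research program, not a proof.

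A secondary error: your first block claims to exploit ``the failure of $\square(\alpha)$ on the larger interval $[\kappa^+,\lambda^+]$ (rather than $[\kappa^+,(2^\kappa)^+]$).'' Since the conjecture sets $\lambda=2^{\kappa}$, these intervals are identical, and the hypothesis of Conjecture \ref{LSAFromPFA} is exactly the hypothesis of Theorem \ref{main_technical_theorem}. There is no stronger anti-square assumption to draw on; the conjecture asks for a strictly stronger conclusion from the same hypothesis. Consequently the covering and cofinality computations of Lemmas \ref{lem:smallCof}, \ref{LpClosed}--\ref{LiftOperatorsToGenExt} and \ref{M1sharpExists} carry over only insofar as they already did in the paper, and any additional strength needed to reach $\sf{LSA}$ must come from the new hod-mouse constructions of the second block --- which is precisely the part that is missing.
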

We're hopeful that the conjecture have a positive answer. This is because we believe it's possible to construct hod mice generating models of $\textsf{LSA}$ from the hypothesis of Conjecture \ref{LSAFromPFA}. 

We end the paper with the following technical questions, whose solution seems to require new core model induction techniques for working with hulls that are not closed under countable sequences. Note that in most interesting cases (e.g. under $\textsf{PFA}$) $\omega_1$-guessing models of size $\aleph_1$ cannot be closed under $\omega$-sequences.
\begin{question}
Let $\kappa=2^{\aleph_2}$. Can one construct a model of ``$\sf{AD}$$_\mathbb{R} + \Theta$ is regular'' from the existence of stationary many $\omega_1$-guessing models $X\prec H_{\kappa^{++}}$ such that  $|X|=\aleph_1$?
\end{question}
\begin{question}
Let $\kappa=2^{\aleph_2}$. Can one construct a model of ``$\sf{AD}$$_\mathbb{R} + \Theta$ is regular'' from $\neg \square(\alpha)$ for all $\alpha\in [\omega_2,\kappa^+]$?
\end{question}
\bibliographystyle{plain}
\small
\bibliography{Rmicebib}
\end{document}